\documentclass[11pt,reqno]{amsart}
\usepackage{amssymb,mathrsfs,graphicx,subfigure, enumerate}
\usepackage{amsmath,amsfonts,amssymb,amscd,amsthm,bbm}
\usepackage{extpfeil}
\usepackage{graphicx,colortbl}
\usepackage{float}
\usepackage{epsfig}
\usepackage{caption}
\usepackage{bm}
\graphicspath{{Figure/}}
\definecolor{ForestGreen}{rgb}{0.13,0.55,0.13}
\usepackage{comment}

\usepackage[margin=1in]{geometry}
\makeatletter
\@namedef{subjclassname@2020}{\textup{2020} Mathematics Subject Classification}
\makeatother

\title[Navier--Stokes--Fourier--Korteweg equations]{Time-asymptotic stability of generic Riemann solutions for viscous-dispersive heat-conducting compressible fluids}

\author[Han]{Sungho Han$^{*}$}
\address[Sungho Han]{\newline Department of Mathematical Sciences \newline Korea Advanced Institute of Science and Technology, Daejeon  34141, Republic of Korea}
\email{sungho\_han@kaist.ac.kr}
\urladdr{https://orcid.org/0009-0003-3117-2542}

\author[Kim]{Jeongho Kim}
\address[Jeongho Kim]{\newline Department of Applied Mathematics, \newline Kyung Hee University, 1732, Deogyeong-daero, Giheung-gu, Yongin-si, Gyeonggi-do 17104, Republic of Korea}
\email{jeonghokim@khu.ac.kr}
\urladdr{https://orcid.org/0000-0002-5220-3139}

\begin{document}
\newtheorem{theorem}{Theorem}[section]
\newtheorem{lemma}{Lemma}[section]
\newtheorem{corollary}{Corollary}[section]
\newtheorem{proposition}{Proposition}[section]
\newtheorem{remark}{Remark}[section]
\newtheorem{definition}{Definition}[section]
\renewcommand{\theequation}{\thesection.\arabic{equation}}
\renewcommand{\thetheorem}{\thesection.\arabic{theorem}}
\renewcommand{\thelemma}{\thesection.\arabic{lemma}}
\newcommand{\R}{\mathbb R}
\newcommand{\bbr}{\mathbb{R}}
\newcommand{\e}{\varepsilon}
\newcommand{\pa}{\partial}

\newcommand{\pv}{p(v)}

\newcommand{\norm}[1]{\left\lVert#1\right\rVert}

\newcommand{\tm}{\widetilde{\mu}}
\newcommand{\tk}{\widetilde{\kappa}}

\subjclass[2020]{76N15, 35Q30, 35B35, 35B40} 

\keywords{$a$-contraction with shifts, time-asymptotic stability, generic Riemann solution, Navier--Stokes--Fourier--Korteweg system}

\thanks{$^{*}$Corresponding author.}

\begin{abstract}
We establish the time-asymptotic stability of generic Riemann solutions for the one-dimensional Navier--Stokes--Fourier--Korteweg (NSFK) system, which describes the dynamics of viscous, heat-conducting, compressible fluids with internal capillarity. We prove that, for the adiabatic exponent $\gamma\in(1,2]$, the solution to the NSFK system converges uniformly in space, as time tends to infinity, to a composite wave consisting of a rarefaction wave, a viscous contact wave, and a viscous-dispersive shock with a time-dependent shift, provided that the initial perturbation and the strengths of the contact wave and the shock are sufficiently small. In particular, we do not impose any smallness condition on the strength of the rarefaction wave. Because of the capillarity, the shock is viscous-dispersive and its profile equation is of third order. We treat the shock by the method of $a$-contraction with shifts introduced by Kang and Vasseur. This is the first result on the time-asymptotic stability of a viscous-dispersive shock, and of a generic Riemann solution, for the NSFK system.
\end{abstract}

\maketitle

\section{Introduction}

We consider a one-dimensional compressible viscous, heat conducting, and capillary gas dynamics model written in the Lagrangian mass coordinate:
\begin{align}
\begin{aligned}\label{eq:NSK}
    &v_t - u_x = 0,\quad t>0,\quad x\in\R,\\
    &u_t + p_x = \mu\left(\frac{u_x}{v}\right)_x + \kappa\left( -\frac{v_{xx}}{v^5} + \frac{5v_x^2}{2v^6} \right)_x,\\
    &E_t +(pu)_x = \alpha\left(\frac{\theta_x}{v}\right)_x+\mu\left(\frac{uu_x}{v}\right)_x+\kappa\left(\left(-\frac{v_{xx}}{v^5}+\frac{5v_x^2}{2v^6}\right)u+\frac{v_xu_x}{v^5}\right)_x.
\end{aligned}
\end{align}
Here, $v$, $u$, and $E$ denote the specific volume, the velocity, and the specific total energy of the fluid, respectively. The positive constants $\mu$, $\alpha$, and $\kappa$ are the viscosity, heat conductivity, and internal capillarity coefficients, and $p$ represents the pressure. The total energy of the non-isothermal capillary flow is given by
\begin{equation}\label{eq:total_energy}
	E=e+\frac{u^2}{2}=\frac{R\theta}{\gamma-1}+\frac{\kappa v_x^2}{2v^5}+\frac{u^2}{2},
\end{equation}
where $e$ is the internal energy of the fluid that incorporates the gradient of the density (or equivalently, the gradient of the specific volume), $\theta$ is the absolute temperature, $R$ is the gas constant, and $\gamma>1$ is the adiabatic exponent. Substituting \eqref{eq:total_energy} into the energy equation \eqref{eq:NSK}$_3$, one can reformulate \eqref{eq:NSK} in terms of $(v,u,\theta)$ as
\begin{align}
	\begin{aligned}\label{eq:NSK-theta}
		&v_t - u_x = 0,\quad t>0,\quad x\in\R,\\
		&u_t + p_x = \mu\left(\frac{u_x}{v}\right)_x + \kappa\left( -\frac{v_{xx}}{v^5} + \frac{5v_x^2}{2v^6} \right)_x,\\
		&\frac{R}{\gamma-1}\theta_t +pu_x=\alpha\left(\frac{\theta_x}{v}\right)_x+\frac{\mu u_x^2}{v},
	\end{aligned}
\end{align}
where the pressure is given by $p(v,\theta) = \frac{R\theta}{v}$. We refer to the system \eqref{eq:NSK-theta} as the Navier--Stokes--Fourier--Korteweg (NSFK) system. 

The fluid with internal capillarity was first introduced by van der Waals \cite{W94} and then by Korteweg \cite{K01}, by considering an internal energy that may depend on the gradient of the density. Later on, Dunn and Serrin rigorously derived the modern form of the system \eqref{eq:NSK} based on the thermodynamics of interstitial working \cite{DS85}. From the mathematical point of view, there have been numerous works on the well-posedness \cite{CH13,DD01,GL16,H09,H11,H17,H23,HL96} and, for the barotropic system, on the weak-strong uniqueness \cite{BGL19,GLT17} and the vanishing viscosity and/or capillarity limit \cite{CH13,BH22}. In particular, when the capillarity is given as $\kappa(v) = v$, it corresponds to the celebrated quantum Navier--Stokes equations \cite{AM09,AS22,BGL19,J10,LV18}.

We consider the initial data $(v,u,\theta)(0,x)=(v_0,u_0,\theta_0)(x)$ that converge to two different constant states at the far-field, that is
\[\lim_{x\to\pm\infty}(v_0,u_0,\theta_0)(x)=(v_\pm,u_\pm,\theta_\pm).\]
Then, the long-time behavior of the solution to \eqref{eq:NSK-theta} is determined by the Riemann problem of the non-isothermal Euler system ($\mu=\kappa=\alpha=0$ in \eqref{eq:NSK})
\begin{align}
\begin{aligned}\label{eq:Euler}
	&v_t - u_x = 0,\quad t>0,\quad x\in\R,\\
	&u_t + p_x = 0,\\
	&\Bigl(\frac{R\theta}{\gamma-1}+\frac{u^2}{2}\Bigr)_t+(pu)_x=0,
\end{aligned}
\end{align}
subject to the Riemann initial data
\begin{equation}\label{eq:Riemann_initial}
	(v,u,\theta)(0,x)=\begin{cases}
	(v_-,u_-,\theta_-),\quad &\mbox{if $x<0$},\\
	(v_+,u_+,\theta_+),\quad &\mbox{if $x>0$}.
\end{cases}
\end{equation}

The long-time behavior of the compressible fluid model has been extensively investigated in the literature. For the barotropic Navier--Stokes equation, Matsumura and Nishihara investigated the stability of a single viscous shock wave \cite{MN85} and of a rarefaction wave \cite{MN86}, under smallness conditions on both the initial perturbation and the wave strength. Later in \cite{MN92}, the stability of the superposition of two rarefaction waves was shown without assuming smallness of the initial perturbation or of the rarefaction strength. The stability of viscous shock waves was further investigated by Liu \cite{L85}, Szepessy--Xin \cite{SX93}, and Liu--Zeng \cite{LZ15}, where the zero-mass condition was removed by introducing constant shifts and diffusion waves. Mascia and Zumbrun \cite{MZ04} established the nonlinear orbital stability of small-amplitude viscous shock profiles for general symmetric hyperbolic--parabolic systems. The stability of the superposition of two viscous shock waves was first considered in \cite{HM09} for the full Navier--Stokes system, under the condition that the two shock strengths are comparable, and this restriction was removed in \cite{HKK23} for the barotropic system. Finally, the stability of the superposition of a rarefaction wave and a viscous shock wave was established in \cite{KVW23}, which completes the classification of all possible asymptotic patterns for the barotropic Navier--Stokes equations. For the degenerate viscosity case, the stability of shock waves was first studied in \cite{MW10}, and it was further generalized in \cite{VY16}. 

For the full Navier--Stokes system, the stability of rarefaction waves was proved in \cite{KMN86} and \cite{LX88} under the smallness assumption on the initial perturbation and strength of rarefaction. Later, the smallness condition on the strength of the rarefaction wave was removed in \cite{NYZ04}. The stability of the superposition of shock waves for the full Navier--Stokes system was obtained in \cite{HM09} for comparable shock strengths, as mentioned above. Finally, the stability of the contact discontinuity was proved in \cite{HXY08}, and the stability of the superposition of rarefaction and contact discontinuity was proved in \cite{HLM10}. The long-standing open question of stability of rarefaction, contact discontinuity, and shock wave was recently resolved by Kang--Vasseur--Wang \cite{KVW25}.

On the other hand, the long-time behavior of the viscous-capillary fluid is a relatively new subject. The first result on this topic is \cite{C12}, where the stability of strong rarefaction waves for the Navier--Stokes--Korteweg system was proved. Then, the stability of the viscous-dispersive shock wave \cite{CHZ15,HKKL25} and of the superposition of a rarefaction wave and a shock wave \cite{HK26} were established subsequently. We also mention that the spectral stability of viscous-dispersive shocks was investigated in the recent works \cite{FPZ23,FLP26}. Finally, for the NSFK system, the stability of the viscous contact wave \cite{CX13} and of the superposition of rarefaction waves and a viscous contact wave \cite{CS19,CXM14} have been studied. To the best of our knowledge, the long-time behavior of generic Riemann solutions to the NSFK system, given by a superposition of a rarefaction wave, a contact discontinuity, and a viscous-dispersive shock wave, has not been considered and remains an open problem. In the present paper, we study this problem.

To set the stage, we first review each of the elementary waves for the Euler system \eqref{eq:Euler}. In general, the Riemann solution of the Euler system \eqref{eq:Euler} consists of a 1-wave, a 2-contact discontinuity, and a 3-wave, where each of the 1- and 3-waves is a rarefaction wave or a shock wave. The Riemann solution is called generic when it consists of a rarefaction wave, a contact discontinuity, and a shock wave. For a given state $(v_L,u_L,\theta_L)\in \R_+\times\R\times\R_+$ with $\R_+:=(0,\infty)$, the 1-rarefaction curve $R_1(v_L,u_L,\theta_L)$ is defined as
\[R_1(v_L,u_L,\theta_L):=\left\{(v,u,\theta)~\Big|~v>v_L,\,s(v,\theta)=s(v_L,\theta_L)=:s_L,\,u=u_L-\int_{v_L}^v\lambda_1(s_L,v')\,dv'\right\},\]
where $s(v,\theta)=\frac{R}{\gamma-1}\ln(\theta v^{\gamma-1}) + \text{constant}$ is the specific entropy and $\lambda_1(s,v)=-\sqrt{\frac{\gamma p(v,s)}{v}}$ is the first eigenvalue of the Jacobian matrix of the flux function for the Euler system \eqref{eq:Euler}.
Then, for any $(v_R,u_R,\theta_R)\in R_1(v_L,u_L,\theta_L)$, the Riemann solution to the Euler system with the left end state $(v_L,u_L,\theta_L)$ and right end state $(v_R,u_R,\theta_R)$ is given by the 1-rarefaction wave $(v^r,u^r,\theta^r)(x/t)$, which is a self-similar continuous weak solution to \eqref{eq:Euler}.
For a state $(v_*,u_*,\theta_*)\in R_1(v_-,u_-,\theta_-)$, we also introduce a smooth approximation of the 1-rarefaction wave connecting $(v_-,u_-,\theta_-)$ to $(v_*,u_*,\theta_*)$ as in \cite{MN92}, on which the initial condition of the main theorem is formulated. Set $w_-:=\lambda_1(v_-,\theta_-)$ and $w_*:=\lambda_1(v_*,\theta_*)$, where $\lambda_1(v,\theta):=-\sqrt{\gamma R\theta}/v$ is $\lambda_1(s,v)$ written in the variables $(v,\theta)$, so that $w_-<w_*$. We pose the inviscid Burgers equation with the smooth initial datum $w_0$:
\begin{equation}\label{eq: inviscid Burgers equation}
	\begin{cases}
		w_t+w w_x=0,\\
		w(0,x)= w_0(x)= \frac{w_*+w_-}{2}+\frac{w_*-w_-}{2}K_q\int_0^{\e_R x}\frac{1}{(1+y^2)^q}\,dy,
	\end{cases}
\end{equation} 
where $w_-<w_*$ are the far-field states as $x\to\pm\infty$, $q \ge 14$ is a fixed constant, $K_q$ is the normalizing constant satisfying $K_q\int_0^\infty (1+y^2)^{-q}\,dy=1$, and $0<\e_R\le1$ is a small smoothing parameter measuring the initial slope of the wave. The condition $q\ge14$ gives the rate $O(\delta_0^{1/6})$ in Proposition~\ref{prop:apriori}, since $\frac{q-8}{3(q-2)}\ge\frac16$ exactly when $q\ge14$ (see Section~\ref{sec:L2}). Then, for a given $\e_R\in(0,1]$, we define a smooth approximation $(v^R,u^R,\theta^R)(t,x)$ of the 1-rarefaction wave $(v^r,u^r,\theta^r)$ as
    \begin{equation} \label{smooth rarefaction}
        \begin{aligned}
            &\lambda_1(v^R,\theta^R) = w(1+t,x),\\
            &z_1(v^R,u^R)(t,x) = z_1(v_-,u_-)=z_1(v_*,u_*),\\
            &s(v^R,\theta^R)(t,x) = s(v_-,\theta_-)=s(v_*,\theta_*),
        \end{aligned}
    \end{equation}
where $w(t,x)$ is the smooth solution to \eqref{eq: inviscid Burgers equation} with parameter $\e_R$ and $z_1(v,u)=u+\int^v \lambda_1(s,v')\,dv'$. {Throughout the paper, the smoothing parameter is fixed as $\e_R=\delta_0/2$, where $\delta_0$ is the constant in Theorem~\ref{thm:main}.} The strength of the rarefaction wave is $\delta_R:=|v_*-v_-|\sim|u_*-u_-|\sim|\theta_*-\theta_-|$. The properties of $w$ and $(v^R,u^R,\theta^R)$ are summarized in Lemmas~\ref{lem:Burgers} and~\ref{lem:rarefaction_property}.

Next, for any $(v_L,u_L,\theta_L)\in \R_+\times\R\times\R_+$, the 2-contact discontinuity curve $CD_2(v_L,u_L,\theta_L)$ is given by
\begin{equation*}
    CD_2(v_L,u_L,\theta_L):=\left\{(v,u,\theta) \left| \right. u=u_L,\,p(v,\theta)=p(v_L,\theta_L)\right\}.
\end{equation*}
Then, for any $(v_R,u_R,\theta_R)\in CD_2(v_L,u_L,\theta_L)$, the Riemann solution with the left end state $(v_L,u_L,\theta_L)$ and right end state $(v_R,u_R,\theta_R)$ is given by the 2-contact discontinuity $(v^c,u^c,\theta^c)$ defined by
\begin{equation*}
    (v^c,u^c,\theta^c)(t,x)=\begin{cases}
	(v_L,u_L,\theta_L),\quad & x<0,\\
	(v_R,u_R,\theta_R),\quad & x>0.
\end{cases}
\end{equation*}
Finally, for any $(v_L,u_L,\theta_L)\in \R_+\times\R\times\R_+$, consider the Rankine--Hugoniot condition
\begin{align}
	\begin{aligned}\label{RH-condition}
		&-\sigma(v-v_L)-(u-u_L)=0,\\
		&-\sigma(u-u_L)+(p(v,\theta)-p(v_L,\theta_L))=0,\\
		&-\sigma(E-E_L)+(p(v,\theta)u-p(v_L,\theta_L)u_L)=0,
	\end{aligned}
\end{align}
where $E_L=\frac{R\theta_L}{\gamma-1}+\frac{u_L^2}{2}$. Among the two possible values of $\sigma$ satisfying \eqref{RH-condition}, we consider the positive one $\sigma=\sqrt{-\frac{p(v,\theta)-p(v_L,\theta_L)}{v-v_L}}$. We define the 3-shock curve $S_3(v_L,u_L,\theta_L)$ as
\begin{equation*}
    S_3(v_L,u_L,\theta_L):=\left\{(v,u,\theta)~\Big|~v>v_L,\,\sigma=\sqrt{-\frac{p(v,\theta)-p(v_L,\theta_L)}{v-v_L}},\, \mbox{and \eqref{RH-condition} is satisfied}\right\}.
\end{equation*}

Then, for any $(v_R,u_R,\theta_R)\in S_3(v_L,u_L,\theta_L)$, the Riemann solution with the left end state $(v_L,u_L,\theta_L)$ and right end state $(v_R,u_R,\theta_R)$ is given by 3-shock wave $(v^s,u^s,\theta^s)(t,x)$ given as
\[(v^s,u^s,\theta^s)(t,x)=\begin{cases}
	(v_L,u_L,\theta_L),\quad & x<\sigma t,\\
	(v_R,u_R,\theta_R),\quad & x>\sigma t.
\end{cases}\]

As mentioned, the purpose of this paper is to investigate the time-asymptotic behavior of the solution $(v,u,\theta)$ to \eqref{eq:NSK-theta} with the far-field states $(v_\pm,u_\pm,\theta_\pm)$, where the solution to the Riemann problem \eqref{eq:Euler}--\eqref{eq:Riemann_initial} is given by a composite wave of 1-rarefaction, 2-contact discontinuity, and 3-shock. Specifically, we consider the end-states $(v_\pm,u_\pm,\theta_\pm)$ for which there exist two intermediate states $(v_*,u_*,\theta_*)$ and $(v^*,u^*,\theta^*)$ such that
\[(v_*,u_*,\theta_*)\in R_1(v_-,u_-,\theta_-),\quad (v^*,u^*,\theta^*)\in CD_2(v_*,u_*,\theta_*),\quad (v_+,u_+,\theta_+)\in S_3(v^*,u^*,\theta^*).\]
Due to the viscous, heat-conducting, and capillary nature of the NSFK system, the time-asymptotic behavior of \eqref{eq:NSK-theta} is expected to be given as a superposition of the rarefaction wave, the viscous contact wave, and the viscous-dispersive shock wave. To define the viscous contact wave, we consider a nonlinear diffusion equation as in \cite{HXY08}:

\begin{equation}\label{eq:nonlinear_diffusion}
	\Theta_t  = \frac{(\gamma-1)\alpha p_*}{R^2\gamma} \left(\frac{\Theta_x}{\Theta}\right)_x,\quad \Theta(t,-\infty)=\theta_*,\quad \Theta(t,+\infty) = \theta^*,
\end{equation}
where $p_*:=p(v_*,\theta_*)=p(v^*,\theta^*)$ denotes the common pressure of the two states adjacent to the contact discontinuity. Let $\Theta(t,x)=\Theta(x/\sqrt{1+t})$ be the self-similar solution of \eqref{eq:nonlinear_diffusion} constructed in \cite{HXY08}. Then, the viscous contact wave $(v^C,u^C,\theta^C)$ is defined as
\begin{equation}\label{def:viscous-contact}
	v^C:=\frac{R}{p_*}\Theta,\quad u^C:=u_*+\frac{(\gamma-1)\alpha}{R\gamma}\frac{\Theta_x}{\Theta},\quad \theta^C:=\Theta.
\end{equation}
The strength of the contact wave is $\delta_C:=|v^*-v_*|=\frac{R}{p_*}|\theta^*-\theta_*|$. The properties of the viscous contact wave are summarized in Lemma~\ref{lem:viscous_contact}.
On the other hand, the profile of the viscous-dispersive shock wave $(v^S,u^S,\theta^S)(\xi)$ with $\xi=x-\sigma t$ is given by the traveling wave solution to \eqref{eq:NSK}. Substituting the traveling wave ansatz $(v,u,E)(t,x)=(v^S,u^S,E^S)(x-\sigma t)$ into \eqref{eq:NSK}, the profile satisfies the following system of ODEs: 
\begin{align}
	\begin{aligned}\label{eq:viscous_shock}
		-\sigma (v^S)' - (u^S)' &= 0, \quad '=\frac{d}{d\xi},\\
		-\sigma (u^S)' + (p^S)' &= \left(\mu \frac{(u^S)'}{v^S} \right)' + \kappa\left( - \frac{(v^S)''}{(v^S)^5} + \frac{5((v^S)')^2}{2(v^S)^6} \right)', \\
		-\sigma (E^S)' + (p^S u^S)' &= \alpha \left( \frac{(\theta^S)'}{v^S} \right)' + \mu \left( \frac{u^S (u^S)'}{v^S} \right)' + \kappa \left(  \left( - \frac{(v^S)''}{(v^S)^5} + \frac{5((v^S)')^2}{2(v^S)^6}\right)u^S + \frac{(v^S)' (u^S)'}{(v^S)^5}  \right)',
	\end{aligned}
\end{align}
together with the limit $\lim_{\xi\to -\infty} (v^S,u^S,\theta^S)=(v^*,u^*,\theta^*)$ and $\lim_{\xi\to\infty}(v^S,u^S,\theta^S)=(v_+,u_+,\theta_+)$, where $p^S:=p(v^S,\theta^S)$, and $E^S:=\frac{R\theta^S}{\gamma-1}+\frac{(u^S)^2}{2}+\frac{\kappa((v^S)')^2}{2(v^S)^5}$. The strength of the shock is $\delta_S:=|v_+-v^*|\sim|u_+-u^*|\sim|\theta_+-\theta^*|$. The existence and uniqueness, up to translation, of a monotone viscous-dispersive shock profile and its quantitative properties are established in the companion paper \cite{H26} and are summarized in Lemma \ref{lem:shock-property}.

We are now ready to present the main result of the present paper, which is the time-asymptotic stability of the composite wave of the 1-rarefaction wave, the 2-viscous contact wave, and the 3-viscous-dispersive shock wave.

\begin{theorem}\label{thm:main}

	Let $\gamma\in (1,2]$ and $q\ge14$. For given constant states $(v_-,u_-,\theta_-)\in\R_+\times\R\times\R_+$ and $(v_*,u_*,\theta_*)\in R_1(v_-,u_-,\theta_-)$, there exist constants $\delta_0,\e_0>0$ such that the following holds.
	For any $(v^*,u^*,\theta^*)\in CD_2(v_*,u_*,\theta_*)$ and $(v_+,u_+,\theta_+)\in S_3(v^*,u^*,\theta^*)$ satisfying
	\[|v^*-v_*|+|v_+-v^*|<\delta_0,\]
	we denote by $(v^r,u^r,\theta^r)(\frac xt)$, $(v^C,u^C,\theta^C)(t,x)$, and $(v^S,u^S,\theta^S)(x-\sigma t)$ the 1-rarefaction wave, the viscous contact wave \eqref{def:viscous-contact}, and the viscous-dispersive shock wave of Lemma~\ref{lem:shock-property} connecting these end states, and by $(v^R,u^R,\theta^R)(t,x)$ the approximate rarefaction wave \eqref{smooth rarefaction} with $\e_R=\delta_0/2$. Let $(v_0,u_0,\theta_0)$ be any initial data such that
	\begin{equation}\label{initial_data}
	\begin{aligned}
		&\bigl\|(v_0,u_0,\theta_0)-(v^R,u^R,\theta^R)(0,\cdot)\bigr\|_{L^2(\R_-)}\\
		&\quad+\bigl\|(v_0,u_0,\theta_0)-(v^R,u^R,\theta^R)(0,\cdot)-(v_+-v_*,u_+-u_*,\theta_+-\theta_*)\bigr\|_{L^2(\R_+)}\\
		&\quad+\bigl\|(v_0-v^R(0,\cdot))_x\bigr\|_{H^1(\R)}
		+\bigl\|\bigl((u_0-u^R(0,\cdot))_x,(\theta_0-\theta^R(0,\cdot))_x\bigr)\bigr\|_{L^2(\R)}<\e_0 ,
	\end{aligned}
	\end{equation}
	where $\R_-:=(-\infty,0)$.
	Then the NSFK system \eqref{eq:NSK-theta} admits a unique global-in-time solution $(v,u,\theta)(t,x)$ for all $t\ge0$. Moreover, there exists a Lipschitz shift $X(t)$ with $X(0)=0$ such that the superposition wave
	\begin{equation*}
    \begin{aligned}(\bar v,\bar u,\bar\theta)(t,x):={}&(v^R,u^R,\theta^R)(t,x)+(v^C,u^C,\theta^C)(t,x)+(v^S,u^S,\theta^S)(x-\sigma t-X(t))\\&-(v_*,u_*,\theta_*)-(v^*,u^*,\theta^*)
    \end{aligned}
    \end{equation*}
	satisfies
	\begin{equation}\label{eq:regularity}
	\begin{aligned}
		&v-\bar v\in C([0,\infty);H^2(\R)),\qquad (v-\bar v)_x\in L^2(0,\infty;H^2(\R)),\\
		&(u-\bar u,\theta-\bar\theta)\in C([0,\infty);H^1(\R)),\qquad (u-\bar u,\theta-\bar\theta)_x\in L^2(0,\infty;H^1(\R)),
	\end{aligned}
	\end{equation}
	and
	\begin{align}
		\begin{aligned}\label{est:large_time}
		\lim_{t\to\infty}\sup_{x\in \R} \Big|(v,u,\theta)(t,x)-\Big(&v^r\left(\frac{x}{t}\right)+v^C(t,x)+v^S(x-\sigma t-X(t))-v_*-v^*,\\
		&u^r\left(\frac{x}{t}\right)+u^C(t,x)+u^S(x-\sigma t-X(t))-u_*-u^*,\\
		&\theta^r\left(\frac{x}{t}\right)+\theta^C(t,x)+\theta^S(x-\sigma t-X(t))-\theta_*-\theta^*\Big)\Big|=0 .
		\end{aligned}
	\end{align}
	In addition, the shift satisfies
	\begin{equation}\label{est:shift}
		\lim_{t\to\infty}|\dot{X}(t)|=0.
	\end{equation}

\end{theorem}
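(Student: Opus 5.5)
The proof follows the standard continuation scheme: local existence, an a priori estimate, and global extension, with the shock handled by the $a$-contraction method with shifts of Kang and Vasseur as adapted to the full system in \cite{KVW25}.

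\emph{Setup.} We first recall the properties of the rarefaction, contact and shock profiles from Lemmas~\ref{lem:Burgers}, \ref{lem:rarefaction_property}, \ref{lem:viscous_contact} and \ref{lem:shock-property}. The shock profile has exponential tails of rate $\sim\delta_S$, and $|(v^S)'|\sim|(u^S)'|\sim|(\theta^S)'|$ with size $O(\delta_S^2)$.

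We define the shift by an ODE of the form
\[
\dot X(t)=-\frac{M}{\delta_S}\int_\R a(\xi)\Bigl[\text{linear combination of } (\bar u-u)\,(u^S)',\ (\bar\theta-\theta)\,(\theta^S)',\ (\bar v-v)\,(p^S)'\Bigr]\,dx,
\qquad X(0)=0 .
\]
Here $\xi=x-\sigma t-X(t)$, and $a$ is a monotone weight with $a-1=O(\lambda)$, where $\lambda$ is comparable to $\sqrt{\delta_S}$, built from $p(v^S,\theta^S)$ or $v^S$. This ODE is solvable by Cauchy--Lipschitz as long as the solution stays smooth.

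The composite wave $(\bar v,\bar u,\bar\theta)$ satisfies \eqref{eq:NSK-theta} up to error terms of three kinds:
\begin{itemize}
\item rarefaction viscous/dispersive errors, controlled via the $\e_R$-smoothing and $q\ge14$;
\item contact errors of order $\delta_C(1+t)^{-3/2}e^{-cx^2/(1+t)}$;
\item wave-interaction errors, decaying like $\delta_S e^{-c\delta_S t}$ times powers.
\end{itemize}

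\emph{A priori estimate.} The core is an a priori estimate (Proposition~\ref{prop:apriori}). Assume $\sup_{[0,T]}\bigl(\|(v-\bar v,u-\bar u,\theta-\bar\theta)\|_{L^2}+\|(v-\bar v)_x\|_{H^1}+\|(u-\bar u,\theta-\bar\theta)_x\|_{L^2}\bigr)\le\e_1$. We then prove
\[
\sup_t\|\cdot\|^2+\delta_S\int_0^T|\dot X|^2\,dt+\int_0^T\bigl(G^S+G^R+G^C+D\bigr)\,dt\le C\bigl(\e_0^2+\delta_0^{1/3}\bigr).
\]
Here $G^S$ and $G^R$ are the good terms from $(v^S)'$ and $u^R_x$ weighting the perturbation, $G^C$ is the contact heat-kernel term, and $D$ collects the dissipations.

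\emph{Zeroth-order estimate.} This is the weighted relative entropy $\int a\,\eta(U|\bar U)\,dx$, with $\eta$ the physical entropy (using $\theta$-weighted relative entropy as in \cite{KVW25}) augmented by the capillary term $\frac{\kappa}{2}\frac{|(v-\bar v)_x|^2}{v^5}$. The capillarity is absorbed by the effective-velocity trick: set $h=u-\mu(\ln v)_x$ plus a suitable Korteweg correction, or treat the dispersive term directly. The bad terms localized on the shock are controlled by the Poincar\'e-type inequality on $[0,1]$ after the change of variable $y=(p^S-p^*)/(p_+-p^*)$. This is combined with $-\frac{\delta_S}{2M}|\dot X|^2$ and the $a'$-good term, exactly as in the Kang--Vasseur sharp inequality. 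The restriction $\gamma\in(1,2]$ enters in the sign of the cubic remainder.

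The rarefaction good term $\int u^R_x\,(\ldots)$ is nonnegative. Hence no smallness of $\delta_R$ is needed; only its slope $\e_R=\delta_0/2$ appears. The contact term is handled by the Huang--Li--Matsumura heat-kernel estimate, giving $\int_0^T\int|\,\cdot\,|^2 e^{-cx^2/(1+t)}(1+t)^{-1}\,dx\,dt$ bounded by the dissipation.

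\emph{Higher-order estimates.} These are standard energy estimates for $(v-\bar v)_x,(v-\bar v)_{xx}$ (via the Korteweg structure giving $\kappa\|(v-\bar v)_{xxx}\|^2$ dissipation) and $(u-\bar u)_x,(\theta-\bar\theta)_x$, with error terms absorbed using the smallness of $\e_1,\delta_0$.

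\emph{Conclusion.} Continuation then gives global existence and \eqref{eq:regularity}. Since $\frac{d}{dt}\|(\cdot)_x\|^2\in L^1(0,\infty)$ and $\|(\cdot)_x\|^2\in L^1$, we get $\|(\cdot)_x\|\to0$. Gagliardo--Nirenberg then gives $L^\infty$ decay of the perturbation. Together with $\|(v^R,u^R,\theta^R)-(v^r,u^r,\theta^r)\|_{L^\infty}\to0$ this gives \eqref{est:large_time}, and $|\dot X|\le C\|\text{perturbation}\|_{L^\infty}$ gives \eqref{est:shift}.

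\emph{Main obstacle.} The main obstacle is the zeroth-order weighted estimate. The third-order dispersive term produces, after weighting by $a$, cross terms $\kappa\,a'\,(v-\bar v)_x(v-\bar v)_{xx}/v^5$ and the interaction of $\kappa$ with the shift that the $L^2$ relative entropy alone does not see. My first attempt would be to include $\frac{\kappa|(v-\bar v)_x|^2}{2v^5}$ in the weighted functional and use $|a'|\lesssim\lambda\delta_S|(v^S)'|/\delta_S$ to bound these terms by $\sqrt{\delta_S}\,(G^S+D)$. Simultaneously, the shock-localized diffusion must be recovered from both $\mu$ and $\alpha$, and the Poincar\'e step verified with the viscous-dispersive profile of \cite{H26} in place of the Navier--Stokes profile.
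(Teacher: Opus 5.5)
Your overall architecture is the paper's: a shift ODE with weight $a-1=O(\sqrt{\delta_S})$, a $\bar\theta$-weighted relative entropy augmented by the capillary energy, the sharp weighted Poincar\'e inequality on $[0,1]$, higher-order estimates, continuation, and the $W^{1,1}$ argument for decay. There is, however, a genuine gap in how you dispose of the large rarefaction.

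You claim that $\delta_R$ enters only through the good sign of $u^R_x$, and you locate $\gamma\le2$ in ``the sign of the cubic remainder''. Neither is correct. The cubic terms $\int a_x|(v-\bar v,u-\bar u,\theta-\bar\theta)|^3\,dx$ are absorbed by $\e_1$-smallness for every $\gamma>1$, and $G^R$ is obtained for every $\gamma>1$ as in \cite{KVW25}.

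The step that actually needs care is the lower bound on the diffusion, which must hold before the Poincar\'e inequality can be used with its sharp constant. The change of variables $x\mapsto y$ maps all of $\R$ onto $[0,1]$. So on the whole line, $\int a\frac{\mu}{v}|(u-\bar u)_x|^2\,dx$ and $\int a\frac{\alpha}{v\theta}|(\theta-\bar\theta)_x|^2\,dx$ must dominate the same integrals with the profile coefficients $\frac{\mu}{v^S}$ and $\frac{\alpha}{v^S\theta^S}$, up to a factor $1-o(1)$. But $\bar v-v^S$ and $\bar v\bar\theta-v^S\theta^S$ contain $v^R-v_*$ and $v^R\theta^R-v_*\theta_*$, which are of size $\delta_R$, not small. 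Since both waves start centered at $x=0$, at early times this discrepancy sits on the shock layer itself.

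The paper gets through only by one-sided signs:
\begin{itemize}
\item $v^R\le v_*$ gives $\frac1{\bar v}-\frac1{v^S}\ge-C\delta_C$;
\item $\theta^R(v^R)^{\gamma-1}=\theta_*v_*^{\gamma-1}$ gives $v^R\theta^R=C(v^R)^{2-\gamma}\le v_*\theta_*$ precisely because $\gamma\le2$, hence $\frac1{\bar v\bar\theta}-\frac1{v^S\theta^S}\ge-C(\delta_C+\delta_S)$.
\end{itemize}
This is the only place $\gamma\le2$ is used. For $\gamma>2$ this sign argument is unavailable, and your plan supplies nothing in its place, so as written it never uses $\gamma\le2$ where it matters.

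Relatedly, several terms that \cite{KVW25} absorbs by smallness of $\delta_R$ must here be treated as time-integrable wave interactions, via $\|R_3\|_{L^2}$ and Gagliardo--Nirenberg, not via the sign of $u^R_x$. These are $\int a_x|v^R-v_*|\,|(v-\bar v,u-\bar u,\theta-\bar\theta)|^2\,dx$, $\int|v^R-v_*||v^S_x||(v-\bar v,\theta-\bar\theta)|^2\,dx$, and the $|\bar p-p^*|$ corrections in $Y_2,Y_3$.

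A second, smaller issue concerns where the dissipation of $v-\bar v$ comes from. The Korteweg term is conservative. Neither your $\frac{\kappa|(v-\bar v)_x|^2}{2v^5}$ nor the paper's $\frac{(w-\bar w)^2}{2}$, with $w=-\sqrt\kappa\,v_x/v^{5/2}$, produces dissipation of $v-\bar v$. The relative entropy gives global dissipation only of $(u-\bar u)_x$ and $(\theta-\bar\theta)_x$.

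The $a_x$-weighted capillary fluxes involve $(u-\bar u)(w-\bar w)_x$ and $(w-\bar w)(u-\bar u)_x$, hence $(v-\bar v)_{xx}$. Their bound therefore necessarily contains $\sqrt{\delta_S}\|(w-\bar w)_x\|_{L^2}^2$, which is not controlled by the relative-entropy dissipation, so bounding them by $\sqrt{\delta_S}(G^S+D)$ does not work. Together with the interpolation remainders, this leaves $C(\e_1+\delta_0^{1/2})\int_0^T\|(v-\bar v,w-\bar w)_x\|_{L^2}^2\,dt$ on the right-hand side of the $L^2$ estimate.

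Closing that estimate requires mixed estimates, not standard energy estimates for $(v-\bar v)_x$ and $(v-\bar v)_{xx}$. The paper computes $\frac{d}{dt}\int(u-\bar u)(w-\bar w)\,dx$ and $\frac{d}{dt}\int(u-\bar u)_x(w-\bar w)_x\,dx$. In these, the pressure supplies $\|w-\bar w\|_{L^2}^2$ (essentially $\|(v-\bar v)_x\|_{L^2}^2$), the capillarity supplies $\|(w-\bar w)_x\|_{L^2}^2$, and the second identity gives the same one derivative higher.
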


\begin{remark}
	\begin{enumerate}
		\item In Theorem \ref{thm:main}, we do not assume the smallness condition on the rarefaction strength $|v_--v_*|\sim|u_--u_*|\sim|\theta_--\theta_*|$. Therefore, we cover the case of a large rarefaction wave. Instead, we restrict the adiabatic exponent to $\gamma\in (1,2]$, whereas the full range $\gamma\in (1,\infty)$ was covered in \cite{KVW25} for the Navier--Stokes--Fourier system. The restriction $\gamma\le2$ is used only in \eqref{thetaRvR}, where it makes $v^R\theta^R$ nondecreasing in $v^R$ and gives the sign $v^R\theta^R\le v_*\theta_*$ in the lower bound of the diffusion term for the temperature perturbation, as in \cite{HKLO26}.
	\item Since the rarefaction wave may be large while the initial perturbation is small, the initial condition \eqref{initial_data} measures the initial data from the approximate rarefaction wave with the smoothing parameter $\e_R=\delta_0/2$ at $t=0$. The approximate wave converges to the rarefaction wave as $t\to\infty$ by Lemma~\ref{lem:rarefaction_property}~(5).
	\item Thanks to the shift estimate \eqref{est:shift}, we have $|X(t)|=o(t)$ as $t\to\infty$. Therefore, the location of the shock wave is asymptotically $\sigma t$.
	\item The change of variables $(x,u)\mapsto(-x,-u)$ leaves \eqref{eq:NSK-theta} invariant and exchanges the 1-waves and the 3-waves. Hence Theorem~\ref{thm:main} also covers the generic Riemann solutions consisting of a 1-shock, a 2-contact discontinuity, and a 3-rarefaction wave.
	\end{enumerate}
\end{remark}

\subsection*{Novelty of the paper}
The nonlinear stability of generic Riemann solutions had remained open due to the incompatibility between the traditional methodologies for the stability of rarefaction waves and those for shock waves. Precisely, the strategy for attaining stability of the rarefaction waves has been based on the direct $L^2$-estimate on the physical variables. On the other hand, one of the successful strategies for the stability of shock waves is the anti-derivative method, which considers the anti-derivatives of the perturbations of the physical variables. Although the anti-derivative method was very successful for a single shock wave, this mismatch obstructs the stability analysis of the superposition of rarefaction and shock waves. This problem was resolved in \cite{KVW23,KVW25} by using another approach called the {\it method of $a$-contraction with shifts} introduced by Kang and Vasseur \cite{KV17,KV21-JEMS} to obtain stability of shock waves. This methodology relies on the weighted relative entropy between physical variables, which is essentially an $L^2$-estimate on them. Therefore, it is natural to combine it with an $L^2$-estimate for the rarefaction wave, obtaining the stability of composite waves of rarefaction and shock. In the present paper, we extend the method of $a$-contraction with shifts to the viscous, heat-conducting model with capillarity and investigate the stability of generic Riemann solutions to the NSFK system. To the best of our knowledge, this is the first result on the time-asymptotic stability of a viscous-dispersive shock, and of a generic Riemann solution, for the NSFK system.

Furthermore, we remove the smallness assumption on the strength of the rarefaction wave, which was assumed in \cite{KVW23,KVW25} for the Navier--Stokes and Navier--Stokes--Fourier systems. To deal with the large rarefaction wave, a special form of approximate rarefaction wave has been considered, motivated by \cite{MN92}. Instead of assuming the smallness of rarefaction strength, this approximation assumes that the initial slope of the wave is small enough, so that the wave is initially rarefied (see Section \ref{sec:rarefaction}). As a consequence, in Lemma \ref{lem: interaction estimates}, we carefully estimate the interactions between the elementary waves without using the smallness of the rarefaction strength.

The capillarity and the viscous-dispersive shock require two further ingredients. First, the Korteweg term is of third order, and the specific volume has no dissipation of its own. As in \cite{BDD06,HKKL25}, we rewrite the system with an auxiliary variable for the derivative of the specific volume. Then the Korteweg terms cancel at the highest order in the relative entropy estimate, and mixed energy estimates give the dissipation of this variable (Section~\ref{sec:H1}). Second, the $a$-contraction argument near the shock uses a precise relation between the shock profile and its derivative, up to higher order in the shock strength. For the NSFK system, the profile equation is of third order, and the companion paper \cite{H26} proves this relation uniformly in the left end state of the shock, which varies with the strength of the contact wave.

The rest of the paper is organized as follows. In Section \ref{sec:preliminaries}, we present basic estimates on the elementary waves, and introduce an auxiliary system which helps our analysis later. Then, we present a key a priori estimate in Section \ref{sec:proof}, and using it, we prove Theorem \ref{thm:main}.
Sections \ref{sec:L2} and \ref{sec:H1} are devoted to the proof of this a priori estimate. In Appendix \ref{app:proof_heat}, we provide the proof of a technical estimate used in Section \ref{sec:L2}.

\section{Preliminaries}\label{sec:preliminaries}
\setcounter{equation}{0}

In this section, we present several technical lemmas and properties of the elementary waves. Moreover, we introduce the extended system for the NSFK system \eqref{eq:NSK-theta}, which is more convenient for the later analysis.

\subsection{Approximate rarefaction wave}\label{sec:rarefaction}
{We collect the properties of the approximate rarefaction wave \eqref{smooth rarefaction}.}

\begin{lemma}\label{lem:Burgers}
	Suppose $w_-<w_*$ and set $\tilde{w}:=w_*-w_-$. Then the unique global smooth solution $w(t,x)$ to \eqref{eq: inviscid Burgers equation} satisfies the following properties:
	\begin{enumerate}
		\item $w_-< w(t,x)<w_*$ and $w_x(t,x)>0$ for all $t>0$ and $x\in\R$.
		\item For any $1\le p\le \infty$, there exists a constant $C_{p,q}>0$ such that, for all $t >0$,
		\begin{align*}
			&\|w_x(t,\cdot)\|_{L^p}\le C_{p,q}\min\left\{\tilde{w}\e_R^{1-1/p},\frac{\tilde{w}^{1/p}}{t^{1-1/p}}\right\}, \\
			&\|\pa_x^j w(t,\cdot)\|_{L^p}\le C_{p,q}\min\left\{\tilde{w}\e_R^{j-1/p},\frac{\tilde{w}^{(j-1)/q}\e_R^{(j-1)-1/p+1/q}}{t^{1-(j-1)/q}}\right\},\quad j=2,3,4 .
		\end{align*}
		Moreover, there exists a constant $C_q>0$ such that
		\[
			|\pa_x^j w(t,x)|\le C_q\e_R^{\,j-1}w_x(t,x),\qquad j=2,3,4,\quad t\ge0,\ x\in\R .
		\]

\item (Left tail) There exists a constant $C_q>0$ such that, for all $t\ge0$ and $x\le w_- t$,
\begin{align*}
  |w(t,x)-w_-|
  &\le \frac{C_q\,\tilde w}{\bigl(1+(\e_R|x-w_-t|)^2\bigr)^{q/2}},\\[3pt]
  |w_x(t,x)|
  &\le \frac{C_q\,\tilde w\,\e_R}{\bigl(1+(\e_R|x-w_-t|)^2\bigr)^{q}}.
\end{align*}
  \item (Right tail) There exists a constant $C_q>0$ such that, for all $t\ge0$ and $x\ge w_* t$,
\begin{align*}
	|w(t,x)-w_*|
	&\le \frac{C_q\,\tilde w}{\bigl(1+(\e_R|x-w_*t|)^2\bigr)^{q/2}},\\[3pt]
	|w_x(t,x)|
	&\le \frac{C_q\,\tilde w\,\e_R}{\bigl(1+(\e_R|x-w_*t|)^2\bigr)^{q}}.
\end{align*}
\item Let $w^r(x/t)$ be the rarefaction wave solution to the Burgers equation connecting $w_-$ and $w_*$. Then, $\displaystyle\lim_{t\to\infty}\sup_{x \in \mathbb{R}}|w(t,x)-w^r(x/t)|=0.$
	\end{enumerate}
\end{lemma}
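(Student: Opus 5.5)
The argument is the method of characteristics for the inviscid Burgers equation, in the spirit of \cite{MN92}. Since $w_0'(x)=\frac{\tilde w}{2}K_q\e_R(1+\e_R^2x^2)^{-q}>0$, the characteristics $x=x_0+w_0(x_0)t$ never cross. The map $x_0\mapsto x_0+w_0(x_0)t$ is a smooth increasing diffeomorphism of $\R$ for every $t\ge0$, so a unique global smooth solution exists, given implicitly by $w(t,x)=w_0(x_0(t,x))$. Item (1) follows at once: $w$ takes the values of $w_0$, which lie strictly between $w_-$ and $w_*$, and
\[
w_x=\frac{w_0'(x_0)}{1+tw_0'(x_0)}>0.
\]

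For (2), I use the last formula. It gives $w_x\le w_0'(x_0)\le C\tilde w\e_R$ and $w_x\le 1/t$, so $\|w_x\|_{L^\infty}\le C\min\{\tilde w\e_R,1/t\}$. The $L^1$ norm is $\tilde w$ because $w$ is monotone. Interpolation then gives the $L^p$ bound for $w_x$. For higher derivatives, I differentiate $x_0(t,x)$ repeatedly, using $\partial_x x_0=(1+tw_0')^{-1}$. This gives
\[
w_{xx}=\frac{w_0''}{(1+tw_0')^3},
\]
and analogous expressions for $j=3,4$, which are sums of terms $w_0^{(k)}$ times powers of $t$ over powers of $(1+tw_0')$. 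The pointwise comparison $|\pa_x^jw|\le C_q\e_R^{j-1}w_x$ rests on the scaling of the profile. The estimates $|w_0^{(k)}|\le C\e_R^{k-1}w_0'$ hold because the derivatives of $(1+y^2)^{-q}$ are bounded by multiples of $(1+y^2)^{-q}$ itself. Combining these with the denominators $(1+tw_0')^{-m}\le(1+tw_0')^{-1}$ term by term gives the comparison. For instance, $t(w_0'')^2/(1+tw_0')^5$ is handled by $tw_0'/(1+tw_0')\le1$.

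The time-decay part with the exponents $(j-1)/q$ is the main technical step. I split according to the size of $tw_0'(x_0)$. Where $tw_0'\ge1$, I bound $|w_0''|\le C\e_R|y|^{-1}w_0'$ with $y=\e_Rx_0$, and use $w_0'\sim\tilde w\e_R(1+y^2)^{-q}$ to trade powers of $(1+y^2)^{-1}$ for powers of $(\tilde w\e_R)^{-1/q}(w_0')^{1/q}$. Together with $tw_0'\lesssim$ the denominator, this should produce the factor $\tilde w^{(j-1)/q}\e_R^{(j-1)+1/q}t^{-1+(j-1)/q}$ in $L^\infty$. The $L^p$ norms then follow by changing variables $dx=(1+tw_0')\,dx_0$ and interpolating with the $L^\infty$ bound. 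I expect this bookkeeping of exponents to be the main obstacle, and would model it on the corresponding lemma in \cite{MN92}.

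For the tails (3)--(4), take $x\le w_-t$. Since $x_0+w_0(x_0)t=x$ and $w_0>w_-$, we get $x_0<x-w_-t\le0$, so $|x_0|\ge|x-w_-t|$. Then
\[
|w-w_-|=|w_0(x_0)-w_-|\le C\tilde w\int_{-\infty}^{\e_Rx_0}(1+y^2)^{-q}\,dy\le C\tilde w(1+\e_R^2x_0^2)^{-(q-1)/2}.
\]
The claimed exponent $q/2$ should come from a slightly sharper count, or the constant can absorb the difference; if it cannot, I would first re-check this estimate for $|w-w_-|$. In addition, $w_x\le w_0'(x_0)\le C\tilde w\e_R(1+\e_R^2|x-w_-t|^2)^{-q}$. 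The right tail is symmetric. Finally, for (5), fix a characteristic: $x_0$ solves $x_0/t+w_0(x_0)=x/t$. On the fan region $w_-t\le x\le w_*t$ this gives $w-x/t=-x_0/t$, and $|x_0|/t\to0$ uniformly wherever $w_0(x_0)$ is not within $\eta$ of $w_\pm$. Near the edges, (3)--(4) and monotonicity give smallness. Outside the fan, the tail bounds give uniform convergence to $w_\pm$.
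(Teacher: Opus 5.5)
The genuine gap is the time-decay half of the $L^p$ bound for $\pa_x^jw$, $j=2,3,4$, with $p<\infty$. The rest of your proposal is essentially fine: (1), the $L^1$--$L^\infty$ interpolation for $\|w_x\|_{L^p}$ (a clean substitute for the paper's change of variables), the pointwise comparison $|\pa_x^jw|\le C_q\e_R^{j-1}w_x$, and the localization $x_0\le x-w_-t\le0$ in the tails.

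You plan to prove an $L^\infty$ rate and then interpolate. But the only $L^1$ bound you have is the non-decaying $\|\pa_x^jw\|_{L^1}\le C\e_R^{j-1}\tilde w$. Interpolation therefore cannot give the rate $t^{-1+(j-1)/q}$ for $p$ near $1$, and gives no decay at all at $p=1$. The $p=1$ case, e.g. $\|w_{xx}\|_{L^1}\le C(\e_R\tilde w)^{1/q}t^{-1+1/q}$, is part of the claim and is used later: it feeds $\|Q_1^R\|_{L^1}$, whose $4/3$-power must be integrable in time.

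The missing step is to compute $\int|\pa_x^jw|^p\,dx=\int|\pa_x^jw|^p(1+tw_0')\,dx_0$ directly, using
\[
|w_0^{(k+1)}(x_0)|\le C_q\,(w_0')^{1-\frac kq}\,\e_R^{k}\,(\e_R\tilde w)^{\frac kq}\,\bigl(1+(\e_Rx_0)^2\bigr)^{-1},\qquad k=1,2,3 .
\]
This comes from rewriting $(w_0')^{k/q}=(c\,\e_R\tilde w)^{k/q}(1+(\e_Rx_0)^2)^{-k}$. That is the opposite of the trade you describe. Turning $(1+y^2)^{-1}$ into powers of $w_0'$ removes the weight that makes the $x_0$-integral converge, and it produces negative powers of $\e_R\tilde w$.

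With the bound above, the argument closes:
\begin{itemize}
\item $\bigl(w_0'/(1+tw_0')\bigr)^{p(1-k/q)}\le t^{-p(1-k/q)}$ handles the reduced power of $w_0'$.
\item The extra powers of $t$ in $\pa_x^3w$ and $\pa_x^4w$ are absorbed by $tw_0'/(1+tw_0')\le1$.
\item $\int(1+(\e_Rx_0)^2)^{-p}\,dx_0=C\e_R^{-1}$.
\end{itemize}
Together with the $\e_R$-branch, this gives the stated bounds for all $p\in[1,\infty)$. Your $L^\infty$ sketch is unnecessary. The comparison already gives $|\pa_x^jw|\le C\e_R^{j-1}\min\{\tilde w\e_R,1/t\}$, and then $\min\{s,1\}\le s^{(j-1)/q}$ with $s=\tilde w\e_Rt$, together with $\e_R\le1$, gives the claimed rate.

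There are also two smaller slips.
\begin{itemize}
\item In the left tail, your bound $\int_{-\infty}^{\e_Rx_0}(1+y^2)^{-q}\,dy\le C(1+\e_R^2x_0^2)^{-(q-1)/2}$ is weaker than the claim, and no constant can absorb the difference, since the ratio $(1+z^2)^{1/2}$ is unbounded. Use instead, for $z\ge0$, $\int_z^\infty(1+y^2)^{-q}\,dy\le(1+z^2)^{-q/2}\int_0^\infty(1+y^2)^{-q/2}\,dy$, which is what the paper does.
\item In (5), the tail bounds do not give uniform convergence outside the fan, because at $x=w_\pm t$ they are only $O(\tilde w)$. Use monotonicity instead: for $x\le w_-t$ one has $0<w(t,x)-w_-\le w(t,w_-t)-w_-$, and the right side tends to zero by your fan-edge argument. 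The case $x\ge w_*t$ is symmetric. The paper simply cites \cite{MN92} for (5).
\end{itemize}
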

\begin{proof}

Although the proof follows the argument of \cite{MN92} and \cite[Lemma~2.1]{EHK26}, we provide it for the reader's convenience.
\noindent (1), (5) Since $w_0'>0$, the characteristics do not intersect, and the solution is
\[
    w(t,x)=w_0(x_0),\qquad x=x_0+w_0(x_0)t .
\]
The representation gives (1), and (5) is \cite[Lemma~2.1~(v)]{MN92}.

\noindent (2) Differentiating in $x$ with $\frac{dx}{dx_0}=1+w_0'(x_0)t$, and writing $w_0^{(k)}=w_0^{(k)}(x_0)$,
\begin{equation}\label{eq:w-derivatives}
\begin{aligned}
    w_x &= \frac{w_0'}{1+w_0't},\qquad
    w_{xx} = \frac{w_0''}{(1+w_0't)^3},\qquad
    w_{xxx} = \frac{w_0'''}{(1+w_0't)^4}-\frac{3t(w_0'')^2}{(1+w_0't)^5},\\
    w_{xxxx} &= \frac{w_0''''}{(1+w_0't)^5}-\frac{10t\,w_0''w_0'''}{(1+w_0't)^6}+\frac{15t^2(w_0'')^3}{(1+w_0't)^7}.
\end{aligned}
\end{equation}

From \eqref{eq: inviscid Burgers equation},
\[
    w_0'(x_0)=\frac{K_q\tilde w\,\e_R}{2\bigl(1+(\e_Rx_0)^2\bigr)^q}.
\]
Differentiating the formula for $w_0'$, we obtain, for $j=1,2,3$,
\begin{equation}\label{eq:w0-ratio}
\begin{aligned}
    |w_0^{(j+1)}(x_0)|&\le C_q\,\e_R^{\,j}\,w_0'(x_0)\bigl(1+(\e_Rx_0)^2\bigr)^{-1/2},\\
    |w_0^{(j+1)}(x_0)|&\le C_q\,|w_0'(x_0)|^{1-\frac jq}\,\e_R^{\,j}(\e_R\tilde w)^{\frac jq}\bigl(1+(\e_Rx_0)^2\bigr)^{-1}.
\end{aligned}
\end{equation}
By \eqref{eq:w-derivatives} and \eqref{eq:w0-ratio}$_1$, with $\frac{w_0't}{1+w_0't}\le1$ for the terms carrying powers of $t$,
\begin{equation}\label{eq:wj-pointwise}
    |\pa_x^j w(t,x)|\le C_q\,\e_R^{\,j-1}\,w_x(t,x),\qquad j=2,3,4 .
\end{equation}

For $j=1$, by $dx=(1+w_0't)\,dx_0$, $w_0'>0$, and $\int_\R w_0'\,dx_0=\tilde w$,
\[
    \|w_x\|_{L^p}^p=\int_\R\frac{(w_0')^p}{(1+w_0't)^{p-1}}\,dx_0
    \le\min\Bigl\{\int_\R(w_0')^p\,dx_0,\ t^{1-p}\int_\R w_0'\,dx_0\Bigr\}
    \le C\min\bigl\{\tilde w^p\e_R^{\,p-1},\ \tilde w\,t^{1-p}\bigr\}.
\]
For $j=2,3,4$, the bound in $\e_R$ follows from \eqref{eq:wj-pointwise} and the case $j=1$, and the bound in $t$ comes from \eqref{eq:w-derivatives} and \eqref{eq:w0-ratio}$_2$. For $j=2$, using $\frac{w_0'}{1+w_0't}\le\frac1t$ and $(1+w_0't)^{-(2p-1+\frac pq)}\le1$,
\[
\begin{aligned}
    \|w_{xx}\|_{L^p}^p&=\int_\R\frac{|w_0''|^p}{(1+w_0't)^{3p-1}}\,dx_0
    \le C\e_R^{\,p}(\e_R\tilde w)^{\frac pq}\int_\R\Bigl(\frac{w_0'}{1+w_0't}\Bigr)^{p-\frac pq}\frac{dx_0}{(1+(\e_Rx_0)^2)^{p}}\\
    &\le C\,t^{-p+\frac pq}\,\e_R^{\,p-1}(\e_R\tilde w)^{\frac pq} .
\end{aligned}
\]
For $j=3,4$, the terms $t(w_0'')^2$, $t\,w_0''w_0'''$, and $t^2(w_0'')^3$ in \eqref{eq:w-derivatives} are treated in the same way, with $\bigl(\frac{w_0't}{1+w_0't}\bigr)^p\le1$ absorbing the powers of $t$, and \eqref{eq:w0-ratio} with $j=1,2,3$ and $\e_R\le1$ give
\[
    \|\pa_x^jw\|_{L^p}\le C_{p,q}\,\tilde w^{\frac{j-1}q}\,\e_R^{\,(j-1)-\frac1p+\frac1q}\,t^{-1+\frac{j-1}q},\qquad j=3,4 .
\]
For $p=\infty$, the bound in $\e_R$ is \eqref{eq:wj-pointwise} with $\|w_x\|_{L^\infty}\le C\min\{\tilde w\e_R,t^{-1}\}$, and the bound in $t$ follows from \eqref{eq:w-derivatives}, \eqref{eq:w0-ratio}$_2$, and $\frac{w_0'}{1+w_0't}\le\frac1t$.

\noindent (3), (4) For $x\le w_-t$, we have $x_0=x-w_0(x_0)t\le x-w_-t\le0$. Hence, by $\int_z^\infty(1+y^2)^{-q}\,dy\le C_q(1+z^2)^{-q/2}$ for $z\ge0$,
\[
\begin{aligned}
    |w(t,x)-w_-|&=\frac{K_q\tilde w}{2}\int_{-\infty}^{\e_Rx_0}\frac{dy}{(1+y^2)^q}
    \le \frac{C_q\tilde w}{\bigl(1+(\e_R|x-w_-t|)^2\bigr)^{q/2}},\\
    |w_x(t,x)|&\le w_0'(x_0)\le\frac{C_q\tilde w\,\e_R}{\bigl(1+(\e_R|x-w_-t|)^2\bigr)^{q}} .
\end{aligned}
\]
The bound (4) follows in the same way from $x_0\ge x-w_*t\ge0$ for $x\ge w_*t$.
\end{proof}

{The approximate rarefaction wave $(v^R,u^R,\theta^R)$ defined in \eqref{smooth rarefaction} solves the Euler equations} 
    \begin{equation}\label{eq:smooth rarefaction}
        \begin{aligned}
        &v^R_t-u^R_x=0, \\
        &u^R_t+p(v^R,\theta^R)_x=0,\\
        &\frac{R}{\gamma-1}\theta^R_t +p(v^R,\theta^R)u^R_x=0. 
        \end{aligned}
    \end{equation}
    
    Using Lemma \ref{lem:Burgers}, one can obtain the following properties on $(v^R,u^R,\theta^R)$.
\begin{lemma}\label{lem:rarefaction_property} Let $\lambda_{1-}:=\lambda_1(v_-,\theta_-)=w_-$ and $\lambda_{1*}:=\lambda_1(v_*,\theta_*)=w_*$. Then, the smooth approximate 1-rarefaction wave $(v^R, u^R,\theta^R)(t, x)$ defined in \eqref{smooth rarefaction} satisfies the following properties:
\begin{enumerate}
\item For all $t\ge0$ and $x\in\mathbb{R}$,
\[
u^R_x(t,x)=\frac{2v^R}{\gamma+1}\,w_x(1+t,x)>0,\quad
v^R_x=\frac{v^Ru^R_x}{\sqrt{R\gamma\theta^R}}>0,\quad
\theta^R_x=-\frac{(\gamma-1)\theta^Rv^R_x}{v^R}<0.
\]

\item For any $1\le p\le \infty$, there exists a constant $C_{p,q}>0$ such that, for all $t\ge0$, 
\begin{align*}
&\|(v^R_x,u^R_x,\theta^R_x)(t,\cdot)\|_{L^p}
\le C_{p,q}\,\min\!\left\{\delta_R\,\e_R^{\,1-1/p},\ \frac{\delta_R^{1/p}}{(1+t)^{\,1-1/p}}\right\},\\
&\|\partial_x^j(v^R,u^R,\theta^R)(t,\cdot)\|_{L^p}
\le C_{p,q}\,\min\!\left\{\delta_R\,\e_R^{\,j-1/p},\
\left( \delta_R^{1/p} + \delta_R^{(j-1)/q} \right) \frac{1}{(1+t)^{\,1-(j-1)/q}}\right\},
\quad j=2,3,4.
\end{align*}
Moreover, there exists a constant $C_q>0$ such that
\[
|\partial_x^j(v^R,u^R,\theta^R)(t,x)|\le C_q\e_R^{\,j-1}|(v^R_x,u^R_x,\theta^R_x)(t,x)|,\qquad j=2,3,4,\quad t\ge0,\ x\in\R .
\]

\item (Left tail) There exists a constant $C_q>0$ such that, for all $t\ge0$ and $x\le \lambda_{1-}(1+t)$,
\begin{align*}
|(v^R,u^R,\theta^R)(t,x)-(v_-,u_-,\theta_-)|
&\le \frac{C_q\,\delta_R}{\bigl(1+(\e_R|x-\lambda_{1-}(1+t)|)^2\bigr)^{q/2}},\\
|(v^R_x,u^R_x,\theta^R_x)(t,x)|
&\le \frac{C_q\,\delta_R\,\e_R}{\bigl(1+(\e_R|x-\lambda_{1-}(1+t)|)^2\bigr)^{q}}.
\end{align*}
\item (Right tail) There exists a constant $C_q>0$ such that, for all $t\ge0$ and $x\ge \lambda_{1*}(1+t)$,
\begin{align*}
	|(v^R,u^R,\theta^R)(t,x)-(v_*,u_*,\theta_*)|
	&\le \frac{C_q\,\delta_R}{\bigl(1+(\e_R|x-\lambda_{1*}(1+t)|)^2\bigr)^{q/2}},\\
	|(v^R_x,u^R_x,\theta^R_x)(t,x)|
	&\le \frac{C_q\,\delta_R\,\e_R}{\bigl(1+(\e_R|x-\lambda_{1*}(1+t)|)^2\bigr)^{q}}.
\end{align*}

\item Let $(v^r,u^r,\theta^r)(x/t)$ be the 1-rarefaction wave connecting $(v_-,u_-,\theta_-)$ and $(v_*,u_*,\theta_*)$. Then, $\displaystyle \lim_{t\to\infty}\sup_{x\in\mathbb{R}}|(v^R,u^R,\theta^R)(t,x)-(v^r,u^r,\theta^r)(x/t)|=0.$
\end{enumerate}
\end{lemma}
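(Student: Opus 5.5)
The plan is to reduce every statement about $(v^R,u^R,\theta^R)$ to the corresponding statement about $w$ in Lemma~\ref{lem:Burgers}. Along the wave, the map $w\mapsto(v^R,u^R,\theta^R)$ is smooth with bounded derivatives, and the bounds depend only on the fixed states $(v_-,\theta_-)$ and $(v_*,\theta_*)$. Indeed, since $s(v^R,\theta^R)=s_-$, we have $\theta^R=\theta_-(v_-/v^R)^{\gamma-1}$. Then
\[
\lambda_1=-\sqrt{\gamma R\theta_-v_-^{\gamma-1}}\,(v^R)^{-(\gamma+1)/2}
\]
is a strictly increasing smooth function of $v^R$ on $[v_-,v_*]$, so $v^R=V(w(1+t,x))$ with $V$ smooth and $V'>0$ on $[w_-,w_*]$. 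Next, $u^R=z_1(v_-,u_-)-\int^{v^R}\lambda_1(s_-,v')\,dv'$ is a smooth function of $v^R$. All these functions are evaluated on a compact range determined by the fixed rarefaction endpoints, so their derivatives up to fourth order are bounded by constants depending only on these states (and on $q$ through the $w$-bounds). In particular, no smallness of $\delta_R$ is needed. Also, $\tilde w=w_*-w_-\sim\delta_R$.

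\textbf{Item (1).} This follows by differentiation. From $z_1$ being constant we get $u^R_x=\lambda_1 v^R_x$. Differentiating $\lambda_1(v^R,\theta^R)=w$ along the isentrope gives
\[
w_x=\tfrac{\gamma+1}{2}\,\tfrac{\sqrt{\gamma R\theta^R}}{(v^R)^2}\,v^R_x .
\]
Combining the two yields $u^R_x=\frac{2v^R}{\gamma+1}w_x$ and $v^R_x=v^Ru^R_x/\sqrt{\gamma R\theta^R}$. Differentiating $\theta^R(v^R)^{\gamma-1}=\mathrm{const}$ gives $\theta^R_x=-(\gamma-1)\theta^Rv^R_x/v^R$. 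The signs then follow from $w_x>0$ in Lemma~\ref{lem:Burgers}(1).

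\textbf{Item (2).} By Faà di Bruno, $\pa_x^j F(w)$ is a sum of terms $F^{(k)}(w)\prod_i\pa_x^{m_i}w$ with $\sum m_i=j$. The pointwise bound $|\pa_x^m w|\le C\e_R^{m-1}w_x$ holds for $m=2,3,4$, and $w_x\le C\tilde w\e_R\le C\e_R$ since $\tilde w$ is bounded. Hence each product is bounded by $C\e_R^{j-1}w_x$. Because $v^R_x\sim w_x$ (the factor in (1) is bounded above and below), this gives the pointwise bound $|\pa_x^j(v^R,u^R,\theta^R)|\le C\e_R^{j-1}|(v^R_x,u^R_x,\theta^R_x)|$. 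For the $L^p$ bounds, the $\e_R$-branch comes from $\|w_x\|_{L^p}$. For the time branch, I bound the linear term $F'(w)\pa_x^jw$ by the $t$-estimate of Lemma~\ref{lem:Burgers}(2). The nonlinear products I bound by $\|w_x\|_{L^\infty}^{k-1}$ times lower-derivative $L^p$ norms, using $\|w_x\|_{L^\infty}\le C(1+t)^{-1}$, which decays at least as fast. This produces the sum $\delta_R^{1/p}+\delta_R^{(j-1)/q}$ and the rate $(1+t)^{-1+(j-1)/q}$. Throughout, $t$ is replaced by $1+t$, which is harmless because $1+t\ge1$.

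\textbf{Items (3)--(5).} For (3) and (4), I use the Lipschitz bound $|(v^R,u^R,\theta^R)-(v_-,u_-,\theta_-)|\le C|w-w_-|$, together with $|v^R_x|\le Cw_x$, evaluated at time $1+t$. The region $x\le\lambda_{1-}(1+t)$ corresponds exactly to the region of Lemma~\ref{lem:Burgers}(3) with $t$ replaced by $1+t$; the right tail is handled symmetrically. For (5), the exact rarefaction satisfies $\lambda_1(v^r,\theta^r)=w^r(x/t)$ with the same Riemann invariants. Uniform continuity of the inverse map then gives
\[
\sup_x|(v^R,u^R,\theta^R)-(v^r,u^r,\theta^r)|\le C\sup_x|w(1+t,x)-w^r(x/t)|.
\]
This tends to $0$ by Lemma~\ref{lem:Burgers}(5), combined with $\sup_x|w^r(x/(1+t))-w^r(x/t)|\to0$.

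\textbf{Main obstacle.} The only delicate point is the time-decay branch in (2) for the nonlinear Faà di Bruno terms, where the exponent bookkeeping must reproduce exactly $(\delta_R^{1/p}+\delta_R^{(j-1)/q})(1+t)^{-1+(j-1)/q}$.
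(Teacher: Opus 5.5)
Your proposal is correct and follows essentially the paper's route: write $(v^R,u^R,\theta^R)=\Lambda(w(1+t,x))$ with $\Lambda$ smooth and $\Lambda'\neq0$ on $[w_-,w_*]$, then transfer Lemma~\ref{lem:Burgers} through the chain rule, the mean value theorem for the tails, and the comparison of $w^r(x/(1+t))$ with $w^r(x/t)$ for (5). In (2), reducing each nonlinear term to $C\e_R^{j-r}(w_x)^r$ by the pointwise ratio bound before taking norms, as the paper does, is the cleanest way to get exactly the coefficient $\delta_R^{1/p}$ in the time branch. One slip to fix: constancy of $z_1$ gives $u^R_x=-\lambda_1 v^R_x$ (consistent with your formula for $u^R$), not $u^R_x=\lambda_1 v^R_x$, which would flip the sign of $u^R_x$ in (1).
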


\begin{proof}

\noindent (1) We use $s(v,\theta)=\frac{R}{\gamma-1}\ln\theta+R\ln v+\mathrm{const}$ and $\lambda_1(v,\theta)=-\frac{\sqrt{\gamma R\theta}}{v}$. Differentiating $s(v^R,\theta^R)=s(v_-,\theta_-)$ and $z_1(v^R,u^R)=z_1(v_-,u_-)$ in $x$, we obtain 
\begin{equation}\label{eq:sz_diff}
    \theta^R_x=-\frac{(\gamma-1)\theta^R}{v^R}\,v^R_x ,\qquad
    u^R_x=-\lambda_1(v^R,\theta^R)\,v^R_x=\frac{\sqrt{R\gamma\theta^R}}{v^R}\,v^R_x .
\end{equation}
Along the constant entropy curve, $\lambda_1(v^R,\theta^R)=-c_0(v^R)^{-\frac{\gamma+1}2}$ for a constant $c_0>0$, and the $x$-derivative of $\lambda_1(v^R,\theta^R)=w(1+t,x)$ is 
\begin{equation}\label{eq:lambda_diff}
    w_x=-\frac{\gamma+1}{2v^R}\,\lambda_1(v^R,\theta^R)\,v^R_x=\frac{\gamma+1}{2v^R}\,u^R_x .
\end{equation}
The signs follow from \eqref{eq:sz_diff}, \eqref{eq:lambda_diff}, and $w_x>0$ in Lemma~\ref{lem:Burgers}.

\noindent (2) By \eqref{smooth rarefaction}, $U^R:=(v^R,u^R,\theta^R)=\Lambda(w(1+t,x))$ for a smooth function $\Lambda$ on $[w_-,w_*]$, and \eqref{eq:lambda_diff} gives $\Lambda'\ne0$ there. Also $\tilde w=w_*-w_-\sim\delta_R$. The chain rule gives
\[
\begin{aligned}
    \pa_x U^R &= \Lambda'(w)w_x, \\
    \pa_x^2 U^R &= \Lambda''(w)(w_x)^2 + \Lambda'(w)w_{xx}, \\
    \pa_x^3 U^R &= \Lambda'''(w)(w_x)^3 + 3\Lambda''(w)w_x w_{xx} + \Lambda'(w)w_{xxx}, \\
    \pa_x^4 U^R &= \Lambda''''(w)(w_x)^4 + 6\Lambda'''(w)(w_x)^2 w_{xx} + 3\Lambda''(w)(w_{xx})^2 + 4\Lambda''(w)w_x w_{xxx} + \Lambda'(w)w_{xxxx},
\end{aligned}
\]
where $w$ and its derivatives are evaluated at $(1+t,x)$. By Lemma~\ref{lem:Burgers}~(2), the case $j=1$ holds and
\[
    |w_xw_{xx}|\le C\e_R(w_x)^2,\qquad |(w_{xx})^2|+|w_xw_{xxx}|\le C\e_R^2(w_x)^2,\qquad |(w_x)^2w_{xx}|\le C\e_R(w_x)^3,
\]
so every term of $\pa_x^jU^R$ other than $\Lambda'(w)\pa_x^jw$ is bounded by $C\e_R^{\,j-r}(w_x)^r$ for some $2\le r\le j$. Lemma~\ref{lem:Burgers}~(2), with $rp$ in place of $p$ for these terms, $\e_R\le1$, and $r-\frac1p\ge1>1-\frac{j-1}q$ give
\[
\begin{aligned}
    \|\pa_x^jU^R\|_{L^p}&\le C\|\pa_x^jw\|_{L^p}+C\sum_{r=2}^{j}\e_R^{\,j-r}\|w_x\|_{L^{rp}}^{r}\\
    &\le C\min\Bigl\{\delta_R\,\e_R^{\,j-\frac1p},\ \bigl(\delta_R^{\frac1p}+\delta_R^{\frac{j-1}q}\bigr)(1+t)^{-1+\frac{j-1}q}\Bigr\},\qquad j=2,3,4 .
\end{aligned}
\]
Since $\|w_x\|_{L^\infty}\le C\delta_R\e_R$ and $|U^R_x|=|\Lambda'(w)|\,w_x\ge c\,w_x$, the same term-by-term bound gives
\[
    |\pa_x^jU^R|\le C\e_R^{\,j-1}w_x\le C\e_R^{\,j-1}|U^R_x|,\qquad j=2,3,4 .
\]

\noindent (3), (4) By the mean value theorem,
\[
    \begin{gathered}|U^R(t,x)-(v_-,u_-,\theta_-)|\le\sup_{[w_-,w_*]}|\Lambda'|\,|w(1+t,x)-w_-|,\\ |U^R(t,x)-(v_*,u_*,\theta_*)|\le\sup_{[w_-,w_*]}|\Lambda'|\,|w(1+t,x)-w_*|,\\ |U^R_x(t,x)|\le C\,|w_x(1+t,x)|,\end{gathered} 
\]
so the bounds follow from Lemma~\ref{lem:Burgers}~(3),(4) with $t$ replaced by $1+t$.

\noindent (5) Lemma~\ref{lem:Burgers}~(5) at time $1+t$, the uniform continuity of $\Lambda$ on $[w_-,w_*]$, and $\sup_{x\in\R}|w^r(x/(1+t))-w^r(x/t)|\le C/t$, which holds since $w^r$ is Lipschitz and constant outside the fan, give the convergence.

\end{proof}

\subsection{Viscous contact wave}
It is proved in \cite{HXY08} that the viscous contact wave $(v^C,u^C,\theta^C)$ defined in \eqref{def:viscous-contact} satisfies the following decay estimates.
\begin{lemma}[\cite{HXY08}]\label{lem:viscous_contact}
	There exists a positive constant $C_1$ such that the viscous contact wave $(v^C,u^C,\theta^C)$ satisfies
	\begin{align*}
		&(v^C-v_*,u^C-u_*,\theta^C-\theta_*)(t,x)=O(1)\delta_Ce^{-\frac{C_1x^2}{1+t}},\quad x<0,\\
		&(v^C-v^*,u^C-u^*,\theta^C-\theta^*)(t,x)=O(1)\delta_Ce^{-\frac{C_1x^2}{1+t}},\quad x>0,\\
		&(\pa_x^n v^C,\pa_x^n \theta^C)(t,x)=\frac{O(1)\delta_C}{(1+t)^{n/2}}e^{-\frac{C_1x^2}{1+t}},\quad x\in \R,\quad n=1,2,\ldots,\\
		&\pa^n_x u^C(t,x) = \frac{O(1)\delta_C}{(1+t)^{(n+1)/2}}e^{-\frac{C_1x^2}{1+t}},\quad x\in \R,\quad n=1,2,\ldots.
	\end{align*}
\end{lemma}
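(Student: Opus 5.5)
The estimates follow from the explicit self-similar structure of the viscous contact wave. The plan has four steps.

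\emph{Reduction to a profile ODE.} Write $\Theta(t,x)=\Theta(\eta)$ with $\eta=x/\sqrt{1+t}$ and set $a:=\frac{(\gamma-1)\alpha p_*}{R^2\gamma}$. Then \eqref{eq:nonlinear_diffusion} reduces to
\[
-\tfrac{\eta}{2}\Theta'=a\Bigl(\tfrac{\Theta'}{\Theta}\Bigr)',\qquad \Theta(-\infty)=\theta_*,\quad \Theta(+\infty)=\theta^*.
\]
Following \cite{HXY08}, this boundary value problem has a unique monotone solution. Put $g:=\Theta'/\Theta$. The ODE becomes the first-order linear equation $a g'=-\frac{\eta}{2}\Theta g$. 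Since $\Theta$ stays between $\theta_*$ and $\theta^*$, it is bounded above and below by positive constants. Integrating gives
\[
g(\eta)=g(0)\exp\Bigl(-\frac{1}{2a}\int_0^\eta s\,\Theta(s)\,ds\Bigr),
\]
and hence $|\Theta'(\eta)|\le C|g(0)|e^{-c\eta^2}$ with $c=\min(\theta_*,\theta^*)/(4a)$. The normalization $\int_\R\Theta'\,d\eta=\theta^*-\theta_*$, together with the two-sided Gaussian bounds on $g$, forces $|g(0)|\sim|\theta^*-\theta_*|\sim\delta_C$. This gives $|\Theta'(\eta)|\le C\delta_Ce^{-c\eta^2}$.

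\emph{Zeroth order.} Integrate $\Theta'$ from $-\infty$ for $\eta<0$ and from $+\infty$ for $\eta>0$, using the Gaussian tail bound $\int_{|\eta|}^\infty e^{-cs^2}ds\le Ce^{-c\eta^2}$. This gives $|\Theta-\theta_*|\le C\delta_Ce^{-c\eta^2}$ for $\eta<0$ and $|\Theta-\theta^*|\le C\delta_Ce^{-c\eta^2}$ for $\eta>0$. Because $v^C=\frac{R}{p_*}\Theta$ is linear in $\Theta$ and $v_*=\frac{R}{p_*}\theta_*$, $v^*=\frac{R}{p_*}\theta^*$, the same bounds hold for $v^C$. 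For $u^C-u_*=\frac{(\gamma-1)\alpha}{R\gamma}\frac{\Theta_x}{\Theta}$ we have the bound $C\delta_C(1+t)^{-1/2}e^{-c\eta^2}$, which is even better than needed. On $x>0$ we also have $u^*=u_*$ because $(v^*,u^*,\theta^*)\in CD_2(v_*,u_*,\theta_*)$. Substituting $\eta^2=x^2/(1+t)$ yields the first two estimates with any $C_1<c$.

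\emph{Derivatives.} We have $\pa_x^n\Theta=(1+t)^{-n/2}\Theta^{(n)}(\eta)$, so it suffices to show $|\Theta^{(n)}(\eta)|\le C_n\delta_C e^{-c'\eta^2}$ for some $c'<c$. I would prove this by induction on $n$. Differentiating $g'=-\frac{\eta}{2a}\Theta g$ repeatedly expresses $g^{(k)}$ as $g$ times a polynomial in $\eta$ and in $\Theta,\dots,\Theta^{(k-1)}$. By induction these are bounded by $C(1+|\eta|)^k$, and the polynomial factor is absorbed by lowering the exponent, $(1+|\eta|)^ke^{-c\eta^2}\le C_ke^{-c'\eta^2}$. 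Then $\Theta^{(n)}=(\Theta g)^{(n-1)}$ follows by the Leibniz rule. This gives the estimates for $\pa_x^n v^C$ and $\pa_x^n\theta^C$. For $u^C$ we have $\pa_x^n u^C=C\,\pa_x^{n+1}\ln\Theta=C(1+t)^{-(n+1)/2}g^{(n)}(\eta)$, which gives the extra half power.

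The main obstacle is the first step: the existence and uniqueness of the monotone self-similar profile, and the identification $g(0)\sim\delta_C$. If that is in place, the remaining steps are bookkeeping. Because the problem is only quasilinear, I would first try a shooting argument in the parameter $g(0)$. Alternatively, I would simply cite \cite{HXY08}, since the lemma is attributed there.
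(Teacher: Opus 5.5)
Your argument is correct, but it is not what the paper does. The paper gives no proof: it takes both the self-similar profile and all four decay bounds from \cite{HXY08}. You take only the existence of the profile from \cite{HXY08} and derive the bounds yourself, using the integrating-factor formula for $g=\Theta'/\Theta=(\ln\Theta)'$.

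Two points in your write-up can be tightened. First, the formula $g(\eta)=g(0)\exp\bigl(-\frac{1}{2a}\int_0^\eta s\Theta(s)\,ds\bigr)$ holds for any positive solution, so $g$ has a fixed sign. Monotonicity, and hence $\Theta$ lying between $\theta_*$ and $\theta^*$, therefore follows and need not be assumed. Second, your normalization step already proves $|g(0)|\le C\delta_C$, so this is not an open obstacle. The only imported ingredient is existence, which the paper also assumes when it refers to the profile ``constructed in \cite{HXY08}''.

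Compared with the citation, your route buys three things:
\begin{enumerate}
\item An explicit Gaussian rate: any $C_1<\min\{\theta_*,\theta^*\}/(4a)$ works, where $a=\frac{(\gamma-1)\alpha p_*}{R^2\gamma}$ depends only on the fixed state $(v_*,\theta_*)$.
\item Visible uniformity of $C_1$ and of the $O(1)$ constants as $(v^*,\theta^*)$ ranges near $(v_*,\theta_*)$. The paper needs this tacitly, since its constants must be independent of $\delta_0$.
\item The derivative bounds for every $n$, with one fixed $C_1$ (e.g.\ half your rate $c$) and $n$-dependent $O(1)$ constants.
\end{enumerate}
The citation is shorter and also supplies existence and uniqueness of the profile, which your argument still has to import.
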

To match the equation for the viscous contact wave with the NSFK system \eqref{eq:NSK-theta}, we note that $(v^C,u^C,\theta^C)(t,x)$ satisfies the following equations
\begin{align}
	\begin{aligned}\label{eq:viscous_contact}
		& v_t^C -u_x^C=0,\\
		& u_t^C +p^C_x=\mu\left(\frac{u^C_x}{v^C}\right)_x+\kappa\left(-\frac{v^C_{xx}}{(v^C)^5}+\frac{5(v^C_x)^2}{2(v^C)^6}\right)_x+Q_1^C,\\
		& \frac{R}{\gamma-1}\theta_t^C +p^C u_x^C = \alpha \left(\frac{\theta_x^C}{v^C}\right)_x +\mu \frac{(u^C_x)^2}{v^C}+Q_2^C,
	\end{aligned}
\end{align}
where $p^C :=p(v^C,\theta^C)$ and
\begin{equation}\label{def_QC}		Q_1^C:=u_t^C-\mu\left(\frac{u_x^C}{v^C}\right)_x-\kappa\left(-\frac{v^C_{xx}}{(v^C)^5}+\frac{5(v^C_x)^2}{2(v^C)^6}\right)_x,\quad Q_2^C:=-\mu \frac{(u_x^C)^2}{v^C}.
\end{equation}
Here we used the fact that $p^C=p(v^C,\theta^C)\equiv p_*$, so that $p^C_x=0$.
Then, by Lemma~\ref{lem:viscous_contact} and \eqref{eq:nonlinear_diffusion}--\eqref{def:viscous-contact}, we have
\begin{equation}\label{error_contact}
	Q_1^C=\frac{O(1)\delta_C}{(1+t)^{3/2}}e^{-\frac{C_1x^2}{1+t}},\quad (Q_1^C)_x=\frac{O(1)\delta_C}{(1+t)^{2}}e^{-\frac{C_1x^2}{1+t}}, \quad Q_2^C=\frac{O(1)\delta_C}{(1+t)^2}e^{-\frac{2C_1x^2}{1+t}},\quad t>0,\ x\in\R.
\end{equation}

\subsection{Viscous-dispersive shock wave}
 A solution of \eqref{eq:viscous_shock} is called a monotone profile if each of $v^S$, $u^S$, and $\theta^S$ is monotone on $\R$. The companion paper \cite{H26} constructs monotone profiles by Fenichel's geometric singular perturbation theory. 

\begin{lemma}[{\cite[Theorem 1.1 and Corollary 1.3]{H26}}] \label{lem:shock-property}

	For a given left end state $(v^*,u^*,\theta^*)\in\R_+\times\R\times\R_+$, there exist positive constants $\delta_0$, $c$, and $C$ such that the following holds for every right end state $(v_+,u_+,\theta_+)\in S_3(v^*,u^*,\theta^*)$ with shock strength $\delta_S:=|v_+-v^*|<\delta_0$. There is a traveling wave solution $(v^S,u^S,\theta^S):\R\to\R_+\times\R\times\R_+$ to \eqref{eq:viscous_shock} with $\lim_{\xi\to\pm\infty}(v^S,u^S,\theta^S)=(v^*,u^*,\theta^*)$ and $(v_+,u_+,\theta_+)$, which is a monotone profile and is unique up to translation within the monotone class. Let $(v^S,u^S,\theta^S)$ be normalized by $v^S(0)=\frac{v^*+v_+}{2}$. Then
	\begin{equation}\label{shock_est}
	\begin{aligned}
	&v^S_\xi>0,\quad u^S_\xi<0,\quad \theta^S_\xi<0,\qquad \xi\in\R,\\
	&|(v^S(\xi)-v^*,u^S(\xi)-u^*,\theta^S(\xi)-\theta^*)|\le C\delta_S e^{-c\delta_S|\xi|},\qquad \xi\le0,\\
	&|(v^S(\xi)-v_+,u^S(\xi)-u_+,\theta^S(\xi)-\theta_+)|\le C\delta_S e^{-c\delta_S|\xi|},\qquad \xi\ge 0,\\
	&|(v^S_\xi,u^S_\xi,\theta^S_\xi)|\le C\delta_S^2 e^{-c\delta_S |\xi|},\qquad \xi\in\R,\\
	&|\pa_\xi^k(v^S,u^S,\theta^S)|\le C\delta_S^{k-1}|v^S_\xi|,\qquad k=2,3,4,\quad \xi\in\R .
	\end{aligned}
	\end{equation}
	Set $p^*:=p(v^*,\theta^*)$, $p_+:=p(v_+,\theta_+)$, and $\sigma^*:=\sqrt{\frac{\gamma p^*}{v^*}}$. Then the shock speed satisfies
	\begin{equation}\label{sigma-est}
		|\sigma-\sigma^*|\le C\delta_S ,
	\end{equation}
	the derivatives satisfy \[ C^{-1}|v^S_\xi|\le|u^S_\xi|\le C|v^S_\xi|,\qquad C^{-1}|v^S_\xi|\le|\theta^S_\xi|\le C|v^S_\xi|,\qquad \xi\in\R, \] and, more precisely,
	\begin{equation} \label{u_xi,theta_xi-est}
	|u^S_\xi+\sigma^*v^S_\xi|\le C\delta_S|v^S_\xi|,\qquad \left|\theta^S_\xi+\frac{(\gamma-1)p^*}{R}v^S_\xi\right|\le C\delta_S|v^S_\xi|,\qquad \xi\in\R .
	\end{equation}
	Moreover,
	\begin{equation}\label{logistic-est}
	C^{-1} (v_+-v^S)(v^S-v^*) \le v^S_\xi \le C (v_+-v^S)(v^S-v^*),\qquad \xi\in\R .
	\end{equation}
	Set $A^*:=\frac{\gamma(\gamma+1)p^*}{2(v^*)^2\sigma^*}$. Then
	\begin{equation}\label{secant-est}
	\left|\frac{p^S-p_+}{v^S-v_+}-\frac{p^S-p^*}{v^S-v^*}-\sigma^*A^*\frac{\mu R\gamma}{\mu R\gamma+\alpha(\gamma-1)^2}\,(v_+-v^*)\right|\le C\delta_S^2,\qquad \xi\in\R .
	\end{equation}
	Finally, if $\Omega\subset\R_+\times\R_+$ is compact, then $\delta_0$, $c$, and $C$ can be chosen uniformly for all left end states with $(v^*,\theta^*)\in\Omega$.
\end{lemma}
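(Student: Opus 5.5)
We follow the companion paper \cite{H26}: reduce the profile system \eqref{eq:viscous_shock} to a low-dimensional ODE, isolate the slow dynamics by a geometric singular perturbation argument, and read off every estimate in \eqref{shock_est}--\eqref{secant-est} from a scalar logistic-type equation with $O(\delta_S)$ corrections.

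\textbf{Reduction.} First integrate \eqref{eq:viscous_shock} once from $\xi=-\infty$.
\begin{itemize}
\item The first equation gives $u^S=u^*-\sigma(v^S-v^*)$.
\item The second gives $\mu\frac{(u^S)'}{v^S}+\kappa\bigl(-\frac{(v^S)''}{(v^S)^5}+\frac{5((v^S)')^2}{2(v^S)^6}\bigr)=\sigma^2(v^S-v^*)+p^S-p^*$.
\item Using the first two identities in the energy equation, the third reduces to a first-order equation of the form $\alpha\frac{(\theta^S)'}{v^S}=G(v^S,\theta^S)+\kappa\,(\text{terms in }v^S,(v^S)')$, with $G$ explicit and vanishing at both end states by \eqref{RH-condition}.
\end{itemize}
This yields an autonomous first-order system in the three unknowns $(v,w,\theta)$ with $w=v'$. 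Both end states $(v^*,0,\theta^*)$ and $(v_+,0,\theta_+)$ are equilibria, and the profile is a heteroclinic orbit joining them.

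\textbf{Slow--fast scaling.} Introduce $\xi=z/\delta_S$, $v=v^*+\delta_S V$, $\theta=\theta^*+\delta_S\Theta$. Near $\sigma\approx\sigma^*$ (which follows from \eqref{RH-condition}, proving \eqref{sigma-est}), the following scalings hold:
\begin{itemize}
\item the viscous and heat-flux terms are $O(\delta_S^2)$;
\item the Korteweg term $v''/v^5$ is $O(\delta_S^3)$;
\item the hyperbolic part degenerates to $O(\delta_S^2)$.
\end{itemize}
So the capillarity is a singular perturbation of higher order. The system becomes a slow--fast system whose critical manifold $\{w=h(v,\theta)\}$ is given by the Navier--Stokes--Fourier relation with $\kappa=0$. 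This manifold is normally hyperbolic, since its normal eigenvalue is of size $\mu/\kappa$ and hence nonzero. Fenichel's theorem then gives a locally invariant slow manifold $\mathcal M_{\delta}$, smooth in $\delta_S$ and in the left state $(v^*,\theta^*)$, and it attracts or repels exponentially.

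On $\mathcal M_\delta$ the flow is a two-dimensional system in $(v,\theta)$. At the degenerate point ($\lambda_3=\sigma^*$) this system has a one-dimensional center direction, along which $\theta'\approx-\frac{(\gamma-1)p^*}{R}v'$. A center-manifold reduction within $\mathcal M_\delta$ then gives the scalar equation
\[
V_z=\frac{A^*}{\nu_{\rm eff}}\,(V_+-V)(V-0)\bigl(1+O(\delta_S)\bigr),\qquad \nu_{\rm eff}=\frac{\mu R\gamma+\alpha(\gamma-1)^2}{R\gamma\,(\cdot)},
\]
where the effective viscosity $\nu_{\rm eff}$ combines $\mu$ and $\alpha$; this is exactly where the coefficient in \eqref{secant-est} comes from.

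\textbf{Reading off the estimates.}
\begin{itemize}
\item The logistic equation has a unique monotone heteroclinic orbit, which gives existence.
\item Undoing the scaling gives \eqref{logistic-est}, the exponential bounds at rate $c\delta_S$, and $|v^S_\xi|\le C\delta_S^2e^{-c\delta_S|\xi|}$.
\item The derivative ratios and \eqref{u_xi,theta_xi-est} follow from $u^S_\xi=-\sigma v^S_\xi$ together with the slope of the center direction.
\item The higher-derivative bounds $|\partial_\xi^k(v^S,u^S,\theta^S)|\le C\delta_S^{k-1}|v^S_\xi|$ follow by differentiating the reduced vector field, each $\xi$-derivative costing a factor $\delta_S$.
\item The secant estimate \eqref{secant-est} comes from a second-order Taylor expansion of $p$ along the profile. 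Using $p_{vv}$ at $(v^*,\theta^*)$ along the isentrope gives $\sigma^*A^*$ times $(v_+-v^*)$, the $\theta$-correction from the center slope gives the weight, and the remainder is $O(\delta_S^2)$.
\end{itemize}

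\textbf{Uniqueness and uniformity.} For uniqueness in the monotone class, we would show that any monotone profile is trapped in the $O(\delta_S)$-neighbourhood where Fenichel's theorem applies. Fast-fiber excursions off $\mathcal M_\delta$ would force oscillation in $v$ because of the third-order Korteweg structure, so a monotone orbit must lie on $\mathcal M_\delta$ and hence on the one-dimensional center manifold. The normalization $v^S(0)=\frac{v^*+v_+}{2}$ then fixes the translation. Uniformity over compact $\Omega$ follows because all Fenichel and center-manifold constants depend continuously on $(v^*,\theta^*)$.

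\textbf{Main obstacle.} We expect the hardest step to be proving the secant estimate \eqref{secant-est} to precision $O(\delta_S^2)$, uniformly in the left state. This requires expanding the slow manifold and the center manifold to second order, including the $\kappa$-correction. We would first check that this correction enters only at order $\delta_S^2$ and so does not change the leading coefficient.
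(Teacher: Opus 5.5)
Your overall route (integrate once, reduce by geometric singular perturbation theory to a scalar logistic slow flow, then read off the estimates) is the one the paper attributes to \cite{H26}. However, the slow--fast structure you set up is wrong, and the two-stage reduction built on it fails (Fenichel onto $\{w=h(v,\theta)\}$, then a center manifold inside that surface). Integrating the energy equation and using the momentum equation gives
\[
\frac{R(\theta-\theta^*)}{\gamma-1}+p^*(v-v^*)-\frac{\sigma^2}{2}(v-v^*)^2=-\frac{\alpha\,\theta'}{\sigma v}+\frac{\kappa(v')^2}{2v^5}.
\]
In your slow variables this reads $\delta_S\frac{\alpha}{v}\Theta_z=-\frac{\sigma}{\gamma-1}\bigl[R\Theta+(\gamma-1)p^*V\bigr]+O(\delta_S)$. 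So the hyperbolic part is $O(\delta_S^2)$ only on the line $R\Theta+(\gamma-1)p^*V=0$, and $\Theta$ is a fast variable on the same footing as your $w$. No small parameter separates the two relaxations, and the flow on $\{w=h(v,\theta)\}$ (essentially the Navier--Stokes--Fourier profile flow) still has an $O(1)$ eigenvalue in $\xi$. Hence that surface is not a critical manifold.

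Concretely, at $\sigma=\sigma^*$ the linearization of the $(v,v',\theta)$-system at $(v^*,0,\theta^*)$ has the eigenvalue $0$ together with the roots of $\lambda^2+(A+B)\lambda+AB+\frac{(\gamma-1)p^*(v^*)^4}{\kappa}=0$, where $A=\frac{\sigma^*\mu(v^*)^4}{\kappa}$ and $B=\frac{\sigma^*Rv^*}{(\gamma-1)\alpha}$. The discriminant $(A-B)^2-\frac{4(\gamma-1)p^*(v^*)^4}{\kappa}$ is negative when, for example, $A=B$. In that case the only two-dimensional invariant subspace of the linearization is the spiralling stable one. No invariant surface contains the center manifold, so your ``center-manifold reduction within $\mathcal M_\delta$'' has nothing to act on.

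The repair is a one-step reduction. Use the fast variable $\xi$ with $v=v^*+\delta_SV$, $\theta=\theta^*+\delta_S\Theta$, $v'=\delta_SW$. At $\delta_S=0$ the system is linear with the line of equilibria $\{W=0,\ R\Theta+(\gamma-1)p^*V=0\}$. This line is normally hyperbolic (attracting), since both coefficients of the quadratic above are positive, whether or not its roots are real. Fenichel's theorem (equivalently, a parameter-dependent center manifold at $((v^*,0,\theta^*),\sigma^*)$) then gives directly a one-dimensional slow manifold carrying your logistic flow, with $\nu_{\mathrm{eff}}=\frac{\mu R\gamma+\alpha(\gamma-1)^2}{R\gamma v^*}$.

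Two further corrections. First, your uniqueness argument (fast-fiber excursions force oscillation in $v$) fails when the transverse roots are real. The correct argument is simpler. For $\delta_S>0$ one has $\sigma<\sigma^*$, so the constant term of the characteristic polynomial at the left state, a positive multiple of $\sigma^2-(\sigma^*)^2$, is negative. Hence exactly one eigenvalue (the small one) has positive real part. Every profile is then a branch of the one-dimensional unstable manifold of $(v^*,0,\theta^*)$, which gives uniqueness up to translation even outside the monotone class.

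Second, what you call the main obstacle needs no second-order expansion. Let $F:=\sigma^2(v^S-v^*)+p^S-p^*$, and let $\mathcal E$ be the right-hand side of the display. Using $R\theta^*=p^*v^*$ and the Rankine--Hugoniot identity $\sigma^2v^*-\gamma p^*=-\frac{(\gamma+1)\sigma^2}{2}\delta_S$, one gets exactly $v^SF-(\gamma-1)\mathcal E=\frac{(\gamma+1)\sigma^2}{2}(v^S-v^*)(v^S-v_+)$. Also $\frac{p^S-p_+}{v^S-v_+}-\frac{p^S-p^*}{v^S-v^*}=\frac{\delta_S F}{(v^S-v^*)(v^S-v_+)}$. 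The momentum equation with $|v^S_{\xi\xi}|\le C\delta_S|v^S_\xi|$ gives $v^S_\xi=-\frac{v^S}{\sigma\mu}F\,(1+O(\delta_S))$. Combined with $\theta^S_\xi=-\frac{(\gamma-1)p^*}{R}v^S_\xi(1+O(\delta_S))$, this gives $\mathcal E=-\frac{\alpha(\gamma-1)p^*}{R\sigma^2\mu}F\,(1+O(\delta_S))$. Together these yield \eqref{secant-est} and \eqref{logistic-est} from first-order information alone. They also show that the capillarity enters only at relative order $\delta_S$.
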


We apply the last statement of Lemma~\ref{lem:shock-property} with a compact set $\Omega\subset\R_+\times\R_+$ whose interior contains $(v_*,\theta_*)$, fixed once $(v_*,u_*,\theta_*)$ is given. Since $|v_*-v^*|+|\theta_*-\theta^*|\le C\delta_0$, the left end state of the shock satisfies $(v^*,\theta^*)\in\Omega$ for $\delta_0$ small, so the constants of Lemma~\ref{lem:shock-property} do not depend on $(v^*,u^*,\theta^*)$.

\subsection{The superposition wave}
Now, we introduce the superposition wave of the approximate rarefaction wave, the viscous contact discontinuity, and the viscous-dispersive shock wave. For a given absolutely continuous shift function $X(t)$, we define a superposition wave $(\bar{v},\bar{u},\bar{\theta})$ as
\begin{align}
\begin{aligned}\label{def:superposition}
	(\bar{v},\bar{u},\bar{\theta})(t,x)=\Big(&v^R(t,x)+v^C(t,x)+v^S(x-\sigma t-X(t))-v_*-v^*,\\
	&\,u^R(t,x)+u^C(t,x)+u^S(x-\sigma t-X(t))-u_*-u^*,\\
	&\theta^R(t,x)+\theta^C(t,x)+\theta^S(x-\sigma t-X(t))-\theta_*-\theta^*\Big).
\end{aligned}
\end{align}
Then, using the equations \eqref{eq:smooth rarefaction}, \eqref{eq:viscous_contact}, and \eqref{eq:viscous_shock} for the elementary waves, we derive the following governing equations for the superposition wave:
\begin{align}
	\begin{aligned}\label{eq:superposition}
	&\bar{v}_t +\dot{X}(t)v^S_x- \bar{u}_x  = 0,\\
	&\bar{u}_t  +\dot{X}(t)u^S_x+ \bar{p}_x = \mu\left(\frac{\bar{u}_x}{\bar{v}}\right)_x+\kappa\left(-\frac{\bar{v}_{xx}}{\bar{v}^5}+\frac{5(\bar{v}_x^2)}{2\bar{v}^6}\right)_x+Q_1,\\
	&\frac{R}{\gamma-1}\bar{\theta}_t  + \frac{R}{\gamma-1}\dot{X}(t)\theta^S_x+\bar{p}\bar{u}_x=\alpha\left(\frac{\bar{\theta}_x}{\bar{v}}\right)_x +\mu \frac{\bar{u}_x^2}{\bar{v}}+Q_2,
	\end{aligned}
\end{align}
where $\bar{p}:=\frac{R\bar{\theta}}{\bar{v}}$. Here the error terms $Q_1$ and $Q_2$ are decomposed as
\[Q_i = Q_i^I+Q_i^R+Q_i^C,\quad i=1,2,\]
where $Q^I_i$ are the terms from the interaction between elementary waves:
\begin{equation*}
    \begin{aligned}
        Q_1^I&=\left(\bar{p}-p^R-p^C-p^S\right)_x-\mu \left(\frac{\bar{u}_x}{\bar{v}}-\frac{u^R_x}{v^R}-\frac{u^C_x}{v^C}-\frac{u^S_x}{v^S}\right)_x\\
	&\quad -\kappa\left(-\frac{\bar{v}_{xx}}{\bar{v}^5}+\frac{v^R_{xx}}{(v^R)^5}+\frac{v^C_{xx}}{(v^C)^5}+\frac{v^S_{xx}}{(v^S)^5}+\frac{5(\bar{v}_x)^2}{2\bar{v}^6}-\frac{5(v^R_x)^2}{2(v^R)^6}-\frac{5(v^C_x)^2}{2(v^C)^6}-\frac{5(v^S_x)^2}{2(v^S)^6}\right)_x,\\
	Q_2^I&=(\bar{p}\bar{u}_x-p^Ru^R_x-p^Cu^C_x-p^Su^S_x)-\alpha\left(\frac{\bar{\theta}_x}{\bar{v}}-\frac{\theta^R_x}{v^R}-\frac{\theta^C_x}{v^C}-\frac{\theta^S_x}{v^S}\right)_x\\
	&\quad -\mu\left(\frac{\bar{u}^2_x}{\bar{v}}-\frac{(u^R_x)^2}{v^R}-\frac{(u^C_x)^2}{v^C}-\frac{(u^S_x)^2}{v^S}\right),
    \end{aligned}
\end{equation*}
and $Q^R_i$ are the terms from the approximation of the rarefaction wave:
\[Q_1^R :=-\mu\left(\frac{u^R_x}{v^R}\right)_x+\kappa\left(\frac{v^R_{xx}}{(v^R)^5}-\frac{5(v^R_x)^2}{2(v^R)^6}\right)_x,\quad Q^R_2:=-\alpha\left(\frac{\theta^R_x}{v^R}\right)_x-\mu\frac{(u^R_x)^2}{v^R}.\]
Finally, the errors $Q^C_i$ from the viscous contact wave are defined in \eqref{def_QC}.

\subsection{Extended system}
As in \cite{BDD06,HKKL25}, we introduce the auxiliary variable
\[w:=-\sqrt{\kappa}\frac{v_x}{v^{5/2}},\]
which is a natural variable considering the energy structure of the NSFK system. Note that $w$ is equivalent to $v_x$ when $v$ is bounded below and above. Then, \eqref{eq:NSK-theta}$_2$ can be written as
\[u_t +p_x = \mu\left(\frac{u_x}{v}\right)_x + \sqrt{\kappa}\left(\frac{w_x}{v^{5/2}}\right)_x,\]
and $w$ satisfies
\[w_t = -\sqrt{\kappa}\left(\frac{v_x}{v^{5/2}}\right)_t =-\sqrt{\kappa}\left(\frac{v_t}{v^{5/2}}\right)_x =  -\sqrt{\kappa}\left(\frac{u_x}{v^{5/2}}\right)_x.\]
Therefore, the system \eqref{eq:NSK-theta} can be written in the following extended form:
\begin{align}
	\begin{aligned}\label{eq:NSFK-extended}
		&v_t -u_x = 0,\\
		&u_t +p_x = \mu\left(\frac{u_x}{v}\right)_x+\sqrt{\kappa}\left(\frac{w_x}{v^{5/2}}\right)_x,\\
		&\frac{R}{\gamma-1}\theta_t +pu_x = \alpha\left(\frac{\theta_x}{v}\right)_x+\frac{\mu u_x^2}{v},\\
		&w_t = -\sqrt{\kappa}\left(\frac{u_x}{v^{5/2}}\right)_x.
	\end{aligned}
\end{align}
This extended system makes the later analysis simpler. Similarly, the system \eqref{eq:superposition} for the superposition wave can be extended by introducing $\bar{w}:=-\sqrt{\kappa}\frac{\bar{v}_x}{\bar{v}^{5/2}}$. Then, it satisfies
\begin{align*}
	\bar{w}_t &= -\sqrt{\kappa}\left(\frac{\bar{v}_t}{\bar{v}^{5/2}}\right)_x= -\sqrt{\kappa}\left(\frac{\bar{u}_x}{\bar{v}^{5/2}}\right)_x-\dot{X}(t)w^S_x+Q_3,
\end{align*}
where
\[w^S:=-\sqrt{\kappa}\frac{v^S_x}{(v^S)^{5/2}},\quad Q_3 := -\sqrt{\kappa}\dot{X}\left(v^S_x\left(\frac{1}{(v^S)^{5/2}}-\frac{1}{\bar{v}^{5/2}}\right)\right)_x.\]
Therefore, the extended superposition wave $(\bar{v},\bar{u},\bar{\theta},\bar{w})$ satisfies
\begin{align}
	\begin{aligned}\label{eq:composite_extended}
		&\bar{v}_t +\dot{X}(t)v^S_x - \bar{u}_x=0,\\
		&\bar{u}_t +\dot{X}(t)u^S_x + \bar{p}_x = \mu \left(\frac{\bar{u}_x}{\bar{v}}\right)_x + \sqrt{\kappa}\left(\frac{\bar{w}_x}{\bar{v}^{5/2}}\right)_x+ Q_1,\\
		&\frac{R}{\gamma-1}\bar{\theta}_t + \frac{R}{\gamma-1}\dot{X}(t)\theta^S_x+ \bar{p}\bar{u}_x = \alpha\left(\frac{\bar{\theta}_x}{\bar{v}}\right)_x +\frac{\mu\bar{u}_x^2}{\bar{v}}+Q_2,\\
		&\bar{w}_t +\dot{X}(t)w^S_x = -\sqrt{\kappa}\left(\frac{\bar{u}_x}{\bar{v}^{5/2}}\right)_x+Q_3.
	\end{aligned}
\end{align}

From \eqref{eq:composite_extended} and \eqref{eq:NSFK-extended}, the perturbation
\begin{equation}\label{def:perturbation}
    (\phi,\psi,\vartheta,\omega):=(v-\bar v,\,u-\bar u,\,\theta-\bar\theta,\,w-\bar w),
    \qquad (\phi_0,\psi_0,\vartheta_0,\omega_0):=(\phi,\psi,\vartheta,\omega)(0,\cdot),
\end{equation}
which we use in Sections~\ref{sec:proof} and~\ref{sec:H1} and in Appendix~\ref{app:proof_heat}, satisfies
\begin{align}
	\begin{aligned}\label{eq:perturbation}
		&\phi_t -\dot{X}(t)v^S_x - \psi_x=0,\\
		&\psi_t -\dot{X}(t)u^S_x + (p-\bar{p})_x = \mu \left(\frac{u_x}{v}-\frac{\bar{u}_x}{\bar{v}}\right)_x + \sqrt{\kappa}\left(\frac{w_x}{v^{5/2}}-\frac{\bar{w}_x}{\bar{v}^{5/2}}\right)_x- Q_1,\\
		&\frac{R}{\gamma-1}\vartheta_t - \frac{R}{\gamma-1}\dot{X}(t)\theta^S_x+ (pu_x-\bar{p}\bar{u}_x) = \alpha\left(\frac{\theta_x}{v}-\frac{\bar{\theta}_x}{\bar{v}}\right)_x +\mu\left(\frac{u_x^2}{v}-\frac{\bar{u}_x^2}{\bar{v}}\right)-Q_2,\\
		&\omega_t -\dot{X}(t)w^S_x = -\sqrt{\kappa}\left(\frac{u_x}{v^{5/2}}-\frac{\bar{u}_x}{\bar{v}^{5/2}}\right)_x-Q_3.
	\end{aligned}
\end{align}

The following lemma collects the pointwise bounds on the derivatives of the superposition wave, which are used in Sections~\ref{sec:proof}, \ref{sec:L2}, and~\ref{sec:H1} and in Appendix~\ref{app:proof_heat}.
\begin{lemma}\label{lem:superposition-derivatives}
Under the assumptions of Theorem~\ref{thm:main}, let $X$ be an absolutely continuous shift. Then, there exists a constant $C>0$, independent of $X$, $\delta_0$, and $\e_R$, such that for all $t\ge0$ and $x\in\R$, the extended superposition wave $(\bar v,\bar u,\bar\theta,\bar w)$ in \eqref{eq:composite_extended} satisfies
\begin{equation}\label{eq:vbar-derivatives}
\begin{aligned}
    &\sum_{j=1}^{3}|\partial_x^j\bar v(t,x)|+\sum_{j=1}^{2}|\partial_x^j(\bar u,\bar\theta)(t,x)|+\sum_{j=0}^{2}|\partial_x^j\bar w(t,x)|\\
    &\qquad\le C\bigl(|v^R_x(t,x)|+|v^S_x(x-\sigma t-X(t))|\bigr)+C\delta_C(1+t)^{-1/2}e^{-\frac{C_1|x|^2}{1+t}},
\end{aligned}
\end{equation}
where $C_1$ is the constant in Lemma~\ref{lem:viscous_contact}.
\end{lemma}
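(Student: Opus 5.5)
The bound follows by differentiating the definition \eqref{def:superposition} term by term and using the pointwise estimates already available for each elementary wave. Since $\bar v=v^R+v^C+v^S-v_*-v^*$ and the constants drop out under $\partial_x$, for $j\ge1$ we have
\[
\partial_x^j\bar v=\partial_x^jv^R+\partial_x^jv^C+\partial_x^jv^S(\cdot-\sigma t-X(t)),
\]
and similarly for $\bar u,\bar\theta$. Each term is handled as follows.

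\begin{itemize}
\item \emph{Rarefaction part.} Lemma~\ref{lem:rarefaction_property}~(2) gives $|\partial_x^jv^R|,|\partial_x^ju^R|,|\partial_x^j\theta^R|\le C_q\e_R^{j-1}|(v^R_x,u^R_x,\theta^R_x)|$ for $j=2,3$. Since $\e_R\le1$ and, by (1), $|u^R_x|+|\theta^R_x|\le C|v^R_x|$ (the coefficients $\sqrt{R\gamma\theta^R}/v^R$ and $(\gamma-1)\theta^R/v^R$ are bounded on the compact range of $(v^R,\theta^R)$), all rarefaction contributions are bounded by $C|v^R_x|$.

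\item \emph{Shock part.} \eqref{shock_est} gives $|\partial_\xi^k(v^S,u^S,\theta^S)|\le C\delta_S^{k-1}|v^S_\xi|\le C|v^S_\xi|$, and the comparability $|u^S_\xi|+|\theta^S_\xi|\le C|v^S_\xi|$ handles $j=1$.

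\item \emph{Contact part.} Lemma~\ref{lem:viscous_contact} bounds $\partial_x^n v^C$ and $\partial_x^n\theta^C$ by $C\delta_C(1+t)^{-n/2}e^{-C_1x^2/(1+t)}\le C\delta_C(1+t)^{-1/2}e^{-C_1x^2/(1+t)}$ for $n\ge1$. It bounds $\partial_x^nu^C$ by an even faster rate.
\end{itemize}

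All constants depend only on the fixed data and on $q$, uniformly in $(v^*,\theta^*)\in\Omega$ by the uniformity remark after Lemma~\ref{lem:shock-property}, and not on $X$, $\delta_0$ or $\e_R$.

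For $\bar w=-\sqrt\kappa\,\bar v_x\bar v^{-5/2}$, I first note that $\bar v$ is bounded above and below by positive constants independent of $\delta_0$ and $\e_R$. Indeed, $v^R$ takes values between $v_-$ and $v_*$, while $v^C-v_*$ and $v^S-v^*$ are $O(\delta_0)$, so $\bar v$ stays in a compact subset of $\R_+$ for $\delta_0$ small. Then
\begin{align*}
\bar w&=-\sqrt\kappa\,\bar v^{-5/2}\bar v_x,\\
\bar w_x&=-\sqrt\kappa\bigl(\bar v^{-5/2}\bar v_{xx}-\tfrac52\bar v^{-7/2}\bar v_x^2\bigr),\\
\bar w_{xx}&=-\sqrt\kappa\bigl(\bar v^{-5/2}\bar v_{xxx}-\tfrac{15}{2}\bar v^{-7/2}\bar v_x\bar v_{xx}+\tfrac{35}{4}\bar v^{-9/2}\bar v_x^3\bigr).
\end{align*}
The linear terms are controlled by the bound already proved for $\partial_x^j\bar v$, $j=1,2,3$. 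The nonlinear terms are products containing at least one factor $\bar v_x$ times powers of bounded quantities. Since $\|\bar v_x\|_{L^\infty}$ and $\|\bar v_{xx}\|_{L^\infty}$ are bounded uniformly (by $C(\delta_R\e_R+\delta_S^2+\delta_C)$), each product is bounded by $C|\bar v_x|$ or $C|\bar v_{xx}|$, which reduces to the linear case.

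The main point needing care is uniformity in $\e_R$ and $\delta_R$. The rarefaction wave may be large, so the pointwise relation $|\partial_x^jU^R|\le C\e_R^{j-1}|U^R_x|$ must be used rather than the $L^p$ bounds. I expect this to be the only step needing attention; everything else is routine bookkeeping.
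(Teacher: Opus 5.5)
Your proposal is correct and follows essentially the same route as the paper. You differentiate the superposition term by term and use the pointwise bounds of Lemma~\ref{lem:rarefaction_property}~(1),(2), Lemma~\ref{lem:shock-property}, and Lemma~\ref{lem:viscous_contact}. You then handle $\bar w=-\sqrt{\kappa}\,\bar v_x/\bar v^{5/2}$ by the chain rule, using that $\bar v$ is bounded below and $\bar v_x$ is bounded in $L^\infty$.
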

\begin{proof}[Proof of Lemma~\ref{lem:superposition-derivatives}]
By \eqref{def:superposition}, for $j=1,2,3$,
\[
    \partial_x^j(\bar v,\bar u,\bar\theta)(t,x)=\partial_x^j(v^R,u^R,\theta^R)(t,x)+\partial_x^j(v^C,u^C,\theta^C)(t,x)+\partial_x^j(v^S,u^S,\theta^S)(x-\sigma t-X(t)).
\]
It follows from Lemma~\ref{lem:rarefaction_property}~(1) and~(2), Lemma~\ref{lem:shock-property}, and Lemma~\ref{lem:viscous_contact}, respectively, that, for $j=1,2,3$,
\[
\begin{aligned}
    &|\partial_x^j(v^R,u^R,\theta^R)|\le C|v^R_x|,\qquad |\partial_x^j(v^S,u^S,\theta^S)|\le C|v^S_x|,\\
    &|\partial_x^j(v^C,u^C,\theta^C)|\le C\delta_C(1+t)^{-\frac12}e^{-\frac{C_1|x|^2}{1+t}}.
\end{aligned}
\]
This proves \eqref{eq:vbar-derivatives} for $\bar v$, $\bar u$, and $\bar\theta$. Since $\bar w=-\sqrt\kappa\,\bar v_x/\bar v^{5/2}$, $\bar v\ge c>0$, and $\|\bar v_x\|_{L^\infty}\le C$, we have
\[
    |\bar w|\le C|\bar v_x|,\qquad |\bar w_x|\le C\bigl(|\bar v_{xx}|+|\bar v_x|\bigr),\qquad |\bar w_{xx}|\le C\bigl(|\bar v_{xxx}|+|\bar v_{xx}|+|\bar v_x|\bigr),
\]
and the bound for $\bar w$ follows from the bound for $\bar v$. This completes the proof of Lemma~\ref{lem:superposition-derivatives}.
\end{proof}

\section{A priori estimate and the proof of Theorem \ref{thm:main}}\label{sec:proof}
\setcounter{equation}{0}

In this section, we prove Theorem \ref{thm:main}, where an a priori estimate on the $H^1$-norms of the perturbations is crucially used. We first explain how to construct the shift function $X(t)$, and then present the a priori estimate for $H^1$-perturbations. Using the a priori estimate, we complete the proof of Theorem \ref{thm:main}.

\subsection{Construction of the shift and weight function}
To obtain a stability result for the superposition wave, we need to shift the viscous-dispersive shock wave $(v^S,u^S,\theta^S)$ by an appropriate shift function $X(t)$. The shift is defined as a solution to the following ODE:
\begin{align}
	\begin{aligned}\label{def:shift}
	\dot{X}(t) = -\frac{M}{\delta_S}\int_{\R}\Bigg[a(t,x)&\Bigg(u^S_x(x-\sigma t-X(t))(u-\bar{u})+\frac{\bar{p}v^S_x(x-\sigma t-X(t))}{\bar{v}}(v-\bar{v})\\
	&\quad +\frac{R\theta^S_x(x-\sigma t-X(t))}{(\gamma-1)\bar{\theta}}(\theta-\bar{\theta})\Bigg)\Bigg]\,dx,\quad X(0)=0,
	\end{aligned}
\end{align}
where the constant $M$ and weight function $a$ are defined as
\begin{equation}\label{choice_M}
	M=\frac{3A^*}{2(\sigma^*)^2}\frac{\mu R \gamma}{\mu R \gamma+\alpha(\gamma-1)^2}\left(1+\frac{2\alpha(\gamma-1)^2}{\mu R\gamma}\right)
\end{equation}
{with $A^*$ as in Lemma~\ref{lem:shock-property},} and
\begin{equation}\label{def:weight}
	a(t,x):=1+\frac{v^S(x-\sigma t-X(t))-v^*}{\sqrt{\delta_S}}.
\end{equation}
Then, using $v_+-v^*=\delta_S$, it is straightforward to observe that
\begin{equation*}\label{property_weight}
	1<a<1+\sqrt{\delta_S},\quad a_x=\frac{v^S_x}{\sqrt{\delta_S}}>0,\quad |a_x|=\frac{|v^S_x|}{\sqrt{\delta_S}}\sim\frac{|u^S_x|}{\sqrt{\delta_S}}\sim \frac{|\theta^S_x|}{\sqrt{\delta_S}}.
\end{equation*}
The following lemma ensures that \eqref{def:shift} has a unique absolutely continuous solution on any time interval on which the solution $(v,u,\theta)$ stays bounded.
\begin{lemma}\label{lem:shift}
	For any $c_1,c_2>0$, there exists a constant $C>0$ such that the following holds. For any $T>0$ and any $(v,u,\theta)\in L^\infty((0,T)\times\R)$ with
	\[
	    c_1\le v(t,x),\ \theta(t,x)\le c_2,\quad |u(t,x)|\le c_2,\qquad (t,x)\in[0,T]\times\R,
	\]
	the ODE \eqref{def:shift} has a unique absolutely continuous solution $X$ on $[0,T]$. Moreover, $|X(t)|\le Ct$ for all $t\in[0,T]$.
\end{lemma}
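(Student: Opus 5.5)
We treat \eqref{def:shift} as an ODE $\dot X=F(t,X)$ and apply the Cauchy--Lipschitz theorem in its Carathéodory form. Here $F(t,X)$ is $-\frac{M}{\delta_S}$ times the integral on the right-hand side, with $(v,u,\theta)$ given and $\bar v,\bar u,\bar\theta,a$ all depending on $X$ through the shifted argument $x-\sigma t-X$. It then suffices to show three things: $F$ is measurable in $t$ for each fixed $X$; $F$ is bounded on $[0,T]\times\R$ by a constant depending only on $c_1,c_2$ and the wave data; and $F$ is Lipschitz in $X$ uniformly in $t$. From these we get a unique global absolutely continuous solution on $[0,T]$ with $|\dot X|\le C$, and integrating gives $|X(t)|\le Ct$ since $X(0)=0$.

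\emph{Boundedness.} The superposition wave is bounded above and below independently of $X$. By construction, $\bar v,\bar\theta$ lie within $C\delta_0$ of the convex hull of the values $v^R,\theta^R$ (between $v_-,v_*$ and $\theta_-,\theta_*$), since the contact and shock pieces are $O(\delta_C+\delta_S)$ perturbations. Hence $\bar v,\bar\theta\ge c>0$ and $|\bar u|,\bar v,\bar\theta\le C$. The weight satisfies $1<a<1+\sqrt{\delta_S}$. Together with the hypotheses on $(v,u,\theta)$, the integrand is bounded by
\[
C\bigl(|u^S_x|+|v^S_x|+|\theta^S_x|\bigr)(x-\sigma t-X),
\]
and its $x$-integral is at most $C\bigl(|u_+-u^*|+|v_+-v^*|+|\theta_+-\theta^*|\bigr)\le C\delta_S$ by the monotonicity in Lemma~\ref{lem:shock-property}. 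Thus $|F|\le M C$, uniformly in $t$ and $X$. Measurability in $t$ follows from Fubini, since the integrand is jointly measurable.

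\emph{Lipschitz in $X$.} Differentiate the integrand with respect to $X$; each factor depends smoothly on $X$ through the shifted shock profile. The derivative $\partial_X$ produces terms of the following kinds:
\begin{itemize}
\item $a_X=-v^S_x/\sqrt{\delta_S}$ times bounded quantities times $(u^S_x,v^S_x,\theta^S_x)$;
\item second derivatives $u^S_{xx},v^S_{xx},\theta^S_{xx}$ times bounded quantities, which are bounded by $C|v^S_x|$ via \eqref{shock_est};
\item derivatives of $\bar p/\bar v$ and $1/\bar\theta$ in $X$, which equal $-(\text{smooth function})\cdot(v^S_x,\theta^S_x)$ and are multiplied by $v^S_x$ or $\theta^S_x$;
\item derivatives of $\bar u,\bar v,\bar\theta$ inside $(u-\bar u)$ etc., giving $-u^S_x,-v^S_x,-\theta^S_x$.
\end{itemize}
Every term is bounded by $C(\delta_S^{-1/2}+1)|v^S_x|$, using \eqref{shock_est} and the comparability of $|u^S_x|,|\theta^S_x|$ with $|v^S_x|$. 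Integrating in $x$ gives $|\partial_XF|\le C\frac{M}{\delta_S}(\delta_S^{-1/2}+1)\delta_S$. This is a finite constant for fixed $\delta_S$, which is all uniqueness needs. The bound on $|X|$ uses only the $O(1)$ bound on $F$.

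The main point requiring care is the uniform lower bound on $\bar v,\bar\theta$. It must hold without smallness of the rarefaction, and it is used to control the quotients $\bar p/\bar v$ and $1/\bar\theta$. We obtain it from the explicit ranges of each elementary wave together with the smallness of $\delta_C$ and $\delta_S$. Everything else is routine, because $(v,u,\theta)$ enters only linearly and merely needs to be $L^\infty$, while all differentiation falls on the smooth profile.
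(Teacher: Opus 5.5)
Your proposal is correct and follows the route the paper takes. The paper omits the proof and appeals to the Cauchy--Lipschitz theorem in the form of \cite[Lemma A.1]{CKKV}, as in \cite[Lemma 3.1]{KVW25}. That result needs exactly what you verify: a uniform bound $|F(t,X)|\le C$ and a Lipschitz bound in $X$ uniform in $t$ and $X$, which you get from $\bar v,\bar\theta\ge c>0$, $1<a<1+\sqrt{\delta_S}$, $\int_\R|v^S_x|\,dx=\delta_S$, and \eqref{shock_est}, with $|X(t)|\le Ct$ then following from $X(0)=0$.
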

The proof follows from the Cauchy--Lipschitz theorem in the form of \cite[Lemma A.1]{CKKV}, as in \cite[Lemma 3.1]{KVW25}, and we omit it.
The motivation of the definition of the shift is to control the terms $Y_1$, $Y_2$, and $Y_3$ in \eqref{def:Yi}. Using the definitions of $Y_i$, \eqref{def:shift} can be written as $\dot{X}=-\frac{M}{\delta_S}(Y_1+Y_2+Y_3)$, from which we can extract a good term $-\frac{\delta_S}{M}|\dot{X}|^2$. This enables us to control the leading terms when we estimate the weighted relative entropy. We refer to Section \ref{sec:decompose} for the detailed motivation of the shift. 
\subsection{Local existence}

We recall the local existence of strong solutions to \eqref{eq:NSK-theta}; see \cite{HL96,CS19}.
\begin{proposition}\label{prop:local}
	Let $\underline v,\underline u,\underline\theta$ be smooth monotone functions such that $(\underline v,\underline u,\underline\theta)(x)=(v_\pm,u_\pm,\theta_\pm)$ for $\pm x\ge1$. For any constants $M_0,M_1,\underline k_0,\overline k_0,\underline k_1,\overline k_1$ with $M_1>M_0>0$ and $\overline k_1>\overline k_0>\underline k_0>\underline k_1>0$, there exists a constant $T_0>0$ such that if
	\begin{align*}
	    &\|v_0-\underline v\|_{H^2(\R)}+\|(u_0-\underline u,\theta_0-\underline\theta)\|_{H^1(\R)}\le M_0,\\
	    &0<\underline k_0\le v_0(x),\ \theta_0(x)\le\overline k_0,\qquad \forall x\in\R,
	\end{align*}
	then \eqref{eq:NSK-theta} has a unique solution $(v,u,\theta)$ on $[0,T_0]$ such that
	\begin{align*}
	    &v-\underline v\in C([0,T_0];H^2(\R))\cap L^2(0,T_0;H^3(\R)),\\
	    &(u-\underline u,\theta-\underline\theta)\in C([0,T_0];H^1(\R))\cap L^2(0,T_0;H^2(\R)),
	\end{align*}
	and
	\[
	    \|v-\underline v\|_{L^\infty(0,T_0;H^2(\R))}+\|(u-\underline u,\theta-\underline\theta)\|_{L^\infty(0,T_0;H^1(\R))}\le M_1.
	\]
	Moreover,
	\[
	    \underline k_1\le v(t,x),\ \theta(t,x)\le\overline k_1,\qquad \forall (t,x)\in[0,T_0]\times\R .
	\]
\end{proposition}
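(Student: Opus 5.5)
The statement to be proved is Proposition~\ref{prop:local}, the local well-posedness of \eqref{eq:NSK-theta} around the smooth reference profile $(\underline v,\underline u,\underline\theta)$. The plan is a standard iteration scheme: linearize, solve the linear problems, obtain uniform bounds on a short time interval, and pass to the limit by a contraction argument in a lower norm. The key structural point is to work with the extended form \eqref{eq:NSFK-extended}, with $w=-\sqrt\kappa\,v_x/v^{5/2}$. In this form the third-order Korteweg term becomes $\sqrt\kappa(w_x/v^{5/2})_x$ in the momentum equation. The equation $w_t=-\sqrt\kappa(u_x/v^{5/2})_x$ is exactly skew-adjoint with respect to it, so the highest-order dispersive terms cancel in the energy pairing $\int \psi\,\cdot+\int\omega\,\cdot$. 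In these variables we have $v-\underline v\in H^2$ exactly when $v-\underline v\in H^1$ and $w-\underline w\in H^1$, once $v$ is bounded above and below.

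\textbf{Construction of the iterates.} Given an iterate $(v^n,u^n,\theta^n)$ in the stated class with $\underline k_1\le v^n,\theta^n\le\overline k_1$, define $(v^{n+1},u^{n+1},\theta^{n+1})$ as follows:
\begin{itemize}
\item $v^{n+1}$ is obtained by integrating $v_t=u^n_x$ in time;
\item $u^{n+1}$ solves the linear parabolic equation $u_t-\mu(u_x/v^{n})_x=-p(v^n,\theta^n)_x+\kappa(\cdots)[v^{n}]_x$, where the capillarity term needs $v^n\in L^2_tH^3$;
\item $\theta^{n+1}$ solves the linear heat equation with coefficient $\alpha/v^n$ and source $-p^nu^n_x+\mu(u^n_x)^2/v^n$.
\end{itemize}
Standard linear parabolic theory gives existence and uniqueness of each iterate. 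The $L^2_tH^3$ regularity of $v$ is not obtained from the $v$-equation, which has no dissipation. It comes instead from the dissipation of $w$, which is recovered through the mixed estimate described next.

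\textbf{Uniform bounds (the main obstacle).} I expect the main difficulty to be closing uniform-in-$n$ bounds. The danger is that the dispersive term costs one more derivative of $v$ than the hyperbolic equation for $v$ supplies. To handle this, I would perform the energy estimates on the coupled system rather than iterate $v$ completely separately. Concretely:
\begin{itemize}
\item Multiply the momentum equation by $\psi-\psi_{xx}$ and the $w$-equation by $\omega-\omega_{xx}$, using the cancellation above.
\item Add the mixed estimate obtained by pairing the momentum equation with $-w_x$, equivalently with $v_{xx}$ up to lower order. This produces $\int_0^t\|w_x\|_{H^1}^2$ from the term $\sqrt\kappa\int (w_x/v^{5/2})_x\,(-w_x)$, with error terms absorbed via the viscous dissipation $\int_0^t\|u_x\|_{H^1}^2$.
\item For $\theta$, pair with $\vartheta-\vartheta_{xx}$ to get $\int_0^t\|\theta_x\|_{H^1}^2$.
\end{itemize}
Together these give a Gronwall-type inequality
\[
E(t)+\int_0^t D\le C(E(0)+t\,\mathcal P(\sup E))\,,
\]
where $E$ is the square of the norm in the proposition. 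For $T_0$ small (depending on $M_0$, $M_1$ and the $k$'s), this gives $E\le M_1^2$. The bounds $\underline k_1\le v,\theta\le\overline k_1$ follow from $\|v(t)-v_0\|_{L^\infty}\le C\|v-v_0\|_{L^2}^{1/2}\|(v-v_0)_x\|_{L^2}^{1/2}$ together with $\|v(t)-v_0\|_{L^2}\le \int_0^t\|u_x\|_{L^2}\le Ct^{1/2}$, and similarly for $\theta$ via its equation.

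\textbf{Convergence and uniqueness.} The differences of consecutive iterates satisfy the same linearized structure. The energy estimate at the $L^2\times H^1$ level (one derivative lower) gives contraction for $T_0$ small. The limit inherits the uniform bounds by weak-$*$ compactness, and time continuity in $H^2\times H^1\times H^1$ follows from the energy identity in the standard way. Uniqueness is the same difference estimate applied to two solutions. Since the paper cites \cite{HL96,CS19} for precisely this result, I would present these steps briefly and defer details to those references, emphasizing only the extended-variable cancellation that makes the scheme close.
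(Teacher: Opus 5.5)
The paper does not prove this proposition; it only quotes it from \cite{HL96,CS19}. Deferring is therefore consistent with the paper, but the scheme you sketch would not close as written.

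\textbf{The iteration loses derivatives.} In your scheme the Korteweg force in the $u^{n+1}$-equation is evaluated at $v^n$, while $v^{n+2}_t=u^{n+1}_x$. Eliminating $u$ gives $v^{n+2}_{tt}-\mu\bigl(v^{n+2}_t/v^n\bigr)_{xx}=-\kappa\bigl(v^n_{xx}/(v^n)^5\bigr)_{xx}+\text{l.o.t.}$, so the fourth-order restoring term acts on the \emph{old} iterate. With frozen coefficients, the response at frequency $\xi$ grows like $\xi^2t\,|\hat v^n|$ once $\xi^2t\gtrsim1$. The true system instead has the bounded damped oscillation $\hat v_{tt}+\frac{\mu}{v}\xi^2\hat v_t+\frac{\kappa}{v^5}\xi^4\hat v=0$. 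Hence $v^{n+2}(t)\notin H^2$ for general $v^n\in C([0,T];H^2)\cap L^2(0,T;H^3)$, and the map does not preserve the class.

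The cancellation you invoke cannot repair this. It needs the Korteweg term of the momentum equation and the $w$-equation to act on the same unknowns. Here $(w^n_x/(v^n)^{5/2})_x$ is paired with $\psi^{n+1}$, while $\omega^{n+1}_t$ is driven by $u^n$. You need a linearization that keeps the top-order coupling in the unknowns. For instance, solve jointly $v_t-u_x=0$, $u_t-\mu(u_x/\tilde v)_x+\kappa(v_{xx}/\tilde v^5)_x=F(\tilde v,\tilde u,\tilde\theta)$ and the $\theta$-equation, with coefficients frozen at the previous iterate. For that system $\frac{d}{dt}\bigl(\frac12\|u_x\|^2+\frac\kappa2\int\tilde v^{-5}v_{xx}^2\,dx\bigr)$ has no uncontrolled top-order term. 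Solving this linear third-order coupled system is not standard parabolic theory, though. It needs its own construction, e.g.\ Galerkin, or an artificial viscosity $\epsilon v_{xx}$ in the mass equation with $\epsilon$-uniform bounds.

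\textbf{Two further steps need correcting.} First, pairing the momentum equation with $-w_x$ (or $v_{xx}$) gives no dissipation. Indeed $\sqrt\kappa\int(w_x/v^{5/2})_x(-w_x)\,dx=-\frac{\sqrt\kappa}{2}\int w_x^2(v^{-5/2})_x\,dx$, and the pressure term likewise reduces to lower-order terms without sign. The dissipation of $w$ comes from the multipliers $\omega$ and $-\omega_{xx}$ (up to weights and lower order, $\phi_x$ and $\phi_{xxx}$). The time derivative then falls on $\int\psi\omega$ and $\int\psi_x\omega_x$, as in Lemmas~\ref{lem:est-1} and~\ref{lem:est-2}.

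Second, these weighted estimates give $E(t)\le C\bigl(E(0)+t\,\mathcal P(\sup E)\bigr)$ with some $C>1$. This is because the controlled quantity contains $\|\omega_x\|^2\approx\kappa\int v^{-5}\phi_{xx}^2\,dx$ and indefinite cross terms. That yields $\sup E\le M_1^2$ only when $M_1^2>CM_0^2$, not for every $M_1>M_0$. The sharp form seems out of reach anyway. At frequency $\xi$ the linearization is $\partial_t\widehat{\phi_{xx}}=-\xi^2\widehat{\psi_x}$, $\partial_t\widehat{\psi_x}=\xi^2\bigl(\frac{\kappa}{v^5}\widehat{\phi_{xx}}-\frac{\mu}{v}\widehat{\psi_x}\bigr)$. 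When $\psi_0=0$, this makes $\|\phi_{xx}\|+\|\psi_x\|$ grow by a fixed factor within time $O(\xi^{-2})$. The weaker form, with $M_1$ allowed to depend on $M_0$ and the bounds on $v_0,\theta_0$, is all the proof of Theorem~\ref{thm:main} uses. Only $T_0$, the solution class, and the bounds on $v$ and $\theta$ enter there.
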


\subsection{A priori estimate}

The key estimate for the proof of Theorem \ref{thm:main} is the following a priori estimate.

\begin{proposition}\label{prop:apriori}
	For given constant states $(v_-,u_-,\theta_-)\in \R_+\times\R\times\R_+$ and $(v_*,u_*,\theta_*)\in R_1(v_-,u_-,\theta_-)$, there exist positive constants $\delta_0$, $\varepsilon_1$, and $C_0$ such that the following holds. Suppose that $(v,u,\theta)$ is the solution to \eqref{eq:NSK-theta} on $[0,T]$ for some $T>0$ and $(\bar{v},\bar{u},\bar{\theta})$ is the superposition wave defined in \eqref{def:superposition} with the shift $X$ given by \eqref{def:shift}. Assume that $\delta_C,\delta_S<\delta_0$, $\e_R=\delta_0/2$,
	\[
	    v-\bar v\in C([0,T];H^2(\R))\cap L^2(0,T;H^3(\R)),\qquad (u-\bar u,\theta-\bar\theta)\in C([0,T];H^1(\R))\cap L^2(0,T;H^2(\R)),
	\]
	and
	\begin{equation}\label{smallness}
		\|v-\bar{v}\|_{C([0,T];H^2(\R))}+\|(u-\bar{u},\theta-\bar{\theta})\|_{C([0,T];H^1(\R))}\le \varepsilon_1.
	\end{equation}
	Then, for all $t\in[0,T]$, 
	\begin{align}
		\begin{aligned}\label{est:apriori}
			\sup_{s \in [0,t]}&\|(v-\bar{v},u-\bar{u},\theta-\bar{\theta},w-\bar{w})(s,\cdot)\|_{H^1}^2+\delta_S\int_0^t |\dot{X}(s)|^2\,ds\\
			&+\int_0^t(G^R+G^S)\,ds+\int_0^t(\|(u-\bar{u},\theta-\bar{\theta})_x\|_{H^1}^2+\|w-\bar{w}\|_{H^2}^2)\,ds\\
			&\le C_0\|(v-\bar v,u-\bar u,\theta-\bar\theta,w-\bar w)(0,\cdot)\|_{H^1}^2+C_0\delta_0^{1/6},
		\end{aligned}
	\end{align}
	where $G^R$ and $G^S$ are defined as
	\begin{equation}\label{GSGR}
		G^R:=\int_{\R} |v^R_x||(v-\bar{v},\theta-\bar{\theta})|^2\,dx,\quad G^S:=\int_{\R} |v^S_x||(v-\bar{v},u-\bar{u},\theta-\bar{\theta})|^2\,dx.
	\end{equation}
	In addition,
	\begin{equation}\label{est:shift-bound}
		|\dot X(t)|\le C_0\|(v-\bar v,u-\bar u,\theta-\bar\theta)(t,\cdot)\|_{L^\infty(\R)},\qquad t\in[0,T].
	\end{equation}
\end{proposition}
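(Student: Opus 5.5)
The proof splits into a weighted relative entropy ($L^2$) estimate in the spirit of $a$-contraction with shifts, followed by higher-order ($H^1$) energy estimates using the extended system \eqref{eq:perturbation}. The bound \eqref{est:shift-bound} is handled first and separately. Since $|a|\le 2$, $|u^S_x|+|v^S_x|+|\theta^S_x|\le C|v^S_x|$, and $\int_\R|v^S_x|\,dx=\delta_S$, the definition \eqref{def:shift} gives
\[
|\dot X(t)|\le \frac{CM}{\delta_S}\,\delta_S\,\|(v-\bar v,u-\bar u,\theta-\bar\theta)(t)\|_{L^\infty},
\]
which is \eqref{est:shift-bound}. By Sobolev embedding and \eqref{smallness}, this also yields $|\dot X|\le C\e_1$.

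\textbf{$L^2$ step.} I will work with the weighted relative entropy
\[
\int_\R a(t,x)\,\eta\big((v,u,\theta,w)\,|\,(\bar v,\bar u,\bar\theta,\bar w)\big)\,dx ,
\]
where $\eta$ is the physical entropy augmented by the capillary part $\tfrac12|w-\bar w|^2$. The Korteweg terms $\sqrt\kappa(w_x/v^{5/2})_x$ in the momentum equation and $-\sqrt\kappa(u_x/v^{5/2})_x$ in the $w$-equation cancel at top order, so only lower-order commutators remain. They involve $\bar w$, $a_x$, and $v-\bar v$, and are bounded through Lemma~\ref{lem:superposition-derivatives}.

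Differentiating in time produces several pieces:
\begin{itemize}
\item the shift terms $\dot X\,(Y_1+Y_2+Y_3)=-\frac{\delta_S}{M}|\dot X|^2$;
\item the hyperbolic flux terms weighted by $a_x$, together with the shock-induced terms $-\bar u_x\,p(v|\bar v)$ and similar;
\item rarefaction terms, which are good by the monotonicity in Lemma~\ref{lem:rarefaction_property}~(1) and produce $G^R$;
\item diffusion terms $\mu\psi_x^2/v$ and $\alpha\vartheta_x^2/(v\theta)$;
\item error terms from $Q_1,Q_2,Q_3$.
\end{itemize}
In the temperature diffusion term the factor $\bar v\bar\theta$ appears, and I expect to use $\gamma\le2$ there exactly as in \eqref{thetaRvR}.

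The shock-localized part is the core of the argument. Following \cite{KVW25}, I would pass to the variable $y=(v^S-v^*)/\delta_S\in(0,1)$ and reduce the leading-order terms to a Poincaré-type inequality on $(0,1)$ for the quantity $\bar v$-component $+$ $\bar\theta$-component of the perturbation. The constant in this reduction comes from the secant estimate \eqref{secant-est} together with \eqref{u_xi,theta_xi-est} and \eqref{logistic-est}, and the choice of $M$ in \eqref{choice_M} is designed to make it close. I expect this \emph{sharp Poincaré/weighted estimate near the shock} to be the main obstacle. The third-order profile enters only through Lemma~\ref{lem:shock-property}, which provides the needed relations up to $O(\delta_S)$ errors uniformly in $(v^*,\theta^*)$. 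The extra capillary contributions near the shock are of higher order in $\delta_S$ and can be absorbed by diffusion plus $\sqrt{\delta_S}\,G^S$.

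\textbf{Wave interactions and source terms.} These are bounded via Lemma~\ref{lem: interaction estimates}, using the tail bounds in Lemmas~\ref{lem:rarefaction_property}, \ref{lem:viscous_contact} and \ref{lem:shock-property}; the rarefaction is large, but $\e_R=\delta_0/2$ is small. The rarefaction errors $Q^R$ contribute
\[
\int_0^t\|Q^R\|_{L^1}^{\cdots}\,ds,
\]
handled by Gagliardo–Nirenberg together with the decay rates in Lemma~\ref{lem:rarefaction_property}~(2). The $t^{-1+(j-1)/q}$ rates integrate to $\delta_0^{(q-8)/(3(q-2))}\le\delta_0^{1/6}$ for $q\ge14$. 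The contact errors are $O(\delta_C)$ times the heat-kernel terms. These are controlled by an auxiliary heat-kernel estimate (the Huang–Li–Matsumura type estimate of the Appendix), which bounds $\int_0^t\!\int \delta_C(1+s)^{-1}e^{-C_1x^2/(1+s)}|(\phi,\vartheta)|^2$ by the dissipation plus small terms.

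\textbf{$H^1$ step.} I would run energy estimates on $\psi_x$ and $\vartheta_x$, and on $\omega$ and $\omega_x$ via \eqref{eq:perturbation}$_4$. A mixed estimate, obtained by multiplying the momentum equation by $\omega$, recovers dissipation of $\omega_x$ (equivalently $\phi_{xx}$). The pressure term supplies $\int|\omega|^2$-type dissipation from $p_v\phi_x$, since $\omega\approx -\sqrt\kappa\,\phi_x/\bar v^{5/2}$ up to wave terms; this gives $\int_0^t\|w-\bar w\|_{H^2}^2$. All nonlinear terms are absorbed using $\e_1$, and shift terms using $|\dot X|\le C\e_1$ and $\delta_S\int|\dot X|^2$. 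Summing the $L^2$ and $H^1$ estimates with a suitable small coefficient, and using equivalence of $\phi_x$ and $\omega$ modulo the waves, gives \eqref{est:apriori}.
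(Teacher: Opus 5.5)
Your overall route matches the paper's: the weighted relative entropy with the shift \eqref{def:shift} following \cite{KVW25}, top-order cancellation of the Korteweg terms through $w$, $\gamma\le2$ in the lower bound for the temperature diffusion, the interaction lemma with $\e_R=\delta_0/2$, the heat-kernel estimate for the contact wave, and then higher-order and mixed estimates. Your proof of \eqref{est:shift-bound} is the paper's.

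The genuine gap is in the $H^1$ step. The single mixed estimate you describe, $\frac{d}{dt}\int\psi\omega\,dx$, gives only $G_{w_1}+D_{w_1}\sim\|\omega\|_{L^2}^2+\|\omega_x\|_{L^2}^2$. That is dissipation of $\omega$ in $H^1$, not $H^2$. The energy estimate for $\omega_x$ (testing \eqref{eq:perturbation}$_4$ by $-\omega_{xx}$) produces no $\omega$-dissipation at all. The $w$-equation has no diffusion of its own, and its coupling term is exactly what cancels against the Korteweg term. So nothing in your outline controls $\int_0^t\|\omega_{xx}\|_{L^2}^2\,ds$. This quantity is part of the left-hand side of \eqref{est:apriori}, and it is used afterwards to get $(v-\bar v)_x\in L^2(0,\infty;H^2)$.

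The missing ingredient is a second mixed estimate one derivative higher, $\frac{d}{dt}\int\psi_x\omega_x\,dx$ (Lemma~\ref{lem:est-2}). Differentiate \eqref{eq:perturbation}$_{2}$ and \eqref{eq:perturbation}$_{4}$ in $x$. The pressure term $-\int(p-\bar p)_{xx}\omega_x\,dx$ then gives $-G_{w_2}$, with $G_{w_2}\sim\|\omega_x\|_{L^2}^2$, via $\sqrt\kappa\,\phi_{xx}=-v^{5/2}\omega_x+\dots$. The Korteweg term gives $-D_{w_2}$, with $D_{w_2}=\int\frac{\sqrt\kappa}{v^{5/2}}|\omega_{xx}|^2\,dx$. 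The price is $C\int(D_{u_2}+D_{\theta_2})$ with an $O(1)$ constant. The estimates must therefore be combined hierarchically:
\begin{itemize}
\item first, this second mixed estimate is added with a small coefficient to the $(\psi_x,\vartheta_x,\omega_x)$ energy estimate;
\item then that combination, whose right side carries $C\int(\|\phi_x\|_{L^2}^2+D_{u_1}+D_{\theta_1})$, is added with a small coefficient to the $L^2$ estimate plus the first mixed estimate.
\end{itemize}

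Two smaller corrections. First, near the shock the weighted Poincar\'e inequality must be applied to $f=(u-\bar u)\circ y^{-1}$ and to $\theta-\bar\theta$, the components that carry the diffusion $D$. As written, you state it for the $v$- and $\theta$-components. The $v$-component has no diffusion. It is tied to $u-\bar u$ only through the good term $G_1$ extracted from $B_2-G$, while $G_2$ ties $\theta-\bar\theta$ to $u-\bar u$. This is what reduces $B_1$ to $A^*\delta_S\int_0^1 f^2\,dy$ and $\dot X$ to $2\sigma^*M\int_0^1 f\,dy$.

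Second, the capillary commutators in the $L^2$ step produce $\|\omega_x\|_{L^2}^2$. These are $B_{32}$ with its $a_x$-weight and the $\bar u_x(w-\bar w)_x$ term in $B_{42}$. There is no diffusion for $\omega$ at the $L^2$ level, so these terms cannot be absorbed by diffusion plus $\sqrt{\delta_S}G^S$ there. They are absorbed only after the mixed estimate supplies $D_{w_1}$. This is why \eqref{est:L2} carries $C(\varepsilon_1+\delta_0^{1/2})\int_0^T\|(v-\bar v,w-\bar w)_x\|_{L^2}^2\,dt$ on its right-hand side.
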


The proof of Proposition \ref{prop:apriori} is given in Sections \ref{sec:L2} and \ref{sec:H1}.

\subsection{Proof of Theorem \ref{thm:main}}

We take the constants $\delta_0$, $\varepsilon_1$, and $C_0$ of Proposition~\ref{prop:apriori}, and let $(\phi,\psi,\vartheta,\omega)$ be the perturbation \eqref{def:perturbation}. The constants $C$ in Sections~\ref{sec:L2} and~\ref{sec:H1} and Appendix~\ref{app:proof_heat} are independent of $\delta_0$, $\e_R$, $\varepsilon_1$, and $T$. Hence Proposition~\ref{prop:apriori} holds with the same $\varepsilon_1$ and $C_0$ for every smaller $\delta_0$, and we fix $\delta_0$ in \eqref{eq:choice-eps0}.
Since $\omega=-\sqrt\kappa\bigl(\phi_x/v^{5/2}+\bar v_x(v^{-5/2}-\bar v^{-5/2})\bigr)$, if $\|\phi\|_{H^2}\le\varepsilon_1$, then we have
\begin{equation}\label{eq:H2-equiv}
\begin{aligned}
    &C_*^{-1}\|\phi\|_{H^2}\le \|\phi\|_{H^1}+\|\omega\|_{H^1}\le C_*\|\phi\|_{H^2},\\
    &\|\phi_x\|_{H^2}\le C_*\bigl(\|\omega\|_{H^2}+\|(\bar v_x\phi,\bar v_{xx}\phi,\bar v_{xxx}\phi)\|_{L^2}\bigr)
\end{aligned}
\end{equation}
for some constant $C_*\ge1$ independent of $\delta_0$.

By \eqref{def:superposition}, Lemma~\ref{lem:viscous_contact} with $t=0$, and \eqref{shock_est} in Lemma~\ref{lem:shock-property}, we have
\begin{align}\label{eq:initial-bridge}
\begin{aligned}
    &\bigl\|(\bar v,\bar u,\bar\theta)(0,\cdot)-(v^R,u^R,\theta^R)(0,\cdot)\bigr\|_{L^2(\R_-)}\\
    &\quad+\bigl\|(\bar v,\bar u,\bar\theta)(0,\cdot)-(v^R,u^R,\theta^R)(0,\cdot)-(v_+-v_*,u_+-u_*,\theta_+-\theta_*)\bigr\|_{L^2(\R_+)}\\
    &\quad+\bigl\|(\bar v-v^R)_x(0,\cdot)\bigr\|_{H^1(\R)}+\bigl\|\bigl((\bar u-u^R)_x,(\bar\theta-\theta^R)_x\bigr)(0,\cdot)\bigr\|_{L^2(\R)}\le\hat C\delta_0^{1/2}
\end{aligned}
\end{align}
for some constant $\hat C>0$ independent of $\delta_0$. We define $\varepsilon_0:=\varepsilon_1/\bigl(4\sqrt3\,C_*^2(C_0+1)^{1/2}\bigr)$ and choose $\delta_0$ such that
\begin{equation}\label{eq:choice-eps0}
    \varepsilon_0+\hat C\delta_0^{1/2}\le\frac{\varepsilon_1}{2},\qquad 6C_*^4C_0\hat C^2\delta_0+3C_*^2C_0\delta_0^{1/6}\le\frac{\varepsilon_1^2}{8}.
\end{equation}

Let $(v_0,u_0,\theta_0)$ satisfy \eqref{initial_data}, which we restate as
\begin{equation}\label{eq:initial-restated}
\begin{aligned}
    &\bigl\|(v_0,u_0,\theta_0)-(v^R,u^R,\theta^R)(0,\cdot)\bigr\|_{L^2(\R_-)}\\
    &\quad+\bigl\|(v_0,u_0,\theta_0)-(v^R,u^R,\theta^R)(0,\cdot)-(v_+-v_*,u_+-u_*,\theta_+-\theta_*)\bigr\|_{L^2(\R_+)}\\
    &\quad+\bigl\|(v_0-v^R(0,\cdot))_x\bigr\|_{H^1(\R)}+\bigl\|\bigl((u_0-u^R(0,\cdot))_x,(\theta_0-\theta^R(0,\cdot))_x\bigr)\bigr\|_{L^2(\R)}<\varepsilon_0 .
\end{aligned}
\end{equation}
By the triangle inequality, \eqref{eq:initial-restated}, \eqref{eq:initial-bridge}, and \eqref{eq:choice-eps0}, we have
\begin{align}\label{eq:initial-small}
\begin{aligned}
    &\|\phi_0\|_{H^2}+\|(\psi_0,\vartheta_0)\|_{H^1}\\
    &\quad\le\bigl\|(v_0,u_0,\theta_0)-(v^R,u^R,\theta^R)(0,\cdot)\bigr\|_{L^2(\R_-)}+\bigl\|(\bar v,\bar u,\bar\theta)(0,\cdot)-(v^R,u^R,\theta^R)(0,\cdot)\bigr\|_{L^2(\R_-)}\\
    &\qquad+\bigl\|(v_0,u_0,\theta_0)-(v^R,u^R,\theta^R)(0,\cdot)-(v_+-v_*,u_+-u_*,\theta_+-\theta_*)\bigr\|_{L^2(\R_+)}\\
    &\qquad+\bigl\|(\bar v,\bar u,\bar\theta)(0,\cdot)-(v^R,u^R,\theta^R)(0,\cdot)-(v_+-v_*,u_+-u_*,\theta_+-\theta_*)\bigr\|_{L^2(\R_+)}\\
    &\qquad+\bigl\|(v_0-v^R(0,\cdot))_x\bigr\|_{H^1}+\bigl\|(\bar v-v^R)_x(0,\cdot)\bigr\|_{H^1}\\
    &\qquad+\bigl\|\bigl((u_0-u^R(0,\cdot))_x,(\theta_0-\theta^R(0,\cdot))_x\bigr)\bigr\|_{L^2}+\bigl\|\bigl((\bar u-u^R)_x,(\bar\theta-\theta^R)_x\bigr)(0,\cdot)\bigr\|_{L^2}\\
    &\quad<\varepsilon_0+\hat C\delta_0^{1/2}\le\frac{\varepsilon_1}{2}.
\end{aligned}
\end{align}
By the Sobolev embedding, \eqref{eq:initial-small} implies that $v_0$ and $\theta_0$ are bounded above and below by positive constants. Since $X(0)=0$, Lemma~\ref{lem:rarefaction_property}~(2)--(4), Lemma~\ref{lem:viscous_contact}, and \eqref{shock_est} in Lemma~\ref{lem:shock-property} give
\begin{equation}\label{eq:bar-under-H3}
    \|(\bar v-\underline v,\bar u-\underline u,\bar\theta-\underline\theta)(0,\cdot)\|_{H^2}<\infty .
\end{equation}
We set $M_0:=\|v_0-\underline v\|_{H^2}+\|(u_0-\underline u,\theta_0-\underline\theta)\|_{H^1}$. By the triangle inequality, \eqref{eq:initial-small}, and \eqref{eq:bar-under-H3},
\begin{align*}
    M_0&\le\|\phi_0\|_{H^2}+\|(\psi_0,\vartheta_0)\|_{H^1}+\|(\bar v-\underline v)(0,\cdot)\|_{H^2}+\|(\bar u-\underline u,\bar\theta-\underline\theta)(0,\cdot)\|_{H^1}\\
    &\le\frac{\varepsilon_1}{2}+\|(\bar v-\underline v)(0,\cdot)\|_{H^2}+\|(\bar u-\underline u,\bar\theta-\underline\theta)(0,\cdot)\|_{H^1}<\infty .
\end{align*}
Hence Proposition~\ref{prop:local} with $M_1:=M_0+1$ gives a unique solution $(v,u,\theta)$ on $[0,T_0]$ in the class of Proposition~\ref{prop:local}. By Lemma~\ref{lem:shift}, \eqref{def:shift} has a unique absolutely continuous solution $X$ on $[0,T_0]$, and $|\dot X|\le C$ on $[0,T_0]$ since the right-hand side of \eqref{def:shift} is bounded.

Next, on every interval $[0,T]$ on which $(v,u,\theta)$ and $X$ exist, we apply Lemma~\ref{lem:rarefaction_property}~(2), Lemma~\ref{lem:viscous_contact}, \eqref{shock_est} in Lemma~\ref{lem:shock-property}, and $|\dot X|\le C$ to obtain
\begin{equation}\label{eq:bar-lip}
    \|(\bar v,\bar u,\bar\theta)(t,\cdot)-(\bar v,\bar u,\bar\theta)(s,\cdot)\|_{H^2}\le C|t-s|,\qquad \|\partial_x^3\bar v(t,\cdot)\|_{L^2}\le C,\qquad s,t\in[0,T].
\end{equation}
By Proposition~\ref{prop:local}, \eqref{eq:bar-under-H3}, and \eqref{eq:bar-lip}, $(v,u,\theta)$ is in the class of Proposition~\ref{prop:apriori} on $[0,T_0]$, and $t\mapsto\|\phi(t,\cdot)\|_{H^2}+\|(\psi,\vartheta)(t,\cdot)\|_{H^1}$ is continuous on $[0,T_0]$. Therefore, by \eqref{eq:initial-small}, there exists $T_1\in(0,T_0]$ such that
\begin{equation}\label{eq:cont-hyp}
    \sup_{t\in[0,T_1]}\Bigl(\|\phi(t,\cdot)\|_{H^2}+\|(\psi,\vartheta)(t,\cdot)\|_{H^1}\Bigr)\le\varepsilon_1 .
\end{equation}

We now extend the solution by the continuation argument, and let
\begin{equation}\label{eq:TM}
\begin{aligned}
    T_M:=\sup\Bigl\{T>0:\ &(v,u,\theta)\text{ exists on }[0,T]\text{ in the class of Proposition~\ref{prop:apriori}, with $X$}\\
    &\text{given by Lemma~\ref{lem:shift}, and }\sup_{t\in[0,T]}\bigl(\|\phi(t,\cdot)\|_{H^2}+\|(\psi,\vartheta)(t,\cdot)\|_{H^1}\bigr)\le\varepsilon_1\Bigr\}.
\end{aligned}
\end{equation}
By \eqref{eq:cont-hyp}, $T_M\ge T_1>0$.
Suppose, for contradiction, that $T_M<\infty$. For $0\le t<T_M$, \eqref{eq:TM}, \eqref{eq:bar-under-H3}, and \eqref{eq:bar-lip} give
\[
\begin{aligned}
    &\|(v-\underline v)(t,\cdot)\|_{H^2}+\|(u-\underline u,\theta-\underline\theta)(t,\cdot)\|_{H^1}\\
    &\quad\le\varepsilon_1+\|(\bar v-\underline v)(0,\cdot)\|_{H^2}+\|(\bar u-\underline u,\bar\theta-\underline\theta)(0,\cdot)\|_{H^1}+CT_M=:M_*.
\end{aligned}
\]
By \eqref{eq:TM} and the Sobolev embedding, $v$ and $\theta$ are bounded above and below on $[0,T_M)\times\R$ by positive constants independent of $t$. Proposition~\ref{prop:local} with $M_0=M_*$, $M_1=M_*+1$, and these bounds gives an existence time independent of the initial time, so applying it at a time close to $T_M$ extends $(v,u,\theta)$ beyond $T_M$. Since the extended solution has the two properties shown before \eqref{eq:cont-hyp}, the maximality of $T_M$ implies
\begin{equation}\label{eq:saturation}
    \sup_{t\in[0,T_M]}\Bigl(\|\phi\|_{H^2}+\|(\psi,\vartheta)\|_{H^1}\Bigr)=\varepsilon_1 .
\end{equation}
On the other hand, for every $T<T_M$, the solution on $[0,T]$ satisfies the hypotheses of Proposition~\ref{prop:apriori} by \eqref{eq:TM}. Letting $T\uparrow T_M$ and using \eqref{eq:H2-equiv}, \eqref{eq:initial-small}, and \eqref{eq:choice-eps0}, we find that
\[
\begin{aligned}
    \sup_{t\in[0,T_M]}\Bigl(\|\phi\|_{H^2}+\|(\psi,\vartheta)\|_{H^1}\Bigr)^2
    &\le 3C_*^2\sup_{t\in[0,T_M]}\|(\phi,\psi,\vartheta,\omega)\|_{H^1}^2\\
    &\le 3C_*^2C_0\bigl(C_*^2(\varepsilon_0+\hat C\delta_0^{1/2})^2+\delta_0^{1/6}\bigr)\le\frac{\varepsilon_1^2}{4},
\end{aligned}
\]
which contradicts \eqref{eq:saturation}. Hence $T_M=\infty$, and Proposition~\ref{prop:apriori} applies on every $[0,T]$ by \eqref{eq:TM}. Moreover, \eqref{est:apriori} in Proposition~\ref{prop:apriori}, \eqref{eq:H2-equiv}, and \eqref{eq:initial-small} give
\begin{align}
	\begin{aligned}\label{est:apriori_global}
		&\sup_{t\ge 0}\|(\phi,\psi,\vartheta,\omega)\|_{H^1}^2+\delta_S\int_0^\infty |\dot{X}|^2\,dt+\int_0^\infty(G^R+G^S)\,dt\\
		&\quad+\int_0^\infty\bigl(\|(\psi,\vartheta)_x\|_{H^1}^2+\|\omega\|_{H^2}^2\bigr)\,dt
		\le C\varepsilon_0^2+C\delta_0^{1/6}.
	\end{aligned}
\end{align}

By Lemma~\ref{lem:superposition-derivatives}, \eqref{GSGR} in Proposition~\ref{prop:apriori}, \eqref{est:apriori_global}, and \eqref{heat-int} and \eqref{eq:phi_x_int} in the proof of Proposition~\ref{prop:apriori},
\begin{equation}\label{eq:phix-L2t}
\begin{aligned}
    I&:=\int_0^\infty\int_\R|(\bar v_x,\bar v_{xx},\bar v_{xxx},\bar u_x,\bar u_{xx},\bar\theta_x,\bar\theta_{xx},\bar w_x,\bar w_{xx})|^2|(\phi,\vartheta)|^2\,dx\,dt\\
    &\le C\int_0^\infty(G^R+G^S)\,dt+C\delta_C^2\int_0^\infty\frac{1}{1+t}\int_\R e^{-\frac{C_1|x|^2}{1+t}}|(\phi,\vartheta)|^2\,dx\,dt<\infty .
\end{aligned}
\end{equation}
By \eqref{eq:H2-equiv}, \eqref{est:apriori_global}, and \eqref{eq:phix-L2t},
\begin{equation}\label{eq:J-def}
    \int_0^\infty\|\phi_x\|_{H^2}^2\,dt\le C\int_0^\infty\|\omega\|_{H^2}^2\,dt+CI<\infty .
\end{equation}
Therefore \eqref{eq:regularity} follows from \eqref{eq:TM}, \eqref{est:apriori_global}, and \eqref{eq:J-def}.

Finally, let $g(t):=\|\phi_x(t)\|_{H^1}^2+\|\psi_x(t)\|_{L^2}^2+\|\vartheta_x(t)\|_{L^2}^2$. We claim that
\begin{equation}\label{eq:g-W11}
    \int_0^\infty\bigl(|g(t)|+|g'(t)|\bigr)\,dt<\infty .
\end{equation}
By \eqref{est:apriori_global} and \eqref{eq:J-def}, $g\in L^1(0,\infty)$. To estimate $g'$, we use the perturbation equations \eqref{eq:perturbation} and \eqref{eq:vbar-derivatives} in Lemma~\ref{lem:superposition-derivatives} as in \cite{KVW23}, which give
\[
\begin{aligned}
    |g'(t)|&\le C\Bigl(\|(\phi_x,\phi_{xx},\phi_{xxx},\psi_x,\psi_{xx},\vartheta_x,\vartheta_{xx},\omega_x,\omega_{xx})(t)\|_{L^2}^2+|\dot X(t)|^2+\|(Q_1,Q_2)(t)\|_{L^2}^2\Bigr)\\
    &\quad+C\int_\R|(\bar v_x,\bar v_{xx},\bar v_{xxx},\bar u_x,\bar u_{xx},\bar\theta_x,\bar\theta_{xx},\bar w_x,\bar w_{xx})|^2|(\phi,\vartheta)|^2(t,x)\,dx .
\end{aligned}
\]
The right-hand side is integrable on $(0,\infty)$ by \eqref{est:apriori_global}, \eqref{eq:J-def}, \eqref{eq:phix-L2t}, Lemma~\ref{lem: interaction estimates}, Lemma~\ref{lem:rarefaction_property}~(2), and \eqref{error_contact}. This proves \eqref{eq:g-W11}, and hence $\lim_{t\to\infty}g(t)=0$. The Gagliardo--Nirenberg inequality and \eqref{est:apriori_global} then give
\[
    \|(\phi,\psi,\vartheta)(t,\cdot)\|_{L^\infty}\le C\|(\phi,\psi,\vartheta)(t,\cdot)\|_{L^2}^{1/2}\|(\phi,\psi,\vartheta)_x(t,\cdot)\|_{L^2}^{1/2}\le Cg(t)^{1/4}\longrightarrow0,
\]
and \eqref{est:large_time} follows by Lemma~\ref{lem:rarefaction_property}~(5). By \eqref{est:shift-bound} in Proposition~\ref{prop:apriori}, \eqref{eq:TM} with $T_M=\infty$, and the Sobolev embedding,
\[
    |\dot X(t)|\le C_0\|(\phi,\psi,\vartheta)(t,\cdot)\|_{L^\infty}\le C\varepsilon_1,\qquad t\ge0,
\]
so $X$ is Lipschitz on $[0,\infty)$, and $\|(\phi,\psi,\vartheta)(t,\cdot)\|_{L^\infty}\to0$ gives \eqref{est:shift}. This completes the proof of Theorem~\ref{thm:main}.

\section{Estimate on the $L^2$ perturbation}\label{sec:L2}
\setcounter{equation}{0}
In this section, we begin to prove Proposition \ref{prop:apriori}. As a first step, we present an $L^2$-estimate on the perturbation $(v-\bar{v},u-\bar{u},\theta-\bar{\theta},w-\bar{w})$. Specifically, we will derive the following estimate:
\begin{equation}
	\begin{aligned}\label{est:L2}
	&\sup_{t\in[0,T]}\|(v-\bar{v},u-\bar{u},\theta-\bar{\theta},w-\bar{w})\|_{L^2}^2
	+\delta_S\int_0^T |\dot{X}(t)|^2\, dt +\int_0^T (G^S+G^R+G_3+D)\, dt\\
	&\le C\|(v-\bar v,u-\bar u,\theta-\bar\theta,w-\bar w)(0,\cdot)
	\|_{L^2}^2 \\
    &\qquad+C (\varepsilon_1 + \delta_0^{1/2})\int_0^T \|(v-\bar{v},w-\bar{w})_x\|_{L^2}^2 \, dt +C\delta_0^{1/6},
	\end{aligned}
\end{equation}
where $G^S$ and $G^R$ are defined in \eqref{GSGR}, $G_3$ in \eqref{GSG3}, and $D$ in \eqref{GD}, which consists of $D_{u_1}$ and $D_{\theta_1}$ (see \eqref{Du1,Dtheta1-def} for details). To prove \eqref{est:L2}, we rely on estimating the relative entropy between the solution to \eqref{eq:NSFK-extended} and the superposition wave \eqref{eq:composite_extended}. Using the lower and upper bounds of the solution and superposition wave, the relative entropy estimate is reduced to the $L^2$-estimate for the perturbation. 
In Sections~\ref{sec:L2}--\ref{sec:H1} and Appendix~\ref{app:proof_heat}, $C$ denotes a positive constant that may change from line to line, depends only on $(v_-,u_-,\theta_-)$, $(v_*,u_*,\theta_*)$, $q$, $\gamma$, $R$, $\mu$, $\alpha$, and $\kappa$, and is independent of $\delta_0$, $\e_R$, $\varepsilon_1$, and $T$.

\subsection{Wave interaction estimates}\label{sec:interaction}
Before we present the relative entropy estimate, we first collect several estimates for interactions between the elementary waves. These will be used to control the terms when we estimate the relative entropy. We first prove \eqref{est:shift-bound}. Since $1<a<2$, $\bar v$ and $\bar\theta$ are bounded below by positive constants, and $|u^S_x|+|\theta^S_x|\le C|v^S_x|$ by Lemma~\ref{lem:shock-property}, \eqref{def:shift} gives
\[
|\dot X(t)|\le\frac{C}{\delta_S}\|(v-\bar v,u-\bar u,\theta-\bar\theta)(t,\cdot)\|_{L^\infty}\int_\R|v^S_x|\,dx=C\|(v-\bar v,u-\bar u,\theta-\bar\theta)(t,\cdot)\|_{L^\infty}.
\]
Hence, by \eqref{smallness} and the Sobolev embedding,
\[
|\dot X(t)|\le C\varepsilon_1 \qquad\text{and hence}\qquad |X(t)|\le C\varepsilon_1 t,\quad\mbox{for $0\le t\le T$}.
\]

We define the interaction terms $R_i$ for $i=1,2,3,4$ as
\begin{align*}
    &R_1:=|(v^R_x,\theta^R_x)|\,|(v^C-v_*,\theta^C-\theta_*,v^S-v^*,\theta^S-\theta^*)|,\\
    &R_2:=|(v^C_x,\theta^C_x)|\,|(v^R-v_*,\theta^R-\theta_*,v^S-v^*,\theta^S-\theta^*)|,\\
    &R_3:=|(v^S_x,\theta^S_x)|\,|(v^R-v_*,\theta^R-\theta_*,v^C-v^*,\theta^C-\theta^*)|,\\
    &R_4:=|v^R_x||v^C_x|+|v^R_x||v^S_x|+|v^C_x||v^S_x|.
\end{align*}

\begin{lemma}\label{lem: interaction estimates}
Let $X$ be the shift defined by \eqref{def:shift}. Under the hypotheses of Proposition \ref{prop:apriori}, there exists a constant $C>0$, independent of $T$, $\delta_S$, $\delta_C$, and $\e_R$, such that for all $t\in[0,T]$, the following estimates hold:
\begin{equation}\label{est-interaction}
\begin{aligned}
    \|R_1\|_{L^2}&\le C\Big(\delta_S e^{-C\delta_S t}+\delta_C e^{-Ct}\Big)
    \delta_R \e_R^{1/2}+C(\delta_S+\delta_C)\delta_R\varepsilon_R^{1/2} \bigl(1+\e_R^2t^2\bigr)^{-q/2},\\
    \|R_2\|_{L^2}&\le C\delta_R\delta_C e^{-Ct}+ C\delta_S\delta_Ce^{-C\delta_St}+C\delta_R\delta_C(1+t)^{-1/4}\bigl(1+\e_R^2(1+t)^2\bigr)^{-q/2},\\
    \|R_3\|_{L^2}&\le C \bigl( \delta_R +\delta_C\bigr)\delta_S^{3/2} e^{-C \delta_S t}+C\delta_S^{3/2}\delta_R(1+\e_R^2t^2)^{-q/4},\\
    \|R_4\|_{L^2}&\le C\delta_S^2\delta_R\e_R^{1/2}e^{-C\delta_St} + C\delta_S^2\delta_R\e_R^{1/2}(1+\e^2_Rt^2)^{-q/2}+ C\delta_S\delta_Ce^{-C\delta_St}\\
    &\quad+C\delta_R\delta_C\e_R^{1/2}(1+t)^{-1/4}\bigl(1+\e_R^2(1+t)^2\bigr)^{-q/2},\\
    \|Q_i^I\|_{L^2}
        &\le C\big( \delta_R \varepsilon_R^{1/2} \delta_S+\delta_R \delta_S^{3/2}+\delta_C \delta_S\big)e^{-C \delta_S t}
        + C\delta_R \delta_C e^{-Ct} \\
        &\quad+ C((\delta_S+\delta_C)\delta_R\varepsilon_R^{1/2}+\delta_S^{3/2}\delta_R)\bigl(1+\varepsilon_R^2 t^2\bigr)^{-q/4}\\
        &\quad +C\delta_R\delta_C(1+t)^{-1/4}\bigl(1+\e_R^2(1+t)^2\bigr)^{-q/4},
        \qquad i=1,2,\\
    \|Q_3\|_{L^2}&\le C\varepsilon_1\Big[(\delta_R\delta_S^{3/2}+\delta_C\delta_S)e^{-C\delta_St}+\delta_S^{3/2}\delta_R(1+\e_R^2t^2)^{-q/4}\\
    &\qquad \qquad +\delta_R\delta_C\e_R^{1/2}(1+t)^{-1/4}\bigl(1+\e_R^2(1+t)^2\bigr)^{-q/2}\Big].
\end{aligned}
\end{equation}
Moreover, since $\e_R=\delta_0/2$ and $\delta_C,\delta_S<\delta_0$, for any $T>0$, we have
\begin{equation}\label{est-time-int-interaction}
\begin{aligned}
    \int_0^T \|R_1\|_{L^2}\,dt
        &\le C (1+\delta_C)\,\delta_R\varepsilon_R^{1/2}
            +C(\delta_S+\delta_C)\,\delta_R\e_R^{-1/2}\le C\delta_0^{1/2},\\
    \int_0^T \|R_2\|_{L^2}\,dt
        &\le C\delta_C\delta_R\bigl(1+\e_R^{-3/4}\bigr) + C\delta_C \le C \delta_0^{1/4},\\
    \int_0^T \|R_3\|_{L^2}\,dt
        &\le C\delta_S^{1/2}(\delta_R+\delta_C)+C\delta_S^{3/2}\delta_R\e_R^{-1}\le C\delta_0^{1/2},\\
    \int_0^T \|R_4\|_{L^2}\,dt
        &\le C(\delta_S\delta_R\e^{1/2}_R+\delta_S^2\delta_R\e_R^{-1/2} + \delta_R\delta_C\e_R^{-1/4}+\delta_C)\le C\delta_0^{3/4},\\
    \int_0^T \|Q_i^I\|_{L^2}\,dt
        &\le C\Big(\delta_R \varepsilon_R^{1/2}+ \delta_R \delta_S^{1/2}+ \delta_C 
            + (\delta_S+\delta_C)\delta_R\e_R^{-1/2} +\delta_S^{3/2}\delta_R\e_R^{-1}+\delta_R\delta_C\e_R^{-3/4}\Big) \le C \delta_0^{1/4},\\
    \int_0^T \|Q_3\|_{L^2}\,dt
        &\le C \varepsilon_1 (\delta_R\delta_S^{1/2}+\delta_C+\delta_S^{3/2}\delta_R\e_R^{-1}+\delta_R\delta_C\e_R^{-1/4}) \le C \varepsilon_1 \delta_0^{1/2}.
\end{aligned}
\end{equation}
\end{lemma}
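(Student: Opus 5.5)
The plan is to split the real line into regions adapted to the positions of the three waves, and on each region use the tail bounds of whichever waves are far away. At time $t$ the rarefaction fan sits in $[\lambda_{1-}(1+t),\lambda_{1*}(1+t)]$ with $\lambda_{1*}<0$. The contact wave is centered at $x=0$ with width $\sqrt{1+t}$. The shock is centered at $\sigma t+X(t)$, and since $|X(t)|\le C\varepsilon_1 t$ with $\sigma\ge\sigma^*/2>0$, we have $\sigma t+X(t)\ge \sigma t/2$. I will use three cutoffs: $x\le\lambda_{1*}(1+t)/2$, $\lambda_{1*}(1+t)/2\le x\le \sigma t/4$, and $x\ge\sigma t/4$.

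\textbf{Estimate for $R_1$.} Write $|v^R_x|\cdot|v^S-v^*|$ as a product.
\begin{itemize}
\item On $x\le \sigma t/4$, the factor $|v^S-v^*|\le C\delta_S e^{-c\delta_S|x-\sigma t-X|}\le C\delta_Se^{-C\delta_S t}$, while $\|v^R_x\|_{L^2}\le C\delta_R\e_R^{1/2}$ by Lemma~\ref{lem:rarefaction_property}(2).
\item On $x\ge\lambda_{1*}(1+t)/2$, we are in the right tail, so $|v^R_x|\le C\delta_R\e_R(1+\e_R^2|x-\lambda_{1*}(1+t)|^2)^{-q}$. Taking the $L^2$ norm over this region gives $C\delta_R\e_R^{1/2}(1+\e_R^2t^2)^{-q/2}$ times $\sup|v^S-v^*|\le\delta_S$.
\end{itemize}
The contact part is handled the same way. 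For $x\le\lambda_{1*}(1+t)/2$ we have $|x|\gtrsim 1+t$, and the Gaussian gives $\delta_Ce^{-Ct}$; otherwise the rarefaction tail applies.

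\textbf{Estimates for $R_2$ and $R_3$.} These are symmetric. For $R_3$, the $L^2$ norm of $|v^S_x|$ restricted to $x\le\sigma t/4$ is $\delta_S^{3/2}e^{-C\delta_St}$, using $\|v^S_x\|_{L^2}\sim\delta_S^{3/2}$. On the complement, the rarefaction difference $|v^R-v_*|\le C\delta_R(1+\e_R^2t^2)^{-q/2}$ comes from the right-tail bound, and the contact difference is $\delta_Ce^{-Ct}$. For $R_2$, Lemma~\ref{lem:viscous_contact} gives $\|v^C_x\|_{L^2}\le C\delta_C(1+t)^{-1/4}$. This factor is paired with the rarefaction tail near $x=0$, which lies at distance $\sim(1+t)$ from the fan. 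Away from $x=0$ the Gaussian decays exponentially. Against the shock, we split at $x=\sigma t/4$.

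\textbf{Estimate for $R_4$.} Bound each product by $\|f\|_{L^\infty}$ on one region times the $L^2$ norm of the other factor there, using the same regions as above.

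\textbf{Estimates for $Q^I_i$ and $Q_3$.} Expand $Q^I_i$ using Taylor formulas. For example, $\bar p-p^R-p^C-p^S$ is a smooth function that vanishes whenever two of the three deviations vanish. Its $x$-derivative is therefore pointwise bounded by a combination of $R_1,\dots,R_4$ and analogous products involving higher derivatives. Those higher derivatives reduce to first derivatives by Lemma~\ref{lem:rarefaction_property}(2), \eqref{shock_est}, and Lemma~\ref{lem:viscous_contact}. The contributions with $\e_R^{j-1}$ factors only help, since $\e_R\le1$. The same applies to the viscous, capillary, and quadratic $\mu\bar u_x^2/\bar v$ differences.

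For $Q_3$, we use $|1/(v^S)^{5/2}-1/\bar v^{5/2}|\le C|\bar v-v^S|$, where $\bar v-v^S=(v^R-v_*)+(v^C-v^*)$, together with $|\dot X|\le C\varepsilon_1$. This reduces $Q_3$ to $\varepsilon_1$ times $R_3$-type terms.

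\textbf{Time integrals.} These are elementary:
\begin{itemize}
\item $\int_0^\infty e^{-C\delta_St}dt\sim\delta_S^{-1}$.
\item $\int_0^\infty(1+\e_R^2t^2)^{-q/4}dt\sim\e_R^{-1}$.
\item $\int_0^\infty(1+t)^{-1/4}(1+\e_R^2(1+t)^2)^{-q/2}dt\lesssim\e_R^{-3/4}$.
\end{itemize}
Finally, substitute $\e_R=\delta_0/2$ and $\delta_S,\delta_C<\delta_0$, keeping $\delta_R$ as an $O(1)$ constant.

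\textbf{Main obstacle.} The main difficulty is tracking the exact powers of $\e_R$ in the two rarefaction tails, because the rarefaction wave is not small. Every term must carry a smallness factor coming from $\delta_S$ or $\delta_C$ that beats the $\e_R^{-1}$ losses from the time integration. For example, in $R_3$ we get $\delta_S^{3/2}\e_R^{-1}\sim\delta_0^{1/2}$, which is only just enough. I would verify this carefully for the $R_2$-type terms, where $(1+t)^{-1/4}$ decays slowly.
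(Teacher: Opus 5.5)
Your proposal is correct and follows essentially the same route as the paper. You split space at $\lambda_{1*}(1+t)/2$ and at a point proportional to $\sigma t$, use the exponential and Gaussian tails of the shock and contact waves away from their centers together with the algebraic right tail of the approximate rarefaction, reduce $Q^I_i$ and $Q_3$ to these products, and integrate in time. One small correction: differentiating $\bar v^{-5/2}$ in $Q_3$ also produces the cross terms $|\dot X|\,|v^S_x|\,|(v^R_x,v^C_x)|$, so $Q_3$ is controlled by $\varepsilon_1(\|R_3\|_{L^2}+\|R_4\|_{L^2})$, as in the paper, rather than by $R_3$-type terms alone. Since you have already estimated $R_4$, this still yields the stated bound.
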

\begin{proof} 
\noindent $\bullet$ (Estimates of $R_i$): Since $\lambda_{1*}<0$, it follows from Lemma~\ref{lem:rarefaction_property} that for all $t\ge0$ and $x>\frac{\lambda_{1*}}{2}(1+t)$,
\begin{equation}\label{ptwise-R}
\begin{aligned}
&\big|(v^R-v_*,\theta^R-\theta_*)\big|
 \le C\delta_R \bigl(1+(\e_R|x-\lambda_{1*}(1+t)|)^2\bigr)^{-q/2},\\
&\big|(v^R_x,\theta^R_x)\big|
 \le C\delta_R \e_R\bigl(1+(\e_R|x-\lambda_{1*}(1+t)|)^2\bigr)^{-q},
\end{aligned}
\end{equation}
and
\begin{equation}\label{L2-Rx}
\|(v^R_x,\theta^R_x)(t,\cdot)\|_{L^2}
\le C\min\Big\{\delta_R\varepsilon_R^{1/2},\ \delta_R^{1/2}(1+t)^{-1/2}\Big\}.
\end{equation}

\medskip
\noindent $\diamond$ ($R_1$-estimate):
We split the estimate of $\|R_1\|_{L^2}$ as
\begin{equation}\label{R1-split}
\begin{aligned}
 \|R_1 \|_{L^2}
 &\le  \Big[\Big(\int_{\left\{x \le \frac{\lambda_{1*}}{2}(1+t)\right\}}
 +\int_{\left\{x > \frac{\lambda_{1*}}{2}(1+t)\right\}}\Big)
 \big|(v^R_x,\theta^R_x)\big|^2
 \big|(v^C-v_*, \theta^C-\theta_*, v^S-v^*, \theta^S-\theta^*) \big|^2\,dx\Big]^{\frac12}\\
& =: \sqrt{R_{11}+R_{12}}.
\end{aligned}
\end{equation}
For $R_{11}$, we first note that the smallness of $\e_1$ implies $|X(t)|\le C\varepsilon_1 t<\frac{\sigma t}{4}$. Then, for $x\le \frac{\lambda_{1*}}{2}(1+t)$,
\[
x-\sigma t- X(t)\le \frac{\lambda_{1*}}{2}(1+t)-\frac{3\sigma t}{4}<-\frac{3\sigma t}{4}<0,
\]
and hence $|x-\sigma t-X(t)|\ge \frac{3\sigma}{4}t$. Therefore, by Lemma~\ref{lem:viscous_contact} and Lemma~\ref{lem:shock-property}, for any $x\le \frac{\lambda_{1*}}{2}(1+t)<0$,
\begin{equation}\label{shock-contact-left}
\begin{aligned}
&\big|(v^S-v^*,\theta^S-\theta^*)\big|
\le C \delta_S e^{-C\delta_S|x-\sigma t -X(t)|} \le C \delta_S e^{-\frac{C\delta_S|x-\sigma t -X(t)|}{2}} e^{-C\delta_S t},\\
&\big|(v^S_x,\theta^S_x)\big|
\le C \delta_S^{2} e^{-C\delta_S|x-\sigma t -X(t)|} \le C \delta_S^2 e^{-\frac{C\delta_S|x-\sigma t -X(t)|}{2}} e^{-C\delta_S t},\\
&\big|(v^C-v_*, \theta^C-\theta_*)\big|+\big|(v^C_x, \theta^C_x)\big|
\le C\delta_Ce^{-\frac{C|x|^2}{1+t}}
\le C\delta_Ce^{-\frac{C|x|^2}{2(1+t)}}e^{-C t}.
\end{aligned}
\end{equation}
Consequently, using \eqref{L2-Rx} and \eqref{shock-contact-left},
\begin{equation}\label{R1L}
\begin{aligned}
R_{11}
&\le C\Big(\delta_S^2 e^{-2C\delta_S t}+\delta_C^2 e^{-2Ct}\Big)
\int_{\R} \big|(v^R_x,\theta^R_x)\big|^2\,dx\le C\Big(\delta_S^2 e^{-2C\delta_S t}+\delta_C^2 e^{-2Ct}\Big)
\delta_R^2\e_R.
\end{aligned}
\end{equation}
For $R_{12}$, we use \eqref{ptwise-R} and
$\big|(v^C-v_*,\theta^C-\theta_*, v^S-v^*, \theta^S-\theta^*) \big|\le C(\delta_S+\delta_C)$ to get
\begin{equation}\label{R1R-0}
\begin{aligned}
R_{12}&\le
C(\delta_S^2+\delta_C^2)\delta_R^2\varepsilon_R^2\int_{\left\{x > \frac{\lambda_{1*}}{2}(1+t)\right\}} (1+(\e_R|x-\lambda_{1*}(1+t)|)^2)^{-2q}\,d x.
\end{aligned}
\end{equation}
However, for $x>\frac{\lambda_{1*}}{2}(1+t)$, we have
\[x-\lambda_{1*}(1+t)=\underbrace{x-\frac{\lambda_{1*}}{2}(1+t)}_{>0}\underbrace{-\frac{\lambda_{1*}}{2}(1+t)}_{>0}>0,\]
which implies
\begin{equation}\label{split}
	(1+(\e_R|x-\lambda_{1*}(1+t)|)^2)^{-1}\le \left(1+\left(\e_R\left|x-\frac{\lambda_{1*}}{2}(1+t)\right|\right)^2\right)^{-1/2}\left(1+\left(\e_R\left|\frac{\lambda_{1*}}{2}(1+t)\right|\right)^2\right)^{-1/2}.
\end{equation}
We substitute \eqref{split} into \eqref{R1R-0} and integrate in $x$ to get
\begin{equation}\label{R1R}
	R_{12}\le C(\delta^2_S+\delta^2_C)\delta^2_R\e_R(1+\e_R^2t^2)^{-q}.
\end{equation}
Combining \eqref{R1-split}, \eqref{R1L}, and \eqref{R1R},
\begin{equation}\label{R1}
\|R_1\|_{L^2}
\le C\Big(\delta_S e^{-C\delta_S t}+\delta_C e^{-Ct}\Big)
\delta_R \e_R^{1/2}+C(\delta_S+\delta_C)\delta_R\varepsilon_R^{1/2}
\bigl(1+\e_R^2t^2\bigr)^{-q/2}.
\end{equation}

\medskip
\noindent $\diamond$ ($R_2$-estimate): We decompose $R_2$ as
\begin{equation*}
    \begin{aligned}
\|R_2 \|_{L^2}
&\le C \Bigg[\Bigg(\int_{\{x\le \frac{\lambda_{1*}}{2}(1+t)\}} + \int_{\{x> \frac{\lambda_{1*}}{2}(1+t)\}}\Bigg)
\big|(v^C_x,\theta^C_x)\big|^2 \big|(v^R-v_*, \theta^R-\theta_*) \big|^2 dx\\
 &\quad+ \Bigg(\int_{\{x\le \frac{\sigma t}{2}\}} + \int_{\{x> \frac{\sigma t}{2}\}}\Bigg)
\big|(v^C_x,\theta^C_x)\big|^2 \big|(v^S-v^*, \theta^S-\theta^*) \big|^2 dx\Bigg]^{\frac12}\\
& =: C\sqrt{R_{21}+R_{22}+R_{23}+R_{24}}.
\end{aligned}
\end{equation*}
For $R_{21}$, we use $|(v^R-v_*,\theta^R-\theta_*)|\le C\delta_R$ and \eqref{shock-contact-left} to get
\begin{equation*}
    \begin{aligned}
        R_{21}&\le C\delta_R^2\delta_C^2e^{-Ct}\int_{\{x \le \frac{\lambda_{1*}}{2}(1+t)\}} e^{-\frac{C|x|^2}{2(1+t)}}\,dx \le C\delta_R^2\delta_C^2e^{-Ct}.
    \end{aligned}
\end{equation*}
To estimate $R_{22}$, we recall \eqref{ptwise-R} for $x>\frac{\lambda_{1*}}{2}(1+t)$, which gives $|(v^R-v_*,\theta^R-\theta_*)|\le C\delta_R(1+\e_R^2(1+t)^2)^{-q/2}$ on this region. Combining this with $\|(v^C_x,\theta^C_x)\|_{L^2}\le C\delta_C(1+t)^{-1/4}$, which follows from Lemma \ref{lem:viscous_contact}, we obtain
\[R_{22}\le C\delta_R^2\delta_C^2(1+t)^{-1/2}\bigl(1+\e_R^2(1+t)^2\bigr)^{-q}.\]

For $R_{23}$, we use the smallness of $\e_1$ to get $x-\sigma t-X(t)\le -\frac{\sigma t}{4}<0$ for $x\le \frac{\sigma t}{2}$. This yields $|x-\sigma t-X(t)|\ge \frac{\sigma t}{4}$ and consequently,
\begin{equation}\label{shock_left_est}
	|(v^S-v^*,\theta^S-\theta^*)|\le C\delta_S e^{-\frac{C\delta_S|x-\sigma t-X(t)|}{2}}e^{-C\delta_S t},\quad |(v^S_x,\theta^S_x)|\le C\delta^2_S e^{-\frac{C\delta_S|x-\sigma t-X(t)|}{2}}e^{-C\delta_S t}.
\end{equation}
This and Lemma \ref{lem:viscous_contact} yield that
\[R_{23}\le C\delta_S^2\delta_C^2e^{-C\delta_St}.\]
Finally, for $R_{24}$, we use $x>\frac{\sigma t}{2}>0$ and Lemma~\ref{lem:viscous_contact} to get
\begin{equation} \label{contact-right}
    |(v^C-v^*,\theta^C-\theta^*)|+|(v^C_x,\theta^C_x)| \le C \delta_C e^{-\frac{C|x|^2}{1+t}} \le C \delta_C e^{-\frac{C|x|^2}{2(1+t)}}e^{-Ct},
\end{equation}
from which we obtain
\begin{equation*} 
    \begin{aligned}
        R_{24} \le C\delta_S^2\delta_C^2e^{-Ct}.
    \end{aligned}
\end{equation*}
Combining all the estimates of $R_{2i}$ for $i=1,2,3,4$, we get
\begin{equation}\label{R2}
\|R_2 \|_{L^2}\le C\delta_R\delta_C e^{-Ct}+ C\delta_S\delta_Ce^{-C\delta_St}+C\delta_R\delta_C(1+t)^{-1/4}\bigl(1+\e_R^2(1+t)^2\bigr)^{-q/2}.
\end{equation}

\medskip
\noindent $\diamond$ ($R_3$-estimate): We split $R_3$ at $x=\frac{\sigma t}{2}$ as
\begin{align*}
\begin{aligned}
\|R_3 \|_{L^2}
 &=\Bigg[\Bigg(\int_{\{x\le \frac{\sigma t}{2}\}} + \int_{\{x> \frac{\sigma t}{2}\}}\Bigg)
 \big|(v^S_x,\theta^S_x)\big|^2 
 \big|(v^R-v_*, \theta^R-\theta_*,v^C-v^*, \theta^C-\theta^*) \big|^2 dx\Bigg]^{\frac12}\\
& =: C\sqrt{R_{31}+R_{32}}.
\end{aligned}
\end{align*}
By using $|(v^R-v_*,\theta^R-\theta_*)|\le C\delta_R$ and \eqref{shock_left_est}, we estimate $R_{31}$ as
\begin{equation} \label{R3L}
    \begin{aligned}
        R_{31}&\le C\bigl( \delta_R^2+ \delta_C^2\bigr)\delta_S^4\int_{\{x \le \frac{\sigma}{2}t\}} e^{-\frac{C \delta_S|x-\sigma t -X(t)|}{2}} e^{-C \delta_S t}  \, dx \le C\bigl( \delta_R^2+ \delta_C^2\bigr) \delta_S^3 e^{-C \delta_St}.
    \end{aligned}
\end{equation}
For $R_{32}$, we use $x>\frac{\sigma t}{2}>0>\frac{\lambda_{1*}}{2}(1+t)$, \eqref{split} and \eqref{contact-right} to derive
\begin{equation} \label{R3R}
    \begin{aligned}
        R_{32} &\le C\left(\frac{\delta_R^2}{(1+\e_R^2t^2)^{q/2}}+ \delta_C^2e^{-Ct}\right)\delta_S^2\int_{\R}|(v^S_x,\theta^S_x)|\,dx\le C\delta_S^3\left(\frac{\delta_R^2}{(1+\e_R^2t^2)^{q/2}}+ \delta_C^2e^{-Ct}\right)
    \end{aligned}
\end{equation}

Combining \eqref{R3L} and \eqref{R3R}, we have
\begin{equation}\label{R3}
    \|R_3\|_{L^2}\le C \bigl( \delta_R +\delta_C\bigr)\delta_S^{3/2} e^{-C \delta_S t}+C\delta_S^{3/2}\delta_R(1+\e_R^2t^2)^{-q/4}.
\end{equation}

\noindent $\diamond$ ($R_4$-estimate): We use similar estimates to those of $R_1$ and $R_2$ to derive
\begin{equation}\label{R4}
    \begin{aligned}
        \|R_4\|_{L^2} &\le C\delta_S^2\delta_R\e_R^{1/2}e^{-C\delta_St} + C\delta_S^2\delta_R\e_R^{1/2}(1+\e^2_Rt^2)^{-q/2}\\
        &\quad+C\delta_R\delta_C\e_R^{1/2}(1+t)^{-1/4}\bigl(1+\e_R^2(1+t)^2\bigr)^{-q/2}+ C\delta_S\delta_Ce^{-C\delta_St}.
    \end{aligned}
\end{equation}
Here the rarefaction--shock interaction $|v^R_x||v^S_x|$ is estimated by splitting at $x=\frac{\sigma t}{2}$ as for $R_1$, while the rarefaction--contact interaction $|v^R_x||v^C_x|$ is estimated as for $R_{22}$, which produces the algebraic decay above.

\medskip
	\noindent $\bullet$ (Estimate of $Q_1^I$): Recall the definition of $Q_1^I$:
\begin{align*}
    Q_1^I&=\left(\bar{p}-p^R-p^C-p^S\right)_x- \mu\left(\frac{\bar{u}_x}{\bar{v}}-\frac{u^R_x}{v^R}-\frac{u^C_x}{v^C}-\frac{u^S_x}{v^S}\right)_x\\
    &\quad-\kappa\left(  \left(-\frac{\bar{v}_{xx}}{\bar{v}^5} + \frac{5(\bar{v}_x)^2}{2\bar{v}^6} \right)- \left(-\frac{v^R_{xx}}{(v^R)^5} + \frac{5(v^R_x)^2}{2(v^R)^6} \right)\right.\\
    &\hspace{4em}\left.-\left(-\frac{v^C_{xx}}{(v^C)^5} + \frac{5(v^C_x)^2}{2(v^C)^6} \right)- \left(-\frac{v^S_{xx}}{(v^S)^5} + \frac{5(v^S_x)^2}{2(v^S)^6} \right)\right)_x.
\end{align*}
The first two terms of $Q_1^I$ can be estimated as
    \begin{align*}
       \bigl| \left(\bar{p}-p^R-p^C-p^S\right)_x \bigr| &\le\Biggl| R\theta^R_x\left(\frac{1}{\bar{v}}-\frac{1}{v^R}\right)+R\theta^C_x\left(\frac{1}{\bar{v}}-\frac{1}{v^C}\right)+R\theta^S_x\left(\frac{1}{\bar{v}}-\frac{1}{v^S}\right)\\
       &\qquad -v^R_x\left( \frac{\bar{p}}{\bar{v}}-\frac{p^R}{v^R}\right)-v^C_x\left( \frac{\bar{p}}{\bar{v}}-\frac{p^C}{v^C}\right)-v^S_x\left( \frac{\bar{p}}{\bar{v}}-\frac{p^S}{v^S}\right)\Biggr|\\
       &\le C (R_1+R_2+R_3),
    \end{align*}
and
    \begin{align*}
        &\mu\Biggl|\left(\frac{\bar{u}_x}{\bar{v}}-\frac{u^R_x}{v^R}-\frac{u^C_x}{v^C}-\frac{u^S_x}{v^S}\right)_x \Biggr|
        \le \left| \left( u^R_x \left( \frac{\mu}{\bar{v}}-\frac{\mu}{v^R} \right) \right)_x \right|+\left| \left( u^C_x \left( \frac{\mu}{\bar{v}}-\frac{\mu}{v^C} \right) \right)_x \right| +\left|\left( u^S_x \left( \frac{\mu}{\bar{v}}- \frac{\mu}{v^S}\right) \right)_x \right|\\
        &\le C \Bigl[ |(u^R_{xx},u^R_x v^R_x)| |(v^C-v_*,v^S-v^*)|+|(u^C_{xx},u^C_x v^C_x)| |(v^R-v_*,v^S-v^*)| \\
        &\qquad + |(u^S_{xx},u^S_x v^S_x)| |(v^R-v_*,v^C-v^*)| + |u^R_x||(v^C_x,v^S_x)|+|u^C_x||(v^R_x,v^S_x)| +|u^S_x| |(v^R_x,v^C_x)|\Bigr].
    \end{align*}
Next, we estimate the third term of $Q^I_1$ as
    \begin{align*}
        &\kappa \left| \left(  \left(-\frac{\bar{v}_{xx}}{\bar{v}^5} + \frac{5(\bar{v}_x)^2}{2\bar{v}^6} \right)- \left(-\frac{v^R_{xx}}{(v^R)^5} + \frac{5(v^R_x)^2}{2(v^R)^6} \right)-\left(-\frac{v^C_{xx}}{(v^C)^5} + \frac{5(v^C_x)^2}{2(v^C)^6} \right)- \left(-\frac{v^S_{xx}}{(v^S)^5} + \frac{5(v^S_x)^2}{2(v^S)^6} \right) \right)_x \right| \\
        &\le \kappa \left| \left(  -  \frac{\bar{v}_{xx}}{\bar{v}^5} +  \frac{v^R_{xx}}{(v^R)^5} + \frac{v^C_{xx}}{(v^C)^5} +\frac{v^S_{xx}}{(v^S)^5} \right)_x \right| +\frac{5 \kappa}{2}  \left| \left(   \frac{(\bar{v}_x)^2}{\bar{v}^6} -  \frac{(v^R_x)^2}{(v^R)^6} -\frac{(v^C_x)^2}{(v^C)^6}-  \frac{(v^S_x)^2}{(v^S)^6} \right)_x \right|.
    \end{align*}
    However, since $\bar{v}_{xx}=v^R_{xx}+v^C_{xx}+v^S_{xx}$, we have
    \begin{align*}
        &\left| \left(  -  \frac{\bar{v}_{xx}}{\bar{v}^5} +  \frac{v^R_{xx}}{(v^R)^5} + \frac{v^C_{xx}}{(v^C)^5} + \frac{v^S_{xx}}{(v^S)^5} \right)_x \right|
        \\
        &=\left| \left( v^R_{xx} \left(\frac{1}{\bar{v}^5}-\frac{1}{(v^R)^5} \right)+ v^C_{xx} \left(\frac{1}{\bar{v}^5}-\frac{1}{(v^C)^5} \right)+ v^S_{xx} \left(\frac{1}{\bar{v}^5}-\frac{1}{(v^S)^5} \right) \right)_x \right|\\
         &\le C \Bigl[ |(v^R_{xxx},v^R_{xx} v^R_x)| |(v^C-v_*,v^S-v^*)|+|(v^C_{xxx},v^C_{xx} v^C_x)| |(v^R-v_*,v^S-v^*)| \\
        &\qquad + |(v^S_{xxx},v^S_{xx} v^S_x)| |(v^R-v_*,v^C-v^*)| + |v^R_{xx}||(v^C_x,v^S_x)|+|v^C_{xx}||(v^R_x,v^S_x)| +|v^S_{xx}| |(v^R_x,v^C_x)|\Bigr].
    \end{align*}
    Similarly, we use   $(\bar{v}_{x})^2=(v^R_{x})^2+(v^C_x)^2+(v^S_{x})^2+2 (v^R_x v^S_x+v^R_xv^C_x+v^C_xv^S_x)$ to obtain
    \begin{align*}
        &\left| \left(   \frac{(\bar{v}_x)^2}{\bar{v}^6} -  \frac{(v^R_x)^2}{(v^R)^6}-\frac{(v^C_x)^2}{(v^C)^6} -  \frac{(v^S_x)^2}{(v^S)^6} \right)_x \right|\\
        &\le \left| \left( (v^R_{x})^2 \left(\frac{1}{\bar{v}^6}-\frac{1}{(v^R)^6} \right)+(v^C_{x})^2 \left(\frac{1}{\bar{v}^6}-\frac{1}{(v^C)^6} \right)+ (v^S_{x})^2 \left(\frac{1}{\bar{v}^6}-\frac{1}{(v^S)^6} \right) \right)_x \right| \\
        &\quad+2 \left|   \left( \frac{1}{\bar{v}^6}v^R_xv^C_x  \right)_x \right|+2 \left|   \left( \frac{1}{\bar{v}^6}v^C_xv^S_x  \right)_x \right|+2 \left|   \left( \frac{1}{\bar{v}^6}v^R_xv^S_x  \right)_x \right| \\
        &\le C\Bigl[ | (v^R_xv^R_{xx},|v^R_x|^3 ) | |(v^C-v_*,v^S-v^*)|+| (v^C_xv^C_{xx},|v^C_x|^3 ) | |(v^R-v_*,v^S-v^*)| \\
        &\qquad + | (v^S_xv^S_{xx},|v^S_x|^3 ) | |(v^R-v_*,v^C-v^*)|+|( |v^R_x|^2,v^R_{xx})| |(v^C_x,v^S_x)|  \\
        &\qquad+|( |v^C_x|^2,v^C_{xx})| |(v^R_x,v^S_x)|+|( |v^S_x|^2,v^S_{xx})| |(v^R_x,v^C_x)|\Bigr].
    \end{align*}
    Combining all the estimates above, we obtain
    \begin{align*}
        |Q_1^I| &\le C\Big( |(v^R_x,u^R_{xx},u^R_xv^R_x,v^R_{xx}v^R_x,v^R_{xxx},|v^R_x|^3)| |(v^C-v_*,v^S-v^*)|\\
        &\qquad+|(v^C_x,u^C_{xx},u^C_xv^C_x,v^C_{xx}v^C_x,v^C_{xxx},|v^C_x|^3)| |(v^R-v_*,v^S-v^*)|\\
        &\qquad+ |(v^S_x,u^S_{xx},u^S_xv^S_x,v^S_{xx}v^S_x,v^S_{xxx},|v^S_x|^3)| | (v^R-v_*,v^C-v^*)|  \\
        &\qquad+|( u^R_x,|v^R_x|^2,v^R_{xx})| |(v^C_x,v^S_x)| +|( u^C_x,|v^C_x|^2,v^C_{xx})| |(v^R_x,v^S_x)|+|(u^S_x, |v^S_x|^2,v^S_{xx})| |(v^R_x,v^C_x)|\Big).
    \end{align*}
By Lemma \ref{lem:rarefaction_property}~(2), $|(v^R_{xx}, v^R_{xxx}, u^R_{xx})| \le C|(v^R_x, u^R_x)|$ and $\|v^R_x\|_{L^\infty(\mathbb{R})} \le C\delta_R\varepsilon_R$, and by Lemma~\ref{lem:shock-property}, $|(v^S_{xx},u^S_{xx},v^S_{xxx})|\le C|v^S_x|$. By Lemma~\ref{lem:viscous_contact}, the derivatives $\partial_x^n(v^C,u^C,\theta^C)$, $n\ge1$, satisfy the pointwise and $L^2$ bounds on $(v^C_x,\theta^C_x)$ used in the proofs of \eqref{R2} and \eqref{R4}. Therefore, the proofs of \eqref{R1}--\eqref{R4} apply to each term on the right-hand side, and
\begin{align*}
    \|Q_1^I\|_{L^2} &\le C\big( \delta_R \varepsilon_R^{1/2} \delta_S+\delta_R \delta_S^{3/2}+\delta_C \delta_S\big)e^{-C \delta_S t}
        + C\delta_R \delta_C e^{-Ct} \\
        &\quad+ C((\delta_S+\delta_C)\delta_R\varepsilon_R^{1/2}+\delta_S^{3/2}\delta_R)\bigl(1+\varepsilon_R^2 t^2\bigr)^{-q/4}+C\delta_R\delta_C(1+t)^{-1/4}\bigl(1+\e_R^2(1+t)^2\bigr)^{-q/4}. 
\end{align*}

\noindent $\bullet$ (Estimate of $Q_2^I$):
Similarly, $Q_2^I$ can be estimated as
\begin{align*}
    |Q_2^I|&\le |\bar{p}\bar{u}_x-p^Ru^R_x-p^Cu^C_x-p^Su^S_x|+\alpha\left|\left(\frac{\bar{\theta}_x}{\bar{v}}-\frac{\theta^R_x}{v^R}-\frac{\theta^C_x}{v^C}-\frac{\theta^S_x}{v^S}\right)_x\right| \\
    &\quad +\mu\left|\frac{\bar{u}^2_x}{\bar{v}}-\frac{(u^R_x)^2}{v^R}-\frac{(u^C_x)^2}{v^C}-\frac{(u^S_x)^2}{v^S}\right|\\
    &\le \bigl| u^R_x(\bar{p}-p^R)+u^C_x(\bar{p}-p^C)+u^S_x(\bar{p}-p^S)\bigr| \\
    &\quad + \alpha \Biggl| \left(\theta^R_{x}\left(\frac{1}{\bar{v}}-\frac{1}{v^R}\right)\right)_x+\left(\theta^C_{x}\left(\frac{1}{\bar{v}}-\frac{1}{v^C}\right)\right)_x+\left(\theta^S_{x}\left(\frac{1}{\bar{v}}-\frac{1}{v^S}\right)\right)_x\Biggr|\\
    &\quad + \mu\left|(u^R_x)^2\left(\frac{1}{\bar{v}}-\frac{1}{v^R}\right)+(u^C_x)^2\left(\frac{1}{\bar{v}}-\frac{1}{v^C}\right)+(u^S_x)^2\left(\frac{1}{\bar{v}}-\frac{1}{v^S}\right)\right|\\
    &\quad + \frac{2\mu}{\bar{v}}\left(|u^R_x||u^C_x|+|u^R_x||u^S_x|+|u^C_x||u^S_x|\right)\\
    &\le C \Bigl[ |(u^R_x,\theta^R_{xx},\theta^R_x v^R_x)| |(v^C-v_*,v^S-v^*)|+|(u^C_x,\theta^C_{xx},\theta^C_x v^C_x)| |(v^R-v_*,v^S-v^*)| \\
    &\qquad + |(u^S_x,\theta^S_{xx},\theta^S_x v^S_x)| |(v^R-v_*,v^C-v^*)| + |\theta^R_x||(v^C_x,v^S_x)|+|\theta^C_x||(v^R_x,v^S_x)| +|\theta^S_x| |(v^R_x,v^C_x)|\Bigr].
\end{align*}
Applying the pointwise bound $|\theta^R_{xx}| \le C|\theta^R_x|$ similarly as above, we can control the right-hand side by the interaction terms $R_i$. This yields
\begin{align*}
    \|Q_2^I\|_{L^2} &\le C\big( \delta_R \varepsilon_R^{1/2} \delta_S+\delta_R \delta_S^{3/2}+\delta_C \delta_S\big)e^{-C \delta_S t}
    + C\delta_R \delta_C e^{-Ct} \\
    &\quad+ C((\delta_S+\delta_C)\delta_R\varepsilon_R^{1/2}+\delta_S^{3/2}\delta_R)\bigl(1+\varepsilon_R^2 t^2\bigr)^{-q/4}+C\delta_R\delta_C(1+t)^{-1/4}\bigl(1+\e_R^2(1+t)^2\bigr)^{-q/4}.        
\end{align*}

\noindent $\bullet$ (Estimate of $Q_3$):
Again, we recall the definition of $Q_3$
\[ Q_3= - \sqrt{\kappa}\dot{X} \left(   v^S_x \left(\frac{ 1}{(v^S)^{5/2}} -  \frac{ 1}{(\bar{v})^{5/2}} \right)  \right)_x,\]
from which we directly obtain 
\begin{align*}
    |Q_3| &\le C |\dot{X}| \left[ (|v^S_{xx}|+|v^S_x|^2) |(v^R-v_*,v^C-v^*)| + |v^S_{x}|( |v^R_x|,|v^C_x|)\right].
\end{align*}
Therefore, 
\begin{align*}
	\|Q_3\|_{L^2} &\le C \varepsilon_1 (\|R_3\|_{L^2}+\|R_4\|_{L^2})\\
	&\le C\varepsilon_1\Bigl((\delta_R\delta_S^{3/2}+\delta_C\delta_S)e^{-C\delta_St}+\delta_S^{3/2}\delta_R(1+\e_R^2t^2)^{-q/4}\\
	&\qquad\qquad+\delta_R\delta_C\e_R^{1/2}(1+t)^{-1/4}\bigl(1+\e_R^2(1+t)^2\bigr)^{-q/2}\Bigr).
\end{align*}
Now, integrating \eqref{est-interaction} in time and using $\e_R=\delta_0/2$ together with $\delta_C,\delta_S<\delta_0$, we obtain \eqref{est-time-int-interaction}. Here all the algebraic factors are integrable in time since $q>3$.
\end{proof}

\subsection{Weighted relative entropy estimate}

To obtain the $L^2$-estimate on the perturbation between the solution and the superposition wave, we derive a weighted relative entropy estimate between them. Let $U=(v,u,E,w)^\top$ be a tuple of conserved quantities in \eqref{eq:NSFK-extended}. Recall that the physical entropy of the Navier--Stokes--Fourier system \cite{KVW25} is given by
\[s(U):=R\log v+\frac{R}{\gamma-1}\log \theta,\]
and the mathematical entropy is $\eta(U)=-s(U)$. Then, it follows from the formula \eqref{eq:total_energy} of the energy
\[E=e+\frac{u^2}{2}=\frac{R}{\gamma-1}\theta + \frac{w^2}{2}+\frac{u^2}{2}\]
that $dE = \frac{R}{\gamma-1}d\theta +udu+wdw$.
Therefore,
\[\theta ds = \frac{R\theta}{v}\,dv + \frac{R}{\gamma-1}d\theta=p\,dv + dE-udu-wdw,\]
which implies
\[\nabla_U s(U)=\left(\frac{p}{\theta},-\frac{u}{\theta},\frac{1}{\theta},-\frac{w}{\theta}\right).\]
Now, let $\bar{U}=(\bar{v},\bar{u},\bar{E},\bar{w})$ be a tuple of the superposition wave. Then the relative entropy $\eta(U|\bar{U})$ can be explicitly written as
\begin{align*}
	\eta(U|\bar{U}) &= -s(U|\bar{U})= -s(U) + s(\bar{U}) + \nabla_Us(\bar{U})\cdot(U-\bar{U})\\
	& = -R\log\frac{v}{\bar{v}}-\frac{R}{\gamma-1}\log\frac{\theta}{\bar{\theta}} + \frac{R}{\bar{v}}(v-\bar{v})-\frac{\bar{u}}{\bar{\theta}}(u-\bar{u})+\frac{1}{\bar{\theta}}(E-\bar{E})-\frac{\bar{w}}{\bar{\theta}}(w-\bar{w})\\
	& = R\left(\frac{v}{\bar{v}}-1-\log \frac{v}{\bar{v}}\right)+\frac{R}{\gamma-1}\left(\frac{\theta}{\bar{\theta}}-1-\log\frac{\theta}{\bar{\theta}}\right)+\frac{(u-\bar{u})^2}{2\bar{\theta}}+\frac{(w-\bar{w})^2}{2\bar{\theta}}\\
	&=R\Phi\left(\frac{v}{\bar{v}}\right)+\frac{R}{\gamma-1}\Phi\left(\frac{\theta}{\bar{\theta}}\right)+\frac{(u-\bar{u})^2}{2\bar{\theta}}+\frac{(w-\bar{w})^2}{2\bar{\theta}},
\end{align*}
where $\Phi(z) := z-1-\log z$ is a nonnegative function that is locally quadratic near $z=1$. Since $v$, $\bar v$, $\theta$, and $\bar\theta$ are bounded above and below by positive constants, $\eta(U|\bar U)$ is comparable to $|(v-\bar v,u-\bar u,\theta-\bar\theta,w-\bar w)|^2$. In the following, we begin with estimating the time derivative of the weighted relative entropy of the form
\[\frac{d}{dt}\int_{\R} a(t,x)\bar{\theta}(t,x) \eta(U(t,x)|\bar{U}(t,x))\,dx,\]
where $a$ is the weight function defined in \eqref{def:weight}.

\begin{lemma}\label{lem:rel_ent}
	Let $U$ and $\bar{U}$ be solutions to \eqref{eq:NSFK-extended} and \eqref{eq:composite_extended} respectively. Then, we have
	\begin{equation}\label{est:rel_ent}
		\frac{d}{dt}\int_{\R} a\bar{\theta}\eta(U|\bar{U})\,dx=\dot{X}Y+J^{\textup{bad}}-J^{\textup{good}},
	\end{equation}
	where
	\begin{align*}
		Y&=-\int_\R a_x\bar{\theta}\eta(U|\bar{U})\,dx+\int_\R a\left(u^S_x(u-\bar{u})+v^S_x \frac{\bar{p}}{\bar{v}}(v-\bar{v})+\frac{R}{\gamma-1}\frac{\theta^S_x}{\bar{\theta}}(\theta-\bar{\theta})+w^S_x(w-\bar{w})\right)\,dx\\
		&\quad - \int_\R a\theta^S_x\left(R\Phi\left(\frac{v}{\bar{v}}\right)+\frac{R}{\gamma-1}\Phi\left(\frac{\theta}{\bar{\theta}}\right)\right)\,dx,
	\end{align*}
	\begin{align*}
		J^{\textup{bad}}&=\int_\R a_x(u-\bar{u})(p-\bar{p})\,dx\\
		&\quad+\int_\R a\left(R\left(\theta^R_t+\theta^C_t-\sigma\theta^S_x\right)\Phi\left(\frac{v}{\bar{v}}\right)-\frac{\bar{p}{\bar{u}_x}}{v\bar{v}}(v-\bar{v})^2\right)\,dx\\
		&\quad +\int_\R a\left(\frac{R}{\gamma-1}\left(\theta^R_t+\theta^C_t-\sigma\theta^S_x\right)\Phi\left(\frac{\theta}{\bar{\theta}}\right)-\frac{\bar{u}_x}{\theta}(\theta-\bar{\theta})(p-\bar{p})+\bar{p}\bar{u}_x\frac{(\theta-\bar{\theta})^2}{\theta\bar{\theta}}\right)\,dx\\
		&\quad + \int_{\R}a\Bigg[\mu(u-\bar{u})\left(\frac{u_x}{v}-\frac{\bar{u}_x}{\bar{v}}\right)+\alpha\frac{\theta-\bar{\theta}}{\theta}\left(\frac{\theta_x}{v}-\frac{\bar{\theta}_x}{\bar{v}}\right)+\sqrt{\kappa}(u-\bar{u})\left(\frac{w_x}{v^{5/2}}-\frac{\bar{w}_x}{\bar{v}^{5/2}}\right)\\
		&\hspace{2cm}-\sqrt{\kappa}\left(\frac{u_x}{v^{5/2}}-\frac{\bar{u}_x}{\bar{v}^{5/2}}\right)(w-\bar{w})\Bigg]_x\,dx\\
		&\quad + \int_\R a\Bigg[-\mu \bar{u}_x\left(\frac{1}{v}-\frac{1}{\bar{v}}\right)(u-\bar{u})_x+\alpha\frac{\theta-\bar{\theta}}{\theta^2}\theta_x\left(\frac{\theta_x}{v}-\frac{\bar{\theta}_x}{\bar{v}}\right)-\alpha\frac{(\theta-\bar{\theta})_x}{\theta}\bar{\theta}_x\left(\frac{1}{v}-\frac{1}{\bar{v}}\right)\\
		&\hspace{2cm}+\mu\frac{\theta-\bar{\theta}}{\theta}\left(\frac{u_x^2}{v}-\frac{\bar{u}_x^2}{\bar{v}}\right)+\sqrt{\kappa}(\bar{u}_x(w-\bar{w})_x-(u-\bar{u})_x\bar{w}_x)\left(\frac{1}{v^{5/2}}-\frac{1}{\bar{v}^{5/2}}\right)\\
		&\hspace{2cm}-\frac{(\theta-\bar{\theta})^2}{\theta\bar{\theta}}\left(\alpha \left(\frac{\bar{\theta}_x}{\bar{v}}\right)_x+\mu \frac{\bar{u}_x^2}{\bar{v}}\right)-Q_1(u-\bar{u})-Q_2\left(\frac{\theta}{\bar{\theta}}-1\right)-Q_3(w-\bar{w})\Bigg]\,dx,
	\end{align*}
	and
	\begin{align*}
		J^{\textup{good}}&=\sigma\int_\R a_x \bar{\theta}\eta(U|\bar{U})\,dx+\int_\R a\left( \frac{\mu}{v}|(u-\bar{u})_x|^2+\frac{\alpha}{v\theta}|(\theta-\bar{\theta})_x|^2\right)\,dx.
	\end{align*}
\end{lemma}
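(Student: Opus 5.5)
This identity is a direct computation, and I would organize it as a weighted relative entropy calculation in the style of \cite{KVW25}, with the Korteweg variable $w$ added. I write $\frac{d}{dt}\int a\bar\theta\eta(U|\bar U)\,dx$ as the sum of three contributions:
\[
\int a_t\bar\theta\eta\,dx,\qquad \int a\bar\theta_t\eta\,dx,\qquad \int a\bar\theta\,\partial_t\eta(U|\bar U)\,dx.
\]

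For the weight, $a_t=-(\sigma+\dot X)a_x$ by \eqref{def:weight}. The $-\sigma a_x$ part gives the first term of $J^{\rm good}$, and the $-\dot X a_x$ part gives the first term of $Y$. For the remaining two contributions I use the explicit form
\[
\bar\theta\eta(U|\bar U)=R\bar\theta\Phi(v/\bar v)+\tfrac{R}{\gamma-1}\bar\theta\Phi(\theta/\bar\theta)+\tfrac12|u-\bar u|^2+\tfrac12|w-\bar w|^2 .
\]
Differentiating $\bar\theta\Phi(\theta/\bar\theta)=\theta-\bar\theta-\bar\theta\log(\theta/\bar\theta)$ and $\bar\theta\Phi(v/\bar v)$ directly avoids general relative entropy machinery. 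It produces the terms $\bar\theta_t\Phi$, the terms $(\theta-\bar\theta)\theta_t/\theta$ and $\bar\theta\,(v-\bar v)/(v\bar v)\,(\ldots)$, and the terms $(u-\bar u)(u-\bar u)_t$ and $(w-\bar w)(w-\bar w)_t$.

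Next I substitute $\bar\theta_t=\theta^R_t+\theta^C_t-(\sigma+\dot X)\theta^S_x$. The $\dot X\theta^S_x$ part gives the last integral in $Y$. The rest yields the $(\theta^R_t+\theta^C_t-\sigma\theta^S_x)\Phi$ terms of $J^{\rm bad}$.

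Then I insert the perturbation equations \eqref{eq:perturbation}, one component at a time:
\begin{itemize}
\item The $\dot X$ terms ($\dot X v^S_x$, $\dot X u^S_x$, $\dot X\theta^S_x$, $\dot X w^S_x$), multiplied by the corresponding entropy variables, give the second integral in $Y$.
\item The hyperbolic parts $-\psi(p-\bar p)_x$, $p\,\phi_x$-type terms and $-(pu_x-\bar p\bar u_x)(\theta-\bar\theta)/\theta$ are integrated by parts. The flux $(u-\bar u)(p-\bar p)$ leaves $\int a_x(u-\bar u)(p-\bar p)$, and the remaining algebra yields the quadratic terms $\frac{\bar p\bar u_x}{v\bar v}(v-\bar v)^2$, $\frac{\bar u_x}{\theta}(\theta-\bar\theta)(p-\bar p)$ and $\bar p\bar u_x\frac{(\theta-\bar\theta)^2}{\theta\bar\theta}$.
\item The parabolic and Korteweg terms are written as total derivatives $[\cdots]_x$ plus remainders. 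For example,
\[
\mu(u-\bar u)\Big(\tfrac{u_x}{v}-\tfrac{\bar u_x}{\bar v}\Big)_x=\Big[\mu(u-\bar u)\Big(\tfrac{u_x}{v}-\tfrac{\bar u_x}{\bar v}\Big)\Big]_x-\tfrac{\mu}{v}|(u-\bar u)_x|^2-\mu\bar u_x\Big(\tfrac1v-\tfrac1{\bar v}\Big)(u-\bar u)_x .
\]
The heat term is handled the same way with test function $(\theta-\bar\theta)/\theta$. This generates $\alpha|(\theta-\bar\theta)_x|^2/(v\theta)$, the $\theta_x(\theta-\bar\theta)/\theta^2$ commutator and the $\bar\theta_x(1/v-1/\bar v)$ term.
\item The source terms $Q_1,Q_2,Q_3$ are paired with $u-\bar u$, $\theta/\bar\theta-1$ and $w-\bar w$, respectively.
\item The terms $\mu\big(u_x^2/v-\bar u_x^2/\bar v\big)$ and the $-\frac{(\theta-\bar\theta)^2}{\theta\bar\theta}\big(\alpha(\bar\theta_x/\bar v)_x+\mu\bar u_x^2/\bar v\big)$ term come from combining the $\bar\theta_t$ contribution with the temperature equation for $\bar\theta$ in \eqref{eq:composite_extended}.
\end{itemize}

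The main bookkeeping obstacle is the Korteweg cancellation. The terms
\[
\sqrt\kappa(u-\bar u)\Big(\tfrac{w_x}{v^{5/2}}-\tfrac{\bar w_x}{\bar v^{5/2}}\Big)_x\quad\text{and}\quad -\sqrt\kappa(w-\bar w)\Big(\tfrac{u_x}{v^{5/2}}-\tfrac{\bar u_x}{\bar v^{5/2}}\Big)_x
\]
must be integrated by parts so that the highest-order products $\frac{(u-\bar u)_x(w-\bar w)_x}{v^{5/2}}$ cancel exactly. What remains is the total derivative and the lower-order term $\sqrt\kappa\big(\bar u_x(w-\bar w)_x-(u-\bar u)_x\bar w_x\big)\big(v^{-5/2}-\bar v^{-5/2}\big)$, obtained by writing $\frac{w_x}{v^{5/2}}-\frac{\bar w_x}{\bar v^{5/2}}=\frac{(w-\bar w)_x}{v^{5/2}}+\bar w_x\big(v^{-5/2}-\bar v^{-5/2}\big)$ and similarly for $u$.

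One further check is needed. The total energy $E$ now contains $w^2/2$, while the temperature equation \eqref{eq:NSFK-extended}$_3$ contains no Korteweg term. So I will verify that differentiating $\eta$ through $(v,u,\theta,w)$ rather than $(v,u,E,w)$ introduces no extra coupling. Since $\eta(U|\bar U)$ has been written explicitly in $(v,u,\theta,w)$, this reduces to direct differentiation, and the remaining work is collecting terms into $Y$, $J^{\rm bad}$ and $J^{\rm good}$.
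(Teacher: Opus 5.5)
Your proposal is correct and matches the paper's proof essentially step by step. Both arguments use $a_t=-(\sigma+\dot X)a_x$, differentiate the explicit $(v,u,\theta,w)$ form of $\bar\theta\eta(U|\bar U)$ term by term using \eqref{eq:perturbation} and \eqref{eq:composite_extended}, rely on the exact cancellation of the top-order Korteweg products (the paper's $\mathcal A_2-\mathcal A_1$), and finally split $\bar\theta_t$ in the $\Phi$ terms. Two bookkeeping points in your sketch should be stated more precisely: the $(u-\bar u)_x$ terms coming from the $v$-, $\theta$- and $u$-pieces cancel identically because $p=R\theta/v$, and the term $\mu\frac{\theta-\bar\theta}{\theta}\bigl(\frac{u_x^2}{v}-\frac{\bar u_x^2}{\bar v}\bigr)$ comes from the $\vartheta$-equation, whereas only the term $-\frac{(\theta-\bar\theta)^2}{\theta\bar\theta}\bigl(\alpha(\frac{\bar\theta_x}{\bar v})_x+\mu\frac{\bar u_x^2}{\bar v}\bigr)$ uses the $\bar\theta$-equation.
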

\begin{proof}
	First of all, a direct computation yields
	\begin{align*}
		\frac{d}{dt}\int_\R a(t,x)\bar{\theta}(t,x)\eta(U|\bar{U})\,dx
		&=-\dot{X}(t)\int_\R a_x\bar{\theta}\eta(U|\bar{U})\,dx-\sigma\int_\R a_x\bar{\theta}\eta(U|\bar{U})\,dx \\
		&\quad + \int_\R a\pa_t\left(R\bar{\theta}\Phi\left(\frac{v}{\bar{v}}\right)+\frac{R\bar{\theta}}{\gamma-1}\Phi\left(\frac{\theta}{\bar{\theta}}\right)+\frac{(u-\bar{u})^2}{2}+\frac{(w-\bar{w})^2}{2}\right)\,dx.
	\end{align*}
	Using the equations of $v$ and $\bar{v}$ in \eqref{eq:NSFK-extended} and \eqref{eq:composite_extended}, we have
	
	\begin{align}
		\begin{aligned}\label{est:v_barv}
			\pa_t\left(R\bar{\theta}\Phi\left(\frac{v}{\bar{v}}\right)\right)&=R\bar{\theta}_t \Phi\left(\frac{v}{\bar{v}}\right)+R\bar{\theta}\left(\frac{1}{\bar{v}}-\frac{1}{v}\right)\left(\dot{X}(t)v^S_x+(u-\bar{u})_x\right)\\
			&\quad + R\bar{\theta}\left(\frac{1}{\bar{v}}-\frac{1}{v}\right)\left(1-\frac{v}{\bar{v}}\right)\left(-\dot{X}(t)v^S_x+\bar{u}_x\right)\\ 
			&=R\bar{\theta}_t \Phi\left(\frac{v}{\bar{v}}\right)+\dot{X}(t)v^S_x\frac{\bar{p}}{\bar{v}}(v-\bar{v})-\frac{\bar{p}\bar{u}_x}{v\bar{v}}(v-\bar{v})^2+R\bar{\theta}\left(\frac{1}{\bar{v}}-\frac{1}{v}\right)(u-\bar{u})_x.
		\end{aligned}
	\end{align}
	
	Similarly, we use the equations of $\theta$ and $\bar{\theta}$ to get
	\begin{align}
		\begin{aligned}\label{est:theta_bartheta}
			\pa_t&\left(\frac{R\bar{\theta}}{\gamma-1}\Phi\left(\frac{\theta}{\bar{\theta}}\right)\right)\\
			&=\frac{R\bar{\theta}_t}{\gamma-1}\Phi\left(\frac{\theta}{\bar{\theta}}\right)+\dot{X}(t)\frac{R}{\gamma-1}\frac{\theta^S_x}{\bar{\theta}}(\theta-\bar{\theta})-\frac{R}{v}(\theta-\bar{\theta})(u-\bar{u})_x-\frac{\bar{u}_x}{\theta}(\theta-\bar{\theta})(p-\bar{p})+\bar{p}\bar{u}_x\frac{(\theta-\bar{\theta})^2}{\theta\bar{\theta}}\\
			&\quad + \left(\alpha \frac{\theta-\bar{\theta}}{\theta}\left(\frac{\theta_x}{v}-\frac{\bar{\theta}_x}{\bar{v}}\right)\right)_x-\frac{\alpha}{v\theta}|(\theta-\bar{\theta})_x|^2+\alpha\frac{\theta-\bar{\theta}}{\theta^2}\theta_x\left(\frac{\theta_x}{v}-\frac{\bar{\theta}_x}{\bar{v}}\right)-\alpha\frac{(\theta-\bar{\theta})_x}{\theta}\bar{\theta}_x\left(\frac{1}{v}-\frac{1}{\bar{v}}\right)\\
			&\quad +\mu\frac{\theta-\bar{\theta}}{\theta}\left(\frac{u_x^2}{v}-\frac{\bar{u}_x^2}{\bar{v}}\right)-Q_2\left(\frac{\theta}{\bar{\theta}}-1\right)-\frac{(\theta-\bar{\theta})^2}{\theta\bar{\theta}}\left(\alpha\left(\frac{\bar{\theta}_x}{\bar{v}}\right)_x+\mu\frac{\bar{u}_x^2}{\bar{v}}\right).
		\end{aligned}
	\end{align}
	
	Next, using \eqref{eq:perturbation}$_2$ for $u-\bar{u}$, we get
	
	\begin{align}
		\begin{aligned}\label{est:u-baru}
			\frac{\pa_t(u-\bar{u})^2}{2}&=(u-\bar{u})\dot{X}(t)u^S_x -\left((p-\bar{p})(u-\bar{u})\right)_x + (p-\bar{p})(u-\bar{u})_x\\
			&\quad +\left(\mu\left(\frac{u_x}{v}-\frac{\bar{u}_x}{\bar{v}}\right)(u-\bar{u})\right)_x-\frac{\mu}{v}|(u-\bar{u})_x|^2-\mu(u-\bar{u})_x\bar{u}_x\left(\frac{1}{v}-\frac{1}{\bar{v}}\right)\\
			&\quad  +\sqrt{\kappa}\left((u-\bar{u})\left(\frac{w_x}{v^{5/2}}-\frac{\bar{w}_x}{\bar{v}^{5/2}}\right)\right)_x-\sqrt{\kappa}\underbrace{(u-\bar{u})_x\left(\frac{w_x}{v^{5/2}}-\frac{\bar{w}_x}{\bar{v}^{5/2}}\right)}_{=:\mathcal{A}_1}-Q_1(u-\bar{u}).
		\end{aligned}
	\end{align}
	
	Finally, we use \eqref{eq:perturbation}$_4$ to obtain
	
	\begin{align}
		\begin{aligned}\label{est:w-barw}
			\frac{\pa_t(w-\bar{w})^2}{2}&=(w-\bar{w})\dot{X}(t)w^S_x-\sqrt{\kappa}\left((w-\bar{w})\left(\frac{u_x}{v^{5/2}}-\frac{\bar{u}_x}{\bar{v}^{5/2}}\right)\right)_x \\
			&\quad+ \sqrt{\kappa}\underbrace{(w-\bar{w})_x\left(\frac{u_x}{v^{5/2}}-\frac{\bar{u}_x}{\bar{v}^{5/2}}\right)}_{=:\mathcal{A}_2} -Q_3(w-\bar{w}).
		\end{aligned}
	\end{align}
	
	Now, we observe that
	\begin{align*}
		\mathcal{A}_2-\mathcal{A}_1 &= (w-\bar{w})_x\left(\frac{u_x}{v^{5/2}}-\frac{\bar{u}_x}{\bar{v}^{5/2}}\right)-(u-\bar{u})_x\left(\frac{w_x}{v^{5/2}}-\frac{\bar{w}_x}{\bar{v}^{5/2}}\right)\\
		&=((w-\bar{w})_x\bar{u}_x-(u-\bar{u})_x\bar{w}_x)\left(\frac{1}{v^{5/2}}-\frac{1}{\bar{v}^{5/2}}\right).
	\end{align*}
	
	Thus, adding \eqref{est:u-baru} and \eqref{est:w-barw} yields
	
	\begin{align}
		\begin{aligned}\label{est:uw}
			&\pa_t\left(\frac{(u-\bar{u})^2}{2}+\frac{(w-\bar{w})^2}{2}\right)\\
			&=(u-\bar{u})\dot{X}(t)u^S_x + (w-\bar{w})\dot{X}(t)w^S_x -\left((p-\bar{p})(u-\bar{u})\right)_x + (p-\bar{p})(u-\bar{u})_x\\
			&\quad +\left(\mu\left(\frac{u_x}{v}-\frac{\bar{u}_x}{\bar{v}}\right)(u-\bar{u})\right)_x-\frac{\mu}{v}|(u-\bar{u})_x|^2-\mu(u-\bar{u})_x\bar{u}_x\left(\frac{1}{v}-\frac{1}{\bar{v}}\right)\\
			&\quad  +\sqrt{\kappa}\left((u-\bar{u})\left(\frac{w_x}{v^{5/2}}-\frac{\bar{w}_x}{\bar{v}^{5/2}}\right)\right)_x-Q_1(u-\bar{u})-Q_3(w-\bar{w})\\
			&\quad-\sqrt{\kappa}\left(\left(\frac{u_x}{v^{5/2}}-\frac{\bar{u}_x}{\bar{v}^{5/2}}\right)(w-\bar{w})\right)_x\\
			&\quad+\sqrt{\kappa}(\bar{u}_x(w-\bar{w})_x-(u-\bar{u})_x\bar{w}_x)\left(\frac{1}{v^{5/2}}-\frac{1}{\bar{v}^{5/2}}\right).
		\end{aligned}
	\end{align}
	Therefore, we combine \eqref{est:v_barv}, \eqref{est:theta_bartheta}, and \eqref{est:uw} to get
	
	\begin{align*}
		\begin{aligned}
			\frac{d}{dt}&\int_\R a(t,x)\bar{\theta}(t,x)\eta(U|\bar{U})\,dx\\
			&=-\dot{X}(t)\int_\R a_x\bar{\theta}\eta(U|\bar{U})\,dx -\sigma\int_\R a_x\bar{\theta}\eta(U|\bar{U})\,dx - \int_\R a\frac{\bar{p}\bar{u}_x}{v\bar{v}}(v-\bar{v})^2\,dx\\
			&\quad +\int_{\R}a_x(u-\bar{u})(p-\bar{p})\,dx +\int_\R a \left(-\frac{\bar{u}_x}{\theta}(\theta-\bar{\theta})(p-\bar{p})+\bar{p}\bar{u}_x\frac{(\theta-\bar{\theta})^2}{\theta\bar{\theta}}\right)\,dx\\
			&\quad -\int_\R a\frac{(\theta-\bar{\theta})^2}{\theta\bar{\theta}}\left(\alpha\left(\frac{\bar{\theta}_x}{\bar{v}}\right)_x+\mu\frac{\bar{u}_x^2}{\bar{v}}\right)\,dx\\
			&\quad + \int_{\R}a\Bigg(\mu(u-\bar{u})\left(\frac{u_x}{v}-\frac{\bar{u}_x}{\bar{v}}\right)+\alpha\frac{\theta-\bar{\theta}}{\theta}\left(\frac{\theta_x}{v}-\frac{\bar{\theta}_x}{\bar{v}}\right)+\sqrt{\kappa}(u-\bar{u})\left(\frac{w_x}{v^{5/2}}-\frac{\bar{w}_x}{\bar{v}^{5/2}}\right)\\
			&\hspace{2cm}-\sqrt{\kappa}\left(\frac{u_x}{v^{5/2}}-\frac{\bar{u}_x}{\bar{v}^{5/2}}\right)(w-\bar{w})\Bigg)_x\,dx-\int_\R a\left(\frac{\mu}{v}|(u-\bar{u})_x|^2+\frac{\alpha}{v\theta}|(\theta-\bar{\theta})_x|^2\right)\,dx\\
			&\quad + \int_\R a\Bigg(-\mu \bar{u}_x\left(\frac{1}{v}-\frac{1}{\bar{v}}\right)(u-\bar{u})_x+\alpha\frac{\theta-\bar{\theta}}{\theta^2}\theta_x\left(\frac{\theta_x}{v}-\frac{\bar{\theta}_x}{\bar{v}}\right)-\alpha\frac{(\theta-\bar{\theta})_x}{\theta}\bar{\theta}_x\left(\frac{1}{v}-\frac{1}{\bar{v}}\right)\\
			&\hspace{2cm}+\mu\frac{\theta-\bar{\theta}}{\theta}\left(\frac{u_x^2}{v}-\frac{\bar{u}_x^2}{\bar{v}}\right)+\sqrt{\kappa}\left(\bar{u}_x(w-\bar{w})_x-(u-\bar{u})_x\bar{w}_x\right)\left(\frac{1}{v^{5/2}}-\frac{1}{\bar{v}^{5/2}}\right)\\
			&\hspace{2cm}-Q_1(u-\bar{u})-Q_2\left(\frac{\theta}{\bar{\theta}}-1\right)-Q_3(w-\bar{w})\Bigg)\,dx\\
			&\quad +\dot{X}(t)\int_{\R}a\left(\frac{v^S_x\bar{p}}{\bar{v}}(v-\bar{v})+u^S_x(u-\bar{u})+w^S_x(w-\bar{w})+\frac{R}{\gamma-1}\frac{\theta_x^S}{\bar{\theta}}(\theta-\bar{\theta})\right)\,dx\\
			&\quad +\int_\R a\bar{\theta}_t\left(R\Phi\left(\frac{v}{\bar{v}}\right)+\frac{R}{\gamma-1}\Phi\left(\frac{\theta}{\bar{\theta}}\right)\right)\,dx.
		\end{aligned}
	\end{align*}
	Finally, since
	\[\bar{\theta}_t = \theta_t^R+\theta_t^C+(-\sigma-\dot{X})\theta^S_x,\]
	we obtain the desired estimate.
\end{proof}

\subsection{Decomposition of the bad and good terms}\label{sec:decompose}
Next, we decompose $J^{\textup{bad}}$ and $J^{\textup{good}}$ in \eqref{est:rel_ent} into the leading terms and the remaining terms. In particular, we control the second term in $J^{\textup{bad}}$:
\[\int_\R a\underbrace{\left(R(\theta^R_t+\theta^C_t-\sigma \theta^S_x)\Phi\left(\frac{v}{\bar{v}}\right)-\frac{\bar{p}\bar{u}_x}{v\bar{v}}(v-\bar{v})^2\right)}_{=:J_2}\,dx\]
and the third term in $J^{\textup{bad}}$:
\[\int_\R a\underbrace{\left(\frac{R}{\gamma-1}\left(\theta^R_t+\theta^C_t-\sigma\theta^S_x\right)\Phi\left(\frac{\theta}{\bar{\theta}}\right)-\frac{\bar{u}_x}{\theta}(\theta-\bar{\theta})(p-\bar{p})+\bar{p}\bar{u}_x\frac{(\theta-\bar{\theta})^2}{\theta\bar{\theta}}\right)}_{=:J_3}\,dx.\]
Since the estimates on the terms $J_2$ and $J_3$ can be obtained by using almost the same argument as in \cite[Section 4.4]{KVW25}, except that the rarefaction strength need not be small, we get
\begin{align*}
	J_2&\le -\frac{(\gamma+1)\bar{p}}{2\bar{v}^2}u^R_x |v-\bar{v}|^2+C(\delta_C+\delta_S+\varepsilon_1)|u^R_x||v-\bar{v}|^2\\
	&\quad + C\delta_C(1+t)^{-1}e^{-\frac{C_1|x|^2}{1+t}}|v-\bar{v}|^2\\
	&\quad + \frac{(\gamma+1)p^*\sigma^*}{2(v^*)^2}v^S_x|v-\bar{v}|^2+C(|v^R-v_*|+\delta_C+\delta_S+\varepsilon_1)|v^S_x||v-\bar{v}|^2, 
\end{align*}
and
\begin{align*}
	J_3&\le u^R_x\left(-\frac{R}{2\bar{v}\bar{\theta}}(\theta-\bar{\theta})+\frac{\bar{p}}{\bar{v}\bar{\theta}}(v-\bar{v})\right)(\theta-\bar{\theta})+C(\delta_C+\delta_S+\varepsilon_1)|u^R_x||(v-\bar{v},\theta-\bar{\theta})|^2\\
	&\quad +C\delta_C (1+t)^{-1}e^{-\frac{C_1|x|^2}{1+t}}|(v-\bar{v},\theta-\bar{\theta})|^2\\
	&\quad + \frac{\sigma^* v^S_x}{2v^*\theta^*}(R(\theta-\bar{\theta})-2p^*(v-\bar{v}))(\theta-\bar{\theta})+C(|v^R-v_*|+\delta_C+\delta_S+\varepsilon_1)|v^S_x||(v-\bar{v},\theta-\bar{\theta})|^2.
\end{align*}
Therefore, combining the estimates for $J_2$ and $J_3$ as in \cite{KVW25}, we obtain
\begin{align*}
	\int_\R a(J_2+J_3)\,dx&\le B_1-CG^R\\
	&\quad +C\delta_C(1+t)^{-1}\int_\R a e^{-\frac{C_1|x|^2}{1+t}}|(v-\bar{v},\theta-\bar{\theta})|^2\,dx\\
	&\quad +C(\delta_C+\delta_S+\varepsilon_1)\int_\R a(|u^R_x|+|v^S_x|)|(v-\bar{v},\theta-\bar{\theta})|^2\,dx\\
	&\quad +C\int_{\R} a|v^R-v_*||v^S_x||(v-\bar{v},\theta-\bar{\theta})|^2\,dx,
\end{align*}
where
\begin{align*}
	&B_1:=\int_\R a v^S_x\left(\frac{(\gamma+1)p^*\sigma^*}{2(v^*)^2}|v-\bar{v}|^2+\frac{R\sigma^*}{2v^*\theta^*}|\theta-\bar{\theta}|^2-\frac{p^*\sigma^*}{v^*\theta^*}(v-\bar{v})(\theta-\bar{\theta})\right)\,dx,
\end{align*}
and $G^R$ is defined in \eqref{GSGR}.

The difference from the corresponding estimate in \cite{KVW25} is that we cannot use the smallness of $|v^R-v_*|$. Therefore, the weighted relative entropy estimate \eqref{est:rel_ent} can be written as

\begin{equation}\label{est:rel_ent_2}
	\frac{d}{dt}\int_\R a\bar{\theta} \eta(U|\bar{U})\,dx\le \dot{X}Y +\sum_{i=1}^5 B_i + \sum_{i=1}^3S_i-CG^R-G-D,
\end{equation}
where
\begin{align*}
	&B_2:=\int_\R a_x(u-\bar{u})(p-\bar{p})\,dx,\\
	&B_3:=-\int_{\R}a_x\Bigg(\mu(u-\bar{u})\left(\frac{u_x}{v}-\frac{\bar{u}_x}{\bar{v}}\right)+\alpha\frac{\theta-\bar{\theta}}{\theta}\left(\frac{\theta_x}{v}-\frac{\bar{\theta}_x}{\bar{v}}\right)\\
	&\hspace{3cm}+\sqrt{\kappa}(u-\bar{u})\left(\frac{w_x}{v^{5/2}}-\frac{\bar{w}_x}{\bar{v}^{5/2}}\right)-\sqrt{\kappa}\left(\frac{u_x}{v^{5/2}}-\frac{\bar{u}_x}{\bar{v}^{5/2}}\right)(w-\bar{w})\Bigg)\,dx,\\
	&B_4:=\int_\R a\Bigg[-\mu \bar{u}_x\left(\frac{1}{v}-\frac{1}{\bar{v}}\right)(u-\bar{u})_x+\alpha\frac{\theta-\bar{\theta}}{\theta^2}\theta_x\left(\frac{\theta_x}{v}-\frac{\bar{\theta}_x}{\bar{v}}\right)-\alpha\frac{(\theta-\bar{\theta})_x}{\theta}\bar{\theta}_x\left(\frac{1}{v}-\frac{1}{\bar{v}}\right)\\
	&\hspace{2cm}+\mu\frac{\theta-\bar{\theta}}{\theta}\left(\frac{u_x^2}{v}-\frac{\bar{u}_x^2}{\bar{v}}\right)-\frac{(\theta-\bar{\theta})^2}{\theta\bar{\theta}}\left(\alpha\left(\frac{\bar{\theta}_x}{\bar{v}}\right)_x+\mu \frac{\bar{u}_x^2}{\bar{v}}\right)\\
	&\hspace{2cm}+\sqrt{\kappa}\left(\bar{u}_x(w-\bar{w})_x-(u-\bar{u})_x\bar{w}_x\right)\left(\frac{1}{v^{5/2}}-\frac{1}{\bar{v}^{5/2}}\right)\Bigg]\,dx,\\
	&B_5:=C\delta_C(1+t)^{-1}\int_\R a e^{-\frac{C_1|x|^2}{1+t}}|(v-\bar{v},\theta-\bar{\theta})|^2\,dx \\
	&\hspace{1cm}+C(\delta_C+\delta_S+\varepsilon_1)\int_\R a(|u^R_x|+|v^S_x|)|(v-\bar{v},\theta-\bar{\theta})|^2\,dx\\
	&\hspace{1cm}+C\int_{\R} a|v^R-v_*||v^S_x||(v-\bar{v},\theta-\bar{\theta})|^2\,dx,\\
	&S_1:=-\int_\R aQ_1(u-\bar{u})\,dx,\quad S_2:=-\int_\R aQ_2\left(\frac{\theta}{\bar{\theta}}-1\right)\,dx,\quad S_3:=-\int_{\R} aQ_3(w-\bar{w})\,dx,
\end{align*}
and
\begin{equation}\label{GD}
	G:=\sigma\int_\R a_x\bar{\theta}\eta(U|\bar{U})\,dx,\quad D:=\int_\R a\left(\frac{\mu}{v}|(u-\bar{u})_x|^2+\frac{\alpha}{v\theta}|(\theta-\bar{\theta})_x|^2\right)\,dx.
\end{equation}
We also decompose $Y$ as $Y=\sum_{i=1}^7 Y_i$, where
\begin{align}
	\begin{aligned}\label{def:Yi}
	&Y_1 :=\int_\R a u^S_x(u-\bar{u})\,dx,\quad Y_2 :=\int_\R a \frac{\bar{p}v^S_x}{\bar{v}}(v-\bar{v})\,dx,\\
	&Y_3 :=\int_\R a\frac{R}{\gamma-1}\frac{\theta^S_x}{\bar{\theta}}(\theta-\bar{\theta})\,dx,\quad Y_4:=-\int_\R a R\theta^S_x \Phi\left(\frac{v}{\bar{v}}\right)\,dx,\\
	&Y_5:=-\int_\R a\frac{R\theta^S_x}{\gamma-1}\Phi\left(\frac{\theta}{\bar{\theta}}\right)\,dx,\quad Y_6:=-\int_\R a_x\bar{\theta}\eta(U|\bar{U})\,dx,\quad Y_7:=\int_\R aw^S_x(w-\bar{w})\,dx.
	\end{aligned}
\end{align}
Now, from the definition of the shift in \eqref{def:shift}, we can express \eqref{def:shift} as
\[\dot{X}(t)=-\frac{M}{\delta_S}(Y_1+Y_2+Y_3),\quad X(0)=0,\]
where the constant $M$ was chosen as in \eqref{choice_M}. This yields
\[\dot{X}(t)Y=-\frac{\delta_S}{M}|\dot{X}(t)|^2+\dot{X}(t)\sum_{i=4}^7 Y_i.\]
Therefore, \eqref{est:rel_ent_2} can be further decomposed as
\begin{align*}
	\frac{d}{dt}\int_{\R} a\bar{\theta}\eta(U|\bar{U})\,dx&\le -\frac{\delta_S}{2M}|\dot{X}|^2+B_1+B_2-G-\frac{3}{4}D\\
	&\quad -\frac{\delta_S}{2M}|\dot{X}|^2+\dot{X}\sum_{i=4}^7Y_i+\sum_{i=3}^5B_i+\sum_{i=1}^3S_i-\frac{D}{4}-CG^R \\
	&\le \underbrace{-\frac{\delta_S}{2M}|\dot{X}|^2+B_1+B_2-G-\frac{3}{4}D}_{\mathcal{R}_1}\\
	&\quad \underbrace{-\frac{\delta_S}{4M}|\dot{X}|^2+\frac{C}{\delta_S}\sum_{i=4}^7|Y_i|^2+\sum_{i=3}^5B_i+\sum_{i=1}^3S_i-CG^R-\frac{D}{4}}_{\mathcal{R}_2},
\end{align*}
where we used Young's inequality in the last inequality. Note that the term $\mathcal{R}_1$ represents the leading terms that should be carefully estimated, while $\mathcal{R}_2$ represents the remaining terms that can be roughly bounded. 

\subsection{Estimates for the leading terms}
We first estimate the leading terms in $\mathcal{R}_1$.
\begin{lemma}\label{lem:leading}
	There exists a constant $C>0$ such that
	\begin{align*}
    \mathcal{R}_1&\le -CG^S-G_3+C\delta_S^{4/3}\left((\delta_R^{4/3}+\delta_C^{4/3})e^{-C\delta_St}+\frac{\delta_R^{4/3}}{(1+\e^2_Rt^2)^{q/3}}\right)\\
    &\quad +C\varepsilon_1^6\|(v-\bar{v},u-\bar{u},\theta-\bar{\theta})_x\|_{L^2}^2,
\end{align*}
	where
	\begin{equation}\label{GSG3}
		G^S:=\int_{\R} |v^S_x||(v-\bar{v},u-\bar{u},\theta-\bar{\theta})|^2\,dx,\quad G_3:=\frac{\sigma}{2}\int_{\R} a_x|w-\bar{w}|^2\,d x.
	\end{equation}
\end{lemma}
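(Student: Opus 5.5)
The plan is to follow the leading-order analysis of \cite[Section 4.5]{KVW25} and reduce $\mathcal{R}_1$ to a sharp weighted Poincar\'e inequality on $[0,1]$. The Korteweg contribution enters only through the part of $G$ carrying $(w-\bar w)^2$, which is split off as $G_3$. The remaining corrections come from the fact that, near the shock, $\bar U$ differs from the shock profile by the rarefaction and contact waves, and these must be measured without any smallness of $\delta_R$.

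\emph{Step 1 (localization).} Near the shock, I freeze all coefficients at $(v^*,\theta^*)$ and replace $p-\bar p$ by its linearization $-\frac{p^*}{v^*}(v-\bar v)+\frac{R}{v^*}(\theta-\bar\theta)$ in $B_2$, $G$ and $\mathcal{R}_1$.
\begin{itemize}
\item The quadratic remainders are bounded by $C\varepsilon_1\int|v^S_x||(v-\bar v,u-\bar u,\theta-\bar\theta)|^2$. I would estimate them using $\|(v-\bar v,\theta-\bar\theta)\|_{L^\infty}$ and Gagliardo--Nirenberg, $\|f\|_{L^\infty}\le\|f\|_{L^2}^{1/2}\|f_x\|_{L^2}^{1/2}$. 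Young's inequality then yields the $C\varepsilon_1^6\|(\cdot)_x\|_{L^2}^2$ term.
\item Replacing $\bar p,\bar v,\bar\theta$ by $p^*,v^*,\theta^*$ costs $C(|v^R-v_*|+|v^C-v^*|+\delta_S)|v^S_x||\cdot|^2$.
\item On the set $\{x\le\sigma t/2\}$, the factor $|v^S_x|$ is exponentially small as in \eqref{shock_left_est}. On the complementary set, $|v^R-v_*|$ is small by \eqref{ptwise-R}.
\item Where these factors cannot be absorbed into $G^S$, I use Young's inequality in the form $\int|v^S_x||f||\cdot|^2\le \delta\,G^S$-type terms plus $L^\infty$ interaction contributions. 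This is how I expect the $\delta_S^{4/3}(\cdots)^{4/3}$ source terms to appear, with exponents matching $\|R_3\|_{L^2}$ in Lemma~\ref{lem: interaction estimates}.
\end{itemize}

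\emph{Step 2 (completing squares).} In $-G+B_2$, I use $\sigma\approx\sigma^*$ from \eqref{sigma-est} and $a_x=v^S_x/\sqrt{\delta_S}$, and complete the square in $u-\bar u$. This produces
\[
-\frac{\sigma^*}{2}\int a_x\Bigl|u-\bar u-\tfrac{1}{\sigma^*}(p-\bar p)\Bigr|^2,
\]
together with the good term $-G_3$ from the $(w-\bar w)^2/2$ part of $\bar\theta\eta$. What remains is a quadratic form in $(v-\bar v,\theta-\bar\theta)$ weighted by $a_x$, which is of order $\delta_S^{-1/2}$ and is good. The term $B_1$, weighted by $v^S_x$, is of order $1$ and bad. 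To relate $u-\bar u$ to $v-\bar v$ and $\theta-\bar\theta$ at leading order, I would use \eqref{u_xi,theta_xi-est}.

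\emph{Step 3 (change of variables and Poincar\'e).} I set $y=(v^S(\xi)-v^*)/\delta_S\in(0,1)$. By \eqref{logistic-est}, $dy=v^S_x/\delta_S\,dx$ and $v^S_x\sim\delta_S^2y(1-y)$. For the $u$ and $\theta$ perturbations, I use the dissipation $\frac34 D$ from $J^{\rm good}$ to get $\int|f_x|^2\,dx\ge C\delta_S\int y(1-y)|\partial_yf|^2\,dy$. The means $\int f\,dy$ are controlled by the shift term $-\frac{\delta_S}{2M}|\dot X|^2=-\frac{M}{2\delta_S}|Y_1+Y_2+Y_3|^2$. The plan is then to apply $\int_0^1|f-\bar f|^2\le\frac12\int_0^1y(1-y)|f'|^2$ to the effective combination $u$, $\theta$ and their linear combination with $v$. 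The constant \eqref{secant-est} gives the exact size of $B_1$ after the change of variables, and the choice of $M$ in \eqref{choice_M} is what makes the constants close. This yields $-CG^S$.

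\emph{Main obstacle.} The hardest step is this sharp-constant bookkeeping. The temperature diffusion and the viscosity enter with the weight $\frac{\mu R\gamma}{\mu R\gamma+\alpha(\gamma-1)^2}$, and the $L^2$ error produced when the left state varies with $\delta_C$ must be uniformly $O(\delta_S)$ relative to the leading coefficient. Uniformity in the left state is supplied by Lemma~\ref{lem:shock-property}. There, I would first try to mimic the eigen-decomposition used in \cite{KVW25}: diagonalize the quadratic form in $(v,\theta)$ into the acoustic direction plus an entropy direction. The entropy direction is controlled directly by the $a_x$-good term, and the acoustic direction is handled by the Poincar\'e inequality.
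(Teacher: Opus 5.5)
Your architecture matches the paper's. It uses the $a_x$-weighted good terms from $B_2-G$, the shift to control $\int_0^1f\,dy$, $B_1\approx A^*\delta_S\int_0^1f^2\,dy$, the weighted Poincar\'e inequality, and the choice of $M$. Your completed square in $u-\bar u$, together with the leftover form in $(v-\bar v,\theta-\bar\theta)$ (which is only rank one), controls the same two combinations as $G_1,G_2$.

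The genuine gap is the lower bound on the dissipation $D$ when the rarefaction is \emph{large}. Your Step~1 disposes of $|v^R-v_*|$ by localization, and that works only because the factor comes multiplied by $|v^S_x|$. In $D=\int a\bigl(\frac{\mu}{v}|(u-\bar u)_x|^2+\frac{\alpha}{v\theta}|(\theta-\bar\theta)_x|^2\bigr)dx$ there is no such factor. Freezing the coefficients at shock values costs $C\int(|v^R-v_*|+|\theta^R-\theta_*|)|(u-\bar u,\theta-\bar\theta)_x|^2\,dx$, which is $O(\delta_R)$ times the dissipation on the fan. For $t\lesssim\e_R^{-1}$ the fan overlaps the shock layer, so this cost can neither be absorbed nor be dropped by localization. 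Yet you need $D\ge(1-o(1))\int\bigl(\frac{\mu}{v^S}|(u-\bar u)_x|^2+\frac{\alpha}{v^S\theta^S}|(\theta-\bar\theta)_x|^2\bigr)dx$ with the sharp constant.

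The paper gets this bound from one-sided inequalities, not smallness. From $v^R\le v_*$ it obtains $\frac1{\bar v}-\frac1{v^S}\ge-C\delta_C$. From \eqref{thetaRvR}, $v^R\theta^R=C(v^R)^{2-\gamma}\le v_*\theta_*$, which gives $\frac{1}{\bar v\bar\theta}-\frac1{v^S\theta^S}\ge-C(\delta_C+\delta_S)$. This is the only place $\gamma\le2$ is used, and your plan never invokes it. For $\gamma>2$ one has $v^R\theta^R>v_*\theta_*$ in the fan, and the argument breaks down.

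Second, the capillarity does not enter $\mathcal R_1$ only through $G_3$, and \eqref{secant-est} is not what fixes $B_1$. The bound $B_1\le A^*(1+o(1))\delta_S\int_0^1f^2dy+C\delta_S^{1/4}(G_1+G_2)$ comes from substituting the $G_1,G_2$ directions. A bound $\int|f_x|^2dx\ge C\delta_S\int_0^1y(1-y)|f_y|^2dy$ with a generic $C$ cannot beat $B_1$. You need the Jacobian estimate \eqref{Diffusion-Jacobian}, which the paper derives from the once-integrated third-order profile equation \eqref{dydx-1}. There the Korteweg term $\frac{\sqrt\kappa}{\sigma}w^S_x(v^S)^{-5/2}$ is a relative $O(\delta_S^2)$ correction, since $|v^S_{xx}|\le C\delta_S|v^S_x|$ and $|v^S_x|\le C\delta_S^2$. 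The right-hand side is then put in secant form and \eqref{secant-est} applied.

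Poincar\'e is then applied to $f$ and to $\theta-\bar\theta$ separately, never to a combination with $v-\bar v$, which has no dissipation. The splitting $\theta-\bar\theta=\bigl[\theta-\bar\theta-\frac{(\gamma-1)\theta^*}{v^*\sigma^*}f\bigr]+\frac{(\gamma-1)\theta^*}{v^*\sigma^*}f$ transfers the heat dissipation onto $f$ and yields exactly $2A^*$.

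Finally, the cubic remainders in $B_2-G$ carry the weight $a_x=v^S_x/\sqrt{\delta_S}$, not $|v^S_x|$. An $L^\infty$ bound therefore gives only $C\varepsilon_1\delta_S^{-1/2}G^S$, which is not small because $\varepsilon_1$ and $\delta_S$ are independent. The paper bounds these terms by $C\varepsilon_1(G_1+G_2+D+G^S)$. The $C\varepsilon_1^6\|(\cdot)_x\|_{L^2}^2$ term comes instead from the interaction term, via $\delta_S^{-1/2}\|R_3\|_{L^2}\|\cdot\|_{L^4}^2$, Gagliardo--Nirenberg, and Young.
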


\begin{proof}
We mainly follow the argument of \cite[Lemma~4.4]{KVW25}, but pay attention to where the smallness of rarefaction strength cannot be used.

\medskip
\noindent $\bullet$ Step 1 (Estimate of $B_2-G$): We first handle the term $B_2-G$, which can be written as
\begin{align*}
	B_2-G&=\int_{\R} a_x\left[(u-\bar{u})(p-\bar{p})-\sigma\bar{\theta}\eta(U|\bar{U})\right]\,dx\\
	&=\int_{\R} a_x\biggl[(u-\bar{u})(p-\bar{p})-\sigma\left(R\bar{\theta}\Phi\left(\frac{v}{\bar{v}}\right)+\frac{R\bar{\theta}}{\gamma-1}\Phi\Bigl(\frac{\theta}{\bar{\theta}}\Bigl)+\frac{(u-\bar{u})^2}{2}+\frac{(w-\bar{w})^2}{2}\right)\biggr]\,dx.
\end{align*}
Following the estimates in \cite[Lemma 4.4]{KVW25}, there exists a constant $C>0$ such that
\begin{align*}
	&(u-\bar{u})(p-\bar{p})-\sigma\Bigl[R\bar{\theta}\Phi\left(\frac{v}{\bar{v}}\right)+\frac{R\bar{\theta}}{\gamma-1}\Phi\left(\frac{\theta}{\bar{\theta}}\right)+\frac{(u-\bar{u})^2}{2}\Bigr]\\
	&\le -\frac{R\sigma^*\theta^*}{2(v^*)^2}\left|(v-\bar{v})+\frac{u-\bar{u}}{\sigma^*}\right|^2-\frac{R\sigma^*}{2(\gamma-1)\theta^*}\left|(\theta-\bar{\theta})-\frac{(\gamma-1)\theta^*}{v^*\sigma^*}(u-\bar{u})\right|^2\\
	&\quad +C\left(|v-\bar{v}|+\delta_S+|(v^R-v_*,\theta^R-\theta_*)|+|(v^C-v^*,\theta^C-\theta^*)|\right)|(v-\bar{v},u-\bar{u},\theta-\bar{\theta})|^2,
\end{align*}
Thus, $B_2-G$ can be bounded as
\[B_2-G\le B_\textup{new}-G_1-G_2-G_3,\]
where the good terms are given by
\begin{align*}
	G_1&:=\frac{R\sigma^*\theta^*}{2(v^*)^2}\int_{\R}a_x\left|(v-\bar{v})+\frac{u-\bar{u}}{\sigma^*}\right|^2\,dx,\\
	G_2&:=\frac{R\sigma^*}{2(\gamma-1)\theta^*}\int_{\R} a_x\left|(\theta-\bar{\theta})-\frac{(\gamma-1)\theta^*}{v^*\sigma^*}(u-\bar{u})\right|^2\,dx,\quad G_3:=\frac{\sigma}{2}\int_{\R} a_x|w-\bar{w}|^2\,dx,
    \end{align*}
    and the new bad term is
    \begin{align*}
	B_{\textup{new}}&:=C\delta_S\int_{\R} a_x|(v-\bar{v},u-\bar{u},\theta-\bar{\theta})|^2\,dx +C\int_{\R} a_x|(v-\bar{v},u-\bar{u},\theta-\bar{\theta})|^3\,dx\\
	&\quad + C\int_{\R} a_x\left(|(v^R-v_*,\theta^R-\theta_*)|+|(v^C-v^*,\theta^C-\theta^*)|\right)|(v-\bar{v},u-\bar{u},\theta-\bar{\theta})|^2\,dx.
\end{align*}

Therefore, we use the good term $G$ to absorb one of the primary bad terms $B_2$, and get new good terms $G_1$, $G_2$, and $G_3$, together with the new bad term $B_{\textup{new}}$. We continue to follow the estimates in \cite[Lemma 4.4]{KVW25} to bound $B_{\textup{new}}$, but we need to be careful since we do not assume the smallness of the rarefaction strength $\delta_R$. The first and second terms in $B_{\textup{new}}$ can be estimated as in \cite{KVW25}, since the smallness of rarefaction strength does not play any role here. Thus, we have
\[C\delta_S\int_{\R} a_x|(v-\bar{v},u-\bar{u},\theta-\bar{\theta})|^2\,dx \le C\sqrt{\delta_S}G^S,\]
and
\begin{align*}
	&\int_{\R} a_x|(v-\bar{v},u-\bar{u},\theta-\bar{\theta})|^3\,dx\le C\varepsilon_1(G_1+G_2+D+G^S).
\end{align*}
Now, we estimate the third term of $B_{\textup{new}}$. We use the interpolation inequality and Lemma \ref{lem: interaction estimates} to obtain
\begin{align*}
	\int_{\R} &|a_x||(v^R-v_*,\theta^R-\theta_*, v^C-v^*,\theta^C-\theta^*)||(v-\bar{v},u-\bar{u},\theta-\bar{\theta})|^2\,dx\\
	&\le\frac{C}{\sqrt{\delta_S}}\int_{\R}|v^S_x||(v^R-v_*,\theta^R-\theta_*, v^C-v^*,\theta^C-\theta^*)| |(v-\bar{v},u-\bar{u},\theta-\bar{\theta})|^2\,dx\\
	&\le \frac{C}{\sqrt{\delta_S}}\|R_3\|_{L^2}\|(v-\bar{v},u-\bar{u},\theta-\bar{\theta})\|_{L^4}^2\\
	&\le \frac{C}{\sqrt{\delta_S}}\|R_3\|_{L^2}\|(v-\bar{v},u-\bar{u},\theta-\bar{\theta})\|_{L^2}^{3/2}\|(v-\bar{v},u-\bar{u},\theta-\bar{\theta})_x\|_{L^2}^{1/2}\\	
	&\le C\left(\bigl( \delta_R +\delta_C\bigr)\delta_S e^{-C \delta_S t}+\delta_S\frac{\delta_R}{(1+\e_R^2t^2)^{q/4}}\right)^{4/3}+C\varepsilon_1^6\|(v-\bar{v},u-\bar{u},\theta-\bar{\theta})_x\|_{L^2}^2.
\end{align*}
Thus, we have the following estimate on $B_{\textup{new}}$:
\begin{align*}
	B_{\textup{new}}&\le C(\sqrt{\delta_S}+\varepsilon_1)(G_1+G_2+D+G^S)+C\delta_S^{4/3}\left((\delta_R^{4/3}+\delta_C^{4/3})e^{-C\delta_St}+\frac{\delta_R^{4/3}}{(1+\e^2_Rt^2)^{q/3}}\right)\\
	&\quad + C\varepsilon_1^6\|(v-\bar{v},u-\bar{u},\theta-\bar{\theta})_x\|_{L^2}^2.
\end{align*}

\noindent $\bullet$ Step 2 (Estimate of $-\frac{\delta_S}{2M}|\dot X|^2$ and $B_1$):
For each fixed $t\ge 0$, let
\[y=y(x):=\frac{v^S(x-\sigma t-X(t))-v^*}{\delta_S},\quad \mbox{therefore}\quad \delta_S\,dy = v^S_x\,dx=-\frac{1}{\sigma}u^S_x\,dx.\]
Note that $y$ increases from 0 to 1 as $x$ increases from $-\infty$ to $+\infty$. Then, we define $f:[0,1]\to\R$ as
\[f:=\left(u(t,\cdot)-\bar{u}(t,\cdot)\right)\circ y^{-1},\quad \mbox{i.e.}\quad f(y) = u(t,x)-\bar{u}(t,x).\]

We estimate each of $Y_i$ for $i=1,2,3$ separately. First of all, it is straightforward to obtain
\[Y_1 = \int_{\R} au^S_x(u-\bar{u})\,dx = -\delta_S\sigma\int_0^1 a f\,dy.\]
Since $|a-1|\le \sqrt{\delta_S}$ and $|\sigma-\sigma^*|\le C\delta_S$, we get
\[\left|Y_1+\delta_S\sigma^*\int_0^1 f\,dy\right|\le C\delta_S^{3/2}\int_0^1 |f|\,dy.\]
Next, we split $Y_2$ as
\begin{align*}
	Y_2 = \int_{\R} a\frac{v^S_x\bar{p}}{\bar{v}}(v-\bar{v})\,dx =-\int_{\R} a\frac{v^S_x\bar{p}}{\bar{v}\sigma^*}(u-\bar{u})\,dx+\int_{\R} a\frac{v^S_x\bar{p}}{\bar{v}}\left((v-\bar{v})+\frac{u-\bar{u}}{\sigma^*}\right)\,dx.
\end{align*}
Using
\begin{align*}
	|\bar{p}-p^*|\le C(|\bar{v}-v^*|+|\bar{\theta}-\theta^*|)\le C(|(v^R-v_*,\theta^R-\theta_*)|+\delta_C+\delta_S),
\end{align*}
we get
\begin{align*}
	\left|Y_2+\frac{p^*\delta_S}{v^*\sigma^*}\int_0^1 f\,dy\right|&\le C\delta_S(\sqrt{\delta_S}+\delta_0)\int_0^1 |f|\,dy+ C\int_{\R} |v^S_x||(v^R-v_*,\theta^R-\theta_*)||(u-\bar{u})|\,dx\\
	&\quad + C\sqrt{\delta_S}\int_{\R} a_x \left|(v-\bar{v})+\frac{u-\bar{u}}{\sigma^*}\right|\,dx.
\end{align*}
Similar to $Y_2$, we estimate $Y_3$ as
\begin{align*}
	\left|Y_3 + \frac{(\gamma-1)p^*\delta_S}{v^*\sigma^*}\int_0^1 f\,dy\right|&\le C\delta_S(\sqrt{\delta_S}+\delta_0)\int_0^1 |f|\,dy+ C\int_{\R} |\theta^S_x||\theta^R-\theta_*||(u-\bar{u})|\,dx\\
	&\quad + C\sqrt{\delta_S}\int_{\R} a_x\left|(\theta-\bar{\theta})-\frac{(\gamma-1)\theta^*}{v^*\sigma^*}(u-\bar{u})\right|\,dx.
\end{align*}
Finally, we use Lemma \ref{lem: interaction estimates} to obtain
\begin{align}
	\begin{aligned}\label{est:Yi_remain}
	&\int_{\R} (|v^S_x|+|\theta^S_x|)|(v^R-v_*,\theta^R-\theta_*)||u-\bar{u}|\,dx
    \\
    &\quad \le \|R_3\|_{L^2}\|u-\bar{u}\|_{L^2} \le C\varepsilon_1\delta_S^{3/2} \left(\bigl( \delta_R +\delta_C\bigr) e^{-C \delta_S t}+\frac{\delta_R}{(1+\e_R^2t^2)^{q/4}}\right).
	\end{aligned}
\end{align}
Since $\dot{X} = -\frac{M}{\delta_S}(Y_1+Y_2+Y_3)$, we collect the estimates of $Y_i$ for $i=1,2,3$ and \eqref{est:Yi_remain} to get
\begin{align*}
	\left|\dot{X}-2\sigma^*M\int_0^1f\,dy\right|&\le C(\sqrt{\delta_S}+\delta_0)	\int_0^1|f|\,dy+\frac{C}{\sqrt{\delta_S}}\int_{\R} a_x\left|(v-\bar{v})+\frac{u-\bar{u}}{\sigma^*}\right|\,dx\\
	&\quad +\frac{C}{\sqrt{\delta_S}}\int_{\R}a_x\left|(\theta-\bar{\theta})-\frac{(\gamma-1)\theta^*}{v^*\sigma^*}(u-\bar{u})\right|\,dx \\
	&\quad+ C\varepsilon_1\delta_S^{1/2} \left(\bigl( \delta_R +\delta_C\bigr) e^{-C \delta_S t}+\frac{\delta_R}{(1+\e_R^2t^2)^{q/4}}\right).
\end{align*}
Using inequality $\frac{p^2}{2}-q^2 \le (q-p)^2$ with $p=2\sigma^*M \int_0^1 f \, dy$ and $q=\dot{X}$, and Cauchy--Schwarz inequality, we obtain
\begin{align*}
	2(\sigma^*)^2M^2\left(\int_0^1f\,dy\right)^2-|\dot{X}|^2&\le C(\sqrt{\delta_S}+\delta_0)^2\int_0^1|f|^2\,dy+\frac{C}{\sqrt{\delta_S}}(G_1+G_2)\\
	&\quad+C\varepsilon_1^2\delta_S\left(\bigl( \delta^2_R +\delta^2_C\bigr) e^{-C \delta_S t}+\frac{\delta_R^2}{(1+\e_R^2t^2)^{q/2}}\right).
\end{align*}
Multiplying by $\tfrac{\delta_S}{2M}$ and rearranging the terms, we get
\begin{align*}
    -\frac{\delta_S}{2M}|\dot{X}|^2&\le -(\sigma^*)^2 M\delta_S\left(\int_0^1 f\,dy\right)^2+C\delta_S(\sqrt{\delta_S}+\delta_0)^2\int_0^1f^2\,dy\\
    &\quad+C\sqrt{\delta_S}(G_1+G_2)+C\varepsilon_1^2\delta_S^2\left(\bigl( \delta^2_R +\delta^2_C\bigr) e^{-C \delta_S t}+\frac{\delta_R^2}{(1+\e_R^2t^2)^{q/2}}\right).
\end{align*}

Finally, since the estimate of $B_1$ in \cite[Lemma 4.4]{KVW25} does not use the smallness of the rarefaction strength, we can directly adopt its estimate as
\[B_1\le A^*\left(1+\sqrt{\delta_S}+\delta_S^{1/4}\right)\delta_S\int_0^1 f^2\,dy+C\delta_S^{1/4}(G_1+G_2),\]
where $A^*$ is the constant defined in Lemma~\ref{lem:shock-property}.
\medskip

\noindent $\bullet$ Step 3 (Estimate of $D$):
Next, we split $D$ as
\begin{equation} \label{Du1,Dtheta1-def}
D=\int_{\R} a\frac{\mu}{v}|(u-\bar u)_x|^2\,dx
  +\int_{\R} a\frac{\alpha}{v\theta}|(\theta-\bar\theta)_x|^2\,dx
=:D_{u_1}+D_{\theta_1} .
\end{equation}
We first estimate $D_{u_1}$. Since $a\ge 1$, we have
\begin{equation*}
    \begin{aligned}
D_{u_1}
&\ge
\mu\int_{\R}\Big(\frac{1}{v^S}
+\Big(\frac{1}{v}-\frac{1}{\bar v}\Big)\Big)|(u-\bar u)_x|^2\,dx
+\mu\int_{\R}\Big(\frac{1}{\bar v}-\frac{1}{v^S}\Big)|(u-\bar u)_x|^2\,dx .
\end{aligned}
\end{equation*}
Using $v^R-v_* \le 0$ and $|v^C-v^*| \le C\delta_C$,
\begin{equation*} 
    \frac{1}{\bar v}-\frac{1}{v^S}
=\frac{v^S-\bar v}{\bar vv^S}
=-\frac{(v^R-v_*)+(v^C-v^*)}{\bar vv^S}\ge -C\delta_C.
\end{equation*}
With $\|v-\bar v\|_{L^\infty}\le C\varepsilon_1$ and the change of variables $x\mapsto y$, we obtain
\begin{equation}\label{eq:D1-lb-new}
D_{u_1}\ge
\bigl(1-C(\delta_C+\varepsilon_1)\bigr)
\int_0^1 |f_y|^2\,\Big(\frac{\mu}{v^S}\frac{dy}{dx}\Big)\,dy .
\end{equation}
Similarly, we estimate $D_{\theta_1}$ as
\[
D_{\theta_1}\ge\alpha\int_{\R}\Bigl(\frac{1}{v^S\theta^S}
+\Bigl(\frac{1}{v\theta}-\frac{1}{\bar v\,\bar\theta}\Bigr)\Bigr)\,|(\theta-\bar\theta)_x|^2\,dx
+\alpha\int_{\R}\Bigl(\frac{1}{\bar v\,\bar\theta}-\frac{1}{v^S\theta^S}\Bigr)\,|(\theta-\bar\theta)_x|^2\,dx.
\]
Note that
\begin{align*}
    \frac{1}{\bar{v} \bar{\theta}} - \frac{1}{v^S \theta^S} &=  -\frac{(v^R+((v^C-v_*)+(v^S-v^*)))(\theta^R+((\theta^C-\theta_*)+(\theta^S-\theta^*)))}-v^S\theta^S{\bar{v} \bar{\theta} v^S \theta^S}\\
    &= -\frac{v^R\theta^R-v_*\theta_*}{\bar{v} \bar{\theta} v^S \theta^S} -\frac{v^R((\theta^C-\theta_*)+(\theta^S-\theta^*))+\theta^R((v^C-v_*)+(v^S-v^*))}{\bar{v} \bar{\theta} v^S \theta^S}\\
    &\quad -\frac{(v^C-v_*)(\theta^C-\theta_*)+(v^S-v^*)(\theta^C-\theta_*)+(\theta^S-\theta^*)(v^C-v_*)}{\bar{v} \bar{\theta} v^S \theta^S} \\
    &\quad -\frac{-\theta^*(v^S-v^*)-v^*(\theta^S-\theta^*)-(v^*\theta^*-v_*\theta_*)}{\bar{v} \bar{\theta} v^S \theta^S}.
\end{align*}
Since the approximate rarefaction wave satisfies \eqref{smooth rarefaction}$_3$, which implies
\begin{equation}\label{thetaRvR}
	\theta^R (v^R)^{\gamma-1} = \theta_*(v_*)^{\gamma-1},\quad \mbox{that is}\quad v^R\theta^R=C (v^R)^{2-\gamma},
\end{equation}
the function $v^R\theta^R=C(v^R)^{2-\gamma}$ is nondecreasing in $v^R$ for $1<\gamma\le2$, and $v^R\le v_*$ gives $v^R\theta^R-v_*\theta_*\le0$. With $|v^C-v^*|+|\theta^C-\theta^*|\le C\delta_C$ and $|v^S-v^*|+|\theta^S-\theta^*|\le C\delta_S$, we obtain
\begin{equation*}
\frac{1}{\bar v\bar\theta}-\frac{1}{v^S\theta^S}\ge-C(\delta_C+\delta_S).
\end{equation*}
Then using $\|(\theta-\bar\theta,v-\bar v)\|_{L^\infty}\le C\varepsilon_1$ and the change of variables $x\mapsto y$, we get
\begin{equation}\label{eq:D2-lb-new}
D_{\theta_1}\ge\bigl(1-C(\delta_C+\delta_S+\varepsilon_1)\bigr)
\int_0^1 \bigl|(\theta-\bar\theta)_y\bigr|^2
\frac{\alpha}{\mu\theta^*}\Bigl(\frac{\mu}{v^S}\frac{dy}{dx}\Bigr)\,dy.
\end{equation}

To obtain a sharp estimate on the lower bounds in \eqref{eq:D1-lb-new} and \eqref{eq:D2-lb-new}, we invoke the following estimate for the Jacobian:
\begin{equation}\label{Diffusion-Jacobian}
\left|
\frac{1}{y(1-y)}\frac{\mu}{v^S}\frac{dy}{dx}
-\delta_SA^*
\frac{\mu R\gamma}{\mu R\gamma+\alpha(\gamma-1)^2}
\right|\le C\delta_S^{2}.
\end{equation}
To prove \eqref{Diffusion-Jacobian}, we use the shock equation \eqref{eq:viscous_shock}$_{1,2}$ to derive
\[
\mu\frac{v_x^S}{v^S}
-\frac{\kappa}{\sigma}\!\left(-\frac{v_{xx}^S}{(v^S)^5}
+\frac{5(v_x^S)^2}{2(v^S)^6}\right)
=-\sigma(v^S-v^*)-\frac{p^S-p^*}{\sigma}.
\]
Since $w^S=-\sqrt{\kappa}v_x^S/(v^S)^{5/2}$ and $y=(v^S-v^*)/\delta_S$, we rewrite the above equation as
\begin{equation}\label{dydx-1}
\delta_S\frac{\mu}{v^S}\frac{dy}{dx}
=+\frac{\sqrt{\kappa}}{\sigma}\frac{w_x^S}{(v^S)^{5/2}}
-\frac{1}{\sigma}\Bigl(\sigma^2(v^S-v^*)+p^S-p^*\Bigr).
\end{equation}
Using $|v_{xx}^S|\le C\delta_S|v_x^S|$ and $|v_x^S|\le C\delta_S^2$,
\begin{equation}\label{w_x-dydx}
\left|\frac{\sqrt{\kappa}}{\sigma}\frac{w_x^S}{(v^S)^{5/2}}\right|
\le C\delta_S^2\frac{\mu}{v^S}\frac{dy}{dx}.
\end{equation}
We substitute the estimate \eqref{w_x-dydx} into \eqref{dydx-1} to get
\[
\delta_S(1-C\delta_S)\frac{\mu}{v^S}\frac{dy}{dx}
\le
-\frac{1}{\sigma}\Bigl(\sigma^2(v^S-v^*)+p^S-p^*\Bigr)
\le
\delta_S(1+C\delta_S)\frac{\mu}{v^S}\frac{dy}{dx}.
\]
Since $y=\dfrac{v^S-v^*}{\delta_S}$, $1-y=\dfrac{v_+-v^S}{\delta_S}$, and
\[
-\frac{1}{\sigma}\Bigl(\sigma^2(v^S-v^*)+p^S-p^*\Bigr)
=-\frac{1}{\sigma(v_+-v^*)}\Bigl((p^S-p_+)(v^S-v^*)+(v_+-v^S)(p^S-p^*)\Bigr),
\]
we deduce that
\begin{equation}\label{eq:two-sided}
\frac{1-C\delta_S}{y(1-y)}\frac{\mu}{v^S}\frac{dy}{dx}
\le
\frac{1}{\sigma}\!\left(\frac{p^S-p_+}{v^S-v_+}-\frac{p^S-p^*}{v^S-v^*}\right)
\le
\frac{1+C\delta_S}{y(1-y)}\frac{\mu}{v^S}\frac{dy}{dx}.
\end{equation}
On the other hand, {dividing \eqref{secant-est} by $\sigma^*$ gives}
\begin{equation}\label{eq:KV-sigma-star}
\left|
\frac{1}{\sigma^*}\!\left(\frac{p^S-p_+}{v^S-v_+}-\frac{p^S-p^*}{v^S-v^*}\right)
-\delta_SA^*
\frac{\mu R\gamma}{\mu R\gamma+\alpha(\gamma-1)^2}
\right|
\le C\delta_S^2.
\end{equation}
Using $|\sigma-\sigma^*|\le C\delta_S$, and $|(v^S-v^*,\theta^S-\theta^*)|,|(v^S-v_+,\theta^S-\theta_+)|\le C\delta_S$, we have
\[
\left|
\frac{1}{\sigma}\!\left(\frac{p^S-p_+}{v^S-v_+}-\frac{p^S-p^*}{v^S-v^*}\right)
-\frac{1}{\sigma^*}\!\left(\frac{p^S-p_+}{v^S-v_+}-\frac{p^S-p^*}{v^S-v^*}\right)
\right|
\le C\delta_S^2.
\]
Hence \eqref{Diffusion-Jacobian} follows from \eqref{eq:two-sided} and \eqref{eq:KV-sigma-star}.

Now, the lower bounds \eqref{eq:D1-lb-new} and \eqref{eq:D2-lb-new}, and the Jacobian estimate \eqref{Diffusion-Jacobian} give
\[
\begin{aligned}
D=D_{u_1}+D_{\theta_1}
&\ge\delta_S\,A^*\,
\frac{\mu R\gamma}{\mu R\gamma+\alpha(\gamma-1)^2}
\Bigl(1-C(\delta_C+\delta_S+\varepsilon_1)\Bigr)
\\
&\qquad\times\Biggl(
\int_0^1 y(1-y)\,|f_y|^2\,dy
+\frac{\alpha}{\mu\theta^*}\int_0^1 y(1-y)\,|(\theta-\bar\theta)_y|^2\,dy
\Biggr).
\end{aligned}
\]
Then, we follow the same argument as in \cite{KVW25}, using the decomposition
\[
    \theta-\bar\theta=\Bigl[(\theta-\bar\theta)-\frac{(\gamma-1)\theta^*}{v^*\sigma^*}(u-\bar u)\Bigr]+\frac{(\gamma-1)\theta^*}{v^*\sigma^*}(u-\bar u),
\]
and the weighted Poincar\'e inequality \cite[Lemma~2.9]{KV21-JEMS} for $f$ and $\theta-\bar\theta$, to derive
\begin{align*}
    D &\ge 2A^*\frac{\mu R\gamma}{\mu R\gamma+\alpha(\gamma-1)^2} \delta_S (1-C(\delta_C+\delta_S +\varepsilon_1)) \\
    &\quad \times \Bigg[  \int_0^1 f^2 dy - \left(\int_0^1 f dy\right)^2+\frac{\alpha}{\mu \theta^*}\int_0^1 (\theta-\bar{\theta})^2dy-\frac{\alpha}{\mu \theta^*}\left(\int_0^1 (\theta-\bar{\theta})dy\right)^2 \Bigg]\\
     &\ge2A^*\left(1-C(\delta_0+\varepsilon_1)-\delta_S^{1/4}\right)\delta_S\int_0^1 f^2\,dy \\
     &\quad-2A^*\delta_S\frac{\mu R \gamma}{\mu R \gamma+\alpha(\gamma-1)^2}\left(1+\frac{2\alpha(\gamma-1)^2}{\mu R\gamma}\right)\left(\int_0^1 f\,dy\right)^2-C\delta_S^{1/4}G_2.
\end{align*}

\noindent $\bullet$ Step 4 (Conclusion): Combining all the estimates in Step 1--Step 3, and using the smallness of $\delta_S$, $\delta_C$, and $\varepsilon_1$, we estimate the leading terms as

\begin{align*}
	\mathcal{R}_1&\le C(\sqrt{\delta_S}+\varepsilon_1)G^S-\frac{A^*\delta_S}{4}\int_0^1 f^2\,dy -\frac{1}{2}(G_1+G_2)-G_3\\
	&\quad -(\sigma^*)^2M\delta_S\left(\int_0^1 f\,dy\right)^2+\frac{3}{2}A^*\delta_S\frac{\mu R \gamma}{\mu R \gamma+\alpha(\gamma-1)^2}\left(1+\frac{2\alpha(\gamma-1)^2}{\mu R\gamma}\right)\left(\int_0^1 f\,dy\right)^2\\
    &\quad +C\delta_S^{4/3}\left((\delta_R^{4/3}+\delta_C^{4/3})e^{-C\delta_St}+\frac{\delta_R^{4/3}}{(1+\e^2_Rt^2)^{q/3}}\right) +C\varepsilon_1^6\|(v-\bar{v},u-\bar{u},\theta-\bar{\theta})_x\|_{L^2}^2.
\end{align*}

Now, the choice of the constant $M$ as in \eqref{choice_M} implies that
\begin{align*}
    \mathcal{R}_1&\le C(\sqrt{\delta_S}+\varepsilon_1)G^S -\frac{A^*\delta_S}{4}\int_0^1 f^2\,dy -\frac{1}{2}(G_1+G_2)-G_3 \\
    &\quad +C\delta_S^{4/3}\left((\delta_R^{4/3}+\delta_C^{4/3})e^{-C\delta_St}+\frac{\delta_R^{4/3}}{(1+\e^2_Rt^2)^{q/3}}\right)+C\varepsilon_1^6\|(v-\bar{v},u-\bar{u},\theta-\bar{\theta})_x\|_{L^2}^2.
\end{align*}
Finally, using Young's inequality, $a_x=\frac{v^S_x}{\sqrt{\delta_S}}$, and
\[\delta_S\int_0^1 f^2 \, dy=\int_\mathbb{R}|v^S_x||u-\bar{u}|^2 \, dx,\]
we have
\begin{align*}
    \mathcal{R}_1&\le -C\int_{\R} |v^S_x||(v-\bar{v},u-\bar{u},\theta-\bar{\theta})|^2\,dx-G_3\\
    &\quad+C\delta_S^{4/3}\left((\delta_R^{4/3}+\delta_C^{4/3})e^{-C\delta_St}+\frac{\delta_R^{4/3}}{(1+\e^2_Rt^2)^{q/3}}\right)+C\varepsilon_1^6\|(v-\bar{v},u-\bar{u},\theta-\bar{\theta})_x\|_{L^2}^2.
\end{align*}
This completes the proof of Lemma \ref{lem:leading}.
\end{proof}

\subsection{Estimates for the remaining terms}
Next, we control the terms in $\mathcal{R}_2$. Since the estimates for the terms in $\mathcal{R}_2$ are similar to those in \cite{KVW25}, we omit the details and refer the reader to \cite{KVW25}. Instead, we provide detailed estimates for the additional terms arising from the capillarity and the large strength of the rarefaction wave.

\noindent $\bullet$ (Estimate of $Y_i$ for $i=4,\ldots,7$): Due to the locally quadratic nature of $\Phi$ and \eqref{shock_est}, we have
\[|(Y_4,Y_5)|\le C\int_{\R} |v^S_x||(v-\bar{v},\theta-\bar{\theta})|^2\,dx\le CG^S,\]
and 
\[|(Y_4,Y_5)|\le C\delta_S^2\int_{\R} |(v-\bar{v},\theta-\bar{\theta})|^2\,dx\le C\delta_S^2\varepsilon_1^2.\]
Therefore, 
\[\frac{C}{\delta_S}|(Y_4,Y_5)|^2\le C\delta_S\varepsilon_1^2 G^S.\]
Next, we estimate $Y_6$ as
\begin{align*}
	\frac{C}{\delta_S}|Y_6|^2&\le \frac{C}{\delta_S}\left(\int_{\R} |a_x||(v-\bar{v},u-\bar{u},\theta-\bar{\theta})|^2\,dx\right)^2+\frac{C}{\delta_S}\left(\int_{\R}|a_x||w-\bar{w}|^2\,dx\right)^2\\
	&\le \frac{C}{\delta_S^2}\left(\int_{\R}|v^S_x||(v-\bar{v},u-\bar{u},\theta-\bar{\theta})|^2\,dx\right)^2+\frac{C}{\delta_S}\|a_x\|_{L^\infty}\|w-\bar{w}\|_{L^2}^2G_3\\
	&\le C\varepsilon_1^2\int_{\R}|v^S_x||(v-\bar{v},u-\bar{u},\theta-\bar{\theta})|^2\,dx+C\sqrt{\delta_S}\e^2_1G_3\\
	&\le C\varepsilon_1^2G^S+C\sqrt{\delta_S}\varepsilon_1^2G_3.
\end{align*}
Finally, since \eqref{shock_est} implies
\[|w_x^S|= \sqrt{\kappa}\left|\left(\frac{v^S_x}{(v^S)^{5/2}}\right)_x\right|\le C(|v_{xx}^S|+|v_x^S|^2)\le C \delta_S |v_x^S|,\]
we use Cauchy--Schwarz inequality to obtain
\begin{align*}
	\frac{C}{\delta_S}|Y_7|^2&\le \frac{C}{\delta_S}\left(\int_\R |w_x^S||w-\bar{w}|\,dx\right)^2\le \frac{C}{\delta_S}\left(\int_\R|w_x^S|\,dx\right)\left(\int_\R |w_x^S||w-\bar{w}|^2\,dx\right)\\
	&\le C\delta_S^2\int_\R |v_x^S||w-\bar{w}|^2\,dx\le C\delta_S^{3/2}\int_\R |a_x||w-\bar{w}|^2\,dx\le C\delta_S^{3/2}G_3.
\end{align*}
Combining all the estimates of $Y_i$ for $i=4,\ldots, 7$, we have
\[\frac{C}{\delta_S}\sum_{i=4}^7|Y_i|^2\le C\varepsilon_1^2G^S+C\sqrt{\delta_S}G_3.\]

\noindent $\bullet$ (Estimate of $B_i$ for $i=3,4,5$).
We first split $B_3$ as
\[
B_3 = B_{31} + B_{32},
\]
where
\begin{align*}
B_{31} &= -\int_\R a_x\left(\mu(u-\bar{u})\Bigl(\frac{u_x}{v}-\frac{\bar{u}_x}{\bar{v}}\Bigr)
        +\alpha \frac{\theta-\bar{\theta}}{\theta}\Bigl(\frac{\theta_x}{v}-\frac{\bar{\theta}_x}{\bar{v}}\Bigr)\right)\,dx,\\
B_{32} &= -\int_\R a_x\left(\sqrt{\kappa}(u-\bar{u})\Bigl(\frac{w_x}{v^{5/2}}-\frac{\bar{w}_x}{\bar{v}^{5/2}}\Bigr)
        -\sqrt{\kappa}(w-\bar{w})\Bigl(\frac{u_x}{v^{5/2}}-\frac{\bar{u}_x}{\bar{v}^{5/2}}\Bigr)\right)\,dx.
\end{align*}
Using $|a_x|=\tfrac{|v^S_x|}{\sqrt{\delta_S}}$, Young’s inequalities and \eqref{shock_est}, we obtain
\begin{equation}\label{est-B31}
\begin{aligned}
    B_{31} 
    &\le \frac{C}{\sqrt{\delta_S}}  \int_\mathbb{R} |v^S_x||u-\bar{u}| \bigl( |(u-\bar{u})_x|+|\bar{u}_x||v-\bar{v}| \bigr) \, dx \\
    &\quad + \frac{C}{\sqrt{\delta_S}} \int_\mathbb{R} |v^S_x| |\theta-\bar{\theta}|  \bigl( |(\theta-\bar{\theta})_x|+|\bar{\theta}_x||v-\bar{v}| \bigr) \, dx   \\
    &\le C\sqrt{\delta_S}  D  + C \sqrt{\delta_S} G^S + C \int_\mathbb{R} \bigl( |\bar{u}_x|^2 +|\bar{\theta}_x|^2  \bigr) | v-\bar{v}|^2 \, dx.
\end{aligned}
\end{equation}
Similarly, we estimate $B_{32}$ as
\begin{equation}\label{est-B32}
\begin{aligned}
       B_{32} 
    &\le  \frac{C}{\sqrt{\delta_S}} \int_\mathbb{R} |v^S_x| |u-\bar{u}|  \bigl( |(w-\bar{w})_x|+|\bar{w}_x||v-\bar{v}| \bigr) \, dx \\
    &\quad + C \int_\mathbb{R} |a_x| |w-\bar{w}|  \bigl( |(u-\bar{u})_x|+|\bar{u}_x||v-\bar{v}| \bigr) \, dx \\
    &\le C\sqrt{\delta_S} \bigl( D+ \|(w-\bar{w})_x\|_{L^2}^2 \bigr) + C \sqrt{\delta_S} G^S + C \delta_S G_3 + C \int_\mathbb{R} \bigl( |\bar{u}_x|^2 + |\bar{w}_x|^2 \bigr) | v-\bar{v}|^2 \, dx.
\end{aligned}
\end{equation}
Moreover, by Lemma~\ref{lem:superposition-derivatives} and $\|(v^R_x,v^S_x)\|_{L^\infty}\le C(\delta_R\e_R+\delta_S^2)$, we get
\begin{equation} \label{est-v}
    \begin{aligned}
        &\int_{\R} \bigl(|\bar u_x|^2 + |\bar\theta_x|^2 + |\bar w_x|^2\bigr)|v-\bar v|^2\,dx
\le C\int_{\R}\bigl(|v^R_x|^2+|v^S_x|^2\bigr)|v-\bar v|^2\,dx\\ &\quad+C\delta_C^2(1+t)^{-1}\int_{\R}e^{-\frac{2C_1|x|^2}{1+t}}|v-\bar v|^2\,dx \\
&\le C\delta_R \varepsilon_R G^R + C\delta_S G^S
   + C\delta_C^2 (1+t)^{-1}\int_{\R} e^{-\frac{2C_1|x|^2}{1+t}} |v-\bar v|^2\,dx .
    \end{aligned}
\end{equation}
Combining \eqref{est-B31}--\eqref{est-v} and using the smallness
of $\delta_S$, we conclude that for a sufficiently small constant $\tau$ that will be chosen later,
\begin{align*}
B_3
&\le \tau D
   + C\sqrt{\delta_S}\bigl(G^S+\|(w-\bar w)_x\|_{L^2}^2+G_3\bigr)
   + C\delta_R \varepsilon_R G^R \\
&\quad + C\delta_C^2 (1+t)^{-1}\int_{\R} e^{-\frac{2C_1|x|^2}{1+t}} |v-\bar v|^2\,dx .
\end{align*}
For $B_4$, we again decompose
\begin{align*}
B_4
&=\int_\R a\biggl[-\mu \bar{u}_x\Bigl(\frac{1}{v}-\frac{1}{\bar{v}}\Bigr)(u-\bar{u})_x
+\alpha\frac{\theta-\bar{\theta}}{\theta^2}\theta_x\Bigl(\frac{\theta_x}{v}-\frac{\bar{\theta}_x}{\bar{v}}\Bigr)
-\alpha\frac{(\theta-\bar{\theta})_x}{\theta}\bar{\theta}_x\Bigl(\frac{1}{v}-\frac{1}{\bar{v}}\Bigr)\\
&\hspace{2cm}
+\mu\frac{\theta-\bar{\theta}}{\theta}\Bigl(\frac{u_x^2}{v}-\frac{\bar{u}_x^2}{\bar{v}}\Bigr)
-\frac{(\theta-\bar{\theta})^2}{\theta \bar{\theta}}\Bigl(
    \alpha\Bigl(\frac{\bar{\theta}_x}{\bar{v}}\Bigr)_x
   +\mu \frac{\bar{u}_x^2}{\bar{v}} \Bigr)\biggr]\,dx\\
&\quad + \int_\R a\biggl[-\sqrt{\kappa}\Bigl(\bar{u}_x(w-\bar{w})_x\Bigl(\frac{1}{\bar{v}^{5/2}}-\frac{1}{v^{5/2}}\Bigr)
+(u-\bar{u})_x\bar{w}_x\Bigl(\frac{1}{v^{5/2}}-\frac{1}{\bar{v}^{5/2}}\Bigr)\Bigr)\biggr]\,dx\\
&=:B_{41}+B_{42}.
\end{align*}
We estimate $B_{41}$ by following a similar argument to that in \cite{KVW25}:
\begin{equation*}
    \begin{aligned}
B_{41} &\le C \int_\mathbb{R} \Bigl[ |\bar{u}_x||v-\bar{v}||(u-\bar{u})_x|+|\theta-\bar{\theta}| \bigl(|(\theta-\bar{\theta})_x|+|\bar{\theta}_x|\bigr)\bigl(|(\theta-\bar{\theta})_x|+|\bar{\theta}_x||v-\bar{v}|\bigr)\Bigr] dx\\
&+C \int_\mathbb{R} \Bigl[ |(\theta-\bar{\theta})_x||\bar{\theta}_x||v-\bar{v}|+|\theta-\bar{\theta}| \bigl(|(u-\bar{u})_x|^2+|\bar{u}_x||(u-\bar{u})_x|+|\bar{u}_x|^2|v-\bar{v}|\bigr)\Bigr] dx\\
&+C \int_\mathbb{R} |\theta-\bar{\theta}|^2 \bigl[ |\bar{\theta}_{xx}| +|(\bar{v}_x,\bar{\theta}_x,\bar{u}_x)|^2 \bigr] dx\\
&\le \tau D +C \int_\mathbb{R} \bigl(|\bar{\theta}_{xx}|+|(\bar{v}_x,\bar{\theta}_x,\bar{u}_x)|^2 \bigr)|(v-\bar{v},\theta-\bar{\theta})|^2 dx.
    \end{aligned}
\end{equation*}
For $B_{42}$, using the boundedness of $v$ and $\bar{v}$, we have
\begin{align*}
B_{42}
&\le C\int_\R |\bar{u}_x|\,|v-\bar{v}|\,|(w-\bar{w})_x|\,dx
   +C\int_\R |(u-\bar{u})_x|\,|\bar{w}_x|\,|v-\bar{v}|\,dx\\
&\le \tau D +C\int_\R |\bar{u}_x|^{1/2}|(w-\bar{w})_x|^2\,dx+C\int_\R \bigl(|\bar{u}_x|^{3/2}+|\bar{w}_x|^2\bigr)|v-\bar{v}|^2\,dx.
\end{align*}

Again, using Lemma~\ref{lem:rarefaction_property}, Lemma~\ref{lem:viscous_contact}, and Lemma~\ref{lem:shock-property}, we deduce
\begin{align*}
&\int_\mathbb{R} \bigl(|\bar{\theta}_{xx}|+|(\bar{v}_x,\bar{\theta}_x,\bar{u}_x)|^2 +|\bar{u}_x|^{3/2}+|\bar{w}_x|^2\bigr)|(v-\bar{v},\theta-\bar{\theta})|^2 dx\\
&\le C(\delta_R^{1/2} \varepsilon_R^{1/2}G^R+\delta_SG^S)
   +C \delta_C (1+t)^{-1}\int_\R e^{-\frac{C_1x^2}{1+t}}|(v-\bar{v},\theta-\bar{\theta})|^2\,dx,
\end{align*}
where the rarefaction part of $|\bar\theta_{xx}|$ is bounded by $|\theta^R_{xx}|\le C\e_R|v^R_x|$ by Lemma~\ref{lem:rarefaction_property}~(1),(2).
Therefore,
\begin{align*}
B_4
&\le 3\tau D
  + C(\delta_R^{1/2}\varepsilon_R^{1/2}G^R+\delta_SG^S)
  +C \delta_C (1+t)^{-1}
     \int_\R e^{-\frac{C_1x^2}{1+t}}|(v-\bar{v},\theta-\bar{\theta})|^2\,dx\\
&\quad + C(\delta_R^{1/2}\varepsilon_R^{1/2}+\delta_S^{1/2}+\delta_C^{1/2})
         \|(w-\bar{w})_x\|_{L^2}^2.
\end{align*}
Finally, to estimate $B_5$, we use the H\"older inequality, the Sobolev interpolation inequality, and Young's inequality to obtain
\begin{align*}
    &\int_\R |v^R-v_*||v^S_x||(v-\bar{v},\theta-\bar{\theta})|^2\,dx \le \| |v^S_x| |v^R-v_*|\|_{L^2} \| (v-\bar{v},\theta-\bar{\theta})\|_{L^4}^2 \\
    &\le C \| |v^S_x| |v^R-v_*|\|_{L^2} \| (v-\bar{v},\theta-\bar{\theta})\|_{L^2}^{3/2}\| (v-\bar{v},\theta-\bar{\theta})_x\|_{L^2}^{1/2} \\
    &\le C \| |v^S_x| |v^R-v_*|\|_{L^2}^{4/3}+C \| (v-\bar{v},\theta-\bar{\theta})\|_{L^2}^6 \| (v-\bar{v},\theta-\bar{\theta})_x\|_{L^2}^2.
\end{align*}
Thus, by the smallness assumption and Lemma \ref{lem: interaction estimates}, we have
\begin{align*}
    B_5&\le C\delta_C(1+t)^{-1}\int_\R e^{-\frac{C_1|x|^2}{1+t}}
    |(v-\bar{v},\theta-\bar{\theta})|^2\,dx+C(\delta_C+\delta_S+\varepsilon_1)(G^R+G^S)\\
      &\quad + C \bigl( \delta_R +\delta_C\bigr)^{4/3}\delta_S^{2} e^{-C \delta_S t}+C\delta_S^{2}\frac{\delta_R^{4/3}}{(1+\e_R^2t^2)^{q/3}}+ C \varepsilon_1^6 \| (v-\bar{v},\theta-\bar{\theta})_x\|_{L^2}^2.
\end{align*}
Collecting the estimates for $B_3$, $B_4$ and $B_5$, we obtain
\begin{align*}
\sum_{i=3}^5B_i
&\le 4\tau D
  + C \bigl(\delta_R^{1/2}\varepsilon_R^{1/2} + \delta_C+\delta_S^{1/2}+\varepsilon_1\bigr) \bigl(G^R+G^S+G_3\bigr)\\
  & \quad +C \varepsilon_1^6 \|(v-\bar{v},\theta-\bar{\theta})_x\|_{L^2}^2+C \bigl( \delta_R^{1/2}\varepsilon_R^{1/2} + \delta_C^{1/2}+\delta_S^{1/2}\bigr) \|(w-\bar{w})_x\|_{L^2}^2 \\
&\quad +C\delta_C(1+t)^{-1}\int_\R e^{-\frac{C_1|x|^2}{1+t}}
     |(v-\bar{v},\theta-\bar{\theta})|^2\,dx\\
     &\quad +C\delta_S^{2}\Bigl((\delta_R+\delta_C)^{4/3}e^{-C\delta_S t}+\frac{\delta_R^{4/3}}{(1+\e_R^2t^2)^{q/3}}\Bigr).
\end{align*}

\noindent $\bullet$ (Estimate of $S_i$ for $i=1,2,3$): We split the estimate of $S_1+S_2$ and $S_3$. For the terms in $S_1+S_2$, we recall that $Q_i=Q_i^I+Q_i^R+Q_i^C$ for $i=1,2$ to derive
\begin{align}
	\begin{aligned}\label{est:S1S2}
	S_1+S_2&=-\int_{\R}aQ_1(u-\bar{u})\,dx-\int_{\R}aQ_2\Bigl(\frac{\theta}{\bar{\theta}}-1\Bigr)dx\\
	&\le C\int_{\R} (|Q_1^I|+|Q_1^R|+|Q_1^C|)|u-\bar{u}|\,dx +C\int_{\R}(|Q_2^I|+|Q_2^R|+|Q_2^C|)|\theta-\bar{\theta}|\,dx\\
    &\le C \|(Q_1^I,Q_1^C,Q_2^I,Q_2^C)\|_{L^2}\|(u-\bar{u},\theta-\bar{\theta})\|_{L^2} + C \|(Q_1^R,Q_2^R)\|_{L^1} \|(u-\bar{u},\theta-\bar{\theta})\|_{L^\infty}\\
    &\le C \varepsilon_1 \|(Q_1^I,Q_1^C,Q_2^I,Q_2^C)\|_{L^2}  + C \|(Q_1^R,Q_2^R)\|_{L^1} \|(u-\bar{u},\theta-\bar{\theta})\|_{L^2}^{1/2} \|(u-\bar{u},\theta-\bar{\theta})_x\|_{L^2}^{1/2}\\
    &\le C \varepsilon_1 \|(Q_1^I,Q_1^C,Q_2^I,Q_2^C)\|_{L^2}  + C \varepsilon_1^{2/3} \|(Q_1^R,Q_2^R)\|_{L^1}^{4/3} + \tau D .
    \end{aligned}
\end{align}
Since the $L^2$-estimate of $Q_i^I$ and $Q_i^C$ can be found in Lemma \ref{lem: interaction estimates} and \eqref{error_contact} respectively, we focus on estimating the $L^1$-norm of $Q_i^R$. Recall that 
\[Q_1^R=-\mu\left(\frac{u_x^R}{v^R}\right)_x +\kappa\left(\frac{v^R_{xx}}{(v^R)^5}-\frac{5(v^R_x)^2}{2(v^R)^6}\right)_x,\quad Q^R_2 = -\alpha\left(\frac{\theta^R_x}{v^R}\right)_x-\mu\frac{(u^R_x)^2}{v^R}.\]
This implies 
\begin{align*}
	&|Q_1^R|\le C(|u_{xx}^R|+|u_x^R||v_x^R|+|v^R_{xxx}|+|v^R_{xx}||v^R_x|+|v^R_x|^3),\\
	&|Q_2^R|\le C(|\theta^R_{xx}|+|\theta^R_x||v^R_x|+|u^R_x|^2),
\end{align*}
and therefore,
\begin{align*}
	&\|Q_1^R\|_{L^1}\le C (\|u^R_{xx}\|_{L^1}+\|(v^R_x,u^R_x)\|_{L^2}^2+\|v^R_{xxx}\|_{L^1}+\|v^R_x\|_{L^2}^2+\|v^R_x\|_{L^3}^3)\\
	&\|Q_2^R\|_{L^1}\le C (\|\theta^R_{xx}\|_{L^1}+\|(v^R_x,u^R_x,\theta^R_x)\|_{L^2}^2).
\end{align*}
However, it follows from Lemma \ref{lem:rarefaction_property} that
\begin{align*}
	&\|(u^R_{xx},\theta^R_{xx})\|_{L^1}\le C\min\left\{\delta_R \varepsilon_R, \frac{\delta_R+\delta_R^{1/q}}{(1+t)^{1-1/q}}\right\}, \quad
    \|(v^R_x,u^R_x,\theta^R_x)\|_{L^2}\le C\min\left\{\delta_R\varepsilon_R^{1/2},\frac{\delta_R^{1/2}}{(1+t)^{1/2}}\right\},\\
	&\|v^R_{xxx}\|_{L^1}\le C\min\left\{\delta_R \varepsilon_R^2, \frac{\delta_R+\delta_R^{2/q}}{(1+t)^{1-2/q}}\right\},\quad
    \|v^R_x\|_{L^3}\le C\min\left\{\delta_R\varepsilon_R^{2/3},\frac{\delta_R^{1/3}}{(1+t)^{2/3}}\right\},
\end{align*}
and consequently,
\[\|Q^R_1\|_{L^1},\|Q^R_2\|_{L^1}\le C\min\left\{\delta_R \varepsilon_R, \frac{\delta_R+\delta_R^{2/q}}{(1+t)^{1-2/q}}\right\}.\]
Then, thanks to Lemma \ref{lem: interaction estimates} and \eqref{error_contact}, and the above estimate, we obtain
\begin{align*}
	S_1+S_2
	&\le \tau D + C\varepsilon_1^{2/3} \min\left\{\delta_R\varepsilon_R, \frac{\delta_R+\delta_R^{2/q}}{(1+t)^{1-2/q}}\right\}^{4/3}+C \varepsilon_1 \|(Q_1^I,Q_1^C,Q_2^I,Q_2^C)\|_{L^2}.
\end{align*}

Finally, we estimate $S_3$ as
\begin{align*}
	S_3 &= -\int_{\R} aQ_3(w-\bar{w})\,dx\le C\|Q_3\|_{L^2}\|w-\bar{w}\|_{L^2}\le C \varepsilon_1\|Q_3\|_{L^2}.
\end{align*}

\subsection{Proof of the estimate \eqref{est:L2}}

We combine all the estimates on the leading terms and the remaining terms. By choosing $\tau<\frac{1}{40}$, we obtain
\begin{align*}
	&\frac{d}{dt}\int_{\R} a\bar{\theta}\eta(U|\bar{U})\,dx \le \mathcal{R}_1+\mathcal{R}_2\\
	&\le -\frac{\delta_S}{4M}|\dot{X}|^2-\frac{C}{2}(G^S+G^R+G_3)-\frac{D}{8} +C\delta_S^{4/3}\left((\delta_R^{4/3}+\delta_C^{4/3})e^{-C\delta_St}+\frac{\delta_R^{4/3}}{(1+\e_R^2t^2)^{q/3}}\right)\\
	&\quad +C\delta_C(1+t)^{-1}\int_{\R}e^{-\frac{C_1|x|^2}{1+t}}|(v-\bar{v},\theta-\bar{\theta})|^2\,dx \\
	&\quad +C\varepsilon_1\|(Q^I_1,Q^C_1,Q^I_2,Q^C_2,Q_3)\|_{L^2}+ C\varepsilon_1^{2/3}\min\left\{\delta_R\varepsilon_R, \frac{\delta_R+\delta_R^{2/q}}{(1+t)^{1-2/q}}\right\}^{4/3} \\
	&\quad+ C \varepsilon_1^6 \|(v-\bar{v},u-\bar{u},\theta-\bar{\theta})_x\|_{L^2}^2+C(\delta_R^{1/2}\varepsilon_R^{1/2}+\delta_S^{1/2}+\delta_C^{1/2})\|(w-\bar{w})_x\|_{L^2}^2.
\end{align*}

Integrating this inequality over $[0,t]$ for any $0\le t\le T$ and using
$1 \le a \le \frac{3}{2}$, \eqref{error_contact}, Lemma \ref{lem: interaction estimates} and the uniform boundedness of $\bar{\theta}$, we obtain
\begin{align*}
    &\sup_{t\in[0,T]}\int_{\R}\eta(U|\bar{U})\,dx+\delta_S\int_0^T |\dot{X}|^2\, dt+\int_0^T (G^S+G^R+G_3+D)\, dt\\
	&\le C\int_{\R}\eta(U_0|\bar{U}(0,x))\,dx + C \varepsilon_1 \int_0^T \|(v-\bar{v},u-\bar{u},\theta-\bar{\theta})_x\|_{L^2}^2 \, dt\\
    &\quad +C(\delta_R^{1/2}\varepsilon_R^{1/2}+\delta_C^{1/2} + \delta_S^{1/2})\int_0^T\|(w-\bar{w})_x\|_{L^2}^2\, dt\\
	&\quad +C\delta_C\int_0^T \frac{1}{1+t}\int_\R e^{-\frac{C_1|x|^2}{1+t}}|(v-\bar{v},\theta-\bar{\theta})|^2\,dx\, dt +C\delta_0^{1/4}+C \delta_0^{1/3}+C\delta_0^{4/3}\e_R^{-1}+C\e_R^{\frac{q-8}{3(q-2)}}.
\end{align*}
Here, we used $q>8$ for the integrability of $(1+t)^{-\frac{4}{3}+\frac{8}{3q}}$ in time, and the term $C\delta_0^{1/4}$ collects the contributions of $\varepsilon_1\int_0^T\|(Q_1^I,Q_2^I,Q_3)\|_{L^2}\,dt$ from Lemma \ref{lem: interaction estimates}. To close the relative entropy estimate, we use the following estimate similar to \cite[Lemma 4.5]{KVW25}:

\begin{align}\label{heat-int}
	\begin{aligned}
		&\int_0^t \frac{1}{1+s}\int_\R e^{-\frac{C_1|x|^2}{1+s}}|(v-\bar{v},u-\bar{u},\theta-\bar{\theta})|^2\,dxds\\
		&\le C\sup_{s\in[0,t]}\|(v-\bar v, u-\bar u, \theta-\bar \theta, w- \bar w)(s)\|^2_{L^2}+C\delta_S\int_0^t |\dot{X}(s)|^2\,ds\\ 
        &\quad+C\int_0^t (G^S+G^R+D)\,ds +C\int_0^t \|(v-\bar{v},w-\bar{w})_x\|^2_{L^2}\,ds+C\delta_0^{1/6}.
	\end{aligned}
\end{align}

Now we use $\e_R=\delta_0/2$: the terms $C\delta_0^{1/4}+C\delta_0^{1/3}+C\delta_0^{4/3}\e_R^{-1}+C\e_R^{\frac{q-8}{3(q-2)}}$ are bounded by $C\delta_0^{1/6}$, where we used that $\frac{q-8}{3(q-2)}\ge\frac{1}{6}$ holds if and only if $q\ge 14$. This is precisely the origin of the standing assumption $q\ge14$. Combining this with the smallness of $\delta_0,\varepsilon_1$ and \eqref{heat-int}, we deduce that

\begin{align*}
	&\sup_{t\in[0,T]}\int_{\R}\eta(U|\bar{U})\,dx+\delta_S\int_0^T |\dot{X}|^2\, dt+\int_0^T (G^S+G^R+G_3+D)\, dt\\
	&\le C\int_{\R}\eta(U_0|\bar{U}(0,x))\,dx +C (\varepsilon_1 + \delta_0^{1/2})\int_0^T \|(v-\bar{v},w-\bar{w})_x\|_{L^2}^2 \, dt  +C\delta_0^{1/6}.
\end{align*}

Finally, by the strict convexity of the entropy and the uniform bounds on $U$ and $\bar U$, we have $\int_{\R}\eta(U|\bar{U})\,dx
\sim \|U-\bar{U}\|_{L^2}^2$. Hence, we obtain the desired estimate \eqref{est:L2}.

\section{Estimate on the $H^1$ perturbation}\label{sec:H1}
\setcounter{equation}{0}
In this section, we derive higher–order a priori estimates for the perturbation, thereby completing the proof of Proposition~\ref{prop:apriori}. Our objective is to establish the following estimate:
\begin{equation}\label{eq:H1-est}
\begin{aligned}
\sup_{0\le s\le t}&\|(v-\bar{v},u-\bar{u},\theta-\bar{\theta},w-\bar{w})(s,\cdot)\|_{H^1}^2+\delta_S\int_0^t |\dot{X}(s)|^2\,ds\\
&+\int_0^t(G^R+G^S)\,ds
+\int_0^t\bigl(\|(u-\bar{u},\theta-\bar{\theta})_x\|_{H^1}^2
+\|w-\bar{w}\|_{H^2}^2\bigr)\,ds\\
&\le C_0\|(v-\bar v,u-\bar u,\theta-\bar\theta,w-\bar w)(0,\cdot)
\|_{H^1}^2
+ C_0\delta_0^{1/6},
\end{aligned}
\end{equation}
for all $t\in[0,T]$. This estimate coincides with \eqref{est:apriori} in Proposition~\ref{prop:apriori}.

\medskip
We briefly outline the strategy used to obtain \eqref{eq:H1-est}. An $L^2$-energy estimate for the perturbation~\eqref{est:L2} has already been established in the previous section, but this estimate involves the derivative terms $\|(v-\bar{v},w-\bar{w})_x\|_{L^2}^2$, so it cannot be closed at the $L^2$-level.

To overcome this difficulty, we structure the proof sequentially. First, in Lemma~\ref{lem:est-0}, we differentiate the equations and derive $H^1$-estimates for the velocity and temperature perturbations $(u-\bar u,\theta-\bar\theta)$, obtaining $L^2$-bounds for $(u-\bar{u},\theta-\bar{\theta})_x$. At the same time, the estimate also involves the derivatives of $w-\bar{w}$. However, in contrast to the velocity and temperature, no dissipative terms for $w-\bar{w}$ arise at this stage; these terms appear only as error contributions that cannot be absorbed.

We introduce suitable mixed energy estimates in Lemmas~\ref{lem:est-1} and \ref{lem:est-2} that exploit the structure of the capillarity variable. These mixed estimates generate dissipation for $w-\bar{w}$ (via Lemma~\ref{lem:est-1}) and its higher-order derivatives (via Lemma~\ref{lem:est-2}), allowing us to control the $w-\bar{w}$ terms produced in Lemma~\ref{lem:est-0}. Finally, combining these estimates with the $L^2$-energy estimate~\eqref{est:L2} and the basic $H^1$-estimates yields the desired $H^1$-estimates.

\medskip
The a priori smallness assumption \eqref{smallness}, together with Sobolev embedding, implies the uniform bounds
\begin{equation}\label{eq:Linf-bounds}
\|(\phi,\phi_x,\psi,\vartheta)\|_{L^\infty((0,T)\times\R)} \le C \varepsilon_1,
\end{equation}
and, for the superposition waves~\eqref{def:superposition},
\begin{equation}\label{eq:Linf-bounds-2}
\|(\bar{v}_x,\bar{u}_x,\bar{\theta}_x)\|_{L^\infty((0,T)\times\R)} \le C \delta_0.
\end{equation}

\subsection{High-order estimates}
We first collect several identities for the pressure that will be used repeatedly in the sequel. Let
\[
p=p(v,\theta):=\frac{R\theta}{v},\qquad 
\bar p:=p(\bar v,\bar\theta)=\frac{R\bar\theta}{\bar v}.
\]
We denote
\[
p_v:=\partial_v p(v,\theta),\qquad p_\theta:=\partial_\theta p(v,\theta),
\]
and similarly define $\bar p_v,\bar p_\theta$ at $(\bar v,\bar\theta)$.
Then we have
\begin{equation}\label{eq:p_derivs}
p_v=-\frac{R\theta}{v^2},\qquad p_\theta=\frac{R}{v},\qquad
p_{vv}=\frac{2R\theta}{v^3},\qquad
p_{v\theta}=p_{\theta v}=-\frac{R}{v^2},\qquad
p_{\theta\theta}=0,
\end{equation}
and the corresponding identities for $\bar p$ follow by replacing $(v,\theta)$ with $(\bar v,\bar\theta)$. A direct application of the chain rule yields
\begin{equation}\label{eq:ppbar_x}
\begin{aligned}
(p-\bar p)_x&= p_v\phi_x+p_\theta\vartheta_x+\bar v_x(p_v-\bar p_v)+\bar\theta_x(p_\theta-\bar p_\theta)\\
&=-\frac{R\theta}{v^2}\phi_x+\frac{R}{v}\vartheta_x
-R\bar v_x\!\left(\frac{\theta}{v^2}-\frac{\bar\theta}{\bar v^2}\right)
+R\bar\theta_x\!\left(\frac{1}{v}-\frac{1}{\bar v}\right).
\end{aligned}
\end{equation}
Differentiating \eqref{eq:ppbar_x} once more, we obtain
\begin{equation}\label{eq:ppbar_xx}
\begin{aligned}
(p-\bar p)_{xx}
&=p_v\phi_{xx}+p_\theta\vartheta_{xx} +\bar{v}_{xx}(p_v-\bar{p}_v)+\bar{\theta}_{xx}(p_\theta-\bar{p}_\theta)\\
&\quad+p_{vv} \phi_x (v_x+\bar{v}_x) + (\bar{v}_x)^2 (p_{vv}-\bar{p}_{vv})\\
&\quad +2p_{v\theta} (\phi_x \vartheta_x +\bar{\theta}_x\phi_x + \bar{v}_x \vartheta_x)+2 \bar{v}_x \bar{\theta}_x(p_{v\theta}-\bar{p}_{v \theta}) \\
&\quad +p_{\theta \theta} \vartheta_x(\theta_x+\bar{\theta}_x)+(\bar{\theta}_x)^2(p_{\theta \theta}-\bar{p}_{\theta \theta})\\
&=-\frac{R \theta}{v^2}\phi_{xx}+\frac{R}{v}\vartheta_{xx} -R\bar{v}_{xx}\Bigl(\frac{ \theta}{v^2}-\frac{\bar{\theta}}{\bar{v}^2}\Bigr)+ R\bar{\theta}_{xx}\Bigl(\frac{1}{v}-\frac{1}{\bar v}\Bigr)
\\
&\quad +\frac{2R\theta}{v^3} \phi_x (v_x+\bar{v}_x) +2R (\bar{v}_x)^2 \Bigl(\frac{\theta}{v^3}-\frac{\bar{\theta}}{\bar{v}^3} \Bigr)\\
&\quad -2\frac{R}{v^2} (\phi_x \vartheta_x +\bar{\theta}_x\phi_x + \bar{v}_x \vartheta_x)
-2R \bar{v}_x \bar{\theta}_x \Bigl(\frac{1}{v^2}-\frac{1}{\bar{v}^2}\Bigr) .
\end{aligned}
\end{equation}

\medskip
\begin{lemma}\label{lem:est-0}
Under the hypotheses of Proposition~\ref{prop:apriori}, there exists a constant $C>0$, independent of $\delta_0$ and $\varepsilon_1$, such that for all $t\in[0,T]$,
\begin{equation}\label{eq: est-0}
\begin{aligned}
&\sup_{0\le s\le t} \|(\psi_x,\vartheta_x,\omega_x)(s,\cdot)\|_{L^2}^2+\int_0^t \bigl(D_{u_2}+D_{\theta_2}\bigr)(s)\,ds \\
&\quad\le C \|(\psi_x,\vartheta_x,\omega_x)(0,\cdot)\|_{L^2}^2 + C \delta_S \int_0^t |\dot{X}(s)|^2\, ds+ C\int_0^t \bigl(\|\phi_x(s,\cdot)\|_{L^2}^2+D_{u_1}(s)+D_{\theta_1}(s)\bigr)\, ds\\
&\qquad +C (\varepsilon_1 + \delta_0^{1/2}) \int_0^t  \bigl( \|\omega_x(s,\cdot)\|_{L^2}^2 +\| \omega_{xx}(s,\cdot)\|_{L^2}^2 \bigr)\, ds+ C \delta_0^{1/2} \int_0^t \bigl(G^R+G^S\bigr)(s)\, ds \\
&\qquad +C \delta_0 \int_0^t \frac{1}{1+s} \int_\mathbb{R} e^{-\frac{3 C_1 |x|^2}{2 (1+s)}} |(\phi,\vartheta)(s,x)|^2 \, dx\, ds + C \delta_0,
\end{aligned}
\end{equation}
where 
\[
D_{u_2}:=\mu \int_{\R} \frac{|\psi_{xx}|^2}{v}\,dx, \qquad
D_{\theta_2}:=\alpha \int_{\R} \frac{|\vartheta_{xx}|^2}{v}\,dx.
\]
\end{lemma}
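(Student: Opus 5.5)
The approach is a first-order energy estimate on the perturbation system \eqref{eq:perturbation}. We multiply \eqref{eq:perturbation}$_2$ by $-\psi_{xx}$, \eqref{eq:perturbation}$_3$ by $-\frac{\gamma-1}{R}\vartheta_{xx}$, and the $x$-derivative of \eqref{eq:perturbation}$_4$ by $\omega_x$, sum the results, and integrate over $\R$.

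The principal parts produce $\frac{d}{dt}\frac12\|(\psi_x,\vartheta_x,\omega_x)\|_{L^2}^2$ together with the dissipations $D_{u_2}$ and (a constant multiple of) $D_{\theta_2}$. These come from
\[
\mu\Bigl(\frac{u_x}{v}-\frac{\bar u_x}{\bar v}\Bigr)_x=\frac{\mu}{v}\psi_{xx}+\text{l.o.t.},\qquad \alpha\Bigl(\frac{\theta_x}{v}-\frac{\bar\theta_x}{\bar v}\Bigr)_x=\frac{\alpha}{v}\vartheta_{xx}+\text{l.o.t.}
\]
The key structural point is the cancellation of the Korteweg terms at top order. In the $\psi$-equation the term $\sqrt\kappa(\omega_x/v^{5/2})_x$ paired with $-\psi_{xx}$ gives $-\sqrt\kappa\int \omega_{xx}\psi_{xx}/v^{5/2}$ plus lower order. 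In the $\omega$-equation, the term $-\sqrt\kappa(\psi_x/v^{5/2})_{xx}$ paired with $\omega_x$ gives, after one integration by parts, $+\sqrt\kappa\int \psi_{xx}\omega_{xx}/v^{5/2}$ plus lower order. These two cancel, exactly as the $\mathcal A_2-\mathcal A_1$ cancellation in Lemma~\ref{lem:rel_ent}. What remains are commutator terms of the form $v_x\psi_x\omega_{xx}$ and $\bar v_x$-weighted products. We bound these using \eqref{eq:Linf-bounds}–\eqref{eq:Linf-bounds-2} by $(\varepsilon_1+\delta_0)(D_{u_2}+\|\psi_x\|^2+\|\omega_x\|_{H^1}^2)$. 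This is the source of the factor $C(\varepsilon_1+\delta_0^{1/2})\int(\|\omega_x\|^2+\|\omega_{xx}\|^2)$, since no dissipation of $\omega$ is available yet.

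The pressure term $(p-\bar p)_x\psi_{xx}$ is expanded with \eqref{eq:ppbar_x}. The parts $\frac{R\theta}{v^2}\phi_x\psi_{xx}$ and $\frac Rv\vartheta_x\psi_{xx}$ are handled by Young's inequality as $\tau D_{u_2}+C\|\phi_x\|^2+C D_{\theta_1}$. The parts carrying $\bar v_x,\bar\theta_x$ multiplied by zeroth-order perturbations become $\tau D_{u_2}+C\int(|\bar v_x|^2+|\bar\theta_x|^2)|(\phi,\vartheta)|^2$. By Lemma~\ref{lem:superposition-derivatives} and $\|(v^R_x,v^S_x)\|_{L^\infty}\le C(\delta_R\varepsilon_R+\delta_S^2)$, this last integral is bounded by $C\delta_0^{1/2}(G^R+G^S)+C\delta_0(1+t)^{-1}\int e^{-\frac{3C_1x^2}{2(1+t)}}|(\phi,\vartheta)|^2$; here we split $|v^C_x|^2\le C\delta_C^2(1+t)^{-1}e^{-2C_1x^2/(1+t)}$ against a slightly weaker Gaussian.

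The energy-equation terms $pu_x-\bar p\bar u_x=p\psi_x+(p-\bar p)\bar u_x$ and the quadratic viscous heating $\mu(u_x^2/v-\bar u_x^2/\bar v)$ are paired with $\vartheta_{xx}$. They are bounded by $\tau D_{\theta_2}+C(D_{u_1}+\|\phi_x\|^2)$, using $\|\psi_x\|_{L^\infty}^2\le C\|\psi_x\|\|\psi_{xx}\|$ and smallness. The shift terms $\dot X u^S_x\psi_{xx}$, $\dot X\theta^S_x\vartheta_{xx}$ and $\dot X w^S_{xx}\omega_x$ are bounded by $\tau(D_{u_2}+D_{\theta_2})+C|\dot X|^2\|v^S_x\|_{L^2}^2+\varepsilon_1\|\omega_x\|^2$; since $\|v^S_x\|_{L^2}^2\le C\delta_S^3$, they contribute at most $C\delta_S\int|\dot X|^2$. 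The sources $Q_1\psi_{xx}$ and $Q_2\vartheta_{xx}$ give $\tau D+C\|(Q_1,Q_2)\|_{L^2}^2$, which integrate in time to $C\delta_0$ by Lemma~\ref{lem: interaction estimates}, \eqref{error_contact}, and the $L^2$ bounds of Lemma~\ref{lem:rarefaction_property}(2) for $Q^R$. The source $Q_{3,x}\omega_x$ is integrated by parts and bounded by $\|Q_3\|_{L^2}\|\omega_{xx}\|$, again small.

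The main obstacle is bookkeeping the Korteweg commutators and the $\omega$-contributions. Since $\omega$ has no dissipation at this stage, every term involving $\omega_x$ or $\omega_{xx}$ must carry an explicit small prefactor $\varepsilon_1+\delta_0^{1/2}$ and must never be placed against $D_{u_2}$ with an $O(1)$ constant. The plan is to expand $(\omega_x/v^{5/2})_x-(\bar w_x/\bar v^{5/2})_x$ fully. Every coefficient other than the leading $\omega_{xx}/v^{5/2}$ involves $v_x$, $\bar v_x$ or $\phi$, and these are $O(\varepsilon_1+\delta_0)$ in $L^\infty$ by \eqref{eq:Linf-bounds}. Choosing $\tau$ small, absorbing the resulting $\tau$ terms into the dissipation, and integrating in time then yields \eqref{eq: est-0}.
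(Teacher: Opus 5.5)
Your proposal is correct and follows the paper's proof essentially step by step. It uses the same multipliers (your $x$-derivative of \eqref{eq:perturbation}$_4$ tested against $\omega_x$ is the paper's $-\omega_{xx}$ multiplier after one integration by parts), the same exact top-order cancellation of the Korteweg terms, and the same $L^\infty$/Young bookkeeping that puts a factor $\varepsilon_1+\delta_0^{1/2}$ on every $\omega_x,\omega_{xx}$ term and sends the wave-weighted zeroth-order terms to $G^R$, $G^S$ and the Gaussian integral.

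One small fix is needed in the shift term $\dot X\int_{\R} w^S_{xx}\omega_x\,dx$. Pairing it with $\varepsilon_1\|\omega_x\|_{L^2}^2$ leaves a factor $\varepsilon_1^{-1}$ in front of $|\dot X|^2$, so the constant would no longer be independent of $\varepsilon_1$. Use instead $|\dot X|\,\|w^S_{xx}\|_{L^2}\|\omega_x\|_{L^2}\le \delta_S|\dot X|^2+C\delta_S^2\|\omega_x\|_{L^2}^2$, which holds since $\|w^S_{xx}\|_{L^2}\le C\delta_S^{3/2}$; the last term fits the allowed $\delta_0^{1/2}$-weighted $\omega$ term.
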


\begin{proof}
Multiplying \eqref{eq:perturbation}$_2$, \eqref{eq:perturbation}$_3$, and \eqref{eq:perturbation}$_4$
by $-\psi_{xx}$, $-\vartheta_{xx}$, and $-\omega_{xx}$, respectively, we obtain
\begin{align*}
\frac{d}{dt} \int_{\R} \left(
   \frac{|\psi_x|^2}{2}
 + \frac{R |\vartheta_x|^2}{2(\gamma-1)}
 + \frac{|\omega_x|^2}{2}
\right)\,dx
= \sum_{i=1}^6 K_i,
\end{align*}
where
\begin{align*}
    K_1&:=- \dot{X}(t) \int_{\R} \left( u^S_x \psi_{xx} + \frac{R}{\gamma-1} \theta^S_x \vartheta_{xx} + w^S_x \omega_{xx} \right)\,dx, \\
    K_2&:= \int_{\R} \bigl((p-\bar{p})_x \psi_{xx} + (p u_x - \bar{p} \bar{u}_x)\vartheta_{xx}\bigr)\,dx,\\
    K_3&:=- \mu \int_{\R} \Bigl( \frac{u_x}{v} - \frac{\bar{u}_x}{\bar{v}} \Bigr)_x \psi_{xx}\,dx - \alpha \int_{\R} \Bigl( \frac{\theta_x}{v} - \frac{\bar{\theta}_x}{\bar{v}} \Bigr)_x \vartheta_{xx}\,dx, \\ 
    K_4&:= \sqrt{\kappa} \int_{\R} \Bigl[\Bigl( \frac{u_x}{v^{5/2}} - \frac{\bar{u}_x}{\bar{v}^{5/2}} \Bigr)_x \omega_{xx}- \Bigl( \frac{w_x}{v^{5/2}} - \frac{\bar{w}_x}{\bar{v}^{5/2}} \Bigr)_x \psi_{xx}\Bigr]\,dx, \\
    K_5&:=- \mu \int_{\R} \Bigl( \frac{u_x^2}{v} - \frac{\bar{u}_x^{\,2}}{\bar{v}} \Bigr)\vartheta_{xx}\,dx, \quad
    K_6:=\int_{\R} \bigl( Q_1 \psi_{xx} + Q_2 \vartheta_{xx} + Q_3 \omega_{xx} \bigr)\,dx.
\end{align*}

\noindent $\bullet$ (Estimate of $K_1$):
Using $|u^S_x|,|\theta^S_x|\le C|v^S_x|$ and $|w^S_x|\le C(|v^S_{xx}|+|v^S_x|^2)\le C|v^S_x|$, we have
\begin{align*}
K_1
&\le C\delta_S^{3/2} |\dot{X}(t)|(\|\psi_{xx}\|_{L^2}+\|\vartheta_{xx}\|_{L^2}+\|\omega_{xx}\|_{L^2})\\
&\le \frac{1}{8}(D_{u_2}+D_{\theta_2})+\delta_S^2|\dot{X}(t)|^2+C\delta_S\|\omega_{xx}\|_{L^2}^2.
\end{align*}

\noindent $\bullet$ (Estimate of $K_2$):
Using \eqref{eq:ppbar_x} and
\[
pu_x-\bar{p}\bar{u}_x
= p\psi_x + (p-\bar{p})\bar{u}_x
= p\psi_x + \left(\frac{R\vartheta}{v}+R\bar{\theta}\Bigl(\frac{1}{v}-\frac{1}{\bar v}\Bigr)\right)\bar{u}_x,
\]
we obtain
\begin{align*}
K_2 
&\le \frac{1}{8} \|(\psi_{xx},\vartheta_{xx})\|_{L^2}^2+C\int_{\R} \bigl(|(p-\bar{p})_x|^2 + |(pu_x-\bar{p} \bar{u}_x)|^2 \bigr)\,dx\\
&\le \frac{1}{8} (D_{u_2}+D_{\theta_2})
 + C\Big(\|(\phi_x,\psi_x,\vartheta_x)\|_{L^2}^2
 +\int_{\R}|(\bar{v}_x,\bar{u}_x,\bar{\theta}_x)|^2|(\phi,\vartheta)|^2\,dx\Big)\\
&\le \frac{1}{8} (D_{u_2}+D_{\theta_2})+C \|(\phi_x,\psi_x,\vartheta_x)\|^2_{L^2}+C \delta_0 (G^R+G^S) + C\delta_0^2 (1+t)^{-1}\int_{\R} e^{-\frac{2C_1|x|^2}{1+t}} |(\phi,\vartheta)|^2\,dx .
\end{align*}

\noindent $\bullet$ (Estimate of $K_3$):
We extract the good terms $D_{u_2}$ and $D_{\theta_2}$ as
\begin{align*}
K_3
=& -D_{u_2}
-\mu\int_{\R}\Bigl(\frac{1}{v}\Bigr)_x\psi_x\psi_{xx}\,dx
-\mu\int_{\R}\Bigl(\bar u_x\Bigl(\frac{1}{v}-\frac{1}{\bar v}\Bigr)\Bigr)_x\psi_{xx}\,dx\\
 &-D_{\theta_2}
-\alpha\int_{\R}\Bigl(\frac{1}{v}\Bigr)_x\vartheta_x\vartheta_{xx}\,dx
-\alpha\int_{\R}\Bigl(\bar\theta_x\Bigl(\frac{1}{v}-\frac{1}{\bar v}\Bigr)\Bigr)_x\vartheta_{xx}\,dx.
\end{align*}

Using $|v_x|\le |\phi_x|+|\bar v_x|$ and Lemma~\ref{lem:superposition-derivatives}, we have
\begin{align*}
K_3
&\le -\bigl(D_{u_2}+D_{\theta_2}\bigr)
 +C\int_{\R}(|\phi_x|+|\bar v_x|)|(\psi_x,\vartheta_x)|\,|(\psi_{xx},\vartheta_{xx})|\,dx \\
&\quad +C\int_{\R}|(\bar u_{xx},\bar\theta_{xx})||\phi|\,|(\psi_{xx},\vartheta_{xx})|\,dx
 +C\int_{\R}|(\bar u_x,\bar\theta_x)|(|\phi_x|+|\bar v_x||\phi|)\,|(\psi_{xx},\vartheta_{xx})|\,dx \\
&\le -\frac{7}{8}\bigl(D_{u_2}+D_{\theta_2}\bigr)
 +C(\varepsilon_1+\delta_0)\bigl(D_{u_1}+D_{\theta_1}\bigr)
 +C\delta_0^2\|\phi_x\|_{L^2}^2 \\
&\quad +C\delta_0(G^R+G^S)
 +C\delta_0^2(1+t)^{-1}\int_{\R}e^{-\frac{2C_1|x|^2}{1+t}}|\phi|^2\,dx .
\end{align*}
Here we used the Sobolev interpolation inequality and the smallness assumption \eqref{smallness},
\begin{equation*} 
    \|\phi_x\|_{L^\infty}\le C\|\phi_x\|_{L^2}^{1/2}\|\phi_{xx}\|_{L^2}^{1/2}\le C\varepsilon_1.
\end{equation*}

\noindent $\bullet$ (Estimate of $K_4$):
We directly calculate $K_4$ as
\begin{align*} 
    K_4 &=  \sqrt{\kappa}\int_{\mathbb{R}}\psi_x \Bigl( \frac{1}{v^{5/2}}\Bigr)_x \omega_{xx} \, dx +\sqrt{\kappa }\int_{\mathbb{R}} \Bigl[ \bar{u}_{x} \Bigl( \frac{1}{v^{5/2}}-\frac{1}{\bar{v}^{5/2}} \Bigr) \Bigr]_x \omega_{xx}dx\\
    &\quad  -\sqrt{\kappa}\int_{\mathbb{R}}\omega_x \Bigl( \frac{1}{v^{5/2}}\Bigr)_x \psi_{xx} \, dx- \sqrt{\kappa}\int_{\mathbb{R}} \Bigl[ \bar{w}_{x} \Bigl( \frac{1}{v^{5/2}}-\frac{1}{\bar{v}^{5/2}} \Bigr)\Bigr]_x \psi_{xx} \, dx .
\end{align*}
Using $|v_x| \le |\phi_x| +|\bar{v}_x|$, $| \bar{w}_x|\le C (|\bar{v}_{xx}|+|\bar{v}_x|^2)$ and $| \bar{w}_{xx}|\le C (|\bar{v}_{xxx}|+|\bar{v}_x||\bar{v}_{xx}|+|\bar{v}_x|^3)$, we have
\begin{align*}
    K_4
    &\le C\int_{\R}|v_x|(  |\psi_x| |\omega_{xx}| + |\omega_x| |\psi_{xx}|)\,dx  +C\int_{\R}|\bar{u}_{xx}||\phi||\omega_{xx}|\,dx  +C\int_{\R}|\bar{u}_x|(|\phi_x|+|\bar{v}_x| |\phi|)|\omega_{xx}|\,dx\\
    &\quad +C\int_{\R}|\bar{w}_{xx}||\phi||\psi_{xx}|\,dx+C\int_{\R}|\bar{w}_x|(|\phi_x|+|\bar{v}_x| |\phi|)|\psi_{xx}|\,dx\\
&\le \frac{1}{8}D_{u_2}+ C (\varepsilon_1 + \delta_0 ) \bigl( D_{u_1}+\|\omega_x\|_{L^2}^2+\|\omega_{xx}\|_{L^2}^2\bigr) +C\delta_0^{1/2} \bigl( \|\phi_x\|_{L^2}^2+\|\omega_{xx}\|_{L^2}^2 \bigr) \\
&\qquad+ C\int_\mathbb{R} \bigl( |\bar{u}_{xx}|^{3/2} + |\bar{v}_x|^2+|\bar{w}_{xx}|^2+|\bar{w}_x|^2|\bar{v}_x|^2\bigr) |\phi|^2 \, dx\\
&\le \frac{1}{8}D_{u_2}+ C (\varepsilon_1 + \delta_0 ) \bigl( D_{u_1}+\|\omega_x\|_{L^2}^2+\|\omega_{xx}\|_{L^2}^2\bigr) +C\delta_0^{1/2} \bigl( G^R+G^S+\|\phi_x\|_{L^2}^2+\|\omega_{xx}\|_{L^2}^2 \bigr)\\
&\quad+ C  \delta_0 (1+t)^{-1} \int_\mathbb{R} e^{-\frac{3C_1x^2}{2(1+t)}} |\phi|^2\, dx.
\end{align*}
Here the term $|\bar u_{xx}|^{3/2}$ is estimated by Lemma~\ref{lem:rarefaction_property}~(2), Lemma~\ref{lem:viscous_contact}, and \eqref{shock_est}, and the remaining weights by Lemma~\ref{lem:superposition-derivatives}.

\noindent $\bullet$ (Estimate of $K_5$):
Using
\[
\frac{u_x^2}{v}-\frac{\bar u_x^2}{\bar v}
=\frac{\psi_x(\psi_x+2\bar u_x)}{v}+\bar u_x^2\Bigl(\frac1v-\frac1{\bar v}\Bigr),
\]
we obtain
\begin{align*}
K_5
&\le C\int_{\R}\Big(|\psi_x|^2+|\psi_x||\bar u_x|+|\bar u_x|^2|\phi|\Big)|\vartheta_{xx}|\,dx \\
&\le \frac{1}{16} D_{\theta_2} +C \int_\mathbb{R} |\psi_x|^2 |\vartheta_{xx}| \, dx +C \delta_0^2(G^R+G^S+D_{u_1})+C \delta_0^4 (1+t)^{-2}\int_\mathbb{R} e^{-\frac{2C_1|x|^2}{1+t}}|\phi|^2\,dx.
\end{align*}
Moreover, the Gagliardo--Nirenberg inequality and a priori smallness imply
\[
\int_{\R}|\psi_x|^2|\vartheta_{xx}|\,dx
\le C\|\psi_x\|_{L^2}\|\psi_x\|_{L^\infty}\|\vartheta_{xx}\|_{L^2}
\le C\varepsilon_1\big(\|\psi_x\|_{L^2}^2+\|\psi_{xx}\|_{L^2}^2+\|\vartheta_{xx}\|_{L^2}^2\big),
\]
which yields 
\begin{align*}
    K_5 \le \frac{1}{8} (D_{u_2}+
    D_{\theta_2} )+ C (\varepsilon_1+\delta_0) D_{u_1} + C \delta_0
    ^2(G^R+G^S) +C \delta_0^4 (1+t)^{-2}\int_\mathbb{R} e^{-\frac{2C_1|x|^2}{1+t}}|\phi|^2\,dx.
\end{align*}

\noindent $\bullet$ (Estimate of $K_6$):
Finally, we estimate $K_6$ as
\begin{align*}
K_6 &\le \frac{1}{8} (D_{u_2}+D_{\theta_2})+\varepsilon_1\|\omega_{xx}\|_{L^2}^2+C \|(Q_1,Q_2)\|_{L^2}^2+\frac{C}{\varepsilon_1}\|Q_3\|_{L^2}^2.
\end{align*}

Therefore, combining all estimates for $K_i$, we obtain
\begin{align}
\begin{aligned}\label{est:psi_x}
&\frac{d}{dt}\int_{\mathbb{R}} \left(\frac{|\psi_x|^2}{2} + \frac{R|\vartheta_x|^2}{2(\gamma-1)}+\frac{|\omega_x|^2}{2} \right) dx + \frac{1}{4} (D_{u_2}+D_{\theta_2}) \\ 
&\le \delta_S^2 |\dot{X}|^2 +C\| \phi_x\|_{L^2}^2 +C (D_{u_1}+D_{\theta_1}) +C (\varepsilon_1+\delta_0^{1/2})  \|\omega_x\|_{H^1}^2 \\
&\quad + C\delta_0^{1/2} (G^R+G^S) + C \delta_0(1+t)^{-1} \int_\mathbb{R} e^{-\frac{3C_1|x|^2}{2(1+t)}}|(\phi,\vartheta)|^2 \, dx \\
&\quad + C\left(\|(Q_1^I,Q_1^C,Q_2^I,Q_2^C)\|_{L^2}^2 + \|(Q_1^R,Q_2^R)\|_{L^2}^2+ \frac{1}{\varepsilon_1}\|Q_3\|_{L^2}^2\right).
\end{aligned}
\end{align}

We note that
\begin{align*}
    \|(Q_1^R,Q_2^R)\|_{L^2}^2 
    &\le C \big( \|u^R_{xx}\|_{L^2}^2 + \|u^R_x v^R_x\|_{L^2}^2+\|v^R_{xxx}\|_{L^2}^2+\|v^R_xv^R_{xx}\|_{L^2}^2+ \|(v^R_x)^3\|_{L^2}^2\\
    &\hspace{3em}+\|\theta^R_{xx}\|_{L^2}^2+\|\theta^R_xv^R_x\|_{L^2}^2+\|(u^R_x)^2\|_{L^2}^2 \bigr)\\
    & \le C \bigl( \|u^R_{xx}\|_{L^2}^2+\|u^R_{xxx}\|_{L^2}^2+\|u^R_x\|_{L^4}^4+\|u^R_x\|_{L^6}^6 \bigr).
\end{align*}

Since the wave strength $\delta_R = O(1)$ and the smoothing parameter is chosen as $\e_R = \delta_0/2$, we can absorb $\delta_R$ into a generic constant $C>0$ and represent the estimates solely in terms of $\delta_0$:
\begin{align*}
&\|u^R_{xx}\|_{L^2} \le C \min \left\{ \delta_0^{3/2}, \frac{1}{(1+t)^{1-1/q}} \right\}, \quad  \|u^R_x\|_{L^4} \le C \min \left\{ \delta_0^{3/4}, \frac{1}{(1+t)^{3/4}} \right\}, \\
&\|u^R_{xxx}\|_{L^2} \le C \min \left\{ \delta_0^{5/2}, \frac{1}{(1+t)^{1-2/q}} \right\}, \quad  \|u^R_{x}\|_{L^6} \le C \min \left\{ \delta_0^{5/6}, \frac{1}{(1+t)^{5/6}} \right\}.
\end{align*}

Integrating these bounds over $[0, t]$ yields
\begin{equation*}
\int_0^t \|u^R_{xx}\|_{L^2}^2 \,ds \le C \delta_0^{\frac{3(q-2)}{2(q-1)}}, \quad \int_0^t \|u^R_{xxx}\|_{L^2}^2 \,ds \le C \delta_0^{\frac{5(q-4)}{2(q-2)}},
\end{equation*}
\begin{equation*}
\int_0^t \|u^R_x\|_{L^4}^4 \,ds \le C \delta_0^2, \quad \int_0^t \|u^R_x\|_{L^6}^6 \,ds \le C \delta_0^4.
\end{equation*}
For $q \ge 14$, the minimum exponent among these is $\frac{3(q-2)}{2(q-1)} \ge \frac{18}{13} > 1$. Hence, we obtain the uniform bound
\begin{equation} \label{Q1R-L2est}
\int_0^t \|(Q_1^R,Q_2^R)\|_{L^2}^2\,ds \le C  \delta_0.
\end{equation}    

Integrating \eqref{est:psi_x} over $(0,t)$ for any $t\in[0,T]$ and using
\eqref{est-time-int-interaction}, \eqref{error_contact}, and \eqref{Q1R-L2est}, we conclude the desired estimate.
\end{proof}

\begin{lemma}\label{lem:est-1}
Under the hypotheses of Proposition~\ref{prop:apriori}, there exists a constant $C>0$, independent of $\delta_0$ and $\varepsilon_1$, such that for all $t\in[0,T]$,
\begin{equation}\label{eq: est-1}
\begin{aligned}
&  \int_\mathbb{R} \psi(t,x)\,\omega(t,x) \, dx+  \frac12\int_0^t \left(G_{w_1} +D_{w_1} \right)(s)\, ds \\
&\le   \int_\mathbb{R} \psi_0 \omega_0 \, dx+ \frac{\delta_S}{2}\int_0^t| \dot{X}(s)|^2\,ds + C \int_0^t \bigl( D_{u_1} + D_{\theta_1} \bigr)(s)\, ds \\
&\quad +C \delta_0 \int_0^t \left(G^R + G^S  \right)(s)\, ds+ C\delta_0^{2} \int_0^t \frac{1}{1+s}\int_{\R} e^{-\frac{2C_1|x|^2}{1+s}} |(\phi,\vartheta)(s,x)|^2\,dx\, ds  + C \delta_0^{1/2},
\end{aligned}
\end{equation}
where
\[
G_{w_1}:=\int_{\R}\frac{R\theta v^{1/2}}{\sqrt{\kappa}}|\omega|^2\,dx, \qquad
D_{w_1}:=\int_{\R}\frac{\sqrt{\kappa}}{v^{5/2}}|\omega_x|^2\,dx .
\]
\end{lemma}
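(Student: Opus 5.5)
The plan is to compute $\frac{d}{dt}\int_\R\psi\omega\,dx$ using \eqref{eq:perturbation}$_2$ and \eqref{eq:perturbation}$_4$, and to extract two good terms. The term $G_{w_1}$ comes from the pressure term $-(p-\bar p)_x\omega$. The term $D_{w_1}$ comes from the Korteweg term $\sqrt\kappa(\frac{w_x}{v^{5/2}}-\frac{\bar w_x}{\bar v^{5/2}})_x\omega$. Integrating by parts in the second one gives $-\int\frac{\sqrt\kappa}{v^{5/2}}|\omega_x|^2\,dx$ plus the commutator $-\sqrt\kappa\int\bar w_x(v^{-5/2}-\bar v^{-5/2})\omega_x\,dx$. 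For the first one, \eqref{eq:ppbar_x} gives $(p-\bar p)_x=-\frac{R\theta}{v^2}\phi_x+\frac{R}{v}\vartheta_x+\text{(terms with }\bar v_x,\bar\theta_x\text{)}$. Since $\omega=-\sqrt\kappa\bigl(\phi_x/v^{5/2}+\bar v_x(v^{-5/2}-\bar v^{-5/2})\bigr)$, we have
\[
\phi_x=-\frac{v^{5/2}}{\sqrt\kappa}\omega-v^{5/2}\bar v_x\bigl(v^{-5/2}-\bar v^{-5/2}\bigr),
\]
so that $-\int(p-\bar p)_x\omega\,dx=-\int\frac{R\theta v^{1/2}}{\sqrt\kappa}|\omega|^2\,dx+\dots$. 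This yields exactly $-G_{w_1}$. The remaining parts are $\int\frac{R}{v}\vartheta_x\omega\,dx$ and terms of the form $|\bar v_x,\bar\theta_x||\phi||\omega|$.

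The $\psi\,\omega_t$ part uses \eqref{eq:perturbation}$_4$:
\[
\int\psi\omega_t\,dx=\dot X\int\psi w^S_x\,dx+\sqrt\kappa\int\psi_x\Bigl(\frac{u_x}{v^{5/2}}-\frac{\bar u_x}{\bar v^{5/2}}\Bigr)dx-\int\psi Q_3\,dx.
\]
The middle term is bounded by $C\|\psi_x\|_{L^2}^2+C\int|\bar u_x|^2|\phi|^2\,dx$. The first term of $\psi_t\omega$ is $\dot X\int u^S_x\omega\,dx$. The viscous term, after integrating by parts, is $-\mu\int(\frac{u_x}{v}-\frac{\bar u_x}{\bar v})\omega_x\,dx$.

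Each of these is bounded by Young's inequality with the good terms on the left:
\begin{itemize}
\item $|\dot X\int u^S_x\omega\,dx|\le\frac14 G_{w_1}+C|\dot X|^2\|u^S_x\|_{L^2}^2$, and $\|u^S_x\|_{L^2}^2\le C\delta_S^3$, so this is at most $\frac{\delta_S}{2}|\dot X|^2$ (partly). The same holds for $\dot X\int\psi w^S_x\,dx$, using $|w^S_x|\le C\delta_S|v^S_x|$ and $\|\psi\|_{L^\infty}\le C\varepsilon_1$.
\item The viscous cross term is at most $\frac14D_{w_1}+C D_{u_1}+C\int|\bar u_x|^2|\phi|^2\,dx$.
\item $\int\frac{R}{v}\vartheta_x\omega\,dx\le\frac14G_{w_1}+CD_{\theta_1}$.
\item The terms weighted by $|\bar v_x|^2,|\bar u_x|^2,|\bar w_x|^2,|\bar\theta_x|^2$ times $|\phi|^2$ or $|\vartheta|^2$ are handled via Lemma~\ref{lem:superposition-derivatives}, as in \eqref{est-v}. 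They give $C\delta_0(G^R+G^S)+C\delta_0^2(1+t)^{-1}\int e^{-2C_1x^2/(1+t)}|(\phi,\vartheta)|^2\,dx$, using $\|v^R_x\|_{L^\infty}\le C\delta_R\e_R$ and $\|v^S_x\|_{L^\infty}\le C\delta_S^2$.
\item The errors are $-\int Q_1\omega\,dx$ and $-\int\psi Q_3\,dx$. The interaction and contact parts of $Q_1$ are bounded by $\|Q\|_{L^2}\varepsilon_1$, whose time integrals are $O(\delta_0^{1/4})$ by \eqref{est-time-int-interaction} and \eqref{error_contact}. The rarefaction part $Q_1^R$ contributes $\frac14G_{w_1}+C\|Q_1^R\|_{L^2}^2$, which integrates to $C\delta_0$ by \eqref{Q1R-L2est}.
\end{itemize}

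The term $\|\psi_x\|_{L^2}^2$ is controlled by $CD_{u_1}$ since $a\ge1$. Integrating in time over $[0,t]$ then gives \eqref{eq: est-1}.

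The main obstacle I expect is the error constant. Interaction terms times $\varepsilon_1$ integrate only to about $\delta_0^{1/4}$, not $\delta_0^{1/2}$. To reach $\delta_0^{1/2}$, I would bound $\int|Q_1^I||\omega|\,dx$ by Young's inequality as $\frac14G_{w_1}+C\|Q_1^I\|_{L^2}^2$. Squared $L^2$ norms of the interaction terms integrate to higher powers of $\delta_0$, checked via the explicit rates in \eqref{est-interaction}. Some care is also needed that the $\bar w_x$ commutator in the Korteweg term produces $\|\omega_x\|$ only against $D_{w_1}$ and not $\omega_{xx}$.
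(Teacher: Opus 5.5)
Your outline follows the paper's proof of Lemma~\ref{lem:est-1} almost term by term. You pair \eqref{eq:perturbation}$_2$ with $\omega$ and \eqref{eq:perturbation}$_4$ with $\psi$, obtain $-G_{w_1}$ from $-(p-\bar p)_x\omega$ by rewriting $\phi_x$ in terms of $\omega$, and obtain $-D_{w_1}$ from the Korteweg term by one integration by parts. Your fallback for $Q_1$, namely $\eta G_{w_1}+C\|Q_1\|_{L^2}^2$, is also what the paper does. However, one step fails as written.

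\textbf{The gap: the shift term $\dot X\int_\R\psi\,w^S_x\,dx$.} Using $\|\psi\|_{L^\infty}\le C\varepsilon_1$ gives only
\[
\Bigl|\dot X\int_\R\psi\,w^S_x\,dx\Bigr|\le C\varepsilon_1\delta_S^2|\dot X|,
\]
which is linear in $|\dot X|$. To absorb it into $\frac{\delta_S}{4}|\dot X|^2$ you must apply Young's inequality. That leaves a remainder $C\varepsilon_1^2\delta_S^3$ which is constant in time, so its integral over $[0,t]$ grows like $t$. The estimate is then not uniform in $T$ and is useless for the continuation argument. Replacing $\|\psi\|_{L^\infty}$ by $|\dot X|\le C\varepsilon_1$ fails for the same reason.

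\textbf{The fix (as in the paper).} Keep $\psi$ in a weighted $L^2$ norm against the shock derivative. Since $|w^S_x|\le C\delta_S|v^S_x|$ and $\int_\R|v^S_x|\,dx=\delta_S$, Cauchy--Schwarz gives
\[
\Bigl|\dot X\int_\R\psi\,w^S_x\,dx\Bigr|\le C\delta_S|\dot X|\Bigl(\int_\R|v^S_x|\,dx\Bigr)^{1/2}\Bigl(\int_\R|v^S_x||\psi|^2\,dx\Bigr)^{1/2}\le\frac{\delta_S}{4}|\dot X|^2+C\delta_S^2G^S .
\]
The term $C\delta_S^2G^S$ is admissible, because the right-hand side of the lemma contains $C\delta_0\int_0^tG^S\,ds$. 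The companion term $\dot X\int_\R u^S_x\omega\,dx$ is fine as you treated it, since $\omega$ is controlled by $G_{w_1}$.

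\textbf{Bookkeeping.} As written, you spend $\frac14G_{w_1}$ four times: on the shift term, on $\vartheta_x\omega$, on $Q_1^R$, and on $Q_1^I$. You then spend more on the $|(\bar v_x,\bar\theta_x)||(\phi,\vartheta)||\omega|$ terms, which would consume all of $G_{w_1}$. Choose smaller Young fractions so that $\frac12G_{w_1}$, and likewise $\frac12D_{w_1}$, survives on the left.
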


\begin{proof}
Multiplying \eqref{eq:perturbation}$_2$ and \eqref{eq:perturbation}$_4$ by $\omega$ and $\psi$, respectively, and integrating over $\R$, we obtain
\[\frac{d}{dt}\int_{\R}\psi\,\omega\,dx=\sum_{i=1}^6 L_i ,\]
where
\begin{align*}
L_1&:=\dot X\int_{\R}\bigl(u_x^S\omega+w_x^S\psi\bigr)\,dx, &
L_2&:=-\int_{\R}(p-\bar p)_x\omega\,dx,\\
L_3&:=\mu\int_{\R}\Bigl(\frac{u_x}{v}-\frac{\bar u_x}{\bar v}\Bigr)_x\omega\,dx,&
L_4&:=\sqrt{\kappa}\int_{\R}\Bigl(\frac{w_x}{v^{5/2}}-\frac{\bar w_x}{\bar v^{5/2}}\Bigr)_x\omega\,dx,\\
L_5&:=-\sqrt{\kappa}\int_{\R}\Bigl(\frac{u_x}{v^{5/2}}-\frac{\bar u_x}{\bar v^{5/2}}\Bigr)_x\psi\,dx,&
L_6&:=-\int_{\R}\bigl(Q_1\omega+Q_3\psi\bigr)\,dx .
\end{align*}

\noindent$\bullet$ (Estimate of $L_1$):
Since $|u_x^S|\le C|v_x^S|$ and $|w_x^S|\le C\delta_S|v_x^S|$, we have
\begin{align*}
L_1
&\le C  |\dot{X}| \Biggl( \|v^S_x\|_{L^2}\|\omega\|_{L^2} +\delta_S\sqrt{ \int_\mathbb{R} |v^S_x| \, dx}\sqrt{\int_\mathbb{R} |v^S_x| |\psi|^2 \, dx } \; \Biggr) \le \frac{\delta_S}{2} |\dot{X}|^2 + C\delta_S\bigl(G^S+G_{w_1}\bigr).
\end{align*}

\noindent $\bullet$ (Estimate of $L_2$):
By the definition of $w$ and $\bar{w}$, we note that
\begin{equation}\label{eq:phi_x_w}
\sqrt{\kappa} \, \phi_x=-v^{5/2}\omega-\bar w( v^{5/2}-\bar{v}^{5/2} ).
\end{equation}
Using the expansion of $(p-\bar p)_x$ in \eqref{eq:ppbar_x} and \eqref{eq:phi_x_w}, we extract the good term $G_{w_1}$ and obtain
\begin{align*}
L_2
&=-G_{w_1} - \int_\mathbb{R} \frac{R \theta}{ v^2\sqrt{\kappa}} \bar{w}\Bigl( v^{5/2}-\bar{v}^{5/2} \Bigr) \omega \, dx -\int_\mathbb{R}\frac{R}{v} \vartheta_x \omega \, dx\\
&\quad+ \int_\mathbb{R} R\bar v_x\Big(\frac{\theta}{v^2}-\frac{\bar\theta}{\bar v^2}\Big) \omega \, dx -\int_\mathbb{R} R\bar{\theta}_x \Bigl(\frac{1}{v}-\frac{1}{\bar{v}} \Bigr) \omega \, dx \\
&\le -G_{w_1} +C \int_\mathbb{R} |\bar{w}||\phi||\omega|\, dx +C\int_\mathbb{R} |\vartheta_x||\omega|\, dx  +C\int_\mathbb{R} |(\bar{v}_x,\bar{\theta}_x)| |(\phi,\vartheta)| |\omega| \, dx \\ 
&\le -\frac{7}{8}G_{w_1} + C D_{\theta_1} + C \int_\mathbb{R} |\bar{w}|^2 |\phi|^2 \, dx + C \int_\mathbb{R} |(\bar{v}_x,\bar{\theta}_x)|^2 | (\phi,\vartheta)|^2 \,dx \\
&\le -\frac{7}{8}G_{w_1} + C D_{\theta_1} +C \delta_0(G^R+G^S)+ C\delta_0^2 (1+t)^{-1}\int_{\R} e^{-\frac{2C_1|x|^2}{1+t}} |(\phi,\vartheta)|^2\,dx .
\end{align*}

\noindent $\bullet$ (Estimate of $L_3$): We use integration by parts to obtain
\begin{align*}
    L_3
    &= -\mu \int_{\R}  \frac{\psi_x}{v} \omega_x \, dx  -\mu \int_\mathbb{R} \bar{u}_x \Bigl( \frac{1}{v}-\frac{1}{\bar{v}} \Bigr) \omega_x \, dx \le \frac{1}{8} D_{w_1} + C D_{u_1} + C \int_\mathbb{R} |\bar{u}_x|^2 |\phi|^2 \,dx \\
    &\le \frac{1}{8} D_{w_1} + C D_{u_1} +C \delta_0(G^R+G^S)+ C\delta_0^2 (1+t)^{-1}\int_{\R} e^{-\frac{2C_1|x|^2}{1+t}} |\phi|^2\,dx .
\end{align*}

\noindent $\bullet$ (Estimate of $L_4$): Extracting the good term $D_{w_1}$ and using Lemma~\ref{lem:superposition-derivatives}, we obtain
\begin{align*}
    L_4
    &=-D_{w_1} - \sqrt{\kappa} \int_{\R}  \bar{w}_x\Bigl( \frac{1}{v^{5/2}}- \frac{1}{\bar{v}^{5/2}} \Bigr)  \omega_x \, dx \le -\frac{7}{8} D_{w_1} + C\int_\mathbb{R} |\bar{w}_x|^2 |\phi|^2 \,dx \\
    &\le -\frac{7}{8} D_{w_1} +C \delta_0(G^R+G^S)+ C\delta_0^2 (1+t)^{-1}\int_{\R} e^{-\frac{2C_1|x|^2}{1+t}} |\phi|^2\,dx .
\end{align*}

\noindent $\bullet$ (Estimate of $L_5$): Similarly,
\begin{align*}
    L_5
    &= \sqrt{\kappa} \int_{\R} \frac{|\psi_x|^2}{v^{5/2}}\,dx+ \sqrt{\kappa} \int_\mathbb{R} \bar{u}_x \Bigl( \frac{1}{v^{5/2}}- \frac{1}{\bar{v}^{5/2}} \Bigr) \psi_x \, dx \le C D_{u_1} + C \int_\mathbb{R} |\bar{u}_x|^2 |\phi|^2 \, dx \\
    &\le C D_{u_1} +C \delta_0(G^R+G^S)+ C\delta_0^2 (1+t)^{-1}\int_{\R} e^{-\frac{2C_1|x|^2}{1+t}} |\phi|^2\,dx.
\end{align*}

\noindent $\bullet$ (Estimate of $L_6$): Finally, we use the Cauchy--Schwarz inequality with the smallness assumption~\eqref{smallness} to obtain
\begin{align*}
     L_6
     &\le \frac{1}{8} G_{w_1}+ C \bigl( \|Q_1\|_{L^2}^2 + \varepsilon_1 \|Q_3\|_{L^2} \bigr).
\end{align*}

Combining all the estimates for $L_i$ and using the smallness of $\delta_0$ and $\varepsilon_1$, we obtain
\begin{align*}
&\frac{d}{dt} \int_\mathbb{R} \psi \omega \, dx + \frac{1}{2} \left( G_{w_1}+D_{w_1}\right) \\ 
&\le \frac{\delta_S}{2} | \dot{X}|^2 + C \bigl(D_{u_1} +D_{\theta_1} \bigr) + C  \delta_0 \bigl(G^R+G^S  \bigr) \\
&\quad + C\delta_0^2 (1+t)^{-1}\int_{\R} e^{-\frac{2C_1|x|^2}{1+t}} |(\phi,\vartheta)|^2\,dx + C \left(\|Q_1\|_{L^2}^2+ \varepsilon_1 \|Q_3\|_{L^2} \right).
\end{align*}
After integrating the above estimate over $[0,t]$ and using \eqref{est-time-int-interaction} and \eqref{Q1R-L2est}, we obtain
\begin{align*}
&  \int_\mathbb{R} \psi(t,x) \omega(t,x) \, dx + \frac{1}{2} \int_0^t \left(G_{w_1} +D_{w_1} \right)(s) \, ds \\ 
&\le   \int_\mathbb{R} \psi_0 \omega_0 \, dx + \frac{\delta_S}{2} \int_0^t| \dot{X}(s)|^2\,d s + C \int_0^t \bigl( D_{u_1} + D_{\theta_1} \bigr)(s) \, d s \\
&\quad +C \delta_0 \int_0^t \left(G^R + G^S  \right)(s)\, d s+ C\delta_0^2 \int_0^t (1+s)^{-1}\int_{\R} e^{-\frac{2C_1|x|^2}{1+s}} |(\phi,\vartheta)(s,x)|^2\,dx\, ds+ C \delta_0^{1/2},
\end{align*}
which completes the proof of Lemma~\ref{lem:est-1}.

\end{proof}

\begin{lemma}\label{lem:est-2}
Under the hypotheses of Proposition~\ref{prop:apriori}, there exists a constant $C>0$, independent of $\delta_0$ and $\varepsilon_1$, such that for all $t\in[0,T]$,
\begin{equation}\label{eq: est-2}
\begin{aligned}
\int_{\mathbb{R}} &\psi_x(t,x)\,\omega_x(t,x) \, dx + \frac 12\int_{0}^t \left( G_{w_2} + D_{w_2} \right)(s)\, ds \\
&\le \int_{\mathbb{R}} \psi_{0x}\omega_{0x} \, dx + \delta_S \int_0^t |\dot{X}(s)|^2 \, ds + C \int_{0}^{t} ( D_{u_2}+D_{\theta_2})(s)\, ds \\ 
&\quad +C(\varepsilon_1+\delta_0) \int_0^t \bigl(\|\phi_x(s,\cdot)\|_{L^2}^2+G_{w_1}(s)+D_{u_1}(s)+D_{\theta_1}(s) \bigr)\, ds+ C \delta_0\int_0^t (G^R+G^S)(s)\,ds\\
&\quad+ C \delta_0^2\int_0^t \frac{1}{1+s} \int_\mathbb{R} e^{-\frac{2C_1|x|^2}{1+s}} |(\phi,\vartheta)(s,x)|^2 \, dx\, ds +C \delta_0^{1/2},
\end{aligned}
\end{equation}
where
\[
G_{w_2}:=\int_\mathbb{R} \frac{R \theta v^{1/2}}{\sqrt{\kappa}} |\omega_x|^2 \, dx, \qquad
D_{w_2}:=  \int_{\R} \frac{\sqrt{\kappa}}{v^{5/2}} |\omega_{xx}|^2\,dx.
\]
\end{lemma}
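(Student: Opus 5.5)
The plan is to repeat the mixed-energy argument of Lemma~\ref{lem:est-1} one derivative higher. We differentiate \eqref{eq:perturbation}$_2$ and \eqref{eq:perturbation}$_4$ in $x$, multiply by $\omega_x$ and $\psi_x$ respectively, and integrate. Equivalently, we multiply \eqref{eq:perturbation}$_2$ by $-\omega_{xx}$ and \eqref{eq:perturbation}$_4$ by $-\psi_{xx}$. This gives
\[
\frac{d}{dt}\int_\R\psi_x\omega_x\,dx=\sum_{i=1}^6\widetilde L_i .
\]
The terms $\widetilde L_i$ mirror $L_1,\dots,L_6$: shift terms, the pressure term $\int(p-\bar p)_x\omega_{xx}$, the viscous term, the capillary term in $w$, the capillary term in $u$, and the source terms $Q_1,Q_3$.

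The two good terms arise as before.

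\begin{itemize}
\item \emph{Capillary term in $w$.} Integrating by parts, $\sqrt\kappa\int(w_x/v^{5/2}-\bar w_x/\bar v^{5/2})_x\omega_{xx}$ (with the sign from $-\omega_{xx}$) produces $-D_{w_2}$ plus commutators. These involve $(v^{-5/2})_x\omega_x\omega_{xx}$ and $[\bar w_x(v^{-5/2}-\bar v^{-5/2})]_x\omega_{xx}$. They are bounded by $\frac18 D_{w_2}+C(\varepsilon_1+\delta_0)(\|\omega_x\|^2+\|\phi_x\|^2)$ plus weighted terms $\int|\bar w_{xx},\bar w_x\bar v_x|^2|\phi|^2$. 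The latter are controlled by Lemma~\ref{lem:superposition-derivatives} as $C\delta_0(G^R+G^S)$ plus the Gaussian contact term. Here we use $\|\phi_x\|_{L^\infty}\le C\varepsilon_1$ from \eqref{eq:Linf-bounds}.

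\item \emph{Pressure term.} We write $\int(p-\bar p)_x\omega_{xx}=-\int(p-\bar p)_{xx}\omega_x$ and use \eqref{eq:ppbar_xx}. Differentiating \eqref{eq:phi_x_w} gives
\[
\sqrt\kappa\,\phi_{xx}=-v^{5/2}\omega_x-(v^{5/2})_x\omega-\bigl(\bar w(v^{5/2}-\bar v^{5/2})\bigr)_x .
\]
Hence the leading piece $-\frac{R\theta}{v^2}\phi_{xx}\omega_x$ yields exactly $-G_{w_2}$. The remainders are of the form $|v_x||\omega||\omega_x|$, $|\bar w_x||\phi|$, $|\bar w||\phi_x|$, $\vartheta_{xx}\omega_x$, and the quadratic terms $\phi_x^2,\phi_x\vartheta_x$ times $\omega_x$. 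They are bounded by $\frac18 G_{w_2}+CD_{\theta_2}+C(\varepsilon_1+\delta_0)(G_{w_1}+\|\phi_x\|^2+D_{\theta_1})$ plus the standard weighted terms. The quadratic terms are handled using $\|(\phi_x,\vartheta_x)\|_{L^\infty}$ small.
\end{itemize}

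The viscous term $\mu\int(u_x/v-\bar u_x/\bar v)_{x}\omega_{xx}$ is simply bounded by Young's inequality: $\frac18D_{w_2}+C(D_{u_2}+(\varepsilon_1+\delta_0)D_{u_1})$ plus weighted terms. The capillary term in $u$, $\sqrt\kappa\int(u_x/v^{5/2}-\bar u_x/\bar v^{5/2})_x\psi_{xx}$, is bounded by $CD_{u_2}$ plus small terms. For $\widetilde L_1$ we use $|u^S_{xx}|,|w^S_{xx}|\le C\delta_S|v^S_x|$ and $\|v^S_x\|_{L^2}\le C\delta_S^{3/2}$ to get
\[
|\dot X|\,C\delta_S^{3/2}\bigl(\|\omega_{xx}\|+\|\psi_{xx}\|\bigr)\le \delta_S|\dot X|^2+\tfrac18 D_{w_2}+CD_{u_2}.
\]
For $\widetilde L_6$ we write $\int Q_1\omega_{xx}+\int Q_3\psi_{xx}\le\frac18D_{w_2}+CD_{u_2}+C\|Q_1\|^2+C\|Q_3\|^2$. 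The time integrals of these are $\le C\delta_0^{1/2}$ by \eqref{est-time-int-interaction}, \eqref{error_contact} and \eqref{Q1R-L2est}.

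Collecting the bounds and integrating in time gives \eqref{eq: est-2}. The main obstacle is the pressure term. The derivative falls on $\phi$, so we must convert $\phi_{xx}$ into $\omega_x$ through the nonlinear relation \eqref{eq:phi_x_w}, while making sure no uncontrolled $\|\phi_{xx}\|$ or $\|\omega_x\|$ without a small factor remains. The plan is to absorb $\omega_x$ only into $G_{w_2}$ itself, and to charge every other occurrence either with a factor of $\varepsilon_1+\delta_0$ or with a superposition-wave weight.
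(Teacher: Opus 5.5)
Your argument is correct and has the same skeleton as the paper's proof. It uses the same functional $\int\psi_x\omega_x$, extracts $-G_{w_2}$ from $-\int(p-\bar p)_{xx}\omega_x$ via \eqref{eq:ppbar_xx} and the differentiated form of \eqref{eq:phi_x_w}, obtains $-D_{w_2}$ from the capillary term, and has the same remainder list.

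The one real difference is where you put the derivative in the non-leading terms. You keep the shift and source contributions as $\dot X\int(u^S_x\omega_{xx}+w^S_x\psi_{xx})$ and $\int(Q_1\omega_{xx}+Q_3\psi_{xx})$, and absorb them into $D_{w_2}$ and $D_{u_2}$. This is harmless, since $C\int D_{u_2}$ already appears on the right-hand side. You therefore only need $\int_0^t\|(Q_1,Q_3)\|_{L^2}^2\,ds\le C\delta_0^{1/2}$, which follows from \eqref{est-time-int-interaction}, \eqref{error_contact}, \eqref{Q1R-L2est} and the pointwise-in-time bounds of Lemma~\ref{lem: interaction estimates}, exactly as in Lemma~\ref{lem:est-1}.

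The paper instead writes these terms as $\dot X\int u^S_{xx}\omega_x$, $\int(Q_1)_x\omega_x$ and $\int(Q_3)_x\psi_x$, and absorbs them into $G_{w_2}$ and $D_{u_1}$. That forces it to estimate $(Q_1^I)_x$, $(Q_1^R)_x$ (which involves $\partial_x^4 v^R$), $(Q_1^C)_x$ and $(Q_3)_x$ separately. Your route is therefore slightly more economical.

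Your citation of $|u^S_{xx}|,|w^S_{xx}|\le C\delta_S|v^S_x|$ belongs to the paper's form. In your form you need $|u^S_x|\le C|v^S_x|$ and $|w^S_x|\le C\delta_S|v^S_x|$, which give the same bound $C\delta_S^{3/2}|\dot X|(\|\omega_{xx}\|_{L^2}+\|\psi_{xx}\|_{L^2})$.

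One small correction: \eqref{smallness} only controls $\vartheta$ in $H^1$, so $\|\vartheta_x\|_{L^\infty}$ is not known to be small. It is not needed, however. The quadratic terms $\phi_x^2\omega_x$ and $\phi_x\vartheta_x\omega_x$ are handled with $\|\phi_x\|_{L^\infty}\le C\varepsilon_1$ alone, which gives $\frac{1}{16}G_{w_2}+C\varepsilon_1^2(\|\phi_x\|_{L^2}^2+D_{\theta_1})$.
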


\begin{proof}
We differentiate \eqref{eq:perturbation}$_2$ and \eqref{eq:perturbation}$_4$ with respect to $x$, multiply the resulting equations by $\omega_x$ and $\psi_x$, respectively, and integrate over $\R$ to obtain
\begin{align*}
  &\frac{d}{dt} \int_{\R}  \psi_x \omega_x \, dx=\sum_{i=1}^6 N_i,
\end{align*}
where
\begin{align*}
    N_1&:=\dot{X} \int_{\R} \bigl( u^S_{xx} \omega_x +  w^S_{xx} \psi_x \bigr) \, dx, &  N_2&:= - \int_{\R}  (p-\bar{p})_{xx} \omega_x \, dx, \\
      N_3&:= \mu \int_{\R} \Bigl( \frac{u_x}{v}-\frac{\bar{u}_x}{\bar{v}} \Bigr)_{xx} \omega_x \, dx, &
      N_4&:= \sqrt{\kappa} \int_{\R} \Bigl( \frac{w_x}{v^{5/2}}- \frac{\bar{w}_x}{\bar{v}^{5/2}} \Bigr)_{xx}  \omega_x \, dx,\\
      N_5&:= - \sqrt{\kappa} \int_{\R}  \Bigl( \frac{u_x}{v^{5/2}}- \frac{\bar{u}_x}{\bar{v}^{5/2}} \Bigr)_{xx} \psi_x \, dx, &
      N_6&:=- \int_{\R} \bigl(  (Q_1)_x \omega_x + (Q_3)_x \psi_x \bigr) \, dx .
\end{align*}

\medskip
\noindent $\bullet$ (Estimate of $N_1$):  Since $|u^S_{xx}|,|w^S_{xx}| \le C \delta_S |v^S_x| $, we estimate $N_1$ as
\begin{align*}
N_1 
&\le C \delta_S |\dot{X}| (\|v^S_x\|_{L^2}\|\omega_x\|_{L^2}+\|v^S_x\|_{L^2}\|\psi_x\|_{L^2})\\ 
&\le \frac{1}{8} G_{w_2} +C \delta_S^2 |\dot{X}|^2 + C \delta_S^3 D_{u_1}.
\end{align*}

\medskip
\noindent $\bullet$ (Estimate of $N_2$):
Differentiating \eqref{eq:phi_x_w} with respect to $x$, we obtain
\begin{align}\label{eq:phi_xx_w}
\sqrt{\kappa}\, \phi_{xx}
&= -v^{5/2}\omega_x
-(v^{5/2})_x\omega
-\bar w_x(v^{5/2}-\bar v^{5/2})
-\bar w ( v^{5/2}-\bar v^{5/2} )_x.
\end{align}
Using \eqref{eq:ppbar_xx} and \eqref{eq:phi_xx_w}, and Young's inequality, we estimate $N_2$ as
\begin{align*}
N_2
&\le -G_{w_2}
+ C\int_{\mathbb{R}} N_{21}|\omega_x|dx\le -\frac{7}{8}G_{w_2}+C\int_{\R}|N_{21}|^2\,dx,
\end{align*}
where $N_{21}$ is defined as
\begin{align*}
N_{21}
&:= |v_x||\omega|+|\bar w_x||\phi|+|\bar w|\bigl(|\phi_x|+|\bar v_x||\phi|\bigr)+|\vartheta_{xx}|+|\bar v_{xx}|(|\phi|+|\vartheta|)+|\bar\theta_{xx}||\phi| \nonumber\\
&\quad+|\phi_x|(|v_x|+|\bar v_x|)+|\bar v_x|^2(|\phi|+|\vartheta|)+|\phi_x||\vartheta_x|+|\bar v_x||\vartheta_x|+|\bar\theta_x||\phi_x|+|\bar v_x||\bar\theta_x||\phi|.
\end{align*}
Moreover, using $|v_x| \le |\phi_x| + |\bar{v}_x|$ with \eqref{eq:Linf-bounds}, $|\bar w|\le C|\bar v_x|$, and Lemma~\ref{lem:superposition-derivatives}, we obtain
\begin{align*} 
&\int_{\mathbb{R}} |N_{21}|^2\,dx \\
&\le C D_{\theta_2} +C \int_\mathbb{R}  |(v_x,\bar{v}_x,\bar{\theta}_x)|^2 |\phi_x|^2 \, dx+C\int_\mathbb{R} |\bar{v}_x|^2 |\vartheta_x|^2 \, dx + \int_\mathbb{R} |\phi_x|^2 |\vartheta_x|^2\,dx  + C \int_\mathbb{R} |v_x|^2 |\omega|^2 \, dx\\
&\quad + C\int_{\mathbb{R}} \bigl(|(\bar{w}_x,  \bar{v}_{xx},\bar{\theta}_{xx})|^2 +|(\bar{w},\bar{v}_x,\bar{\theta}_x)|^2|\bar{v}_x|^2 \bigr)|\phi|^2\,dx+ C \int_\mathbb{R} \bigl(|\bar{v}_{xx}|^2 + |\bar{v}_x|^4 \bigr) |\vartheta|^2 \, dx \\
&\le C D_{\theta_2} +C (\varepsilon_1+\delta_0) \bigl( \|\phi_x\|_{L^2}^2 + D_{\theta_1}+G_{w_1} \bigr) + C \delta_0(G^R+G^S)\\
&\quad + C \delta_0^2 (1+t)^{-1} \int_\mathbb{R} e^{-\frac{2C_1|x|^2}{1+t}} |(\phi,\vartheta)|^2 \, dx .
\end{align*}
Consequently,
\begin{align*}
N_2
&\le -\frac{7}{8}G_{w_2} + C D_{\theta_2} +C (\varepsilon_1+\delta_0) \bigl( \|\phi_x\|_{L^2}^2 + D_{\theta_1}+G_{w_1} \bigr) \\
&\quad + C \delta_0(G^R+G^S) + C \delta_0^2 (1+t)^{-1} \int_\mathbb{R} e^{-\frac{2C_1|x|^2}{1+t}} |(\phi,\vartheta)|^2 \, dx .
\end{align*}

\medskip
\noindent $\bullet$ (Estimate of $N_3$): We integrate by parts to obtain
\begin{align*}
    N_3 &\le C \int_\mathbb{R} |\psi_{xx}| |\omega_{xx}| \, dx + C \int_\mathbb{R} |v_x| |\psi_x| |\omega_{xx}|\,dx + C \int_\mathbb{R} |\bar{u}_{xx}| |\phi| |\omega_{xx}|\,dx\\
    &\quad + C \int_\mathbb{R} |\bar{u}_x| \bigl(|\phi_x| + |\bar{v}_x| |\phi| \bigr)  |\omega_{xx}|\,dx\\
    &\le \frac{1}{8} D_{w_2} + C D_{u_2}+ C\int_\mathbb{R} |v_x|^2 |\psi_x|^2 \,dx+ C\int_\mathbb{R} |\bar{u}_x|^2 |\phi_x|^2 \, dx + C \int_\mathbb{R} \bigl( |\bar{u}_{xx}|^2 + |\bar{u}_x|^2 |\bar{v}_x|^2 \bigr) |\phi|^2 \, dx \\
    &\le \frac{1}{8} D_{w_2} + C D_{u_2}+ C (\varepsilon_1 + \delta_0) D_{u_1} + C \delta_0 \bigl( \|\phi_x\|_{L^2}^2 + G^R+G^S\bigr)\\
    &\quad + C \delta_0^2 (1+t)^{-1} \int_\mathbb{R} e^{-\frac{2C_1|x|^2}{1+t}} |\phi|^2 \, dx .
\end{align*}

\medskip
\noindent $\bullet$ (Estimate of $N_4$): We estimate $N_4$ as
\begin{align*}
    N_4 &\le -D_{w_2} + C \int_\mathbb{R} |v_x| |\omega_x||\omega_{xx}|\,dx + C \int_\mathbb{R} |\bar{w}_{xx}||\phi||\omega_{xx}|\, dx + C \int_\mathbb{R} |\bar{w}_x| \bigl( |\phi_x|  + |\bar{v}_x||\phi| \bigr) |\omega_{xx}| \, dx \\
    &\le -\frac{7}{8}D_{w_2} + C \int_\mathbb{R} |v_x|^2 |\omega_x|^2 \, dx +C \int_\mathbb{R} |\bar{w}_x|^2 |\phi_x|^2 \, dx + C \int_\mathbb{R} \bigl( |\bar{w}_{xx}|^2 + |\bar{w}_x|^2 |\bar{v}_x|^2 \bigr) |\phi|^2 \, dx \\
    &\le -\frac{7}{8}D_{w_2} + C (\varepsilon_1 +\delta_0) G_{w_2} +C \delta_0 ( \| \phi_x\|_{L^2}^2 +G^R+G^S) + C \delta_0^2 (1+t)^{-1} \int_\mathbb{R} e^{-\frac{2C_1|x|^2}{1+t}} |\phi|^2 \, dx .
\end{align*}

\medskip
\noindent $\bullet$ (Estimate of $N_5$): Similarly, we obtain
\begin{align*}
    N_5 &\le C \int_\mathbb{R} |\psi_{xx}|^2 \, dx + C \int_\mathbb{R} |v_x| |\psi_x| |\psi_{xx}| \, dx + C \int_\mathbb{R} |\bar{u}_{xx}| |\phi| |\psi_{xx}| \,dx\\
    &\quad + C \int_\mathbb{R} |\bar{u}_x| \bigl( |\phi_x| + |\bar{v}_x| |\phi| \bigr) |\psi_{xx} | \, dx \\
    &\le C D_{u_2} + C \int_\mathbb{R} |v_x|^2 |\psi_x|^2 \, dx + C \int_\mathbb{R}|\bar{u}_x|^2 |\phi_x|^2 \, dx + C \int_\mathbb{R} \bigl( |\bar{u}_{xx}|^2 + |\bar{u}_x|^2 |\bar{v}_x|^2 \bigr) |\phi|^2 \, dx\\
    &\le C D_{u_2} + C (\varepsilon_1 +\delta_0) D_{u_1} +C \delta_0 ( \| \phi_x\|_{L^2}^2 +G^R+G^S)\\
    &\quad + C \delta_0^2 (1+t)^{-1} \int_\mathbb{R} e^{-\frac{2C_1|x|^2}{1+t}} |\phi|^2 \, dx .
\end{align*}

\medskip
\noindent $\bullet$ (Estimate of $N_6$): Finally, we estimate $N_6$ as
\begin{align*}
     N_6 &\le \frac{1}{8} G_{w_2}+ C \varepsilon_1 D_{u_1} + C \left( \|(Q_1)_x\|_{L^2}^2 + \frac {1}{\varepsilon_1} \|(Q_3)_x\|_{L^2}^2 \right).
\end{align*}
We decompose $Q_1=Q_1^I+Q_1^R+Q_1^C$. By direct differentiation, we obtain the pointwise bounds
\begin{align*}
|(Q_1^I)_x|
\le C&\Big[
\bigl|\bigl(v^R_{xx},u^R_{xxx},u^R_{xx}v^R_x,u^R_xv^R_{xx},v^R_{xxx}v^R_x,(v^R_{xx})^2,v^R_{xxxx},(v^R_x)^2v^R_{xx} \bigr) \bigr||(v^C-v_*,v^S-v^*)|
\\
&+\bigl|\bigl(v^R_x,u^R_{xx},u^R_xv^R_x,v^R_{xx}v^R_x,v^R_{xxx},(v^R_x)^3\bigr) \bigr|\,|(v^C_x,v^S_x)|\\
&+\bigl|\bigl(v^C_{xx},u^C_{xxx},u^C_{xx}v^C_x,u^C_xv^C_{xx},v^C_{xxx}v^C_x,(v^C_{xx})^2,v^C_{xxxx},(v^C_x)^2v^C_{xx}\bigr) \bigr||(v^R-v_*,v^S-v^*)|\\
&+\bigl|\bigl(v^C_x,u^C_{xx},u^C_xv^C_x,v^C_{xx}v^C_x,v^C_{xxx},(v^C_x)^3\bigr)\bigr|\,|(v^R_x,v^S_x)|\\
&+\bigl|\bigl(v^S_{xx},u^S_{xxx},u^S_{xx}v^S_x,u^S_xv^S_{xx},v^S_{xxx}v^S_x,(v^S_{xx})^2,v^S_{xxxx},(v^S_x)^2v^S_{xx}\bigr) \bigr||(v^R-v_*,v^C-v^*)|\\
&+\bigl|\bigl(v^S_x,u^S_{xx},u^S_xv^S_x,v^S_{xx}v^S_x,v^S_{xxx},(v^S_x)^3\bigr)\bigr|\,|(v^R_x,v^C_x)|\\
&+|(u^R_{xx},v^R_x v^R_{xx},v^R_{xxx})|\,|(v^C_x,v^S_x)| + |(u^R_x,(v^R_x)^2,v^R_{xx})|\,|(v^C_{xx},v^S_{xx})|\\
&+|(u^C_{xx},v^C_x v^C_{xx},v^C_{xxx})|\,|(v^R_x,v^S_x)|
+|(u^C_x,(v^C_x)^2,v^C_{xx})|\,|(v^R_{xx},v^S_{xx})|\\
&+|(u^S_{xx},v^S_xv^S_{xx},v^S_{xxx})|\,|(v^R_x,v^C_x)|+|(u^S_x,(v^S_x)^2,v^S_{xx})|\,|(v^R_{xx},v^C_{xx})|\Big].
\end{align*}
and 
\begin{align*}
|(Q_1^R)_x|\le C\Big(
&|u^R_{xxx}|+|u^R_{xx}|\,|v^R_x|+|u^R_x|\,|v^R_{xx}|+|u^R_x|\,|v^R_x|^2\\
&\quad+|v^R_{xxxx}|+|v^R_x|\,|v^R_{xxx}|+|v^R_{xx}|^2+|v^R_x|^2\,|v^R_{xx}|+|v^R_x|^4\Big).
\end{align*}

By Lemma~\ref{lem: interaction estimates} and the argument for $Q_1^I$ and $Q_3$ in its proof, we get
\begin{align*}
&\int_0^t \|(Q_1^I)_x\|_{L^2}^2\,ds
\le C\delta_0^{1/2},\\
&\int_0^t \|(Q_3)_x\|_{L^2}^2\,ds
\le C\varepsilon_1\delta_0^{1/2}.
\end{align*}
Furthermore, using Lemma~\ref{lem:rarefaction_property}, we obtain
\begin{align*}
\int_0^t \|(Q_1^R)_x\|_{L^2}^2\,ds
&\le C\int_0^t\Bigl(\|v^R_{xxxx}\|_{L^2}^2+\|v^R_{xxx}\|_{L^4}^4+\|u^R_{xxx}\|_{L^2}^2+\|(v^R_{xx},u^R_{xx})\|_{L^4}^4 \nonumber\\
&\hspace{4.5em} +\|(v^R_x,u^R_x)\|_{L^4}^4+\|v^R_x\|_{L^8}^8\Bigr)\,ds\le C\delta_0.
\end{align*}
Finally, we deduce from \eqref{error_contact} that
\[\int_0^t\|(Q_1^C)_x\|_{L^2}^2\,ds \le C\delta_0.\]

Combining all the above estimates on $N_i$, integrating in time over $[0,t]$,
and using the smallness of $\delta_0$ and $\varepsilon_1$, we obtain
\begin{align*}
\int_{\mathbb{R}} &\psi_x\omega_x \, dx+\frac{1}{2} \int_{0}^t \left( G_{w_2} + D_{w_2} \right) \, ds\\
&\le \int_{\mathbb{R}} \psi_{0x}\omega_{0x} \, dx+ \delta_S \int_0^t |\dot{X}|^2 \, d s + C \int_{0}^{t} ( D_{u_2}+D_{\theta_2})\, ds\\ 
&\quad +C(\varepsilon_1+\delta_0) \int_0^t \bigl(\|\phi_x\|_{L^2}^2+G_{w_1}+D_{u_1}+D_{\theta_1} \bigr) \, d s +C \delta_0\int_0^t (G^R+G^S) \,ds\\
&\quad+ C \delta_0^2\int_0^t \frac{1}{1+s} \int_\mathbb{R} e^{-\frac{2C_1|x|^2}{1+s}} |(\phi,\vartheta)|^2 \, dx\,ds +C \delta_0^{1/2},
\end{align*}
which implies the desired estimate \eqref{eq: est-2}.
\end{proof}

\subsection{Proof of Proposition~\ref{prop:apriori}}
We now derive \eqref{eq:H1-est} by combining \eqref{eq: est-0}, \eqref{eq: est-1}, and \eqref{eq: est-2} with the $L^2$-estimate \eqref{est:L2}. For the reader's convenience, we decompose the argument into three steps.

\medskip
\noindent\textbf{Step 1.}
Applying Young's inequality to \eqref{eq: est-1}, we obtain, for some constant $C_2>0$,
\begin{equation}\label{eq: est-1,1}
\begin{aligned}
	&\int_0^t \bigl(G_{w_1} + D_{w_1}\bigr)\, ds \\
	&\le \|(\psi,\omega)(t)\|_{L^2}^2 +\|(\psi_{0},\omega_{0})\|_{L^2}^2 + C_2 \int_0^t  (\delta_S|\dot{X}(s)|^2+D_{u_1} +D_{\theta_1}) \,d s\\
	&\quad +C \delta_0\int_0^t (G^R+G^S) \,ds+ C \delta_0^2\int_0^t \frac{1}{1+s} \int_\mathbb{R} e^{-\frac{2C_1|x|^2}{1+s}} |(\phi,\vartheta)|^2 \, dx\,ds +C \delta_0^{1/2}.
\end{aligned}
\end{equation}

In order to bound the integral of $\|\phi_x\|_{L^2}^2$ appearing on the right-hand side of \eqref{est:L2}, we use \eqref{eq:phi_x_w} to deduce
\begin{equation}\label{eq:phi_x_omega}
	\|\phi_x\|_{L^2}^2 \le C G_{w_1} + C \delta_0 (G^R+G^S) + C \delta_0^2 (1+t)^{-1}\int_\mathbb{R} e^{-\frac{2C_1|x|^2}{1+t}} |(\phi,\vartheta)|^2 \, dx.
\end{equation}
Integrating \eqref{eq:phi_x_omega} in time over $[0, t]$ and utilizing the heat kernel estimate \eqref{heat-int}, we explicitly obtain
\begin{equation}\label{eq:phi_x_int}
\begin{aligned}
    \int_0^t \|\phi_x\|_{L^2}^2 \,ds &\le C \int_0^t G_{w_1} \,ds  +  C\delta_0 \int_0^t (G^R+G^S+D_{u_1}+D_{\theta_1}) \,ds \\
    &\quad + C\delta_0^2 \sup_{s\in[0,t]} \|(\phi,\psi,\vartheta,\omega)\|_{L^2}^2 +C\delta_0^2\delta_S \int_0^t |\dot{X}(s)|^2\,ds +C\delta_0^2\int_0^t D_{w_1}(s)\,ds +C\delta_0^2.
\end{aligned}
\end{equation}

Now, we multiply \eqref{eq: est-1,1} by $\frac{1}{2\max\{1,C_2\}}$ and add it to \eqref{est:L2}. Since $D=D_{u_1}+D_{\theta_1}$, the left-hand side of \eqref{est:L2} absorbs $\|(\psi,\omega)(t)\|_{L^2}^2$ and the $C_2$-term of \eqref{eq: est-1,1}. By \eqref{eq:phi_x_int} and \eqref{heat-int}, the remaining terms carry the small factor $\delta_0$ or $\varepsilon_1$, and we obtain
\begin{equation}\label{est-3}
\begin{aligned}
	&\sup_{t\in[0,T]}\|(\phi,\psi,\vartheta,\omega)\|_{L^2}^2
	+\delta_S\int_0^T |\dot{X}(t)|^2\, dt +\int_0^T \bigl(G^S+G^R+G_3+D_{u_1}+D_{\theta_1}+G_{w_1}+D_{w_1}\bigr)\, dt\\
	&\le C\|(\phi_0,\psi_0,\vartheta_0,\omega_0)\|_{L^2}^2 +C\delta_0^{1/6}.
\end{aligned}
\end{equation}

\medskip
\noindent\textbf{Step 2.}
Similarly, we apply Young's inequality to \eqref{eq: est-2} to observe that for some constant $C_3>0$,
\begin{equation}\label{eq: est-2,1}
\begin{aligned}
	&\int_{0}^t \bigl(G_{w_2} + D_{w_2} \bigr)\, ds \\
	&\le \|(\psi_x,\omega_x)(t)\|_{L^2}^2 +\|(\psi_{0x},\omega_{0x})\|_{L^2}^2 + C_3 \int_{0}^{t} (\delta_S|\dot{X}(s)|^2+D_{u_2}+D_{\theta_2})\, ds \\
	&\quad +C(\varepsilon_1+\delta_0) \int_0^t \bigl(\|\phi_x\|_{L^2}^2+G_{w_1}+D_{u_1}+D_{\theta_1} \bigr) \, d s +C \delta_0\int_0^t (G^R+G^S) \,ds\\
    &\quad+ C \delta_0^2\int_0^t \frac{1}{1+s} \int_\mathbb{R} e^{-\frac{2C_1|x|^2}{1+s}} |(\phi,\vartheta)|^2 \, dx\,ds +C \delta_0^{1/2}.
\end{aligned}
\end{equation}
Multiplying \eqref{eq: est-2,1} by $\frac{1}{2\max\{1,C_3\}}$ and adding it to \eqref{eq: est-0}, we apply \eqref{eq:phi_x_int} again and use $D_{w_1}\le C\,G_{w_2}$ to deduce that for some constant $C_4>0$,
\begin{equation}\label{eq: est-4}
	\begin{aligned}
		&\|(\psi_x, \vartheta_x,\omega_x)\|_{L^2}^2 + \int_{0}^t  \bigl(D_{u_2}+D_{\theta_2}+G_{w_2}+D_{w_2}\bigr)\, ds \\
		&\le C \|(\psi_{0x},\vartheta_{0x},\omega_{0x})\|_{L^2}^2 + C_4 \int_0^t \bigl(\delta_S|\dot{X}(s)|^2+D_{u_1}+D_{\theta_1}+G_{w_1}\bigr) \, ds \\
		&\quad + C\delta_0^2\sup_{s\in[0,t]}\|(\phi,\psi,\vartheta,\omega)(s,\cdot)\|_{L^2}^2+ C \delta_0^{1/2}\int_0^t (G^R+G^S) \,ds\\
		&\quad + C \delta_0\int_0^t \frac{1}{1+s} \int_\mathbb{R} e^{-\frac{3C_1|x|^2}{2(1+s)}} |(\phi,\vartheta)|^2 \, dx\,ds +C \delta_0^{1/2}.
	\end{aligned}
\end{equation}

\medskip
\noindent\textbf{Step 3.}
Finally, we multiply \eqref{eq: est-4} by $\frac{1}{2\max\{1,C_4\}}$ and add it to \eqref{est-3}. The left-hand side of \eqref{est-3} absorbs the $C_4$-term of \eqref{eq: est-4}, and \eqref{heat-int}, \eqref{eq:phi_x_int}, and the smallness of $\delta_0$ and $\varepsilon_1$ absorb the remaining terms. Since $\|\phi_x\|_{L^2}\le C\|\omega\|_{L^2}+C\delta_0\|\phi\|_{L^2}$ by \eqref{eq:phi_x_w} and \eqref{eq:Linf-bounds-2}, we arrive at
\begin{equation*}
\begin{aligned}
& \|(\phi,\psi,\vartheta,\omega)\|_{H^1}^2+ \delta_S\int_0^t |\dot{X}(s)|^2\,ds+\int_0^t \bigl(G^R+G^S+G_3\bigr)\,ds\\
&\quad +\int_0^t \bigl(D_{u_1}+D_{\theta_1}+D_{u_2}+D_{\theta_2}+G_{w_1}+G_{w_2}+D_{w_1}+D_{w_2}\bigr)\,ds \\
&\le C_0\|(\phi_0,\psi_0,\vartheta_0,\omega_0)\|_{H^1}^2+C\delta_0^{1/6},
\end{aligned}
\end{equation*}
which implies \eqref{eq:H1-est} and completes the proof of Proposition \ref{prop:apriori}.

\appendix

\section{Proof of \eqref{heat-int}}\label{app:proof_heat}
\setcounter{equation}{0}
	In this section, we provide the detailed proof of \eqref{heat-int}. We mainly follow the argument in \cite{KVW25}, together with additional control on the capillarity terms. With the perturbation \eqref{def:perturbation}, define
	\[W(t,x):=\frac{1}{\sqrt{1+t}}e^{-\frac{\beta|x|^2}{1+t}},\quad \beta:=\frac{C_1}{2},\quad h(t,x):=\int_{-\infty}^xW(t,y)\,dy,\quad H(t,x):=\int_{-\infty}^{x} W^2(t,y)\,dy.\]
	We note that $W$ is a solution to the heat equation $W_t=\frac{1}{4\beta}W_{xx}$ subject to the initial data $W(0,x)=e^{-\beta|x|^2}$. Therefore, $h$ satisfies the same heat equation $h_t = \frac{1}{4\beta}h_{xx}$. Let $\nu\in(0,1)$, and let $C_\nu$ denote a constant depending on $\nu$. Our goal is to show the following two estimates
	\begin{align}
		\begin{aligned}\label{est-heat-1}
		\int_0^t\int_{\R}&\left(\bar{p}\phi+\frac{R}{\gamma-1}\vartheta\right)^2W^2 \,dx\,ds\le C\sup_{s\in[0,t]}\|(\phi,\vartheta)(s,\cdot)\|_{L^2}^2\\
		&\quad +C\delta_S\int_0^t|\dot{X}|^2\,ds+(\nu+C\delta_0)\int_0^t\int_\R W^2|(\phi,\vartheta)|^2\,dx\,ds\\
		&\quad +C_\nu\int_0^t \|(\phi,\psi,\vartheta)_x\|_{L^2}^2\,ds+C\int_0^t(G^S+G^R)\,ds+C\delta_0^{1/6},
		\end{aligned}
	\end{align}
	and
	\begin{align}
		\begin{aligned}\label{est-heat-2}
		\int_0^t\int_{\R}&\left[\frac{(R\vartheta-\bar{p}\phi)^2}{2v}+\frac{\gamma\bar{p}\psi^2}{2}\right]W^2 \,dx\,ds\le C\sup_{s\in[0,t]}\|(\phi,\psi,\vartheta)(s,\cdot)\|_{L^2}^2\\
		&\quad + C\delta_S\int_0^t|\dot{X}|^2\,ds+(\nu+C\delta_0)\int_0^t\int_{\R} W^2|(\phi,\vartheta)|^2\,dx\,ds\\
		&\quad +C_\nu\int_0^t\|(\phi,\psi,\vartheta,\omega)_x\|_{L^2}^2\,ds+C(\delta_0+\varepsilon_1)\int_0^t(G^S+G^R)\,ds+C\delta_0^{1/6}.
		\end{aligned}
	\end{align}
	Once the estimates \eqref{est-heat-1} and \eqref{est-heat-2} are established, \eqref{heat-int} can be derived from \eqref{est-heat-1} and \eqref{est-heat-2} by following the same argument as in \cite{KVW25}. To show \eqref{est-heat-1}, we introduce $F:=\bar{p}\phi+ \frac{R}{\gamma-1}\vartheta$. Then, we follow the same lines as in \cite[Appendix D]{KVW25} to derive
	\begin{align}
		\begin{aligned}\label{est:F}
			&\frac{1}{4\beta} \int_0^t\int_\R F^2W^2 \,d x\,d s=\int_{\R} F(0,x)^2\frac{h_0^2}{2}\,dx\\
			&\quad - \int_{\R} F^2\frac{h^2}{2}\,dx +\int_0^t \dot{X}(s)\int_{\R} \left(v^S_x\bar{p}+\frac{R}{\gamma-1}\theta^S_x\right)Fh^2\,d x\,d s-\int_0^t \int_{\R} \frac{1}{4\beta}(F^2)_x hW\,dx\,ds\\
			&\quad-\int_0^t \int_\R (p-\bar{p})\psi_xFh^2 \,dx\,ds +\int_0^t\int_{\R} \left(\bar{p}_t \phi -\bar{u}_x (p-\bar{p})+\mu\left(\frac{u_x^2}{v}-\frac{\bar{u}_x^2}{\bar{v}}\right)-Q_2\right)Fh^2\,dx\,ds\\
			&\quad -\int_0^t\int_{\R} \alpha\left(\frac{\theta_x}{v}-\frac{\bar{\theta}_x}{\bar{v}}\right)(Fh^2)_x\,dx\,ds=:\sum_{i=1}^7 \mathcal{J}_i.
		\end{aligned}
	\end{align}
	Since the estimates of $\mathcal{J}_1$, $\mathcal{J}_2$, and $\mathcal{J}_3$ are the same as in \cite{KVW25}, we adopt the results as
	\begin{align*}
		|\mathcal{J}_1|+|\mathcal{J}_2|\le C\left(\|(\phi_0,\vartheta_0)\|_{L^2}^2+\|(\phi,\vartheta)(t,\cdot)\|_{L^2}^2\right),\quad |\mathcal{J}_3|\le \delta_S\int_0^t |\dot{X}(s)|^2\,ds +C\delta_S\int_0^t G^S\,ds.
	\end{align*}
	Next, we use the Cauchy-Schwarz inequality and $\e_R< \delta_0$ to estimate $\mathcal{J}_4$ as
	\begin{align*}
		\mathcal{J}_4&\le \frac{1}{16\beta}\int_0^t\int_{\R} F^2W^2\,dx\,ds+C\int_0^t\int_{\R} \left(|(\phi_x,\vartheta_x)|^2+|\bar{p}_x\phi|^2\right)\,dx\,ds\\
		&\le \frac{1}{16\beta}\int_0^t\int_{\R} F^2W^2\,dx\,ds+C\int_0^t\int_{\R} |(\phi_x,\vartheta_x)|^2\,dx\,ds\\
		&\quad +C\delta_0\int_0^t\int_{\R} W^2|\phi|^2\,dx\,ds+C\delta_0\int_0^t(G^R+G^S)\,ds.
	\end{align*}
	We manipulate $\mathcal{J}_5$ as
	\begin{align*}
		\mathcal{J}_5 &= \int_0^t\int_{\R}\left(\frac{\gamma\bar{p}}{2v}\phi^2Fh^2-\frac{\gamma-1}{v}\phi F^2h^2\right)_t \,dx\,ds- \int_0^t\int_{\R}\left(\frac{\gamma\bar{p}}{v}\phi^2 F-\frac{2(\gamma-1)}{v}\phi F^2\right)hh_t\,dx\,ds\\
		&-\int_0^t\int_{\R}\left(\frac{\gamma\bar{p}}{v}F\right)_t \frac{\phi^2}{2}h^2\,dx\,ds+\int_0^t\int_{\R}\left(\frac{\gamma-1}{v}F^2\right)_t\phi h^2\,dx\,ds-\int_0^t\dot{X}\int_{\R}v^S_x\frac{\bar{p}\phi-R\vartheta}{v}Fh^2\,dx\,ds\\
		&=:\sum_{i=1}^5 \mathcal{J}_{5i}.
	\end{align*}
	Again, we adopt the estimates of $\mathcal{J}_{51}$, $\mathcal{J}_{52}$, and $\mathcal{J}_{55}$ from \cite{KVW25} as
	\begin{align*}
		&\mathcal{J}_{51}\le C\sup_{s\in[0,t]}\|(\phi,\vartheta)(s,\cdot)\|_{L^2}^2,\quad \mathcal{J}_{52}\le C\int_0^t \|\phi_x\|_{L^2}^2\,ds + C\varepsilon_1^{4/3}\sup_{s\in[0,t]}\|(\phi,\vartheta)(s,\cdot)\|_{L^2}^2,\\
		&\mathcal{J}_{55}\le \delta_S\int_0^t |\dot{X}(s)|^2\,ds + C(\delta_0+\varepsilon_1)\int_0^t G^S\,ds.
	\end{align*}
	To estimate $\mathcal{J}_{53}$, we further split it as
	\begin{align*}
		\mathcal{J}_{53}&=-\int_0^t \int_{\R} F_t \frac{\gamma\bar{p}\phi^2h^2}{2v}\,dx\,ds-\int_0^t\int_{\R}\left(\frac{\bar{p}}{v}\right)_tF\frac{\gamma\phi^2h^2}{2}\,dx\,ds\\
		&\le-\int_0^t\int_{\R} \dot{X}(s)\left(v^S_x\bar{p}+\frac{R}{\gamma-1}\theta^S_x\right)\frac{\gamma \bar{p}\phi^2h^2}{2v}\,dx\,ds+\alpha\int_0^t\int_{\R} \left(\frac{\theta_x}{v}-\frac{\bar{\theta}_x}{\bar{v}}\right)\left(\frac{\gamma\bar{p}\phi^2h^2}{2v}\right)_x\,dx\,ds\\
		&\quad -\int_0^t\int_{\R} \left[-(p-\bar{p})\psi_x - \bar{u}_x (p-\bar{p})+\mu\left(\frac{u_x^2}{v}-\frac{\bar{u}_x^2}{\bar{v}}\right)-Q_2\right]\frac{\gamma\bar{p}\phi^2h^2}{2v}\,dx\,ds\\
		&\quad + C\int_0^t\int_{\R} \left(|\bar{p}_t|+|v_t|\right)|(\phi,\vartheta)||\phi|^2\,dx\,ds.
	\end{align*}
	The only different estimate on $\mathcal{J}_{53}$ compared to \cite{KVW25} is the term regarding $Q_2$. Since $h$ is bounded, we use a similar estimate as in \eqref{est:S1S2} to get
	\begin{align*}\int_\R \left|Q_2\frac{\gamma\bar{p}\phi^2 h^2}{2v}\right|\,dx&\le C\int_{\R} |Q_2||\phi|^2\,dx\le C\varepsilon_1\int_{\R} |Q_2||\phi|\,dx\\
	&\le C\varepsilon_1^2\|(Q_1^I,Q_1^C,Q_2^I,Q_2^C)\|_{L^2}+C\varepsilon_1^{5/3}\|(Q_1^R,Q_2^R)\|_{L^1}^{4/3}+C\e_1\|\phi_x\|_{L^2}^2.
	\end{align*}
	Thus, we use Lemma \ref{lem: interaction estimates}, \eqref{error_contact} and $\e_R<\delta_0$ to obtain
	\begin{align*}
		\mathcal{J}_{53}&\le C\delta_S \int_0^t |\dot{X}(s)|^2\,ds+C(\delta_0+\varepsilon_1)\int_0^t (G^R+G^S+D+\|\phi_x\|_{L^2}^2)\,ds\\
		&\quad + C\delta_C\int_0^t \int_{\R} W^2|(\phi,\vartheta)|^2\,dx\,ds+C(\delta_0^{1/4}+\delta_0^{\frac{q-8}{3(q-2)}}). 
	\end{align*}
	A similar estimate holds for $\mathcal{J}_{54}$ and $\mathcal{J}_6$. Finally, using $\e_R<\delta_0$, we estimate $\mathcal{J}_7$ as in \cite{KVW25}:
	\[\mathcal{J}_7\le (\nu+C\delta_0)\int_0^t\int_{\R} W^2|(\phi,\vartheta)|^2\,dx\,ds+C_\nu\int_0^t \|(\phi,\vartheta)_x\|_{L^2}^2\,ds+C\delta_0\int_0^t (G^R+G^S)\,ds.\]
	Collecting all the estimates on $\mathcal{J}_i$ with \eqref{est:F}, and using $q \ge 14$, we complete the proof of \eqref{est-heat-1}.
		
	For \eqref{est-heat-2}, we again follow the same estimate as in \cite{KVW25}, paying particular attention to the additional capillarity term, to obtain
	\begin{align*}
		&\int_0^t\int_\R W^2\Big[\frac{(R\vartheta-\bar p \phi)^2}{2v}+\frac{\gamma\bar p\psi^2}{2}\Big]\,dx\,ds=\int_\R H\psi(R\vartheta-\bar p \phi)|^{s=t}_{s=0}\,dx
		\\
		&- \int_0^t\int_\R  v_x\frac{H(R\vartheta-\bar p \phi)^2}{2v^2}\,dx\,ds  -\int_0^t\int_\R \psi(R\vartheta-\bar p \phi)H_t\,dx\,ds\\
		&+ \int_0^t\dot{X}(s)\int_\R H\big[\bar p \psi v^S_x-R\psi \theta^S_x-(R\vartheta-\bar p \phi) u^S_x\big]\,dx\,ds\\
		&-\int_0^t\int_\R \bar p_x H\frac{\gamma \psi^2}{2}\,dx\,ds+ \int_0^t\int_\R (\gamma-1)H\psi\big[(p-\bar p)\psi_x+\bar u_x(p-\bar p)\big]\,dx\,ds\\
		&+ \int_0^t\int_\R H\psi\phi\bar{p}_t\,dx\,ds  - \int_0^t\int_\R \mu (\gamma-1)H\psi \Bigl( \frac{(u_x)^2}{v} -\frac{ (\bar u_x)^2}{\bar v}
		\Bigr)\,dx\,ds \\ 
		& + \int_0^t\int_\R    \alpha(\gamma-1)(H\psi)_x \Bigl( \frac{\theta _x
		}{v}-\frac{\bar \theta_x}{\bar v} \Bigr)\,dx\,ds + \int_0^t\int_\R \mu \big(H(R\vartheta-\bar p \phi)\big)_x \Bigl(\frac{u_x}{v}-\frac{\bar u_x}{\bar v}\Bigr)\,dx\,ds  \\
		&+ \int_0^t\int_\R \sqrt{\kappa} \big(H(R\vartheta-\bar p \phi)\big)_x \Bigl(\frac{w_x}{v^{5/2}}-\frac{\bar w_x}{\bar v^{5/2}}\Bigr)\,dx\,ds + \int_0^t\int_\R \big[(\gamma-1)H\psi Q_2 +H(R\vartheta-\bar p \phi) Q_1\big]\,dx\,ds.
	\end{align*}
	Therefore, compared to \cite{KVW25}, the only additional term that we need to control is
	\[\int_0^t\int_{\R} \sqrt{\kappa}(H(R\vartheta-\bar{p}\phi))_x\left(\frac{w_x}{v^{5/2}}-\frac{\bar{w}_x}{\bar{v}^{5/2}}\right)\,dx\,ds=:\mathcal{I}\]
	We use $H_x = W^2$ and $|H|\le C(1+t)^{-1/2}$ to obtain
	\begin{align*}
		\mathcal{I}&\le C\int_0^t\int_{\R} \left(W^2|(\phi,\vartheta)|+|\vartheta_x|+|\bar{p}_x||\phi|+|\phi_x|\right)\left(|\omega_x|+|\bar{w}_x||\phi|\right)\,dx\,ds
	\end{align*}
	Now, using Lemma~\ref{lem:superposition-derivatives}, we have
	\begin{align*}
		\mathcal{I}\le (\nu+C\delta_0)\int_0^t\int_\R W^2|(\phi,\vartheta)|^2\,dx\,ds+C_\nu \int_0^t\|(\phi_x,\vartheta_x,\omega_x)\|_{L^2}^2\,ds+C\delta_0\int_0^t(G^S+G^R)\,ds.
	\end{align*}
	The other terms can be controlled by similar estimates to those for \eqref{est-heat-1}, and thus we obtain the desired estimate \eqref{est-heat-2}.
\section*{Statements and Declarations}

\subsection*{Acknowledgments}
The authors are grateful to Professor Moon-Jin Kang for his valuable advice.

\subsection*{Funding}
S. Han has been partially supported by the National Research Foundation of Korea (RS-2024-00361663 and NRF-2019R1A5A1028324) and by the Miwon Du-Myeong Fellowship. J. Kim has been supported by the National Research Foundation of Korea (NRF) grant funded by the Korea government (MSIT) (RS-2026-25590471).

\subsection*{Competing interests}
The authors have no competing interests to declare that are relevant to the content of this article.

\subsection*{Data availability}
Data sharing is not applicable to this article as no datasets were generated or analysed during the current study.

\bibliographystyle{acm}
\bibliography{reference}

\end{document}